\numberwithin{equation}{section}
\newcommand*\bulletsmall{\mathpalette\bulletsmall@{.5}}
\newcommand*\bulletsmall@[2]{\mathbin{\vcenter{\hbox{\scalebox{#2}{$\m@th#1\bullet$}}}}}
\DeclareRobustCommand\bigop[1]{%
  \mathop{\vphantom{\sum}\mathpalette\bigop@{#1}}\slimits@
}
\newcommand{\bigop@}[2]{%
  \vcenter{%
    \sbox\z@{$#1\sum$}%
    \hbox{\resizebox{\ifx#1\displaystyle.9\fi\dimexpr\ht\z@+\dp\z@}{!}{$\m@th#2$}}%
  }%
}
\newenvironment{myproof}[2]{\paragraph{\textit{Proof of {#1} }{#2}.}}{\hfill$\square$}
\newenvironment{myproof2}[1]{\paragraph{\textit{Proof of {#1}}}}{\hfill$\square$}
\newenvironment{mystep}[1]{\paragraph{\textbf{Step {#1}}}}{}
\newenvironment{mystepf}{\paragraph{\textbf{Final step}}}{}
\theoremstyle{plain}
\newtheorem{theorem}{Theorem}[section]
\newtheorem{lemma}[theorem]{Lemma}
\newtheorem{corollary}[theorem]{Corollary}
\newtheorem{proposition}[theorem]{Proposition}
\theoremstyle{definition}
\newtheorem{definition}[theorem]{Definition}
\newtheorem{assume}[theorem]{Assumption}
\newtheorem{remark}[theorem]{Remark}
\newtheorem{condition}[theorem]{Condition}
\newtheorem{problem}[theorem]{Problem}
\newtheorem{example}[theorem]{Example}
\def\CC{\mathbb{C}}
\def\NN{\mathbb{N}}
\def\QQ{\mathbb{Q}}
\def\RR{\mathbb{R}}
\def\ZZ{\mathbb{Z}}
\def\ds{\displaystyle}
\def\a{\alpha}
\def\g{\gamma}
\def\s{\sigma}
\def\G{\Gamma}
\def\w{\omega}
\def\l{\lambda}
\def\ra{\rightarrow}
\def\tt{\theta}
\def\ol{\overline}
\def\O{\Omega}
\def\M{\mathcal{M}}
\def\gg{\mathfrak{g}}
\def\UU{\mathcal{U}}
\def\t{\mathfrak{t}}
\def\OO{\mathcal{O}}
\def\X{\mathcal{X}}
\def\Y{\mathcal{Y}}
\def\Ham{Hamiltonian }
\def\e{\epsilon}
\def\J{\mathcal{J}}
\def\ul{\underline}
\def\S{\Sigma}
\def\sle{strip-like end }
\def\pt{\partial}
\def\I{\mathcal{I}}
\def\HH{\mathbb{H}}
\def\cl{a}
\def\mr{\mathring}
\def\AA{\mathcal{A}}
\def\wc{\mathcal{C}}
\def\monoconst{\tau}
\def\capping{moment capping }
\def\D{\Delta}
\def\MC{\Phi_{MC}}
\def\holo{pseudoholomorphic }
\def\P{P}
\def\PP{\mathcal{P}}
\def\lo{\varphi}
\newcommand{\Nov}[2]{\Lambda^{#2}_{#1}}
\newcommand{\Se}[1]{\Phi_{Se}^{#1}}
\def\H{\mathcal{H}}
\def\Q{Q^{\vee}}
\def\BS{BS}
\def\FF{\mathcal{F}}
\def\CCC{\mathcal{C}}
\def\Lo{\mathcal{L}}
\def\GG{\mathcal{G}}
\def\eit{e^{i\tt}}
\def\cu{\g}
\newcommand{\bighash}{\DOTSB\bigop{\mathrm{\#}}}
\def\ii{e}
\def\TT{T}
\newcommand{\See}[2]{\Phi_{#1}^{#2}}
\def\ppp{p}
\def\XX{X^-\times X}
\DeclareMathOperator{\qcp}{qcp}
\def\vp{\varphi}
\def\HH{\mathcal{H}}
\DeclareMathOperator{\fiber}{fiber}
\DeclareMathOperator{\Crit}{Crit}
\DeclareMathOperator{\codim}{codim}
\DeclareMathOperator{\im}{Im}
\DeclareMathOperator{\ind}{Ind}
\DeclareMathOperator{\id}{id}
\DeclareMathOperator{\ad}{ad}
\DeclareMathOperator{\Aut}{Aut}
\DeclareMathOperator{\pr}{pr}
\DeclareMathOperator{\Ad}{Ad}
\DeclareMathOperator{\Lie}{Lie}
\DeclareMathOperator{\crit}{Crit}
\DeclareMathOperator{\Span}{span}
\DeclareMathOperator{\Aff}{Aff}
\DeclareMathOperator{\rk}{rk}
\begin{document}
\title[Quantum characteristic classes, moment correspondences and $Ham(G/L)$]{Quantum characteristic classes, moment correspondences and the Hamiltonian groups of coadjoint orbits}

\author{Chi Hong Chow}
\address{The Institute of Mathematical Sciences and Department of Mathematics, The Chinese University of Hong Kong, Shatin, Hong Kong}
\email{chchow@math.cuhk.edu.hk}

\begin{abstract}
For any coadjoint orbit $G/L$, we determine all useful terms of the associated Savelyev-Seidel morphism defined on $H_{-*}(\Omega G)$. Immediate consequences are: (1) the dimension of the kernel of the natural map $\pi_*(G)\otimes \mathbb{Q}\rightarrow \pi_*(Ham(G/L))\otimes \mathbb{Q}$ is at most the semi-simple rank of $L$, and (2) the Bott-Samelson cycles in $\Omega G$ which correspond to Peterson elements are solutions to the min-max problem for Hofer's max-length functional on $\Omega Ham(G/L)$.

The proof is based on Bae-Chow-Leung's recent computation of Ma'u-Wehrheim-Woodward morphism for the moment correspondence associated to $G/T$ where $T$ is a maximal torus, the computation of Abbondandolo-Schwarz isomorphism for $G$, and two theoretical results including the coincidence of the above Savelyev-Seidel and Ma'u-Wehrheim-Woodward morphisms, and a Leray-type spectral sequence relating Savelyev-Seidel morphisms for $G/L$ and $G/T$.

These ingredients also allow us to obtain an alternative proof of Peterson-Woodward's comparison formula which relates the quantum cohomology of $G/T$ to that of $G/L$.
\end{abstract}

\subjclass[2010]{53D35, 53D40, 57R17, 57S05}
\keywords{coadjoint orbit, Hamiltonian group, Hofer geometry, quilted Floer theory, Seidel representation}

\maketitle


\section{Introduction}\label{intro}
Let $G$ be a compact connected semi-simple Lie group. By $G/L$ we mean the coadjoint orbit passing through a point of the dual of its Lie algebra $\gg$ whose associated KSS symplectic form is monotone. Since the $G$-action on $G/L$ is Hamiltonian, it induces a group homomorphism $G\ra \H:=Ham(G/L)$ from $G$ to the group of \Ham diffeomorphisms of $G/L$.
\begin{theorem}\label{A} The dimension of the kernel of the natural map
\[ \pi_*(G)\otimes\QQ\ra\pi_*(\H)\otimes\QQ\]
is at most the rank of $L/Z(L)$ where $Z(L)$ is the center of $L$. In particular, this map is injective if $L$ is equal to a maximal torus in $G$.
\end{theorem}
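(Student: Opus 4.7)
The strategy is to detect non-vanishing rational homotopy classes via the Savelyev--Seidel morphism $\Phi_{SS}:H_{-*}(\Omega G;\QQ)\to QH^*(G/L)$ together with the explicit computation of its ``useful terms'', which forms the main computational result of the paper. First I would invoke Milnor--Moore: since $\Omega G$ and $\Omega\H$ are connected $H$-spaces, their rational homology Hopf algebras are primitively generated, and the primitive subspaces are canonically $\pi_{*+1}(G)\otimes\QQ$ and $\pi_{*+1}(\H)\otimes\QQ$. Under this identification, the map in the theorem becomes the restriction to primitives of the Pontryagin-algebra homomorphism $F_*:H_*(\Omega G;\QQ)\to H_*(\Omega\H;\QQ)$ induced by $G\to\H$. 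Because $\Phi_{SS}$ is constructed from the Hamiltonian $G$-action on $G/L$, it factors through $F_*$, so
\[\ker\bigl(F_*|_{\mathrm{Prim}}\bigr)\subseteq \ker\bigl(\Phi_{SS}|_{\mathrm{Prim}}\bigr),\]
and it suffices to prove that the right-hand side has dimension at most $\rk(L/Z(L))$.

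The bound is obtained in two steps. For $L=T$ a maximal torus, Bae--Chow--Leung's explicit formula for the Ma'u--Wehrheim--Woodward morphism associated to the moment correspondence, combined with the identification of MWW and Savelyev--Seidel morphisms and with the Abbondandolo--Schwarz isomorphism $H_{-*}(\Omega G;\QQ)\cong SH^*(T^*G)$, yields a complete description of $\Phi_{SS}^{G/T}$ on the Pontryagin generators. From this description one verifies that $\Phi_{SS}^{G/T}$ is injective on primitives, settling Theorem~\ref{A} in the toroidal case. For a general $L$, the Leray-type spectral sequence announced above relates $\Phi_{SS}^{G/L}$ to $\Phi_{SS}^{G/T}$ through the fibration $G/T\to G/L$ with fibre $L/T$. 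One then argues that the additional kernel on primitives that can appear in passing from $G/T$ to $G/L$ is controlled by $\dim\mathrm{Prim}\,H_*(\Omega(L/T);\QQ)$ in the appropriate degrees; since $L/T$ is the complete flag variety of the semisimple group $L/Z(L)$, this count equals $\rk(L/Z(L))$.

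The main obstacle will be the second step: showing that the only primitive classes of $H_*(\Omega G;\QQ)$ that can acquire zero image under $\Phi_{SS}^{G/L}$ come from the fibre direction $\Omega(L/T)$, and that they contribute exactly $\rk(L/Z(L))$ linearly independent elements. This requires tracking the differentials of the Leray-type spectral sequence against the primitive filtration of the Pontryagin ring, and is essentially the content of the ``all useful terms'' computation that constitutes the principal technical result of the paper.
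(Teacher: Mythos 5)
Your overall architecture is right — push the problem through the Savelyev--Seidel morphism, use the Hopf-algebra/Hurewicz dictionary to translate homotopy into primitives of the Pontryagin ring, and invoke the computation of $\Se{\O G}$ for coadjoint orbits. But two of the load-bearing steps are either missing or replaced by an argument I cannot see through.

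First, your reduction ``it suffices to prove that $\ker\bigl(\Phi_{SS}|_{\mathrm{Prim}}\bigr)$ has dimension at most $\rk(L/Z(L))$'' silently uses the injectivity of the change-of-Novikov-coefficients map
$\ZZ[\Nov{G/L}{\O G}]\to\ZZ[\Nov{G/L}{\O\HH}]$
underlying the factorization of $\Phi_{SS}^{G/L}$ through $F_*$. That map need not be injective: $\Nov{G/L}{\O\HH}$ is a quotient of homotopy classes of pairs and the gluing construction can collapse section classes when one passes from $\O G$ to $\O\HH$. The paper's proof handles this by lifting $\O G\to\O\HH$ to the universal covering group $\mathcal{K}$ of the identity component of $\O\HH$ and working with $\See{G/L}{\mathcal{K}}$, for which the relevant coefficient map \emph{is} injective. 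Without this step the commutative square you want does not give the inclusion of kernels you claim.

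Second, and more seriously, your bound for general $L$ is obtained by a route that is different from the paper's and is not carried out. You propose to ``track the differentials of the Leray-type spectral sequence against the primitive filtration'' and to identify the potential new kernel with $\mathrm{Prim}\,H_*(\O(L/T);\QQ)$ ``in the appropriate degrees''. The count you want — $\rk(L/Z(L))$ — does match $\dim\pi_{\mathrm{odd}}(L/T)\otimes\QQ$, but there is no canonical map from $\O(L/T)$ into $\O G$, and Theorem \ref{GPmain} is a product formula for individual section counts, not a genuine spectral sequence with a primitive filtration; it is not clear how one would deduce a kernel bound on primitives from it directly. The paper's argument is sharper and sidesteps this: Theorem \ref{computeG/Lintro}(b) produces explicit elements $x_q\in H_{-*}(\O G;\QQ)$, for $q$ running through a rank-$(\rk G-\rk L^{\ad})$ family of Peterson elements with $w_q=\ii$, whose images under $\See{G/L}{\O G}$ are (up to sign) the Novikov monomials $\TT^{A^{G/L}_{q+\Q_{R_{y_0}}}}$. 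These span a group of rank $\rk G-\rk L^{\ad}$ in $\Nov{G/L}{\O G}$, so the image of $\See{G/L}{\O G}$ has transcendence degree at least $\rk G-\rk L^{\ad}$. Since $H_{-*}(\O G;\QQ)$ is a free commutative algebra on the $\rk G$ primitive generators and $\See{G/L}{\O G}$ is a ring map, if $d$ of those generators died the image would have transcendence degree at most $\rk G-d$; hence $d\leqslant\rk L^{\ad}$. This is a linear-algebra fact about ring maps out of a polynomial ring, not a spectral-sequence argument, and it is where Theorem \ref{computeG/Lintro}(b) really enters. Your sketch does not reconstruct it, and the ingredient you cite (injectivity of $\Phi_{SS}^{G/T}$) is not actually what the paper's proof of Theorem \ref{A} uses.
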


\begin{remark}\label{kedra} The case when $L$ is equal to a maximal torus in $G$ is a corollary of a result of K\k{e}dra \cite{K} based on the previous work of Reznikov \cite{Reznikov}, K\k{e}dra-McDuff \cite{KM} and Gal-K\k{e}dra-Tralle \cite{GKT}. Notice that his result does not imply our general case because it holds only for coadjoint orbits lying in a Zariski open subset of $\gg^{\vee}$.
\end{remark}

Our second result is about the Hofer geometry of $\H$ \cite{Hofer}. Let $\{\varphi_t\}_{t\in [0,1]}$ be a loop in $\H$. Then $\{\varphi_t\}$ has a \textit{generating Hamiltonian} which is a smooth family $\{H_t\}_{t\in [0,1]}$ of Hamiltonians $H_t:G/L\ra \RR$ satisfying
\[ \dot{\varphi}_t = X_{H_t}\circ \varphi_t.\]
Such a family is unique if we further impose the normalization condition
\[ \int_{G/L} H_t\w^{\frac{1}{2}\dim G/L} =0\]
for any $t$. Define
\[L^+(\{\varphi_t\}):= \int_0^1\max_{G/L} H_t ~dt.\]
This defines a length functional on $\O\H$, called the \textit{max-length functional} \cite{S_QCC}. As in Riemannian geometry, a natural problem is the min-max problem for $L^+$. More precisely, given a homology class $\a\in H_*(\O\H;\ZZ)$, what is the infimum of
\[\max_{\G}~ (L^+\circ f)\]
where $f:\G\ra \O\H$ runs over all smooth cycles which represent $\a$? Moreover, is this value attained by an explicit representative? Our theorem deals with homology classes which come from $\O G$. Recall that the additive group $H_*(\O G;\ZZ)$ is completely known: Bott-Samelson \cite{BS} constructed an explicit basis $\{x_q\}_{q\in \Q}$, indexed by the unit lattice $\Q$ of a maximal torus $T$ in $G$ (say contained in $L$), such that each $x_q$ is represented by a cycle, called \textit{Bott-Samelson cycle}, whose domain is a smooth projective variety which has a structure of iterated $\mathbb{P}^1$-bundles.

The following theorem does not hold for all $q\in \Q$ (unless $L=T$) but a subset consisting of those which are, roughly speaking, concentrated near the faces of the Weyl chambers corresponding to $G/L$. The precise notion is \textit{Peterson elements} with respect to the canonical fibration $G/T\ra G/L$ which will be defined in Definition \ref{peterelementdef} below.
\begin{theorem}\label{B} Suppose $q\in \Q$ is a Peterson element with respect to $G/T\ra G/L$. For any smooth cycle $f:\G\ra \O\H$ representing $x_q$, we have
\[\max_{\G}~(L^+\circ f)\geqslant C_q\]
where $C_q:=\max_{G/L}\langle q,-\rangle$ is a constant determined by $q$. Moreover, the equality holds for the associated Bott-Samelson cycle.
\end{theorem}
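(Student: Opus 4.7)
The plan is to combine the standard Seidel-type lower bound for Hofer's max-length functional with the detailed computation of the Savelyev-Seidel morphism announced in the abstract. The Hamiltonian $G$-action on $G/L$ induces a map $\O G \to \O\H$, under which $x_q$ pushes forward to a class in $H_*(\O\H)$ that I continue to call $x_q$. The Savelyev-Seidel construction assigns to any $\a \in H_*(\O\H)$ a class $\Psi(\a) \in QH^*(G/L)$ with coefficients in the appropriate Novikov ring $\RF$, and the fundamental estimate behind all quantum characteristic class arguments --- a direct consequence of the energy identity for the parametrized Floer trajectories defining $\Psi$ --- gives, for every smooth representative $f : \G \to \O\H$ of $\a$,
\[
\max_\G (L^+ \circ f) \;\geqslant\; -\nu(\Psi(\a)),
\]
where $\nu$ is the Novikov valuation recording Hamiltonian action. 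This is the abstract tool that converts algebraic nonvanishing of $\Psi(x_q)$ with prescribed leading term into a min-max bound.

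Next I would identify the leading (minimum-valuation) term of $\Psi(x_q)$. The main computation of the paper determines the ``useful terms'' of the Savelyev-Seidel morphism on $H_{-*}(\O G)$; it is assembled from Bae-Chow-Leung's calculation of the Ma'u-Wehrheim-Woodward morphism for the moment correspondence of $G/T$, the coincidence of the Savelyev-Seidel and MWW morphisms, and the Leray-type spectral sequence relating the two morphisms for $G/T$ and $G/L$. The Peterson condition on $q$ is precisely the combinatorial hypothesis ensuring that $x_q$ survives this spectral sequence with a nonzero leading Novikov monomial of valuation exactly $-C_q$. Inserting this into the previous display yields $\max_\G(L^+\circ f) \geqslant C_q$ for every representative $f$ of $x_q$.

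For the matching upper bound I would exhibit the Bott-Samelson cycle $f_q : \G_q \to \O G \to \O\H$ explicitly. The loops comprising its image arise from the iterated $\mathbb{P}^1$-tower attached to a reduced word for $q$; each such loop, viewed in $\O\H$, is generated by a normalized Hamiltonian of the form $\langle q',\mu\rangle - c_{q'}$, where $\mu : G/L \to \gg^{\vee}$ is the moment map and $q'$ ranges over the weights produced by the Bott-Samelson word. The Peterson hypothesis, combined with Weyl-invariance of $\max_{G/L}\langle -,\mu\rangle$ along the orbit of $q$, gives $\max_{G/L}\langle q',\mu\rangle = C_q$ for every $q'$ appearing in the tower. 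Therefore $L^+\circ f_q$ is constant equal to $C_q$, which gives the reverse inequality and completes the proof.

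The hardest step is the second: pinning down that the leading Novikov weight of $\Psi(x_q)$ is exactly $-C_q$ under the Peterson hypothesis. This is where the full package of earlier ingredients --- the BCL computation for $G/T$, the Abbondandolo-Schwarz isomorphism for $G$, the identification of Savelyev-Seidel with MWW, and the Leray-type spectral sequence --- must be fitted together. The Peterson condition is exactly the combinatorial filter that simultaneously makes the spectral sequence behave well at the relevant location and preserves a distinguished leading term from the $G/T$ computation, where the corresponding statement is essentially classical; without it, the leading term of $\Psi(x_q)$ could either vanish or live at a strictly larger valuation and no Seidel-type bound of this strength would be available.
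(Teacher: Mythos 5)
Your proposal matches the paper's argument: the lower bound follows from the Seidel-type action estimate applied to the nonvanishing leading term $\pm\s_{w_qW_{y_0}}\TT^{A'}$ of $\See{G/L}{\O G}(x_q)$ guaranteed by the Peterson hypothesis (Theorem \ref{computeG/Lintro}(a)), together with the identification $-\CCC(A') = \langle q,w_q(y_0)\rangle = C_q$, and the upper bound from the observation that every loop in the image of $\BS_q$ is a concatenation of $\Ad$-conjugated portions of $\vp_q$, so $L^+$ is constant on the cycle. One small correction to your upper-bound step: the conjugating elements in the Bott-Samelson construction lie in products of subgroups $G_{\a_i}\subset G$, not in the Weyl group, so the relevant fact is $\Ad(G)$-invariance of $\max_{G/L}\langle -,\mu\rangle$ rather than Weyl-invariance, the normalization constant you call $c_{q'}$ vanishes automatically because $G$ is semi-simple, and the Peterson hypothesis plays no role in this computation --- it enters only to guarantee the nonvanishing leading term needed for the lower bound.
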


\begin{remark}\label{savelyev} Theorem \ref{B} is an extension of a result of Savelyev \cite{S_JDG} which deals with the case of $G/T$ and the classes $x_q$ for $q$ lying in the interior of the dominant chamber\footnote{Strictly speaking, the classes $x_q$ he considered are not represented by the Bott-Samelson cycles but the descending submanifolds with respect to the standard energy functional on $\O G$. Yet, the combinatorics describing these bases are the same, namely the affine Weyl group associated to $\O G$.}.
\end{remark}

As is well-known, the standard tool for solving problems of the above types is \textit{Seidel morphisms} or their variants. See for example the work \cite{ASLag, MSl, S_JDG}. Since higher dimensional cycles in $\O G$ enter our situation, we consider Savelyev's generalization of Seidel morphisms which is a ring map\footnote{To avoid unnecessary technical issues, our map $\Se{}$ is defined only on the subring of $H_{*}(\O Ham(X,\w);\ZZ)$ generated by classes represented by smooth cycles.}
\[ \Se{}:H_{*}(\O Ham(X,\w);\ZZ)\ra QH^*(X;\ZZ)\]
associated to any compact monotone symplectic manifold $(X,\w)$ where the source is given the Pontryagin product. Roughly speaking, it is defined by counting pairs $(\vp,u)$ consisting of $\vp\in \O Ham(X,\w)$ which lies in a given cycle as the input of $\Se{}$ and a pseudoholomorphic section $u$ of the \Ham fibration $P_{\vp}(X)$ over $S^2=D_-\cup D_+$ defined by
\begin{equation}\label{Hamfib}
 P_{\vp}(X):= (D_-\times X)\cup (D_+\cup X)/(e^{-i\tt},x)\sim (\eit,\vp(\eit)\cdot x)
\end{equation}
where $D_{\pm}$ are two copies of the unit disk which are glued to form the 2-sphere $S^2$. To prove Theorem \ref{A}, we introduce a Novikov ring $\ZZ[\Nov{X}{\PP}]$ associated to any group homomorphism $\vp:\PP\ra \O Ham(X,\w)$ where $\PP$ is possibly infinite dimensional, defined as the group algebra of the group $\Nov{X}{\PP}$ of homotopy classes of pairs $(p,u)$ consisting of $p\in \PP$ and a section $u$ of $P_{\vp_p}(X)$. The group structure on $\Nov{X}{\PP}$ is given by the standard gluing operation (e.g. \cite{S_QCC})
\[ \pi_2^{section}(P_{\vp_1}(X)) \times\pi_2^{section}(P_{\vp_2}(X))\ra \pi_2^{section}(P_{\vp_1\circ\vp_2}(X))   \]
for any $\vp_1,\vp_2\in \O Ham(X,\w)$ where $\pi_2^{section}$ denotes the set of section classes and $\circ$ the pointwise composition. It is straightforward to modify the definition of $\Se{}$ to obtain a ring map
\[ \Se{\PP}: H_{-*}(\PP;\ZZ)\ra QH^*(X;\ZZ[\Nov{X}{\PP}]).\]
The key point of the proof of Theorem \ref{A} and Theorem \ref{B} is to have good knowledge of $\Se{\O G}$ for $X=G/L$. Our strategy consists of three steps.

\vspace{0.3cm}
\begin{mystep}{1} We relate $\Se{\O G}$ to another ring map constructed by a different theory. Suppose $(X,\w)$ admits a Hamiltonian $G$-action for a compact connected Lie group $G$. Introduced by Weinstein \cite{W}, the \textit{moment correspondence} associated to $(X,\w)$ is defined by
\begin{equation}\label{moment}
C:=\{(g,\mu(x),x,g\cdot x)|~g\in G, x\in X\}\subset T^*G\times X\times X
\end{equation}
where $\mu:X\ra \gg^{\vee}$ is the moment map, and $T^*G$ is identified with $G\times \gg^{\vee}$ by left multiplication. It is a Lagrangian correspondence from $T^*G$ to $X^-\times X$ which geometrically composes with the cotangent fiber $L:=T_e^*G$ at the identity element $e\in G$ to give the diagonal $\D$. By Ma'u-Wehrheim-Woodward's \textit{quilted Floer theory} \cite{MWW} and the work of Evans-Lekili \cite{EL}, $C$ induces a ring homomorphism
\begin{equation}\label{MC}
\MC: HW^*(L,L)\ra HF^*(\D,\D).
\end{equation}
It is well-known that the source of $\MC$ has a topological model: Abbondandolo-Schwarz \cite{AS} constructed a ring isomorphism
\begin{equation}\label{AS}
\FF: H_{-*}(\O G)\ra HW^*(L,L).
\end{equation}
See also the work of Abouzaid \cite{Ab_JSG} who constructed the inverse of $\FF$. As for the target of $\MC$, we have the ring isomorphism of Piunikhin-Salamon-Schwarz \cite{PSS}
\[PSS: HF^*(\D,\D)\ra QH^*(X).\]
Since we have enlarged $QH^*(X)$ by tensoring it with $\ZZ[\Nov{X}{\O G}]$, we enlarge $HF^*(\D,\D)$ by introducing a natural notion of capping disks and modify the definition of $\MC$ and $PSS$ correspondingly. See Section \ref{cappingdisk} for the details.
\begin{theorem}\label{MC=Se} $PSS\circ \MC\circ \FF=\Se{\O G}$.
\end{theorem}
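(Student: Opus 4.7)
My plan is to realize both sides of the claimed identity as counts arising from a single master moduli space of quilted pseudoholomorphic curves, and then to interpolate between the two compositions by a strip-shrinking degeneration along the geometric composition $L\circ C=\D$.

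\textbf{Step 1: Glue the three chain-level maps.} I would first build the composition $PSS\circ\MC\circ\FF$ at the level of moduli spaces. Concretely, given a smooth cycle $f:\G\ra\O G$, the Abbondandolo--Schwarz map $\FF$ is realized via Morse--Floer half-strips in $T^*G$ with boundary on $L=T_e^*G$, whose asymptotic behavior records the loop $f(\g)\in \O G$. The map $\MC$ is computed by quilted strips in $T^*G\times(X^-\times X)$ with seam along the moment correspondence $C$ and with boundaries on $L$ and $\D$. Finally, $PSS$ is computed by cylindrical caps in $X$ with a Morse input at $-\infty$ and a Floer datum for $(\D,\D)$ at $+\infty$. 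Gluing these three pieces along their common asymptotic ends gives a moduli space $\M$ on a quilted surface with an incoming Morse-type marker (taking values in the $\O G$-cycle $f$), a single seam carrying $C$, and an outgoing puncture in $X$. By the usual gluing/compactness arguments, counting $\M$ with appropriate orientations produces $PSS\circ\MC\circ\FF$.

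\textbf{Step 2: Strip-shrink along $L\circ C=\D$.} The crucial geometric input, already used in \cite{MWW,EL} and central to Bae--Chow--Leung's computation cited in the abstract, is that the geometric composition of $L\subset T^*G$ with the moment correspondence $C\subset T^*G\times(X^-\times X)$ equals the diagonal $\D\subset X^-\times X$. I would apply a parametric strip-shrinking argument: introduce a width parameter $\e>0$ on the $T^*G$-region of the quilt and consider the one-parameter family of moduli spaces $\M_\e$. For $\e=1$ we recover the configuration of Step~1. As $\e\to 0$, the $T^*G$-component of each quilted curve collapses onto $C$, and by the strip-shrinking theorem the limiting configuration is a pseudoholomorphic section of the Hamiltonian bundle obtained by coupling the half-strip asymptotic data in $\O G$ to the $X$-factor via the $G$-action. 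The parametric moduli space $\bigcup_\e \M_\e$ provides the cobordism between the two endpoints.

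\textbf{Step 3: Recognize the Savelyev--Seidel moduli.} Next, I would verify that the $\e\to 0$ limit is precisely the moduli defining $\Se{\O G}$. This is essentially dictated by the definition of $C$ in \eqref{moment}: pairing a loop $g:S^1\ra G$ with $C$ produces the clutching data $(\eit,x)\mapsto(\eit,g(\eit)\cdot x)$, which is exactly the cocycle used in \eqref{Hamfib} to build $P_{\vp}(X)$, with $\vp$ the loop in $\H$ induced by $g$. Because our input is a cycle $f:\G\ra\O G$, one gets a family of such fibrations parametrized by $\G$, and the collapsed moduli consists of pairs $(\g,u)$ with $u$ a pseudoholomorphic section of $P_{\vp_{f(\g)}}(X)$: this is the definition of $\Se{\O G}(f_*[\G])$. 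Simultaneously, one must match book-keeping of section classes, i.e.\ check that the gluing map from homotopy classes of $(L,C,\D)$-capping data on the quilted side to the Novikov group $\Nov{X}{\O G}$ on the Savelyev--Seidel side is a bijection preserving areas, Chern numbers, and orientations; this is essentially formal once the strip-shrinking has been carried out.

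\textbf{Main obstacle.} The hardest part will be establishing the strip-shrinking degeneration in the present setting, which differs from the classical Wehrheim--Woodward framework in two ways: the Lagrangian $L$ is non-compact (forcing us to work with wrapped Floer theory and hence with Hamiltonian perturbations of conical type at infinity), and the Lagrangian correspondence $C$ is likewise non-compact in the $\gg^\vee$-direction. One therefore needs a priori $C^0$-bounds to confine limiting curves to a region where the standard monotone strip-shrinking analysis applies, together with a careful choice of perturbation data that is compatible with the $G$-action on the $X$-factor and with the wrapping Hamiltonian on $T^*G$. Given the monotonicity of $G/L$ and the homogeneity of $C$, I expect these estimates to follow from arguments modelled on \cite{AS,Ab_JSG,EL,MWW}, but they constitute the genuine technical content of the theorem.
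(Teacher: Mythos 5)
Your proposal correctly identifies the essential ideas---quilted curves seamed along the moment correspondence $C$, the geometric composition $L\circ C=\D$, and a shrinking degeneration---but the paper takes a genuinely different route. You propose to glue the chain-level moduli for $\FF$, $\MC^{op}$ and $PSS$ into a single open-string quilt (boundary on $L$ and $\D$, seam on $C$) and then strip-shrink the $T^*G$ region directly. The paper instead detours through the closed string: it introduces $\MC^{cl}$, the free-loop Abbondandolo--Schwarz map $\GG$, the open-closed map $\mathcal{OC}$, and the quantum co-product $\qcp$; it proves $\MC^{cl}\circ\GG=\qcp\circ\Se{\Lo G}$ by annulus-shrinking a \emph{compact} ``quilted annulus'' moduli $\M_{\vp}(h_{\pm},A)$ (Figure~6), whose free width parameter $\cl\in(0,\infty)$ interpolates between the two sides; and it identifies $\qcp\circ PSS\circ\MC^{op}\circ\FF$ with the restriction of $\MC^{cl}\circ\GG$ to $\O G$ via the commutative diagrams of Propositions~\ref{commQFT} and~\ref{commwrapped}, concluding by injectivity of $\qcp$ (Lemma~\ref{qcpinj}). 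The closed detour buys three things: the annulus-shrinking is a family version of Wehrheim--Woodward's theorem, which Appendix~\ref{gluing} adapts to the non-compact setting; the moduli $\M_{\vp}(h_{\pm},A)$ encodes the $\O G$-cycle by a moving Lagrangian boundary $T^*_{\vp_{\g}(\eit)}G$ and carries no intermediate Floer-chord asymptotics, thereby avoiding the wrapped gluing of three separate chain-level moduli that your Step~1 would require; and the non-compactness estimates need only be handled once, on the closed side, where the convexity argument at the end of Appendix~\ref{gluing} applies. Your ``Main obstacle'' paragraph correctly locates exactly the difficulty the paper sidesteps: an open-string strip-shrinking with non-compact $L$, non-compact $C$, and wrapping Hamiltonians would be a new result rather than an application of existing ones, and that is precisely why the paper argues through $\qcp$-injectivity instead.
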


The proof, given in Section \ref{proveMC=Se}, is to consider the closed string analogue of the problem which is nothing but a special case of (a family version of) a theorem of Wehrheim-Woodward \cite{WW3}, proved by \textit{annulus-shrinking} \cite{WW2}, stating that quilted invariants are compatible with geometric compositions. The open string case then follows from the closed string case, since the open-closed map $\mathcal{OC}:HF^*(\D,\D)\ra QH^*(X^-\times X)$ is injective.
\end{mystep}

\bigskip
\begin{mystep}{2} We compute $PSS\circ \MC\circ \FF$ for $X=G/T$. It is based on  the analysis of $\FF$ and the work of Bae-Chow-Leung \cite{BCL} which computed $PSS\circ \MC$. Let us define some notations before stating the result. Recall $G$ is a compact connected Lie group which is semi-simple\footnote{That means the center $Z(G)$ of $G$ is discrete.} and $T$ is a maximal torus in $G$. Put $\gg:=\Lie(G)$ and $\t:=\Lie(T)$. Fix an $\Ad$-invariant metric $\langle -,-\rangle$ on $\gg$. Denote by $W$ the Weyl group of the pair $(G,T)$. It is well-known that $W$ acts simply transitively on the set of Weyl chambers in $\t$. Fix one of these chambers $\wc_0$ and call it \textit{dominant}. Denote by $\Q\subset \t$ the unit lattice of the torus $T$. For any $q\in\Q$, there is a unique element $w_q\in W$ such that $w_q\wc_0$ is the Weyl chamber the segment from $q$ to any interior point of $\wc_0$ first hits.

To describe what $PSS\circ \MC\circ \FF$ looks like, we specify a basis for each of the sources and targets as follows. As mentioned above, the group $H_*(\O G;\ZZ)$ has a basis $\{x_q\}_{q\in\Q}$ where each $x_q$ is represented by the Bott-Samelson cycle $\BS_q$ associated to $q$. See Definition \ref{BSq} for the construction of $\BS_q$. As for $QH^*(G/T;\ZZ[\Nov{G/T}{\O G}])$, it is well-known that, over $\ZZ$ but not $\ZZ[\Nov{G/T}{\O G}]$, it has a basis consisting of Schubert classes $\s_w$, $w\in W$. Each $\s_w$ is the Poincar\'e dual of the stable (i.e. descending) submanifold passing through the critical point $x_w$ of the Morse function $\langle a,-\rangle$ on $G/T$ for an element $a\in\mr{\wc}_0$, where $x_w:=w(x_0)$ and $x_0$ is the unique intersection point of $G/T$ and $\wc_0$. By Lemma \ref{GPNovforOO},
\[\Nov{G/T}{\O G}=\left\{ \left. A_q^{G/T}:=[\vp_q, s_{\vp=\vp_q,u_{\pm}\equiv x_0}]\right|~q\in\Q\right\}\]
where $\vp_q:\eit\mapsto \exp(\tt\cdot q/2\pi)\in G$ and $s_{\vp,u_{\pm}}$ is the section of $P_{\vp}(G/T)$ defined in Section \ref{Hamfibsection}. It follows that $QH^*(G/T;\ZZ[\Nov{G/T}{\O G}])$ has a basis $\{\s_w \TT^{A_q^{G/T}}\}_{(w,q)\in W\times \Q}$ where $\TT$ is the Novikov variable.

\begin{theorem}(=Theorem \ref{computeG/T}) \label{computeG/Tintro} For any $q\in\Q$, we have
\[(PSS\circ\MC\circ\FF)(x_q)\in\pm\s_{w_q}\TT^{A^{G/T}_{w_q^{-1}(q)}}+\bigoplus_{\substack{w'\in W\\ \ell'(w')<\ell'(w_q)}}\ZZ[\Nov{G/T}{\O G}]\cdot \s_{w'}\]
where $\ell':W\ra \RR$ is a function defined in Definition \ref{perturblengthdef}\footnote{Roughly speaking, it is a small perturbation of the standard length function $\ell$ on $W$.}
\end{theorem}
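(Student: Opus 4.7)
The plan is to combine an explicit description of $\FF(x_q) \in HW^*(L,L)$ with the Bae-Chow-Leung computation of $PSS\circ\MC$ on wrapped Floer generators. First I would unpack the Abbondandolo-Schwarz isomorphism applied to the Bott-Samelson cycle. The wrapped Floer complex $CW^*(L,L)$ of the cotangent fiber $L=T_e^*G$ has generators corresponding to geodesic chords from $L$ to itself, parametrized by $\Q$, with the generator $[q']$ corresponding to the geodesic loop $\vp_{q'}$. Using the Morse-theoretic model of $\FF$ on $\O G$, I expect
\[ \FF(x_q) = \pm[q] + (\text{contributions from } [q']\text{ with } |q'|<|q|),\]
the sign coming from an orientation comparison between $\BS_q$ and the descending manifold through $\vp_q$ for a suitable energy-type functional on $\O G$.

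Second, I would invoke the BCL computation of $PSS\circ\MC$ applied to each wrapped generator $[q']$. For the generator $[q']$, BCL's formula should express $(PSS\circ\MC)([q'])$ as a sum over Weyl elements of Schubert classes $\s_w$ weighted by Novikov variables, with leading term $\pm\s_{w_{q'}}\TT^{A^{G/T}_{w_{q'}^{-1}(q')}}$, where $w_{q'}$ is the Weyl element determined by the first chamber the segment from $q'$ to $\mr{\wc}_0$ enters. Substituting the expansion from Step 1 and collecting terms yields the desired leading contribution $\pm\s_{w_q}\TT^{A^{G/T}_{w_q^{-1}(q)}}$ in $(PSS\circ\MC\circ\FF)(x_q)$.

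Third, I would verify that all remaining contributions land in Schubert classes $\s_{w'}$ with $\ell'(w')<\ell'(w_q)$. These come from two sources: (i) the lower-order wrapped generators $[q']$ in the expansion of $\FF(x_q)$, which by the design of the perturbed length function in Definition \ref{perturblengthdef} produce Schubert classes of strictly smaller $\ell'$-value; and (ii) the non-leading terms in BCL's expansion of $(PSS\circ\MC)([q])$ itself, which their formula controls in the same triangular manner. The role of $\ell'$ is precisely to break ties that could occur for the unperturbed length $\ell$, ensuring a strict triangular form with respect to the Schubert basis.

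The main obstacle will be the identification of the leading coefficient as $\pm 1$ and the verification that no other term in the double sum coincidentally lands on $\s_{w_q}\TT^{A^{G/T}_{w_q^{-1}(q)}}$. The first reduces to a transversality and sign analysis for a $0$-dimensional moduli space of quilted Floer configurations (equivalently, PSS sections after geometric composition through the moment correspondence), where one must compare orientations of $\BS_q$, the Schubert cell of $x_{w_q}$, and the moduli space itself. A secondary delicacy is matching the AS Morse-theoretic representative of $\FF(x_q)$ with the geometric Bott-Samelson cycle at the chain level rather than merely in homology, so that the leading-term tracking in Step 1 survives the passage through $PSS\circ\MC$ without being absorbed into a lower-order Schubert class.
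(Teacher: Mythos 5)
Your overall strategy — expand $\FF(x_q)$ in the wrapped generator basis $\{x_{q'}^F\}$ and then apply the Bae-Chow-Leung computation of $PSS\circ\MC$ — is indeed how the paper begins, but there is a genuine gap hidden in your Step 3(i). The Abbondandolo-Schwarz energy argument (Lemma \ref{energyindex} in the paper) gives you, for any non-leading contribution $x_{q'}^F$, the constraints $q'\in q+\Q_0$, an index equality $\langle q,w_q(\rho)\rangle-\ell(w_q)=\langle q',w_{q'}(\rho)\rangle-\ell(w_{q'})$, and an energy inequality $|q+a|^2>|q'+a|^2$. What you need for the triangular argument to close is $\ell'(w_{q'})<\ell'(w_q)$, which by Lemma \ref{perturblengthlemma} and the index equality is equivalent to $\langle q'+a,w_{q'}(\rho)\rangle<\langle q+a,w_q(\rho)\rangle$. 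The energy inequality does \emph{not} give this in general, and the phrase ``by the design of the perturbed length function'' does not supply the missing implication. This is precisely the step the paper cannot carry out directly.

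The paper instead proves the desired $\ell'$-inequality only in two restricted situations (Lemma \ref{Fspeciallemma}): (I) $q=w_q(\rho)$, where Cauchy-Schwarz turns $\langle q,w_q(\rho)\rangle$ into $|q||\rho|$ and the energy inequality then suffices; and (II) a finite set of dominant elements of the form $K_0\rho+q_0$ with $q_0$ a minimal-norm coset representative in $\Q/\Q_0$, handled by a careful case analysis. The general case is then obtained by \emph{bootstrapping} through the Pontryagin ring structure: the Magyar product formula $x_{q_0}\bulletsmall x_{q_1}=x_{q_1+w_{q_1}(q_0)}$, its Floer-theoretic counterpart $x_{q_1}^F\bulletsmall x_{q_2}^F=x_{q_1+q_2}^F$, and the invertibility of Seidel elements $\TT^A$. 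Concretely, one evaluates $(PSS\circ\MC\circ\FF)$ on $x_{q+w_q(q_1+N\rho)}^{BS}$ for a carefully chosen $q_1\in\Q\cap\wc_0$ and large $N$, once via powers of $x_\rho^{BS}$ (reducing to Situation (I)), and once via $x_{q_1+N\rho}^{BS}\bulletsmall x_q^{BS}$ with the first factor decomposed through Situation (II); dividing out the known invertible factors then recovers the theorem for arbitrary $q$. Without this indirect mechanism your Step 3 does not go through: lower-energy wrapped generators $x_{q'}^F$ can a priori feed Schubert classes at or above the $\ell'$-level of $\s_{w_q}$, destroying the claimed triangularity.
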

\end{mystep}

\begin{mystepf} We carry out the computation for general coadjoint orbits $G/L$. This time, we do not examine $PSS\circ \MC\circ\FF$ but prove a theoretical result expressing $\Se{\O G}$ for $G/L$ in terms of $\Se{\O G}$ for $G/T$. This will give what we want since we know enough about the latter map, by Theorem \ref{MC=Se} and Theorem \ref{computeG/Tintro}. As we will deal with $\Se{\O G}$ for more than one Hamiltonian $G$-manifolds, we use $\See{G/L}{\O G}$ in place of $\Se{\O G}$ if we talk about $G/L$, etc. By $G/L$, we mean the coadjoint orbit passing through a point $y_0\in\wc_0$ (while $G/T$ is the coadjoint orbit passing through an interior point $x_0\in\mr{\wc}_0$). The points $x_0$ and $y_0$ are required to lie in certain rays in order for the resulting coadjoint orbits to be monotone. See Section \ref{fromG/T} for more detail.

There is a unique $G$-equivariant \Ham fibration $\pi:G/T\ra G/L$ sending $x_0$ to $y_0$. Given $\vp\in \O G$. Since $\pi$ is $G$-equivariant, it induces a map $\pi_{\vp}:P_{\vp}(G/T)\ra P_{\vp}(G/L)$ commuting with the projections onto $S^2$. For any section $u$ of $P_{\vp}(G/L)$, the pre-image $\pi_{\vp}^{-1}(\im(u))$ of the image of $u$ with respect to $\pi_{\vp}$ is a \Ham fibration over $\im(u)\simeq S^2$ with fibers isomorphic to $L/T$. Moreover, any section of this fibration induces a section of $P_{\vp}(G/T)$ via the inclusion. Notice that everything we have just introduced is defined up to homotopy. As the first step towards the computation of $\See{G/L}{\O G}$, we solving the following

\begin{problem} \label{peterG/Lask} For any $A\in \Nov{G/L}{\O G}$, determine $\psi_A\in\pi_0(\O Ham(L/T))$ such that
\[ \pi_{\vp_A}^{-1}(\im(u_A))\simeq P_{\psi_A}(L/T)\]
for a (and hence all) representative $(\vp_A,u_A)$ of $A$. Determine also the map
\[ \PP_A:\pi_2^{section}(P_{\psi_A}(L/T))\ra \pi_2^{section}(P_{\vp_A}(G/T)).\]
\end{problem}

\noindent We answer Problem \ref{peterG/Lask} here and give a complete proof in Section \ref{fromG/T}. Denote by $R$ the set of roots for the pair $(G,T)$. For any $y\in\t$, define $R_y:=\{\a\in R|~\a(y)=0\}$, $\t_y:=\bigcap_{\a\in R_y}\{\a=0\}$,
\begin{align*}
\Q_{R_y} &:= \Span_{\ZZ}\{\a^{\vee}|~\a\in R_y\}\\
P_{R_y}^{\vee}&:= \{q\in \Q_{R_y}\otimes \RR|~\a(q)\in\ZZ\text{ for all }\a\in R_y\}
\end{align*}
where $\a^{\vee}:=\frac{2\a}{\langle\a,\a\rangle}\in\t$ is the coroot associated to a root $\a\in R$. Notice that $\Q_{R_y}\otimes\RR$ is equal to the orthogonal complement $\t_y^{\perp}$ of $\t_y$ in $\t$. Denote by $\pi_{\t_y^{\perp}}:\t\ra\t_y^{\perp}$ the orthogonal projection. By Lemma \ref{GPNovforOO},
\[\Nov{G/L}{\O G}=\left\{\left. A_{q+\Q_{R_{y_0}}}^{G/L}:=[\vp_q, s_{\vp=\vp_q, u_{\pm}\equiv y_0}] \right|~q+\Q_{R_{y_0}}\in\Q/\Q_{R_{y_0}}\right\}\]
where $\vp_q:\eit\mapsto \exp(\tt\cdot q/2\pi)\in G$. Observe that the fiber $L/T$ is a coadjoint orbit of the group $L^{\ad}:=L/Z(L)$ which has a maximal torus $\exp(\t_{y_0}^{\perp})$ with unit lattice $P^{\vee}_{R_{y_0}}$. By the same lemma, we have
\[ \Nov{L/T}{\O L^{\ad}}=\left\{ \left. A^{L/T}_q \right|~q\in P^{\vee}_{R_{y_0}}\right\}\]
where $A^{L/T}_q$ defined similarly as above.
\begin{proposition}(=Proposition \ref{peterG/L}) \label{peterG/Lintro} For any $q+\Q_{R_{y_0}}\in\Q/\Q_{R_{y_0}}$,
\[ \psi_{A'} = \pi_{\t_{y_0}^{\perp}}(q) + \Q_{R_{y_0}}\in P^{\vee}_{R_{y_0}}/\Q_{R_{y_0}}\simeq \pi_1(L^{\ad})\simeq \pi_0(\O L^{\ad}) \]
where $A':=A^{G/L}_{q+\Q_{R_{y_0}}}$, and for any $\tilde{q}\in P^{\vee}_{R_{y_0}}$ with $\tilde{q}\in\pi_{\t_{y_0}^{\perp}}(q)+\ Q_{R_{y_0}}\simeq \pi_2^{section}(P_{\psi_{A'}}(L/T))$,
\[ \PP_{\psi_{A'}}\left(A^{L/T}_{\tilde{q}}\right)= A^{G/T}_{\tilde{q}-\pi_{\t_{y_0}^{\perp}}(q)+q}.\]
\end{proposition}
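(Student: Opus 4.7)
The plan is to directly describe the restricted fibration $\pi_{\vp_q}^{-1}(\im(u_{A'}))$ using the canonical representative $(\vp_q, s_{\vp=\vp_q,u_\pm\equiv y_0})$ of $A':=A^{G/L}_{q+\Q_{R_{y_0}}}$, and then unwind the section-class identifications. First I would observe that because $\vp_q(\eit)=\exp(\tt\cdot q/2\pi)$ lies in $T\subset L=\mathrm{Stab}(y_0)$, the section $u_{A'}$ is well-defined and its image in $P_{\vp_q}(G/L)$ is the embedded $2$-sphere obtained by gluing the constant disks $D_\pm\times\{y_0\}$ along the equator. Over either of these disks the pre-image under $\pi_{\vp_q}$ is just $\pi^{-1}(y_0)=L/T$, while the clutching map at the equator is the pointwise action of $\vp_q(\eit)$ on $L/T$, which factors through $L\to L^{\ad}$. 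Hence $\pi_{\vp_q}^{-1}(\im(u_{A'}))\simeq P_{\bar\vp_q}(L/T)$, where $\bar\vp_q\in\O L^{\ad}$ denotes the projection of $\vp_q$.

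Next, to identify $\psi_{A'}\in\pi_0(\O L^{\ad})=\pi_1(L^{\ad})$, I would compute the class of $\bar\vp_q$. Since $\Lie(L^{\ad})=\mathfrak{l}/\mathfrak{z}(\mathfrak{l})=\mathfrak{l}/\t_{y_0}$, the canonical projection $\t\to\t/\t_{y_0}\simeq\t_{y_0}^\perp$ sends $q$ to $\pi_{\t_{y_0}^\perp}(q)$, so $\bar\vp_q(\eit)=\exp(\tt\cdot \pi_{\t_{y_0}^\perp}(q)/2\pi)$ in the maximal torus $\exp(\t_{y_0}^\perp)$ of $L^{\ad}$. Under the standard isomorphism $\pi_1(L^{\ad})\simeq P^\vee_{R_{y_0}}/\Q_{R_{y_0}}$, this loop represents exactly $\pi_{\t_{y_0}^\perp}(q)+\Q_{R_{y_0}}$, giving the first assertion.

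For the map $\PP_{\psi_{A'}}$, I would exploit that the inclusion of Hamiltonian fibrations $P_{\bar\vp_q}(L/T)\hookrightarrow P_{\vp_q}(G/T)$ induces a torsor map on section classes over the lattice inclusion $\pi_2(L/T)=\Q_{R_{y_0}}\hookrightarrow\Q=\pi_2(G/T)$; the identification of these $\pi_2$'s with the claimed sublattice follows by comparing the long exact sequences of $T\hookrightarrow L\to L/T$ and $T\hookrightarrow G\to G/T$. It then suffices to pin down $\PP_{\psi_{A'}}$ on one distinguished element. For $\tilde q=\pi_{\t_{y_0}^\perp}(q)$, the class $A^{L/T}_{\pi_{\t_{y_0}^\perp}(q)}$ is represented by $[\bar\vp_q,\, s_{\bar\vp_q,u_\pm\equiv x_0^{L/T}}]$ (using $\vp_{\pi_{\t_{y_0}^\perp}(q)}=\bar\vp_q$ in $L^{\ad}$); under the embedding, the basepoint $x_0^{L/T}\in L/T$ is identified with the basepoint $x_0\in G/T$, and the gluing by $\bar\vp_q$ on $L/T$ matches the gluing by $\vp_q$ on $G/T$ (both fixing $x_0$), so this maps to the section $s_{\vp_q,u_\pm\equiv x_0}$ representing $A^{G/T}_q$. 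Torsor compatibility then gives, for $\tilde q=\pi_{\t_{y_0}^\perp}(q)+r$ with $r\in\Q_{R_{y_0}}\subset\Q$,
\[ \PP_{\psi_{A'}}(A^{L/T}_{\tilde q}) = A^{G/T}_{q+r} = A^{G/T}_{\tilde q-\pi_{\t_{y_0}^\perp}(q)+q}. \]

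The main obstacle I anticipate is verifying that the presentation of $\Nov{L/T}{\O L^{\ad}}$ as $\{A^{L/T}_{\tilde q}\}_{\tilde q\in P^\vee_{R_{y_0}}}$ supplied by Lemma \ref{GPNovforOO} is natural under the embedding into $P_{\vp_q}(G/T)$. The representative of $A^{L/T}_{\tilde q}$ uses the loop $\vp_{\tilde q}\in\O L^{\ad}$ rather than $\bar\vp_q$, so re-interpreting it as a section of $P_{\bar\vp_q}(L/T)$ requires a chosen homotopy from $\vp_{\tilde q}$ to $\bar\vp_q$ in $\O L^{\ad}$, and this homotopy alters the section class by exactly $r=\tilde q-\pi_{\t_{y_0}^\perp}(q)\in\Q_{R_{y_0}}$. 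One must check that this homotopy is compatible with the constant (up to homotopy) path at $\vp_q$ in $\O G$, so that the base section $A^{G/T}_q$ is shifted by precisely $r\in\Q$ as claimed; this is a bookkeeping argument that uses the naturality of the gluing construction $\pi_2^{section}(P_{\vp_1}(X))\times \pi_2^{section}(P_{\vp_2}(X))\to \pi_2^{section}(P_{\vp_1\circ\vp_2}(X))$ under $L^{\ad}\hookrightarrow\Ham(G/T)$ pulled back from $L\subset G$.
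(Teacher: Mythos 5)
Your proposal is correct and takes essentially the same route as the paper: identify the canonical representative $(\vp_q,s_{\vp_q,u_\pm\equiv y_0})$, observe that the constant disks force the clutching map to be the pointwise action of $\vp_q$ on the fiber $L/T$ (factoring through $L\to L^{\ad}$), read off $\psi_{A'}$ from $\pi_1(L^{\ad})$, and then pin down the target of $\PP_{\psi_{A'}}$ by its compatibility with the $\Q_{R_{y_0}}$-torsor structure. The paper phrases the last step slightly differently---it characterizes the exponent $p$ of the image $A^{G/T}_{x,p}$ as the unique element projecting to $q$ under $\Q\to\Q/\Q_{R_{y_0}}$ and to $\tilde q$ under $\pi_{\t_{y_0}^\perp}$---but this is the same uniqueness argument you invoke via equivariance plus one explicit computation at $\tilde q=\pi_{\t_{y_0}^\perp}(q)$.

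One small slip: you write $\pi_2(G/T)=\Q$, whereas the long exact sequence of $T\hookrightarrow G\to G/T$ gives $\pi_2(G/T)\simeq\ker(\pi_1(T)\to\pi_1(G))=\Q_0$, the coroot lattice. This does not affect the argument, since the torsor action on section classes of a fixed $P_{\vp_q}(G/T)$ is by $\pi_2(G/T)=\Q_0$ and the subgroup you actually use is $\Q_{R_{y_0}}\subset\Q_0$; the lattice $\Q$ only appears once one allows the loop $\vp$ to vary over all of $\O G$.
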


\noindent Proposition \ref{peterG/Lintro} is necessary for the following key argument which expresses $\See{G/L}{\O G}$ in terms of $\See{G/T}{\O G}$. It is a version of Leray spectral sequences in Floer theory (for example, the work of Oancea \cite{SSFloer1}). We show that, by choosing perturbation data for $P_{\vp}(G/T)$ and $P_{\vp}(G/L)$ suitably, the signed count of \holo sections of $P_{\vp}(G/T)$ yielding a term in $\See{G/T}{\O G}$ is equal to the signed count for a term in $\See{G/L}{\O G}$ times the signed count for a term in $\See{L/T}{\O L^{\ad}}$. The key point is that the signed count for $\See{L/T}{\O L^{\ad}}$ is $\pm 1$ if we make specific choices of section classes in $\Nov{G/T}{\O G}$ and $\Nov{G/L}{\O G}$ which is done with help of Proposition \ref{peterG/Lintro}.

The outcome is the following. Recall there are also Schubert classes $\s_{wW_{y_0}}$ for $G/L$ indexed by the cosets $wW_{y_0}$ in $W/W_{y_0}$ where $W_{y_0}$ is the Weyl group of $L$. Fix $a\in \mr{\wc}_0$ which is sufficiently close to the origin. Define
\[ \deg^{L/T}(q):= \sum_{\substack{\a\in R_{w_q(y_0)}\\ \a(w_q(x_0))>0}} \lfloor \a(q+a)\rfloor.\]

\begin{theorem}(=Theorem \ref{computeG/L}) \label{computeG/Lintro} Let $q\in \Q$.
\begin{enumerate}[(a)]
\item If $\deg^{L/T}(q)=0$, then
\[ \See{G/L}{\O G}(x_q) = \pm \s_{w_qW_{y_0}} \TT^{A^{G/L}_{w_q^{-1}(q)+\Q_{R_{y_0}}}}+\cdots \]
where $\cdots$ is a finite sum of terms which do not cancel with the first term.
\item If $\deg^{L/T}(q)=0$ and $w_q=\ii$, then
\[ \See{G/L}{\O G}(x_q) = \pm  \TT^{A^{G/L}_{q+\Q_{R_{y_0}}}}.\]
\end{enumerate}
\end{theorem}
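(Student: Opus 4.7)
The plan is to deduce Theorem~\ref{computeG/L} from Theorem~\ref{computeG/Tintro} by running a Leray-type spectral sequence for the $G$-equivariant Hamiltonian fibration $\pi:G/T\to G/L$ (following the philosophy of Oancea's spectral sequence for fibered Floer theories). The geometric input is that $\pi$ induces a fiberwise projection $\pi_\vp:P_\vp(G/T)\to P_\vp(G/L)$ between the Hamiltonian fibrations over $S^2$, and by Proposition~\ref{peterG/Lintro} the preimage of any section of $P_\vp(G/L)$ is identified (up to homotopy) with a fibration of the form $P_{\psi_A}(L/T)$ for an explicit $\psi_A\in\pi_0(\O L^{\ad})$. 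I would then arrange fibered perturbation data so that \holo sections of $P_\vp(G/T)$ project to \holo sections of $P_\vp(G/L)$ and simultaneously descend, as sections of the restricted $L/T$-bundle, to \holo sections of $P_{\psi_A}(L/T)$, and factorize the signed counts defining $\See{G/T}{\O G}$ into the product of counts defining $\See{G/L}{\O G}$ and $\See{L/T}{\O L^{\ad}}$.

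Concretely, I would take $G$-invariant Kähler data on $G/T$ and $G/L$ making $\pi$ a holomorphic submersion, and fibered almost complex structures and Hamiltonian perturbations on $P_\vp(G/T)$ and $P_\vp(G/L)$ compatible with $\pi_\vp$. Assuming simultaneous regularity for base, fiber and total moduli (attained by further fibered perturbations) and consistent orientations, standard fiber-product arguments for regular moduli give
\[
\#\M(P_\vp(G/T),B) \;=\; \#\M(P_\vp(G/L),\pi_\ast B)\cdot\#\M(P_{\psi_A}(L/T),B_{\mathrm{fib}}),
\]
with $\pi_\ast B$ and $B_{\mathrm{fib}}$ determined by $B$ via Proposition~\ref{peterG/Lintro}. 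Tracking the leading term $\pm\s_{w_q}\TT^{A^{G/T}_{w_q^{-1}(q)}}$ of $\See{G/T}{\O G}(x_q)$ through this factorization, the Schubert class pushes to $\s_{w_qW_{y_0}}$ (since $w_q$, by its definition via the chamber the segment from $q$ first hits, is the minimal-length representative of its $W_{y_0}$-coset) and the Novikov class pushes to $A^{G/L}_{w_q^{-1}(q)+\Q_{R_{y_0}}}$. The hypothesis $\deg^{L/T}(q)=0$ is tailored so that, for the distinguished fiber section class $A^{L/T}_{\tilde q}$ extracted from Proposition~\ref{peterG/Lintro}, the fiber count $\#\M(P_{\psi_A}(L/T),A^{L/T}_{\tilde q})$ equals $\pm1$; this is itself verified by applying Theorem~\ref{computeG/Tintro} to the $L^{\ad}$-coadjoint orbit $L/T$, where the vanishing of the $c_1$-defect measured by $\deg^{L/T}(q)$ pins down a unique rigid \holo section contributing with sign $\pm1$.

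Combining these, part (a) follows: the leading term $\pm\s_{w_qW_{y_0}}\TT^{A^{G/L}_{w_q^{-1}(q)+\Q_{R_{y_0}}}}$ appears, and the remaining contributions—arising from the lower-order Schubert terms of $\See{G/T}{\O G}(x_q)$—involve Schubert indices of strictly smaller $\ell'$-length, so they cannot cancel the leading term. For part (b), since $w_q=\ii$ is automatically $\ell'$-minimal, Theorem~\ref{computeG/Tintro} forces $\See{G/T}{\O G}(x_q)=\pm\s_\ii\TT^{A^{G/T}_q}$ with no lower Schubert corrections, and the factorization then yields exactly $\pm\TT^{A^{G/L}_{q+\Q_{R_{y_0}}}}$ (the Schubert class $\s_{\ii W_{y_0}}$ being the unit of $QH^*(G/L)$). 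The main technical obstacle I anticipate is implementing the Leray factorization rigorously in the monotone, non-exact, $S^2$-family setting: achieving simultaneous transversality for the three moduli problems while preserving the fibered structure, handling orientations consistently between base, fiber and total space, and confirming by an explicit sub-computation that the fiber class singled out by $\deg^{L/T}(q)=0$ truly contributes $\pm1$.
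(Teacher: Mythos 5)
Your proposal follows essentially the same route as the paper: deduce Theorem~\ref{computeG/L} from Theorem~\ref{computeG/T} by applying the Leray-type factorization (Theorem~\ref{GPmain}) to the Hamiltonian fibration $\pi:G/T\to G/L$ with $\pi$-compatible perturbation data, tracking section and Novikov classes via Proposition~\ref{peterG/L}, and using the fact that $\deg^{L/T}(q)=0$ makes the $L/T$-fiber count equal $\pm1$ (by invoking the $G/T$-type computation for the $L^{\ad}$-coadjoint orbit $L/T$); this is exactly the content of the paper's Proposition~\ref{compare} combined with the dimension count $\dim\G+\dim\UU_y^{G/L}+2c_1^v=2\deg^{L/T}(q)$. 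The only loose phrasing is the suggestion that the $G/T$-Schubert class ``pushes to'' $\s_{w_qW_{y_0}}$ — the paper instead compares coefficients paired against the compatible unstable manifolds $\UU^{G/T}_{w_q(x_0)}$ and $\UU^{G/L}_{w_q(y_0)}$ — but this does not affect the correctness of the argument.
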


\begin{definition}\label{peterelementdef} Any element $q\in\Q$ with $\deg^{L/T}(q)=0$ is called a \textit{Peterson element}.
\end{definition}

\begin{remark}
The term ``Peterson element'' comes from Peterson's conjectural formula \cite{Peter}, proved by Woodward \cite{Wformula}, relating the quantum cohomology of $G/T$ to that of $G/L$. An alternative proof based on the techniques developed in this paper is given in Appendix \ref{PWproof}. Peterson also conjectured an explicit isomorphism (after localization) between $QH^*(G/T;\ZZ[\Nov{G/T}{\O G}])$ and the Pontryagin ring $H_{-*}(\O G;\ZZ)$. This conjecture was proved by Lam-Shimozono \cite{LS}. In the same paper, they also obtained a similar result for general $G/L$\footnote{In this case, the source is replaced with the quotient of $H_{-*}(\O G;\ZZ)$ by the ideal generated additively by $x_q$ with $\deg^{L/T}(q)\ne 0$.} using Peterson-Woodward's formula. Theorem \ref{computeG/Lintro} should be considered parallel to their argument of obtaining the case of $G/L$ from that of $G/T$.
\end{remark}
\end{mystepf}

We now prove Theorem \ref{A} and Theorem \ref{B}, assuming the results stated in our strategy.
\begin{myproof}{Theorem}{\ref{A}} Since $\pi_1(G)$ is finite, we may assume $G$ is simply connected without affecting the result. Denote by $\mathcal{K}$ the universal covering group of the identity component of $\O\HH$. Since $\pi_0(\O G)\simeq \pi_1(\O G)=0$, the group homomorphism $\O G\ra \O\HH$ has a lift $\O G\ra\mathcal{K}$. By the same reason, it suffices to prove the result for the induced map $\pi_*(\O G)\otimes \QQ\ra\pi_*(\mathcal{K})\otimes \QQ$. We have the commutative diagram
\begin{equation}\label{pfA}
\begin{tikzcd}
 \pi_{-*}(\O G)\otimes \QQ \arrow{r}   \arrow{d}[left]{hur_{\O G}\otimes \QQ} &[1.5em]  \pi_{-*}(\mathcal{K})\otimes \QQ\arrow{d}{hur_{\mathcal{K}}\otimes \QQ}   \\
  H_{-*}(\O G;\QQ) \arrow{r}   \arrow{d}[left]{\See{G/L}{\O G}} &[1.5em]  H_{-*}(\mathcal{K};\QQ) \arrow{d}{\See{G/L}{\mathcal{K}}}  \\
QH^*(G/L;\QQ[\Nov{G/L}{\O G}])   \arrow{r}  &[1.5em] QH^*(G/L;\QQ[\Nov{G/L}{\mathcal{K}}])
\end{tikzcd}
\end{equation}
where the horizontal arrows are some natural homomorphisms and the vertical arrows in the upper square are the rational Hurewicz maps. The result then follows from
\begin{enumerate}[(i)]
\item $H_*(\O G;\QQ)$ is the symmetric algebra of $\pi_*(\O G)\otimes\QQ$ with canonical map $hur_{\O G}\otimes \QQ$.
\item Theorem \ref{computeG/Lintro}(b) which implies that the transcendence degree of the image of $\See{G/L}{\O G}$ is at least $\rk(G)-\rk(L^{\ad})$ where $\rk$ denotes the rank of a compact Lie group.
\item The bottom horizontal arrow of \eqref{pfA} is injective. (This is the reason why we do not consider $\O\HH$ but $\mathcal{K}$.)
\item The dimension of $\pi_*(\O G)\otimes\QQ$ is equal to $\rk(G)$.
\end{enumerate}
\end{myproof}

\begin{myproof}{Theorem}{\ref{B}} Let $q\in\Q$. Suppose $\deg^{L/T}(q)=0$. By Theorem \ref{computeG/Lintro}(a), $\See{G/L}{\O G}(x_q)$ contains a non-zero term $\pm\s_{w_qW_{y_0}}\TT^{A'}$ where $A':=A^{G/L}_{w_q^{-1}(q)+\Q_{R_{y_0}}}$. Let $f:\G\ra \O\HH$ be a smooth cycle representing $x_q$. By a standard argument (see \cite{S_JDG}), we have
\[ \max_{\G}~(L^+\circ f)\geqslant - \CCC(A')\]
where $\CCC$ is the coupling class\footnote{
The coupling class $\CCC\in H^2(P_{\vp}(X);\RR)$ of any \Ham fibration $P_{\vp}(X)$ is characterized by (1) $\CCC$ restricts to $[\w_X]$ in the fibers; and (2) $\pi_*\CCC^{\frac{1}{2}\dim X+1}=0$. The existence and uniqueness of $\CCC$ are proved in \cite{GScf, MScf}.} which acts naturally on $\Nov{G/L}{}$. An straightforward computation gives
\[ \CCC(A') = -\langle w_q^{-1}(q),y_0 \rangle = - \langle q,w_q(y_0) \rangle.\]
To determine $C_q:=\max_{G/L}\langle q,-\rangle$, observe that the function $\langle q,-\rangle $ is Bott-Morse and its critical values can be determined by looking at its restriction to the intersection $\t\cap G/L=W\cdot y_0$. It is then easy to deduce that the maximum value is $\langle q,w_q(y_0) \rangle$. This proves the first part of the theorem.

It remains to show that the Bott-Samelson cycle $\BS_q$ is a minimizer of $L^+$. By definition, the image of $\BS_q$ consists of broken geodesics in $G$ each obtained by applying the $G$-action to different portions (with respect to a fixed partition) of the loop $\vp_q:t\mapsto \exp(tq)$, without tearing off the curve. Such an operation does not change the value of $L^+$. It follows that $L^+\circ \BS_q$ is constantly equal to $L^+(\vp_q)$. Notice that the normalized generating \Ham for $\vp_q$ is given by $H_t\equiv \langle q,-\rangle$, and we have seen that its maximum value is equal to $\langle q,w_q(y_0) \rangle$ so that $L^+(\vp_q)=\langle q,w_q(y_0) \rangle$.
\end{myproof}

\vspace{.2cm}
Finally, we point out that Theorem \ref{A} and Theorem \ref{B} hold for non-monotone coadjoint orbits as long as one is able to define $\Se{\O Ham(X,\w)}$ for these spaces, most likely by virtual theory. Indeed $\See{G/L}{\O G}$ is well-defined, without applying virtual theory, even for non-monotone $G/L$. This relies on the existence of a $G$-equivariant integrable almost complex structure on $G/L$ which depends only on the topology of $G/L$. It is compatible with any KSS forms and satisfies that all moduli spaces of genus-zero stable curves are regular and that the evaluation maps at any points of the domain curves are submersions. Using the fact that $\See{G/L}{\O G}$ is invariant under deformation of symplectic forms, we see that it is independent of which KSS form used. Since Theorem \ref{computeG/Lintro} holds for a particular form, namely the monotone one, we conclude that the same theorem holds for all coadjoint orbits.
\section*{Organization of the paper}
We prove Theorem \ref{MC=Se} in Section \ref{proveMC=Se}, after recalling the definition of Savelyev-Seidel and Ma'u-Wehrheim-Woodward morphisms in Section \ref{SSmor} and Section \ref{QFTsection} respectively. In Section \ref{OG}, we recall the necessary materials from Lie theory including coadjoint orbits and Bott-Samelson cycles, and prove Theorem \ref{computeG/Tintro} and \ref{computeG/Lintro}. In Appendix \ref{wrapped}, we give a brief summary of the wrapped Floer theory on cotangent bundles. In Appendix \ref{gluing}, we recall a gluing result of Wehrheim-Woodward in Appendix \ref{gluing}. In Appendix \ref{PWproof}, we prove Peterson-Woodward's comparison formula.
\section*{Acknowledgements}
I would like to thank my supervisor Conan Leung for his encouragement and continuous support; Changzheng Li for bringing my attention to his paper with Yongdong Huang and a paper of Peter Magyar; Yasha Savelyev for answering my questions about his papers and invaluable comments on the draft of this paper; and Mohammed Abouzaid, Cheuk Yu Mak and Jarek K\k{e}dra for useful discussions.
\section{Savelyev-Seidel morphisms}\label{SSmor}
\subsection{Hamiltonian fibrations and their sections}\label{Hamfibsection}
Let $(X,\w_X)$ be a compact monotone symplectic manifold. For any loop $\vp\in\Lo Ham(X,\w_X)$, there is an associated \Ham fibration $P_{\vp}(X)$ over $S^2$ with fibers $(X,\w_X)$ defined by
\[ P_{\vp}(X):= (D_-\times X)\cup (D_+\cup X)/(e^{-i\tt},x)\sim (\eit,\vp(\eit)\cdot x)\]
where $D_{\pm}$ are two copies of the unit disk which are glued to form the 2-sphere $S^2$. A typical way of constructing sections of $P_{\vp}(X)$ is as follows: Given a pair $u_{\pm}:D\ra X$ satisfying
\begin{equation}\label{pair}
u_+(\eit)=\vp(\eit)\cdot u_-(\eit)\quad \text{for any }\tt,
\end{equation}
define $s_{\vp,u_{\pm}}$ to be the section of $P_{\vp}(X)$ by
\[ s_{\vp,u_{\pm}}(z):=\left\{
\begin{array}{rl}
u_-(\overline{z})& z\in D_-\subset S^2\\
u_+(z)&z\in D_+\subset S^2
\end{array}
\right. .\]
This gives a bijective correspondence between the set of sections of $P_{\vp}(X)$ and the set of pairs $u_{\pm}$ satisfying \eqref{pair}.

\begin{definition} \label{sectionclassdef} Let $\vp:\PP\ra \Lo Ham(X,\w_X)$ be a group homomorphism where $\PP$ is a Fr\'echet Lie group. Define $\Nov{X}{\PP}$ to be the set of homotopy classes of pairs $(p,u)$ consisting of $p\in \PP$ and a section $u$ of $P_{\vp_p}(X)$. It is naturally endowed with a group structure defined by the standard gluing construction
\[ \pi_2^{section}(P_{\vp_1}(X)) \times \pi_2^{section}(P_{\vp_2}(X)) \ra \pi_2^{section}(P_{\vp_1\circ\vp_2}(X))\]
for any $\vp_1,\vp_2\in \Lo Ham(X,\w_X)$ where $\pi_2^{section}$ denotes the set of section classes and $\circ$ the pointwise composition. See \cite{S_QCC} for more detail.
\end{definition}
\subsection{The definition} We recall the definition of Savelyev-Seidel morphism
\[\Se{\PP}: H_{-*}^{cycle}(\PP;\ZZ)\ra QH^*(X;\ZZ[\Nov{X}{\PP}])\]
where the domain is the subring generated by homology classes which are represented by smooth cycles. Let $\ppp:\G\ra \PP$ be a smooth cycle, giving rise to a smooth family $\{P_{\vp_{\ppp(\g)}}(X)\}_{\g\in \G}$ of \Ham fibrations over $S^2$ with fibers $(X,\w_X)$. Denote by $\I_{\G}(X)$ the space of families $I=\{I^{\g}\}_{\g\in\G}$ of $\w_X$-compatible almost complex structures $I^{\g}$ on $X$, and by $\J_{\ppp}(X)$ the space of families $J=\{J^{\g,z}\}_{(\g,z)\in \G\times S^2}$ of $\w_X$-compatible almost complex structures on the fibers of $\{P_{\vp_{\ppp(\g)}}(X)\}_{\g\in \G}$. For any $J\in\J_{\ppp}(X)$ and any families $\HH=\{\HH^{\g}\}_{\g\in\G}$ of \Ham connections on $\{P_{\vp_{\ppp(\g)}}(X)\}$, denote by $J_{\HH}=\{J^{\g}_{\HH}\}_{\g\in\G}$ the family of almost complex structures on $\{P_{\vp_{\ppp(\g)}}(X)\}$ characterized by the conditions
\begin{itemize}
\item each $J^{\g}_{\HH}$ restricts to $J^{\g,z}$ on each fiber;
\item the horizontal distributions $\HH^{\g}$ are preserved by $J^{\g}_{\HH}$; and
\item the projection $(P_{\vp_{\ppp(\g)}}(X), J^{\g}_{\HH})\ra (S^2,j)$ is holomorphic for any $\g\in \G$, where $j$ is the standard complex structure on $S^2$.
\end{itemize}

\begin{definition} Given $A\in \pi_2(X)$ and $I=\{I^{\g}\}\in \I_{\G}(X)$, define $\M^{simple}(A,\G,I)$ to be the moduli space of pairs $(\g,u)$ with $\g\in \G$ and $u:S^2\ra X$ satisfying
\[ \left\{
\begin{array}{ll}
u\text{ is simple and }(j,I^{\g})\text{-holomorphic; and}\\
u_*[S^2]=A.
\end{array}
\right.\]
\end{definition}
\noindent Notice that we do not take the quotient by the automorphism group of $(S^2,j)$. Fix two distinct marked points on $S^2$, call them ``in'' and ``out'', and denote by
\[ev_{in}, ev_{out} :\M^{simple}(A,\G,I)\ra X\]
the evaluation maps at these two marked points.

\begin{definition} Given $A_1,\ldots,A_k\in \pi_2(X)$, define
\begin{align*}
& \M^{chain}(A_1,\ldots,A_k,\G,I)\\
\subseteq~ & \D_{\G}\times_{\G^k}\left( \M^{simple}(A_1,\G,I)\times_{(ev_{out},ev_{in})} \cdots\times_{(ev_{out},ev_{in})}\M^{simple}(A_k,\G,I) \right)
\end{align*}
to be the open subset consisting of chains of simple holomorphic spheres in $X$ any two components of which have distinct images, where $\D_{\G}$ is the small diagonal of $\G^k$.
\end{definition}
\noindent We denote by
\[ev_{in},ev_{out}:\M^{chain}(A_1,\ldots,A_k,\G,I)\ra X\]
the evaluation maps at the marked point ``in'' of the component in $\M^{simple}(A_1,\G,I)$ and the marked point ``out'' of the component in $\M^{simple}(A_k,\G,I)$ respectively.

\begin{definition} Given $A\in \Nov{X}{\PP}$, $J\in\J_{p}(X)$ and families $\HH=\{\HH^{\g}\}_{\g\in\G}$ of \Ham connections on $\{P_{\vp_{\ppp(\g)}}(X)\}$, define $\M^{section}(A,\ppp,\HH,J)$ to be the moduli space of pairs $(\g,u)$ where $\g\in\G$ and $u:S^2\ra P_{\vp_{\ppp(\g)}}(X)$ satisfying
\[ \left\{
\begin{array}{ll}
u\text{ is a section and is }(j,J^{\g}_{\HH})\text{-holomorphic; and}\\
A=[\vp_{\ppp(\g)},u].
\end{array}
\right.\]
\end{definition}
\noindent We denote by
\[ev_0:\M^{section}(A,\ppp,\HH,J)\ra X \]
the map sending each $(\g,u)$ to $u(0)$ ($0\in S^2$) where the target $X$ is the fiber of $P_{\vp_{\ppp(\g)}}(X)$ over $0$.

Let $h:N\ra X$ be a pseudocycle and $(\HH,J)$ be given.
\begin{condition} \label{GPdefSeidel} For any $A_0\in\Nov{X}{\PP}$ and $A_1,\ldots,A_k\in\pi_2(X)$ satisfying
\[\dim\G +\dim N + 2c_1^v(A_0)+2\sum_{i=1}^k c_1(A_i)\leqslant 0\]
(where $c_1^v$ is the vertical first chern class), the fiber product
\[ \D_{\G}\times_{\G^2} \left( \M^{section}(A_0,\ppp,\HH,J)\times_{(ev_0,ev_{in})}\M^{chain}(A_1,\ldots,A_k,\G,\{J^{\g,0}\})\times_{(ev_{out},h)}N\right),\]
as well as the similar fiber product with $h$ replaced by a given smooth map (whose domain has dimension at most $\dim N-2$) covering the limit set\footnote{It is a terminology about pseudocycles. See \cite{MS} for the definition.} of $h$, is regular.
\end{condition}

\noindent The following proposition is standard.
\begin{proposition} For generic $(\HH,J)$, Condition \ref{GPdefSeidel} is satisfied. Moreover, the signed count of the fiber product $\M^{section}(A,\ppp,\HH,J)\times_{(ev_0,h)}N$ for any $A\in \Nov{X}{\PP}$ satisfying $\dim\G+\dim N+2c_1^v(A)=0$ is well-defined for and independent of any $(\HH,J)$ satisfying Condition \ref{GPdefSeidel}. \hfill$\square$
\end{proposition}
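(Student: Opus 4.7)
The statement combines a standard transversality claim with a standard well-posedness/invariance claim, and my plan is to handle each in turn via the by-now-classical virtual-dimension-plus-Sard-Smale package, reinforced by the monotonicity hypothesis on $(X,\w_X)$.

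\textbf{Transversality (generic $(\HH,J)$ satisfy Condition \ref{GPdefSeidel}).} First I would upgrade $\J_{\ppp}(X)$ and the space of families of Hamiltonian connections to a separable Banach manifold of $C^{\ell}$ or $C^{\ell,\delta}_{\varepsilon}$ perturbations so that the universal moduli spaces are the zero sets of smooth Fredholm sections, in the standard McDuff--Salamon setup. For the simple moduli spaces $\M^{simple}(A_i,\G,\{J^{\g,0}\})$ parameterized by $\g\in\G$, somewhere-injectivity of simple curves lets one write the linearization surjectively by varying $J^{\g,0}$ in a neighborhood of an injective point, so the universal moduli space is cut out transversally; Sard--Smale then gives a Baire-generic subset of $J$'s achieving simultaneous transversality for all classes $A_i$ with $\omega_X(A_i)$ below any fixed cap. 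For the section moduli $\M^{section}(A_0,\ppp,\HH,J)$, I would perturb the family of Hamiltonian connections $\{\HH^{\g}\}$: a holomorphic section is automatically somewhere injective as a map into $P_{\vp}(X)$ (it meets each fiber transversally in exactly one point), so the standard argument giving transversality for Seidel-type moduli spaces applies verbatim, yielding a Baire subset of $(\HH,J)$ for which every $\M^{section}(A_0,\ppp,\HH,J)$ is a smooth manifold of the expected dimension. The fiber-product constraints in Condition \ref{GPdefSeidel} are cut out by the evaluation maps at the marked points ``in'', ``out'' and $0$; since these evaluation maps are submersive after the generic perturbation (again by varying $J$ in a neighborhood of the marked point), a further Sard--Smale step applied to the diagonal and to $h$ (and to the pseudocycle covering its limit set) produces the required regularity once the virtual dimension is $\leq 0$. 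Intersecting countably many Baire sets over the finitely many $(A_0,A_1,\dots,A_k)$ allowed by the monotonicity energy bound produces the generic $(\HH,J)$.

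\textbf{Monotonicity and finiteness of the count.} For $(\HH,J)$ satisfying Condition \ref{GPdefSeidel} and a class $A\in\Nov{X}{\PP}$ with virtual dimension zero, I would run Gromov compactness on $\M^{section}(A,\ppp,\HH,J)\times_{(ev_0,h)}N$: any Gromov limit is a section in some class $A_0$ together with a chain of bubbles of classes $A_1,\dots,A_k$ in fibers over marked points of $S^2$, one of which is forced to be the basepoint $0$ because the fiber-product constraint with $h$ is imposed there. Monotonicity of $(X,\w_X)$, together with the coupling-class relation controlling $c_1^v$ on section classes, bounds $\omega_X(A_i)$ by the total energy of $A$ and thus bounds $c_1(A_i)$ and $c_1^v(A_0)$. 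This constrains the limit to lie in a fiber product of the type appearing in Condition \ref{GPdefSeidel}, which by construction is empty or negative-dimensional unless $k=0$ and no bubbling occurs. Hence the zero-dimensional cut-down moduli space is already compact, so the signed count is finite and well-defined; orientations are produced in the standard way from coherent orientations on determinant lines of Cauchy--Riemann operators over sections.

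\textbf{Independence of $(\HH,J)$.} Given two generic choices $(\HH^0,J^0)$ and $(\HH^1,J^1)$, I would interpolate by a smooth path $(\HH^s,J^s)_{s\in[0,1]}$ and form the parameterized moduli space $\bigsqcup_s \M^{section}(A,\ppp,\HH^s,J^s)\times_{(ev_0,h)}N$. The same Baire argument, applied now to the space of paths, gives transversality with a parameterized version of Condition \ref{GPdefSeidel}, so the parameterized cut-down moduli is a smooth $1$-manifold. Gromov compactness and monotonicity, exactly as above, show that its only boundary comes from the two endpoints at $s=0,1$; all boundary strata coming from bubbling are excluded by the parameterized regularity condition (they are either empty or of codimension $\geq 2$). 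Thus the two signed counts are cobordant, and hence equal.

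\textbf{Main obstacle.} The delicate point is the combined transversality over all chain configurations appearing in Condition \ref{GPdefSeidel}: one must achieve regularity not only for each individual moduli space $\M^{simple}(A_i,\G,\{J^{\g,0}\})$ and $\M^{section}(A_0,\ppp,\HH,J)$ but for the fiber product cut out by evaluation maps at both marked points of each bubble and at the node with the section, while only varying $J^{\g,z}$ in the fibers (so that $J_{\HH}^{\g}$ remains an almost complex structure of the form dictated by the connection). Handling this requires exploiting the $\G$- and $z$-dependence of $J$ to decouple the constraints near injective points of each component, which is the familiar technical heart of Seidel-type constructions; once this is in place, the rest is routine.
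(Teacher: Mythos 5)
Your proposal is a correct reconstruction of the standard Sard--Smale transversality, monotonicity-based compactness, and parametrized-cobordism argument; the paper simply asserts this proposition as ``standard'' and supplies no proof, so your outline matches exactly what is being implicitly referenced.
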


\noindent Choose a pair $(\HH,J)$ satisfying Condition \ref{GPdefSeidel}. Let $h:N\ra X$ be a pseudocycle.
\begin{definition} Define
\[\langle \Se{\PP}([\ppp]),[h]\rangle := \sum_{ \substack{A\in\Nov{X}{\PP} \\ \dim\G+\dim N+2c_1^v(A)=0 } } \langle \Se{\PP}([\ppp]),[h]\rangle_A ~\TT^A \]
where $\langle -,-\rangle$ is the natural pairing $H^*(X)\otimes H_*(X)\ra \ZZ$, $T$ is the Novikov variable of $\ZZ[\Nov{X}{\PP}]$ and
\[\langle \Se{\PP}([\ppp]),[h]\rangle_A:=\#\left(\M^{section}(A,\ppp,\HH,J) \times_{(ev_0,h)}N \right).\]
This defines an element $\Se{\PP}([\ppp])\in QH^*(X;\ZZ[\Nov{X}{\PP}])$. By standard cobordism arguments, $\Se{\PP}([\ppp])$ is independent of which smooth cycle $\ppp$ representing the same homology class of $H_{-*}(\PP;\ZZ)$.
\end{definition}
\subsection{A Leray-type spectral sequence}\label{Leray}
Let $F\hookrightarrow X\xrightarrow{\pi} Y$ be a fiber bundle where $F$, $X$ and $Y$ are compact simply connected smooth manifolds. Assume there exist symplectic forms $\w_X$ and $\w_Y$ on $X$ and $Y$ respectively such that $(X,\w_X)$ and $(Y,\w_Y)$ are monotone, and every fiber of $\pi$ is a symplectic submanifold of $(X,\w_X)$. Then $\pi:(X,\w_X)\ra (Y,\w_Y)$ is a Hamiltonian fibration. Denote by $\w_F$ the induced symplectic form on any particular fiber $F$.

Let $G$ be a compact connected Lie group. Suppose there are \Ham $G$-actions on $X$ and $Y$ respectively such that the projection $\pi:X\ra Y$ is $G$-equivariant.
Since $\pi$ is $G$-equivariant, it induces, for any $\vp\in\O G$, a bundle map $\pi_{\vp}:P_{\vp}(X)\ra P_{\vp}(Y)$ which restricts to $\pi$ on every fiber, and hence a map $\pi_*:\Nov{X}{\O G}\ra \Nov{Y}{\O G}$.

Let $A\in \Nov{Y}{\O G}$. Choose a representative $(\vp_A,u_A)$ of $A$ where $\vp_A\in \O G$ and $u_A$ is a smooth section of $P_{\vp_A}(Y)$. Then the pre-image $\pi_{\vp_A}^{-1}(\im(u_A))$ is a \Ham fibration over $\im(u_A)\simeq S^2$ with fibers $(F,\w_F)$\footnote{The \Ham connection is induced by a connection 1-form on $P_{\vp_A}(X)$.} so it is isomorphic to $P_{\psi_A}(F)$ for some $\psi_A\in \O Ham(F,\w_F)$. By an abuse of notation, we denote by the same symbol the connected component of $ \O Ham(F,\w_F)$ which contains the loop $\psi_A$. Moreover, every section of $P_{\psi_A}(F)$ induces a section of $P_{\vp_A}(X)$ via the inclusion. Thus we get a map
\[ \PP_A:\pi_2^{section}(P_{\psi_A}(F))\ra \pi_2^{section}(P_{\vp_A}(X)).\]

\begin{lemma} \label{chernclassformula} For any $A\in \Nov{Y}{\O G}$ and $B\in \pi_2^{section}(P_{\psi_A}(F))$, we have
\[c_1^v(\PP_A(B))=c_1^v(A)+c_1^v(B). \]
\end{lemma}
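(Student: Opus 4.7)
The plan is to pull back a natural short exact sequence of complex vertical tangent bundles along the sections $\PP_A(B)$, $B$, and $u_A$, and then invoke the Whitney sum formula. First, I would set up the bundles on the total spaces. On the sub-fibration $P_{\psi_A}(F)\simeq \pi_{\vp_A}^{-1}(\im(u_A))\subset P_{\vp_A}(X)$, the restriction of the vertical tangent bundle $T^vP_{\vp_A}(X)$ (tangent along the $X$-fibers) contains the vertical tangent bundle $T^vP_{\psi_A}(F)$ of the sub-fibration (tangent along the $F$-subfibers) as a complex subbundle, and the quotient is canonically isomorphic, via the fiberwise differential $d\pi$, to the pullback by $\pi_{\vp_A}$ of the vertical tangent bundle $T^vP_{\vp_A}(Y)$. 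This gives a short exact sequence of complex vector bundles
\[
0 \ra T^vP_{\psi_A}(F) \ra T^vP_{\vp_A}(X)\big|_{P_{\psi_A}(F)} \ra \pi_{\vp_A}^*T^vP_{\vp_A}(Y)\big|_{P_{\psi_A}(F)} \ra 0.
\]

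Next, I would pull this sequence back along the section $\PP_A(B):S^2\ra P_{\vp_A}(X)$, whose image lies in $P_{\psi_A}(F)$ by construction. The pullback of the leftmost bundle coincides with $B^*T^vP_{\psi_A}(F)$ because, under the identification $P_{\psi_A}(F)\simeq \pi_{\vp_A}^{-1}(\im(u_A))$, the section $\PP_A(B)$ corresponds to $B$; its first Chern class is $c_1^v(B)$ by definition. The composition $\pi_{\vp_A}\circ\PP_A(B):S^2\ra P_{\vp_A}(Y)$ is a section with image in $\im(u_A)$, and hence equals $u_A$; so the pullback of the rightmost bundle is $u_A^*T^vP_{\vp_A}(Y)$, whose first Chern class is $c_1^v(A)$. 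Applying $c_1$ to the pulled-back short exact sequence of complex bundles on $S^2$ and using Whitney additivity then gives the desired identity.

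The only point I expect to require genuine care, and hence the main obstacle, is the naturality of the identification $\pi_{\vp_A}^{-1}(\im(u_A))\simeq P_{\psi_A}(F)$: one has to verify that the Hamiltonian connection on the sub-fibration inherited from $P_{\vp_A}(X)$ agrees (up to homotopy, which is all that matters for Chern classes) with the one used to define $P_{\psi_A}(F)$, and that the inclusion sends the homotopy class $B$ to $\PP_A(B)$. Both are essentially tautological from the definitions of $\psi_A$ and $\PP_A$ stated just before the lemma, and a compatible almost complex structure making $F\subset X$ holomorphic exists because $\pi$ is a symplectic fibration by hypothesis. So the substantive content really is the short exact sequence together with Whitney's formula.
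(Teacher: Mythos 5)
Your proposal is correct and follows essentially the same route as the paper's proof: both hinge on the short exact sequence of vertical tangent bundles
\[
0\ra u^*T^vP_{\psi_A}(F)\ra (\iota\circ u)^*T^vP_{\vp_A}(X)\ra (\pi_{\vp_A}\circ \iota\circ u)^*T^vP_{\vp_A}(Y)\ra 0
\]
and the Whitney sum formula. The only cosmetic difference is that you first state the sequence on the total space $P_{\psi_A}(F)\subset P_{\vp_A}(X)$ and then pull back, while the paper pulls back directly along a section $u$ representing $B$.
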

\begin{proof}
Let $u:S^2\ra P_{\psi_A}(F)$ be a section representing $B$. Recall we have an inclusion $\iota: P_{\psi_A}(F)\hookrightarrow P_{\vp_A}(X)$ commuting with the projections onto $S^2$, and that $\pi_{\vp_A}\circ \iota \circ u$ is a section of $P_{\vp_A}(Y)$ representing $A$. The result follows from the short exact sequence
\[ 0\ra u^*T^vP_{\psi_A}(F)\ra (\iota\circ u)^*T^vP_{\vp_A}(X)\ra (\pi_{\vp_A}\circ \iota \circ u)^*T^v P_{\vp_A}(Y)\ra 0 \]
where $T^v$ denotes the vertical tangent bundle with respect to the fibration structure over $S^2$.
\end{proof}

Now suppose we have a smooth cycle $\vp:\G\ra \O G$, a fiber bundle $N_F\hookrightarrow N_X\ra N_Y$ and smooth cycles $h_F:N_F\ra F$, $h_X:N_X\ra X$ and $h_Y:N_Y\ra Y$ fitting into the commutative diagram
\begin{equation}\nonumber
\begin{tikzcd}
  N_F \arrow[hookrightarrow]{r}   \arrow{d}[left]{h_F} &[1em] N_X \arrow{r}   \arrow{d}[left]{h_X}    &[1em]  N_Y  \arrow{d}[left]{h_Y}  \\
  F \arrow[hookrightarrow]{r}  &[1em] X \arrow{r}{\pi}  &[1em]  Y
\end{tikzcd}.
\end{equation}
\begin{assume} \label{GPassume} There exist smooth families $I_X=\{I^{\g}_X\}_{\g\in\G}\in \I_{\G}(X)$ and $I_Y=\{I^{\g}_Y\}_{\g\in\G}\in \I_{\G}(Y)$ such that
\begin{enumerate}
\item $\pi:(X,I^{\g}_X)\ra (Y,I^{\g}_Y)$ is holomorphic for any $\g\in \G$;
\item for any $A_1,\ldots,A_k\in\pi_2(X)$, the fiber product
\[\M^{chain}(A_1,\ldots,A_k,\G,I_X)\times_{(ev_{out},h_X)}N_X\]
is regular; and
\item the similar regularity condition holds for $Y$.
\end{enumerate}
\end{assume}

\noindent The following theorem allows us to express $\Se{\O G}$ for $G/L$ in terms of $\Se{\O G}$ for $G/T$. Since it deals with more than one symplectic manifolds at the same time, we drop the subscript in $\Se{\O G}$ and replace it by the space for which the morphism is defined.
\begin{theorem} \label{GPmain} Under Assumption \ref{GPassume} and the assumption that the $G$-actions on $X$ and $Y$ are transitive, for any $A_X\in \Nov{X}{\O G}$ and $A_Y\in \Nov{Y}{\O G}$ such that $\pi_*A_X=A_Y$ and
\[\dim\G+\dim N_X+2c_1^v(A_X)= \dim\G+\dim N_Y+2c_1^v(A_Y)=0,\]
we have
\[ \langle \See{X}{\O G}([\vp]),[h_X] \rangle_{A_X} = \langle\See{Y}{\O G}([\vp] ),[h_Y]  \rangle_{A_Y}\cdot \sum_{ \substack{ B\in \pi_2^{section}(P_{\psi_{A_Y}}(F))\\ \PP_{A_Y}(B)=A_X }} \langle \See{F}{\O G}(\psi_{A_Y}),[h_F] \rangle_B  .\]
\end{theorem}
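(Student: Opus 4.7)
The plan is to realize the moduli space of sections of $P_{\vp_{\ppp(\g)}}(X)$ in class $A_X$ as a total space of a fibration over the moduli space of sections of $P_{\vp_{\ppp(\g)}}(Y)$ in class $A_Y$, with fiber modeled on moduli spaces of sections of the fiberwise Hamiltonian bundle $P_{\psi_{A_Y}}(F)$. The key technical input is to choose perturbation data on $P_{\vp_{\ppp(\g)}}(X)$ that are compatible with the projection $\pi_{\vp_{\ppp(\g)}}:P_{\vp_{\ppp(\g)}}(X)\to P_{\vp_{\ppp(\g)}}(Y)$, so that holomorphic sections of $X$ project to holomorphic sections of $Y$ and conversely lifts are counted fiberwise.

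Concretely, the first step will be to build perturbation data fiberwise over $Y$. Using the transitivity of the $G$-actions on $X$ and $Y$, pick a $G$-invariant Ehresmann connection on $\pi:X\to Y$; applying $\vp_{\ppp(\g)}$ along the clutching circle produces, for every $\g\in\G$, a smooth splitting $TP_{\vp_{\ppp(\g)}}(X)=T^vP_{\psi_{A_Y}}(F)\oplus \pi_{\vp_{\ppp(\g)}}^*TP_{\vp_{\ppp(\g)}}(Y)$ over the image of any section of $P_{\vp_{\ppp(\g)}}(Y)$. I would then choose families $(\HH_Y,J_Y)$ on $\{P_{\vp_{\ppp(\g)}}(Y)\}$ satisfying Condition \ref{GPdefSeidel} and giving a transverse count $\langle\See{Y}{\O G}([\vp]),[h_Y]\rangle_{A_Y}$, and families $(\HH_F,J_F)$ on the fiberwise bundles $\{P_{\psi_{A_Y}}(F)\}$ (parameterized by sections $v$ in class $A_Y$) giving a transverse count of $B$-sections. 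I would then lift these to $(\HH_X,J_X)$ on $\{P_{\vp_{\ppp(\g)}}(X)\}$ using the chosen splitting, so that $\pi_{\vp_{\ppp(\g)}}$ is $(J^\g_{X,\HH_X},J^\g_{Y,\HH_Y})$-holomorphic and restricts on each preimage $\pi_{\vp_{\ppp(\g)}}^{-1}(\im(v))\simeq P_{\psi_{A_Y}}(F)$ to the chosen fiberwise data.

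With this setup, any $(j,J^\g_{X,\HH_X})$-holomorphic section $u$ of $P_{\vp_{\ppp(\g)}}(X)$ in class $A_X$ projects to a $(j,J^\g_{Y,\HH_Y})$-holomorphic section $v=\pi_{\vp_{\ppp(\g)}}\circ u$ in class $\pi_*A_X=A_Y$, and $u$ lands in the subbundle $P_{\psi_{A_Y}}(F)\hookrightarrow P_{\vp_{\ppp(\g)}}(X)$ over $\im(v)$ where it is a $J_F$-holomorphic section in some class $B$ with $\PP_{A_Y}(B)=A_X$. Conversely any such pair $(v,\tilde u)$ glues to such a $u$. Moreover the marked-point condition $ev_0(u)\in h_X(N_X)$ factors as $ev_0(v)\in h_Y(N_Y)$ together with a matching fiber condition $ev_0(\tilde u)\in h_F(N_F)$, because $h_F,h_X,h_Y$ fit into a commutative diagram of bundles. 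Hence setting $B$-counts aside, we get a set-theoretic bijection
\begin{align*}
\M^{section}(A_X,\ppp,\HH_X,J_X)\times_{(ev_0,h_X)}N_X \;\longleftrightarrow\; \;& \M^{section}(A_Y,\ppp,\HH_Y,J_Y)\times_{(ev_0,h_Y)}N_Y \\
&\times \bigsqcup_{\PP_{A_Y}(B)=A_X} \M^{section}(B,\psi_{A_Y},\HH_F,J_F)\times_{(ev_0,h_F)}N_F.
\end{align*}

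The remaining steps are regularity, orientation, and a dimension check. Regularity of the $X$-moduli space reduces, via the short exact sequence used in Lemma \ref{chernclassformula}, to regularity of the $Y$-moduli space and of the fiberwise $F$-moduli spaces; the latter is guaranteed by the genericity of $(\HH_F,J_F)$, which I am free to choose in a parameterized way over the $Y$-moduli space since the $Y$-moduli space is $0$-dimensional. The dimension constraint $\dim\G+\dim N_X+2c_1^v(A_X)=0$ is compatible with the corresponding zero constraints on $Y$ and on $F$ precisely because Lemma \ref{chernclassformula} gives $c_1^v(A_X)=c_1^v(A_Y)+c_1^v(B)$ and $\dim N_X=\dim N_Y+\dim N_F$ from the fiber bundle structure on $N_X\to N_Y$; so among the terms in the sum only those with $\dim N_F+2c_1^v(B)=0$ contribute, matching the index for $\See{F}{\O G}$. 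The sign of each contribution in the $X$-count equals the product of the signs of the corresponding $Y$- and $F$-contributions because the linearized operator on $X$ splits as a short exact sequence of Cauchy--Riemann operators whose coherent orientations multiply, and the marked-point fiber-product signs are determined by the bundle decomposition $h_X^*TX\cong h_F^*TF\oplus h_Y^*TY$ pulled back along the diagram. Summing over $B$ and collecting the products yields the claimed identity.

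The step I expect to be the main obstacle is regularity: I must ensure simultaneously that $(\HH_Y,J_Y)$ is generic for the $Y$-problem, that on each preimage $\pi^{-1}_{\vp_{\ppp(\g)}}(\im(v))\simeq P_{\psi_{A_Y}}(F)$ the restricted data is generic for the fiberwise $F$-problem, and that bubbles in the $X$-moduli space (horizontal, vertical, and mixed) are ruled out in the relevant dimensions. Horizontal and vertical bubbles are handled by Condition \ref{GPdefSeidel} applied to $Y$ and to each fiber copy of $F$ respectively, but mixed bubbles---holomorphic spheres in $X$ whose image is neither contained in a fiber nor maps isomorphically to a sphere in $Y$---need a separate genericity argument; homogeneity of both $X$ and $Y$ under $G$ is what allows this to be arranged by perturbing the lifted connection in fiberwise-symmetric-breaking directions without disturbing the chosen data on $Y$.
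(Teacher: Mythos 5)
Your proposal captures the correct high-level strategy — choose perturbation data on $P_{\vp_{\ppp(\g)}}(X)$ compatible with $\pi$ so that holomorphic sections push forward to holomorphic sections of $P_{\vp_{\ppp(\g)}}(Y)$, identify the fiber of the resulting map between $0$-dimensional moduli spaces with a union of $F$-moduli spaces, and use Lemma~\ref{chernclassformula} for the index bookkeeping. This is essentially the paper's approach. However, the execution differs in two places where your plan has gaps and the paper's method is sharper.

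First, the construction of perturbation data. You propose a $G$-invariant Ehresmann connection and then "generic fiberwise" $F$-perturbations chosen over the $Y$-moduli space. This is problematic: the fiberwise data are supposed to be pulled back from a single global choice of $(\HH_X,J_X)$ varying smoothly in $\g$, and it is not clear you can select them pointwise over the $Y$-moduli space while staying inside a Banach/Fr\'echet space you are allowed to perturb in and keeping $\pi$-compatibility. The paper instead restricts the Hamiltonian perturbations to be of the explicit moment-map form $K^{\g}_X|_{D_+}=\langle\ol{K}^{\g}_X,\mu_X\rangle$, $K^{\g}_Y|_{D_+}=\langle\ol{K}^{\g}_Y,\mu_Y\rangle$ with $\ol{K}^{\g}_X,\ol{K}^{\g}_Y\in\O^1(D_+;\gg)$. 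Since $\pi$ is $G$-equivariant, $\pi$-compatibility is then just the single constraint $\ol{K}^{\g}_X\equiv\ol{K}^{\g}_Y$; transitivity of the $G$-actions on $X$ and $Y$ is exactly what guarantees this restricted class of perturbations is large enough to achieve Condition~\ref{GPdefSeidel} generically, and one just intersects two residual sets. This is the crucial technical device you are missing.

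Second, the direction of the regularity argument. You argue that regularity on $X$ "reduces" to regularity on $Y$ and on $F$; the paper goes the opposite way. It first arranges Condition~\ref{GPdefSeidel} for $(\HH_X,J_X)$ and $(\HH_Y,J_Y)$ directly (by the moment-map perturbation argument plus Assumption~\ref{GPassume}), and then derives Condition~\ref{GPdefSeidel} for the induced $(\HH_F,J_F)$ as a consequence, via the embedding \eqref{GPembed} and the index identity \eqref{GPequal}: when the $F$-index would be negative the fiber product is empty because the ambient $X$-fiber product is empty, and when it is zero no bubbling occurs ($k=0$) and the kernel of the linearized operator injects into that of the regular $X$-problem, hence vanishes. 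In particular, your worry about "mixed" holomorphic bubbles is already taken care of by Condition~\ref{GPdefSeidel} for $(\HH_X,J_X)$ together with Assumption~\ref{GPassume}(2); no additional symmetry-breaking perturbation is needed or used.
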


\noindent The proof of Theorem \ref{GPmain} relies on choosing perturbation data on $\{P_{\vp_{\g}}(X)\}_{\g\in\G}$ and $\{P_{\vp_{\g}}(Y)\}_{\g\in\G}$ suitably.

\begin{definition} Let $\HH_X$ and $\HH_Y$ be \Ham connections on $P_{\vp}(X)$ and $P_{\vp}(Y)$ respectively. We say that they are \textit{$\pi$-compatible} if $(\pi_{\vp})_*\HH_X=\HH_Y$.
\end{definition}

\noindent We show that $\pi$-compatible \Ham connections exist if $\vp\in\O G$. Recall there exists a unique normalized generating Hamiltonian $\{H_{\tt}^{\vp,X}\}_{\tt\in [0,2\pi]}$ (resp. $\{H_{\tt}^{\vp,Y}\}_{\tt\in [0,2\pi]}$) of $\vp$ acting on $X$ (resp. $Y$), i.e. a family of smooth functions $H_{\tt}^{\vp,X}:X\ra \RR$ (resp. $H_{\tt}^{\vp,Y}:Y\ra \RR$) satisfying, for any $\tt$,
\[ \dot{\vp}_{\tt} = X_{H_{\tt}^{\vp,X}} \circ \vp_{\tt}\]
and
\[ \int_X H_{\tt}^{\vp,X} \w_X^{\dim X/2} =0\]
(resp. similar equalities for $Y$). Pick a cut-off function $\chi:[0,1]\ra [0,1]$ such that $\chi|_{[0,1/3]}\equiv 0$ and $\chi|_{[2/3,1]}\equiv 1$ Define $\s_{X,\pm}\in\O^2(D_{\pm}\times X;\RR)$ and $\s_{Y,\pm}\in\O^2(D_{\pm}\times Y;\RR)$ by
\begin{align}
\s_{X,-} :=\w_X &\quad\text{and}\quad \s_{X,+}:=\w_X-d(\chi(r)H_{\tt}^{\vp,X}d\tt)\nonumber \\
\s_{Y,-} :=\w_Y &\quad\text{and}\quad \s_{Y,+}:=\w_Y-d(\chi(r)H_{\tt}^{\vp,Y}d\tt). \label{compatibleform}
\end{align}
Then $\s_{X,-}$ and $\s_{X,+}$ (resp. $\s_{Y,-}$ and $\s_{Y,+}$) can be glued to a closed 2-form $\s_X$ (resp. $\s_Y$) on $P_{\vp}(X)$ (resp. $P_{\vp}(Y)$). We will show in a moment that $\s_X$ and $\s_Y$ define $\pi$-compatible \Ham connections. Starting with $\s_X$ and $\s_Y$, we can construct more $\pi$-compatible \Ham connections by \Ham perturbations. Let $K_X$ (resp. $K_Y$) be a 1-form on $S^2$ with values in the spaces of smooth functions on the fibers of $P_{\vp}(X)$ (resp. $P_{\vp}(Y)$). Define $\tilde{K}_X\in \O^1(P_{\vp}(X);\RR)$ by
\[ \iota_{\eta}\tilde{K}_X := (\iota_{(\pi_X)_*\eta}K_X)(p)\]
for any $p\in P_{\vp}(X)$ and $\eta\in T_{p}P_{\vp}(X)$, where $\pi_X:P_{\vp}(X)\ra S^2$ is the projection. Define $\tilde{K}_Y\in \O^1(P_{\vp}(Y);\RR)$ similarly.

\begin{definition} We say that $K_X$ and $K_Y$ are \textit{$\pi$-compatible} if for any $z\in S^2$ and $v\in T_zS^2$,
\[ \pi_*X_{\iota_v K_X}=X_{\iota_v K_Y}\]
where we identify the fibers of $P_{\vp}(X)$ and $P_{\vp}(Y)$ over $z$ with $X$ and $Y$ respectively.
\end{definition}

\begin{lemma} \label{compatible} Suppose $K_X$ and $K_Y$ are $\pi$-compatible. Then the 2-forms $\s_X+d\tilde{K}_X$ and $\s_Y+d\tilde{K}_Y$ define a pair of $\pi$-compatible \Ham connections on $P_{\vp}(X)$ and $P_{\vp}(Y)$. In particular, $\s_X$ and $\s_Y$ themselves define a pair of $\pi$-compatible \Ham connections.
\end{lemma}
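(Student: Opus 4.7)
The plan is to reduce the assertion to a direct coordinate computation of horizontal lifts, first for the base forms $\s_X, \s_Y$ and then for their perturbations $\s_X + d\tilde{K}_X, \s_Y + d\tilde{K}_Y$. Recall that a closed $2$-form on $P_{\vp}(X)$ restricting to $\w_X$ on fibers determines a \Ham connection whose horizontal distribution is the $\s$-orthogonal complement of the vertical distribution, and $\pi$-compatibility amounts to $(\pi_{\vp})_*$ sending horizontal vectors to horizontal vectors.

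First I would verify the base case. On $D_- \times X$ we have $\s_{X,-} = \w_X$, so the horizontal distribution is the product connection and $\pi$-compatibility is immediate. On $D_+ \times X$, expanding $\s_{X,+} = \w_X - d(\chi(r) H^{\vp,X}_{\tt} d\tt)$ in coordinates and using the identity $dH = \iota_{X_H}\w$ on the fiber, one finds that the horizontal lift of $a\partial_r + b\partial_{\tt}$ at $(r,\tt,x)$ equals $a\partial_r + b\partial_{\tt} - \chi(r) b \, X_{H^{\vp,X}_{\tt}}(x)$, and analogously on $Y$. Since $\pi_{\vp}$ acts as the identity on the base factor and as $\pi$ on the fibers, the compatibility reduces to $\pi_* X_{H^{\vp,X}_{\tt}} = X_{H^{\vp,Y}_{\tt}}$. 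Writing $\xi_{\tt} := \dot{\vp}(\tt)\vp(\tt)^{-1} \in \gg$, the normalized generators $H^{\vp,X}_{\tt}$ and $H^{\vp,Y}_{\tt}$ differ from the moment-map pairings $\langle \mu_X, \xi_{\tt}\rangle$ and $\langle \mu_Y, \xi_{\tt}\rangle$ only by normalization constants, which do not affect their Hamiltonian vector fields; and $G$-equivariance of $\pi$ gives $\pi_* \xi_{\tt}^X = \xi_{\tt}^Y$ for the fundamental vector fields.

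Next I would handle the perturbed case by showing that $d\tilde{K}_X$ shifts each horizontal lift by the fiberwise Hamiltonian vector field of the corresponding potential. The key input is that $\tilde{K}_X$ annihilates vertical vectors by construction: for vertical $u_1, u_2$ the Cartan-formula computation immediately gives $d\tilde{K}_X(u_1, u_2) = 0$, so $\s_X + d\tilde{K}_X$ still restricts to $\w_X$ on fibers and defines a \Ham connection. Moreover, for $\eta$ horizontal with respect to $\s_X$ lifting a base vector $v$ and any vertical $u$, a direct coordinate computation using $\tilde{K}_X = K^r_X dr + K^{\tt}_X d\tt$ yields $d\tilde{K}_X(\eta, u) = -u(K_X(v)) = -\w_X(X_{K_X(v)}, u)$, where $K_X(v)$ is viewed as a smooth function on the fiber. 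The equation $(\s_X + d\tilde{K}_X)(\eta + \xi, u) = 0$ for vertical $\xi$ therefore forces $\xi = X_{K_X(v)}$, so the new horizontal lift is $\eta + X_{K_X(v)}$. Applying $(\pi_{\vp})_*$ and invoking both the base-case compatibility and the hypothesis $\pi_* X_{\iota_v K_X} = X_{\iota_v K_Y}$ produces the horizontal lift in $Y$ with respect to $\s_Y + d\tilde{K}_Y$, which completes the proof.

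The argument is essentially formal once the coordinate expression for the horizontal lift is in hand. The only step requiring genuine care is identifying $d\tilde{K}_X(\eta, u)$ with $-\w_X(X_{K_X(v)}, u)$, which ultimately amounts to differentiating $\tilde{K}_X(\eta)$ along the fiber and applying the definition of the Hamiltonian vector field; I do not anticipate any real obstacle here.
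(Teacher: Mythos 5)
Your proof is correct and follows essentially the same route as the paper's: restrict to the trivializations $D_\pm\times X$ and $D_\pm\times Y$, identify the horizontal distribution of a $2$-form $\w+d(\text{1-form})$ as the graph of the fiberwise Hamiltonian vector field map $v\mapsto X_{K(v)}$, and observe that the $\pi$-compatibility condition on the potentials is exactly what pushes this graph forward correctly. (The paper treats the unperturbed generating term and the perturbation $\tilde{K}$ uniformly by absorbing both into a single pair $(f,g)$, whereas you handle them in two stages; a small added value of your write-up is that you actually verify $\pi_*X_{H^{\vp,X}_{\tt}}=X_{H^{\vp,Y}_{\tt}}$ via the moment-map description of the generating Hamiltonians and $G$-equivariance of $\pi$, a step the paper merely asserts.)
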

\begin{proof} By restricting the 2-forms to $D_{\pm}\times X$ and $D_{\pm}\times Y$, and noticing that the normalized generating Hamiltonians $H_{\tt}^{\vp,X}$ and $H_{\tt}^{\vp,Y}$ satisfy the condition we are going to impose, it suffices to show the following: Let $f_X,g_X:D\times X\ra \RR$ and $f_Y,g_Y:D\times Y\ra \RR$ be smooth functions. Suppose for any $z=x+iy\in D$,
\[ \pi_*X_{f_X(z,-)} = X_{f_Y(z,-)}\quad\text{and}\quad\pi_*X_{g_X(z,-)}=X_{g_Y(z,-)}.\]
Then the 2-forms
\[\w_X+d(f_Xdx+g_Xdy)\quad\text{and}\quad w_Y+d(f_Ydx+g_Ydy)\]
define a pair of $\pi$-compatible \Ham connections on $D\times X$ and $D\times Y$ respectively. This follows from a simple observation that the horizontal distributions defined by these 2-forms are given by the graphs of the linear map $TD\ra TX$ defined by
\[ a\pt_x +b\pt_y\mapsto aX_{f_X}+bX_{g_X}\]
and the linear map $TD\ra TY$ defined by
\[ a\pt_x +b\pt_y\mapsto aX_{f_Y}+bX_{g_Y}\]
respectively.
\end{proof}

\begin{myproof}{Theorem}{\ref{GPmain}} Fix smooth families
\[J_X=\{J_X^{\g,z}\}_{(\g,z)\in \G\times S^2}\in \J_{\vp}(X)\quad\text{and}\quad J_Y=\{J_Y^{\g,z}\}_{(\g,z)\in \G\times S^2}\in \J_{\vp}(Y)\]
such that $I_X=\{J_X^{\g,0}\}$ and $I_Y=\{J_Y^{\g,0}\}$ where $I_X$ and $I_Y$ are given in Assumption \ref{GPassume}, and the restriction of $\pi_{\vp_{\g}}$ to any fiber is holomorphic with respect to $J_X^{\g,z}$ and $J_Y^{\g,z}$. Let $\HH_X=\{\HH_X^{\g}\}_{\g\in\G}$ and $\HH_Y=\{\HH_Y^{\g}\}_{\g\in\G}$ be smooth families of \Ham connections on $\{P_{\vp_{\g}}(X)\}_{\g\in\G}$ and $\{P_{\vp_{\g}}(Y)\}_{\g\in\G}$ respectively such that for any $\g\in \G$, $\HH_X^{\g}$ and $\HH_Y^{\g}$ are $\pi$-compatible.

\begin{lemma} \label{GPeasylemma} For any $\g\in\G$, $\pi_{\vp_{\g}}: (P_{\vp_{\g}}(X), J_{\HH_X^{\g}})\ra (P_{\vp_{\g}}(Y), J_{\HH_Y^{\g}})$ is holomorphic.
\end{lemma}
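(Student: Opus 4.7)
The plan is to verify holomorphicity of $\pi_{\vp_\g}$ by checking the two pieces of the splitting $TP_{\vp_\g}(X) = T^v P_{\vp_\g}(X)\oplus \HH_X^\g$ separately, exploiting the characterization of $J_{\HH_X^\g}$ (and analogously $J_{\HH_Y^\g}$) recalled in Section \ref{SSmor}: it restricts to the chosen fiberwise $J_X^{\g,z}$ on the vertical subspace, preserves the horizontal distribution, and makes the projection to $(S^2,j)$ holomorphic.

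First, I will handle the vertical directions. The bundle map $\pi_{\vp_\g}$ commutes with projections to $S^2$, so it sends fibers to fibers, and its restriction to the fiber over $z\in S^2$ is identified with $\pi:X\to Y$. By our choice of $J_X$ and $J_Y$ at the beginning of the proof of Theorem \ref{GPmain}, $\pi$ is $(J_X^{\g,z},J_Y^{\g,z})$-holomorphic, so $(\pi_{\vp_\g})_*$ intertwines $J_{\HH_X^\g}$ and $J_{\HH_Y^\g}$ on vertical vectors.

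Next, I will handle the horizontal directions. Here the crucial input is the $\pi$-compatibility $(\pi_{\vp_\g})_*\HH_X^\g=\HH_Y^\g$ ensured by our selection of connections and Lemma \ref{compatible}. Given a horizontal vector $v\in \HH_X^\g$ at $p\in P_{\vp_\g}(X)$, write $v$ as the horizontal lift of $w:=(\pi_X)_*v\in T_zS^2$. By the defining property of $J_{\HH_X^\g}$, the vector $J_{\HH_X^\g}v$ is again horizontal and projects to $jw$ under $(\pi_X)_*$. Applying $(\pi_{\vp_\g})_*$ and using $\pi$-compatibility together with $\pi_Y\circ \pi_{\vp_\g}=\pi_X$, both $(\pi_{\vp_\g})_*(J_{\HH_X^\g}v)$ and $J_{\HH_Y^\g}((\pi_{\vp_\g})_*v)$ are horizontal vectors in $\HH_Y^\g$ projecting to $jw$. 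Since a horizontal vector is uniquely determined by its base projection and its footpoint, these two vectors coincide.

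Combining the two cases gives $(\pi_{\vp_\g})_*\circ J_{\HH_X^\g} = J_{\HH_Y^\g}\circ (\pi_{\vp_\g})_*$ on all of $TP_{\vp_\g}(X)$. There is really no obstacle here: the statement is essentially a formal consequence of the characterization of $J_\HH$ together with the fact that the horizontal and vertical pieces of the data are $\pi$-compatible by construction, so the proof should be a short direct verification with no analytical subtleties.
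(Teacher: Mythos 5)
Your argument is correct and is exactly the verification that the paper implicitly has in mind when it dismisses this lemma as ``Obvious'': split $TP_{\vp_\g}(X)$ into vertical and horizontal parts, use that $\pi$ is fiberwise holomorphic for the chosen $J_X^{\g,z}$, $J_Y^{\g,z}$, and use $\pi$-compatibility of the connections together with the defining properties of $J_{\HH}$ on horizontal vectors. There is no divergence from the paper's route, just an expansion of a step the author chose not to write out.
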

\begin{proof} Obvious.
\end{proof}

Now take $\HH_X^{\g}$ and $\HH_Y^{\g}$ to be the Hamiltonian connections defined by the closed 2-forms
\[ \s^{\g}_X+d\tilde{K}^{\g}_X\quad\text{and}\quad \s^{\g}_Y+d\tilde{K}^{\g}_Y\]
in Lemma \ref{compatible} for some Hamiltonian perturbations $K^{\g}_X$ and $K^{\g}_Y$, where $\s^{\g}_X$ and $\s^{\g}_Y$ are defined by \eqref{compatibleform} for the loop $\vp_{\g}$. We require that
\begin{align*}
K^{\g}_X|_{D_-}\equiv 0 &\quad\text{and}\quad K^{\g}_X|_{D_+}=\langle \ol{K}^{\g}_X,\mu_X\rangle\\
K^{\g}_Y|_{D_-}\equiv 0 &\quad\text{and}\quad K^{\g}_Y|_{D_+}=\langle \ol{K}^{\g}_Y,\mu_Y\rangle
\end{align*}
where $\mu_X$ (resp. $\mu_Y$) is the moment map of $(X,\w_X)$ (resp. $(Y,\w_Y)$) and $\ol{K}^{\g}_X,\ol{K}^{\g}_Y\in \O^1(D_+;\gg)$ are some $\gg$-valued 1-forms with support contained in the interior of $D_+$.

Since the $G$-actions on $X$ and $Y$ are transitive, by assumption, it follows that Condition \ref{GPdefSeidel} is satisfied for $(\HH_X,J_X)$ (resp. $(\HH_Y,J_Y)$) for generic $\{\ol{K}^{\g}_X\}_{\g\in\G}$ (resp. $\{\ol{K}^{\g}_Y\}_{\g\in\G}$). Recall we have to require $\HH^{\g}_X$ and $\HH^{\g}_Y$ to be $\pi$-compatible. It is equivalent to imposing $\ol{K}^{\g}_X\equiv \ol{K}^{\g}_Y$. Therefore, by taking the intersection of the two residual subsets consisting of regular $\{\ol{K}^{\g}_X\}$ and $\{\ol{K}^{\g}_Y\}$, we conclude that there exist regular $\HH_X$ and $\HH_Y$ which are $\pi$-compatible.

By lemma \ref{GPeasylemma}, $\{\pi_{\vp_{\g}}\}_{\g\in\G}$ induces a map
\[ f:\M^{section}(A_X,\vp,\HH_X,J_X)\times_{(ev_0,h_X)}N_X\ra\M^{section}(A_Y,\vp,\HH_Y,J_Y)\times_{(ev_0,h_Y)}N_Y. \]
It suffices to show that
\begin{equation}\label{GPunion}
\fiber(f)\simeq \bigcup_{\substack{ B\in\pi_2^{section}(P_{\psi_{A_Y}}(F))\\\PP_{A_Y}(B)=A_X }} \M^{section}(B,\psi_{A_Y},\HH_F,J_F)\times_{(ev_0,h_F)}N_F
\end{equation}
for some $(\HH_F,J_F)$ satisfying Condition \ref{GPdefSeidel} for the cycle $h_F$. (Here $\psi_{A_Y}$ is regarded as a zero-dimensional cycle.) Let $(\g,u)\in\M^{section}(A_Y,\vp,\HH_Y,J_Y)\times_{(ev_0,h_Y)}N_Y$. Then $\pi_{\vp_{\g}}^{-1}(\im(u))$ is a compact almost complex submanifold of $(P_{\vp_{\g}}(X),J_{\HH_X^{\g}})$ fibering over $\im(u)\simeq S^2$. By definition, it is isomorphic to $P_{\psi_{A_Y}}(F)$. Moreover, it has a \Ham connection $\HH_F$ induced by a connection 1-form associated to $\HH_X^{\g}$. Define $J_F\in \J_{pt}(F)$ to be the restriction of $J_X$. Then $J_{\HH_F}$ is equal to the restriction of $J_{\HH_X^{\g}}$. It is not hard to see that the pre-image $f^{-1}((\g,u))$ is precisely the union in \eqref{GPunion}. It remains to verify that $(\HH_F,J_F)$ satisfies Condition \ref{GPdefSeidel} for $h_F$.

Let $B_0\in\pi_2^{section}(P_{\psi_{A_Y}}(F))$ and $B_1,\ldots,B_k\in\pi_2(F)$ satisfying
\begin{equation}\label{GPdim<0}
\dim N_F+2c_1^v(B_0)+2\sum_{i=1}^kc_1(B_i)\leqslant 0.
\end{equation}
We have to show that the fiber product
\begin{equation}\label{GPfiberprod}
 \M^{section}(B_0,\psi_{A_Y},\HH_F,J_F)\times_{(ev_0,ev_{in})}\M^{chain}(B_1,\ldots,B_k,\{pt\},J_F^0)\times_{(ev_{out},h_F)}N_F
\end{equation}
is regular where $J_F^0$ is the restriction of $J_F$ on the fiber over $0\in S^2$. We have a canonical embedding
\begin{align}
& \M^{section}(B_0,\psi_{A_Y},\HH_F,J_F)\times_{(ev_0,ev_{in})}\M^{chain}(B_1,\ldots,B_k,\{pt\},J_F^0)\times_{(ev_{out},h_F)}N_F \label{GPembed} \\
\hookrightarrow~& \D_{\G}\times_{\G^2} \left( \M^{section}(\PP_{A_Y}(B_0),\vp,\HH_X,J_X)\times_{(ev_0,ev_{in})}\M^{chain}(\iota_*B_1,\ldots,\iota_*B_k,\G,I_X)\times_{(ev_{out},h_X)}N_X\right) \nonumber
\end{align}
where $\iota_*:\pi_2(F)\ra \pi_2(X)$ is induced by the inclusion $\iota:F\hookrightarrow X$. By Lemma \ref{chernclassformula} and the assumption $\dim\G+\dim N_Y+2c_1^v(A_Y)=0$, we have
\begin{align}
& \dim\G+\dim N_X+2c_1^v(\PP_{A_Y}(B_0))+2\sum_{i=1}^kc_1(\iota_*B_i) \nonumber\\
= ~& \dim N_F+2c_1^v(B_0)+2\sum_{i=1}^kc_1(B_i) \label{GPequal}
\end{align}
which is non-positive by \eqref{GPdim<0}. Therefore, if \eqref{GPdim<0} is strict, the fiber product \eqref{GPfiberprod} is empty, by the embedding \eqref{GPembed} and Condition \ref{GPdefSeidel} for $(\HH_X,J_X)$. If \eqref{GPdim<0} is an equality, then $k=0$, since there is a real two-dimensional symmetry for each sphere bubble. We show that the fiber product \eqref{GPfiberprod} is regular by showing that the kernel of the linearization of the Cauchy-Riemann operator is zero, and this follows from the linearization of the embedding \eqref{GPembed}, the equality \eqref{GPequal} and the regularity of the target of \eqref{GPembed}. This completes the proof of the theorem.
\end{myproof}

\section{Moment correspondences and quilted Floer theory}\label{QFTsection}
\subsection{Moment correspondences} Let $G$ be a compact connected Lie group and $(X,\w_X)$ a compact monotone Hamiltonian $G$-manifold with moment map $\mu_X:X\ra \gg^{\vee}$. Weinstein \cite{W} defined a Lagrangian correspondence $T^*G\xrightarrow{C} \XX$, called the \textit{moment correspondence}, defined by
\[C:=\{ (g,\mu_X(x),x,g\cdot x)|~g\in G, x\in X\}\]
where $T^*G$ is identified with $G\times \gg^{\vee}$ by left multiplication. For any $g\in G$, define $L_g:=T^*_gG$ and
\[ \D_g:=\{(x,g\cdot x)|~x\in X\}.\]
For $g$ equal to the identity element $e$ of $G$, we also put $L:=L_e$ and $\D:=\D_e$. A key observation is that for any $g\in G$, the \textit{geometric composition} \cite{WW2} $L_g\circ C$ is embedded and is equal to $\D_g$.

In order to apply the quilted Floer theory of Ma'u-Wehrheim-Woodward \cite{MWW} to the moment correspondence $C$, it is necessary to establish a suitable monotonicity condition. The following lemma will do the job.

\begin{lemma}\label{monolemma} For any loop $\lo:\pt D\ra G$ and disks $u_{\pm}:D\ra X$ satisfying
\begin{equation}\label{monolemma1}
u_+(\eit)=\vp(\eit)\cdot u_-(\eit)\quad \text{for any }\tt,
\end{equation}
define
\[E(\lo,u_{\pm}):= -\int_D(u_-)^*\w_X+\int_D(u_+)^*\w_X-\int_0^{2\pi} \langle \lo^{-1}(d\lo/d\tt), \mu_X(u_-(\eit))\rangle d\tt\]
where $\langle-,-\rangle$ is the natural pairing $\gg\otimes \gg^{\vee}\ra\RR$. There exists a map $C:\pi_1(G)\ra \RR$ such that for any pair $(\lo,u_{\pm})$ satisfying \eqref{monolemma1},
\[ E(\lo,u_{\pm}) =\monoconst\cdot c_1^v(s_{\lo,u_{\pm}}) +C([\lo])\]
where $\monoconst$ is the monotonicity constant of $X$ and $s_{\lo,u_{\pm}}$ is the section of $P_{\lo}(X)$ defined in Section \ref{Hamfibsection}.
\end{lemma}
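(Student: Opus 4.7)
My plan is to interpret $E(\varphi, u_\pm)$ as the integral of a canonical closed $2$-form along the section $s_{\varphi, u_\pm}$ of the Hamiltonian fibration $P_\varphi(X) \to S^2$, then combine monotonicity with Stokes' theorem applied to the family of fibrations obtained from a homotopy of $\varphi$. Concretely, I would introduce the closed $2$-form $\sigma_\varphi$ on $P_\varphi(X)$ defined exactly as in \eqref{compatibleform}: $\omega_X$ on $D_-\times X$ and $\omega_X - d(\chi(r) H^\varphi_\theta\, d\theta)$ on $D_+\times X$, where $\{H^\varphi_\theta\}$ is the normalized generating Hamiltonian of $\varphi$. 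A direct Stokes computation, combined with the identity $H^\varphi_\theta = \langle \dot\varphi\varphi^{-1}, \mu_X\rangle + c_\theta$ (where $c_\theta := -\frac{1}{\operatorname{vol}(X)}\langle \dot\varphi\varphi^{-1}, \mu_0\rangle$ with $\mu_0 := \int_X \mu_X\, \omega_X^n$ is the normalization constant) and the Ad-equivariance of $\mu_X$ -- which converts $\langle \dot\varphi\varphi^{-1}, \mu_X(u_+)\rangle$ into $\langle \varphi^{-1}\dot\varphi, \mu_X(u_-)\rangle$ on the boundary -- yields
\[ E(\varphi, u_\pm) \;=\; \int_{s_{\varphi, u_\pm}}\sigma_\varphi \;+\; C_0(\varphi), \qquad C_0(\varphi) := \int_0^{2\pi} c_\theta\, d\theta. \]

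Since $[\sigma_\varphi]$ restricts to $[\omega_X]$ on each fiber and $[\omega_X] = \tau c_1(TX)$ on $\pi_2(X)$ by monotonicity, any two sections of $P_\varphi(X)$ differing by a class $A \in \pi_2(X)$ satisfy $\int_{s'}\sigma_\varphi - \int_s\sigma_\varphi = \int_A\omega_X = \tau c_1(A) = \tau(c_1^v(s') - c_1^v(s))$. Hence the quantity $D(\varphi) := \int_{s_{\varphi, u_\pm}}\sigma_\varphi - \tau c_1^v(s_{\varphi, u_\pm})$ is independent of $u_\pm$ and depends only on $\varphi$, so combining with the preceding display,
\[ E(\varphi, u_\pm) - \tau c_1^v(s_{\varphi, u_\pm}) \;=\; C_0(\varphi) + D(\varphi). \]

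Finally, I would verify that both summands factor through $\pi_1(G)$. Given a smooth homotopy $\{\varphi_s\}_{s\in[0,1]}$, the fibrations $P_{\varphi_s}(X)$ assemble into a bundle $P \to [0,1]\times S^2$ and the piecewise construction glues the $\sigma_{\varphi_s}$ into a closed $2$-form $\hat\sigma$ on $P$; extending $(u_{\pm,s})$ smoothly produces a map $\hat s: [0,1]\times S^2 \to P$ whose restriction to $\{s\}\times S^2$ is $s_{\varphi_s, u_{\pm,s}}$. Since $\hat s^*\hat\sigma$ is closed on a cylinder, Stokes gives $\int_{s_{\varphi_s, u_{\pm,s}}}\sigma_{\varphi_s}$ constant in $s$, and $c_1^v$ is an integer varying continuously hence constant. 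For $C_0$: Ad-equivariance of $\mu_X$ and $G$-invariance of $\omega_X^n$ force $\mu_0 \in (\gg^\vee)^G$, so by the Maurer--Cartan equation the left-invariant $1$-form $\langle \theta^R_{MC}, \mu_0\rangle$ on $G$ satisfies $d\langle \theta^R_{MC}, \mu_0\rangle = -\frac{1}{2}\langle[\theta^R_{MC}, \theta^R_{MC}], \mu_0\rangle = 0$, and its loop integral $C_0(\varphi)$ is therefore a homotopy invariant. The main technical obstacle is the bookkeeping in the first step: one must carefully track the orientation reversal from $z\mapsto\bar z$ in the definition of $s_{\varphi, u_\pm}|_{D_-}$ and apply Ad-equivariance to convert the boundary integral involving $\mu_X(u_+)$ into the one involving $\mu_X(u_-)$ appearing in $E$; the remaining steps are formal consequences of closedness of $\hat\sigma$ and monotonicity.
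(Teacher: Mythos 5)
The decomposition $E(\varphi,u_\pm)=\int_{s_{\varphi,u_\pm}}\sigma_\varphi+C_0(\varphi)$, the use of monotonicity to compare two sections of the \emph{same} $P_\varphi(X)$, and the observation that $C_0$ is the period of a closed $1$-form on $G$ (closed because the averaged moment value is $\mathrm{Ad}^*$-invariant) are all sound. The gap is in your third paragraph: the slicewise forms $\sigma_{\varphi_s}$ do \emph{not} assemble into a closed $2$-form $\hat\sigma$ on the total space over $[0,1]\times S^2$, because the gluing map now depends on $s$. On the collar it is $\Phi(s,\theta,x)=(s,\theta,\varphi_s(e^{i\theta})\cdot x)$; setting $\xi_s:=(\partial_s\varphi_s)\varphi_s^{-1}$ and writing $\mathrm{pr}_X$ for the fiber projection, one computes
\[
\Phi^*(\mathrm{pr}_X^*\omega_X)(\partial_s,v)=\omega_X\bigl(\xi_s^X,\,(\varphi_s(e^{i\theta}))_*v\bigr)\qquad\text{for }v\in T_xX,
\]
which is generically nonzero, whereas $\Phi^*d\bigl(\chi(r)\,H^{\varphi_s}_\theta\,d\theta\bigr)$ vanishes on $(\partial_s,v)$ and the $D_-$-side model $\mathrm{pr}_X^*\omega_X$ has no $ds$-component at all. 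The two local expressions disagree across the seam, so there is no closed $\hat\sigma$ to which Stokes applies. The homotopy invariance of $\int_s\sigma_\varphi$ you want is true, but to get it along these lines you would either need an explicit $ds$-correction term repairing the gluing, or need to observe that $\sigma_\varphi$ actually represents the coupling class (one checks $\pi_*\sigma_\varphi^{\frac{1}{2}\dim X+1}=0$ from the normalization of $H^\varphi_\theta$) and then invoke the uniqueness and naturality of the coupling class under bundle isomorphisms over $\mathrm{id}_{S^2}$, as in \cite{GScf,MScf}.

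The paper's own proof sidesteps the fibration picture entirely for this step. It splits the inclusion $C\simeq G\times X\hookrightarrow(T^*G)^-\times\XX$ into $m:(g,x)\mapsto(g,\mu_X(x))$ and $j:(g,x)\mapsto(x,g\cdot x)$ and uses the Lagrangian identity $m^*d\lambda_G=j^*\omega_{\XX}$. A pair $(\varphi,u_\pm)$ is the same as a loop $\tilde\varphi(\theta)=(\varphi(e^{i\theta}),u_-(e^{i\theta}))$ in $G\times X$ together with a capping disk $(u_-,u_+)$ of $j\circ\tilde\varphi$, and $E=\int_{(u_-,u_+)}\omega_{\XX}-\int_{\tilde\varphi}m^*\lambda_G$. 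Since $m^*\lambda_G$ is a genuine primitive of $j^*\omega_{\XX}$, homotopy invariance of $E$ is a one-line Stokes argument on a cylinder, and monotonicity enters only afterwards, to absorb a fiber-sphere bubble exactly as in your second step. The ingredient you are missing is this primitive $m^*\lambda_G$, which the Lagrangian property of the moment correspondence supplies for free.
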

\begin{proof} We first prove that $E$ is invariant under homotopy of $(\lo,u_{\pm})$. The inclusion of $C\simeq G\times X$ into $(T^*G)^-\times \XX$ splits into two maps
\[ \begin{array}{cccc}
m:&G\times X&\ra& T^*G\simeq G\times \gg^{\vee}\\ \\ [-1em]
 &(g,x) &\mapsto &(g,\mu_X(x)).
\end{array}\]
and
\[ \begin{array}{cccc}
j:&G\times X&\ra& X\times X\\ \\ [-1em]
 &(g,x) &\mapsto &(x,g\cdot x).
\end{array}\]
We have $m^*d\l_G=j^*\w_{\XX}$ where $\l_G$ is the canonical Liouville form on $T^*G$. Notice that giving a pair $(\lo,u_{\pm})$ satisfying \eqref{monolemma1} amounts to giving a loop $\tilde{\lo}$ in $G\times X$ and a capping disk of $j\circ \tilde{\lo}$. The assertion then follows from Stokes' theorem.

Now, given two pairs $(\lo,u_{\pm})$ and $(\lo',u'_{\pm})$ satisfying \eqref{monolemma1} and a homotopy $\lo_s$ joining $\lo$ and $\lo'$. One can easily produce $\ol{u}_{\pm}$ such that $(\lo',u'_{\pm})$ is homotopic to $(\lo,\ol{u}_{\pm})$ and $s_{\lo,\ol{u}_{\pm}}=s_{\lo,u_{\pm}}\# v$ for some sphere $v$ lying in a fiber of $P_{\lo}(X)$. By the first part of the proof, we have
\begin{align*}
E(\lo',u'_{\pm})-\monoconst \cdot c_1^v(s_{\lo',u'_{\pm}}) &=E(\lo,u_{\pm})-\monoconst \cdot c_1^v(s_{\lo,u_{\pm}})    + \left( \int_{S^2} v^*\w_X - \monoconst \cdot c_1(v)\right)\\
&= E(\lo,u_{\pm})-\monoconst \cdot c_1^v(s_{\lo,u_{\pm}}) .
\end{align*}
\end{proof}
\begin{remark} It should be pointed out that Evans-Lekili \cite{EL} have already proved a similar result. Besides establishing a well-defined Floer theory, Lemma \ref{monolemma} is necessary for proving a gluing result in Appendix \ref{gluing}.
\end{remark}
\subsection{A notion of capping disks} \label{cappingdisk}
Parallel to the introduction of $\Nov{X}{\O G}$ for $QH^*(X)$, we introduce a notion of capping disks for the Floer cochain complex $CF^*(\D,\D)$. Since the outcome of quilted Floer theory applied to the moment correspondence is an $A_{\infty}$ homomorphism from the wrapped Floer cochain complex $CW^*(L,L)$ of the cotangent fiber $L$ to the Floer cochain complex $CF^*((L,C),(L,C))$ of the generalized Lagrangian $(L,C)$, it is necessary to do the same thing for $CF^*((L,C),(L,C))$ as well as the morphism spaces between $(L,C)$ and $\D$. For this reason, we define, following \cite{BCL},  an $A_{\infty}$ category $\AA$ as follows.

\begin{definition} \label{object} (objects) The objects of $\AA$ are
\[ (L,C)\quad\text{and}\quad\D.\]

Next, we define the morphism spaces of $\AA$. Notice that every generalized \Ham chord involved consists of up to three \Ham chords, each in either $T^*G$ or $\XX$. For example, the generalized \Ham chords for defining $CF^*(\D,(L,C))$ are of the form $(x_1,x_2)$ where $x_1$ is a time-1 \Ham chord in $\XX$ and $x_2$ is a time-$\delta$ \Ham chord in $T^*G$ (for a fixed $\delta >0$). They are required to satisfy
\begin{equation}\label{EL1}
x_1(0)\in \D,~(x_1(1),x_2(0))\in C^T,~x_2(\delta)\in L.
\end{equation}
For simplicity, we denote any generalized \Ham chords by $\ul{x}=(x_-,x_{\XX},x_+)$ where $x_{\XX}$ is a time-1 \Ham chord in $\XX$, and each of $x_-$ and $x_+$ is either a time-0 (i.e. constant) or a time-$\delta$ \Ham chord in $T^*G$. These \Ham chords are required to satisfy a condition analogous to \eqref{EL1}. In the above example, $x_{\XX}$ is equal to $x_1$ and $x_+$ is equal to $x_2$. As for $x_-$, recall that $\D=L\circ C$ is embedded so there is a unique point $z\in T^*G$ such that $(z,x_1(0))\in C$. We have $x_-\equiv z$.
\end{definition}

Give the unit disk $D$ a negative \sle near $-1$.
\begin{definition} Let $\ul{x}=(x_-,x_{\XX},x_+)$ be a generalized \Ham chord.
\begin{enumerate}
\item A \textit{\capping disk} for $\ul{x}$ is a pair $(\lo,u)$ consisting of maps $\vp:\pt D\setminus \{-1\}\ra G$ and $u:D\setminus \{-1\}\ra \XX$ which satisfies
\begin{enumerate}[(a)]
\item $u$ converges to $x_{\XX}$ at the negative strip-like end;
\item $u(\eit)\in\D_{\lo(\eit)}$ for any $\tt\ne \pi$; and
\item $\lim_{\tt\to\pi^-}\lo(\eit)$ is equal to the starting point of $\pi\circ x_+$ and $\lim_{\tt\to\pi^+}\lo(\eit)$ is equal to the ending point of $\pi\circ x_-$, where $\pi:T^*G \ra G$ is the projection.
\end{enumerate}
\item A \textit{homotopy of \capping disks} for $\ul{x}$ consists of homotopies $s\mapsto \lo_s$ and $s\mapsto u_s$ such that for any $s$, $(\lo_s,u_s)$ is a \capping disk for $\ul{x}$.
\end{enumerate}
\end{definition}

Let $\ul{L}$ and $\ul{L}'$ be any objects of $\AA$. Denote by $\X(\ul{L},\ul{L}')$ the set of generalized \Ham chords for the cyclic set $(\ul{L},(\ul{L}')^T)$ of Lagrangian correspondences.
\begin{definition} \label{morphismspace} (morphisms) The morphism space from $\ul{L}$ to $\ul{L}'$ is defined to be
\[ Hom_{\AA}^*(\ul{L},\ul{L}') := \bigoplus \ZZ\langle \ul{x},[\lo,u]\rangle \]
where the direct sum is taken over all $\ul{x}\in\X(\ul{L},\ul{L}')$ and all homotopy classes $[\lo,u]$ of \capping disks for $\ul{x}$.
\end{definition}

\noindent It is straightforward to extend the definition of the usual $A_{\infty}$ operations and the quilted invariants to the case where \capping disks are present. The general principle is to glue the \capping disks for input generalized \Ham chords and the patches of any \holo quilted surfaces of interest to us which are labelled by $\XX$.
\subsection{Quilted invariants associated to moment correspondences}\label{QFT}
The quilted invariants we are interested in are the following two linear maps
\begin{equation}\label{MCop}
\MC^{op}: HW^*(L,L)\ra Hom_{H(\AA)}^*((L,C),(L,C))\xrightarrow{Y_{\#}} Hom_{H(\AA)}^*(\D,\D)
\end{equation}
and
\begin{equation}\label{MCcl}
\MC^{cl}: SH^*(T^*G)\ra Hom(H_*(X)\otimes H_*(X),\ZZ[\Nov{X}{\Lo G}])
\end{equation}
where the first arrow of \eqref{MCop} and the arrow \eqref{MCcl} count configurations in Figure 1 and Figure 2 respectively, and the second arrow $Y_{\#}$ of \eqref{MCop} is the isomorphism induced by Lekili-Lipyanskiy's quasi-isomorphism \cite{LL}
\[ Y:(L,C)\ra \D\]
in $\AA$ defined by counting configurations in Figure 3.

\begin{center}
\begin{minipage}{8cm}
\begin{center}
\vspace{0.2cm}
\begin{tikzpicture}
\tikzmath{\x1 = 0.6; \x2 = 1.6; \x3=2; \x4=1; \x5=\x1+(\x2-\x4)/2; \x6=1; \x7=3; \x8=0.2; \x9=0.45; \y1=0.45;}

\draw [line width=\x9mm, pattern=north east lines, pattern color=gray]  (0,0) -- (\x3,0)  to[out=2,in=182] (\x3+\x7,\x5) -- (\x3+\x7+\x6,\x5) -- (\x3+\x7+\x6,\x5+\x4) -- (\x3+\x7,\x5+\x4) to[out=178,in=-2]  (\x3,\x1+\x2+\x1) -- (0,\x1+\x2+\x1) -- (0,\x1+\x2) -- (\x3-\x8,\x1+\x2) arc (90:-90:0.5*\x2) -- (\x3-\x8,\x1) -- (0,\x1) -- cycle;

\draw [->, line width=\x9mm] (0,0) -- (0,\x1) node [midway, left] {};
\draw [->, line width=\x9mm] (0,\x1) -- (0,\x1+\x2) node [midway, left] {$\ul{x}_{out}$};
\draw [->, line width=\x9mm] (0,\x1+\x2) -- (0,\x1+\x2+\x1) node [midway, left] {};

\draw [->, line width=\x9mm] (\x3+\x7+\x6,\x5) -- (\x3+\x7+\x6,\x5+\x4) node [midway, right] {$x_{in}$};

\node[anchor = south west] at (\x5+1.7*\x7/2,\x1+\x2+0.2*\x1/2) {$L$};
\node[anchor = north west] at (\x5+1.7*\x7/2,\x1-0.3*\x1/2)  {$L$};
\node[anchor = west] at (0.15*\x3, \x1+\x2/2) {$\XX$};
\node[anchor = east] at (\x3+0.95*\x7, \x1+\x2/2) {$T^*G$};
\node[anchor = east] at (\x3+0.4*\x7, \x1+\x2/2) {$C$};
\node at (0,1.5+\x1+0.5*\x2){} ;
\node at (0,-1.5+\x1+0.5*\x2){} ;
\end{tikzpicture}

\vspace{0.3cm}
\noindent \hypertarget{fig1}{FIGURE 1.}
\vspace{.2cm}
\end{center}
\end{minipage}
\begin{minipage}{8cm}
\begin{center}
\vspace{0.2cm}
\begin{tikzpicture}
\tikzmath{\x1 = 1.5; \x2 = 0.2; \x3=1; \x4=1; \x5=2.5; \x9=0.45; \y1=0.45; \y3=1.2*\x3;}
\tikzmath{\z1= 0.15*\x3 ; \z2=0.2; \z3=1.3*\x3; \z4=0;}

\draw [line width=0mm, pattern=north east lines, pattern color=gray]
(\x1,-\x3) -- (\x1+\x5,-\x3) --(\x1+\x5,\x3) -- (\x1,\x3)-- cycle;

\begin{scope}
\clip(\x1,-\y3)--(\x1,\y3)--(\x1+1.1*\x2,\y3)--(\x1+1.1*\x2,-\y3)--cycle;
\draw [line width=\y1mm, fill=white] (\x1,0) ellipse (\x2 and \x3);
\end{scope}

\begin{scope}
\clip(\x1,-\y3)--(\x1,\y3)--(\x1-1.1*\x2,\y3)--(\x1-1.1*\x2,-\y3)--cycle;
\draw [dashed, line width=\y1mm, fill=white] (\x1,0) ellipse (\x2 and \x3);
\end{scope}

\draw[dashed, line width=0mm, fill=white] (\x1+\x5,0) ellipse (\x2 and \x3);
\draw[dashed, line width=0mm, pattern=north east lines, pattern color=gray] (\x1+\x5,0) ellipse (\x2 and \x3);

\begin{scope}
\clip(\x1+\x5,-\y3)--(\x1+\x5,\y3)--(\x1+1.1*\x2+\x5,\y3)--(\x1+1.1*\x2+\x5,-\y3)--cycle;
\draw [line width=\y1mm] (\x1+\x5,0) ellipse (\x2 and \x3);
\end{scope}

\begin{scope}
\clip(\x1+\x5,-\y3)--(\x1+\x5,\y3)--(\x1-1.1*\x2+\x5,\y3)--(\x1-1.1*\x2+\x5,-\y3)--cycle;
\draw[dashed, line width=\y1mm] (\x1+\x5,0) ellipse (\x2 and \x3);
\end{scope}

\draw [line width=\x9mm] (0,-\x3) -- (\x1+\x5,-\x3);
\draw [line width=\x9mm] (0,\x3) -- (\x1+\x5,\x3);

\draw [->,line width=0.99*\x9mm] (\x1+\x2+\x5,-0.01) -- (\x1+\x2+\x5,0);


\node[anchor = west] at (\x1+0.4*\x5,0) {$T^*G$};
\node[anchor = east] at (0.8*\x1,0) {$\XX$};
\node[anchor = west] at (\x1+1*\x2,0) {$C$};
\node[anchor = west] at (\x1+1.4*\x2+\x5, 0) {$y_{in}$};

\node at  (-\x3,0) [circle,fill, inner sep=2pt]{};

\node[anchor = east] at (-0.2*\x3,-1.1*\x3){$h_-\times h_+$};

\draw  [line width=\y1mm] (0,\x3) arc (90:270:\x3);
\draw  [line width=\y1mm] (-\x3,0) arc (0:15:3);
\draw  [line width=\y1mm] (-\x3,0) arc (0:-15:3);



\node at (0,1.5){} ;
\node at (0,-1.5){} ;
\end{tikzpicture}

\vspace{0.3cm}
\noindent \hypertarget{fig2}{FIGURE 2.}
\vspace{.2cm}
\end{center}
\end{minipage}
\end{center}

\begin{center}
\begin{minipage}{8cm}
\begin{center}
\vspace{0.2cm}
\begin{tikzpicture}
\tikzmath{\x1 = 1; \x2 = 1.5; \x3=2; \x4=.5; \x5=1.1;  \x9=0.45; \y1=0.45;}

\draw [line width=\x9mm] (0,0) -- (\x4,0);
\draw [line width=\x9mm] (0,\x1) -- (\x4,\x1);
\draw [line width=\x9mm] (0,\x1+\x2) -- (\x4,\x1+\x2);

\draw  [line width=\y1mm]  (\x4,\x1+\x2) to[out=-2,in=135] (\x3,\x1+\x2/2);

\node (A) at (\x3,\x1+\x2/2) {};
\node (B) at ([shift=({-45:\x5})]A) {};

\draw [line width=\y1mm, pattern=north east lines, pattern color=gray]  (0,0) -- (\x4,0)  to[out=2,in=225] (B.center) to[out=45,in=0] (\x3,\x1+\x2/2)  to[out=-135,in=2] (\x4,\x1) -- (0,\x1) -- cycle;

\node[anchor = west] at (0.2*\x3, 0.6*\x1) {$T^*G$};
\node[anchor = west] at (0.2*\x3, -0.3*\x1) {$L$};
\node[anchor = west] at (0, \x1+\x2/2) {$\XX$};
\node[anchor = west] at (0.2*\x3, \x1+1.2*\x2) {$\D$};
\node[anchor = north west] at (0.8*\x3, 1.5*\x1) {$C$};
\node[anchor = east] at (0,0.5*\x1+0.5*\x2) {$\ul{x}_{out}$};
\node at  (A) [circle,fill,inner sep=2pt]{};

\draw [->, line width=\x9mm] (0,0) -- (0,\x1);
\draw [->, line width=\x9mm] (0,\x1) -- (0,\x1+\x2);

\node at (0,3){};
\node at (0,-0.5){};
\end{tikzpicture}

\vspace{0.3cm}
\noindent \hypertarget{fig3}{FIGURE 3.}
\vspace{.2cm}
\end{center}
\end{minipage}
\begin{minipage}{8cm}
\begin{center}
\vspace{0.2cm}
\begin{tikzpicture}
\tikzmath{\x1 = 2; \x2 = 1.5; \x3=1.2; \x4=1.2; \x5=1.8; \x6=1.2; \x7=1; \x8=0.75; \x9=1.2;}
\tikzmath{\y1=0.3;\y2=.45; \y3= \x1+2*\x2+\x3+\x4+\x5+\x6+\x8+\x9+\y1;}
\tikzmath{\z1=\y1+\x9;\z4=3;}
\draw [->, line width=\y2mm] (\y3,-0.5*\x7) -- (\y3,0.5*\x7) node [midway, right] {$x_{in}$};
\draw [line width=\y2mm]  (\y3,0.5*\x7) -- (\y3-\y1,0.5*\x7)  to[out=178,in=-2] (\y3-\y1-\x9,\x8) arc (90:270:\x8) to[out=2,in=182]  (\y3-\y1,-0.5*\x7) -- (\y3,-0.5*\x7) -- cycle;

\node at  (\y3-\y1-\x9-\x8,0) [circle,fill, inner sep=2pt]{};

\node[anchor = east] at (\y3-\y1-\x9-\x8,-0.35*\z4){$h$};
\draw  [line width=\y2mm] (\y3-\y1-\x9-\x8,0)  arc (0:15:\z4);
\draw  [line width=\y2mm] (\y3-\y1-\x9-\x8,0)  arc (0:-15:\z4);
\node[anchor = east] at (\y3-0.4*\z1, 0.95*\x7) {$\D$};
\node[anchor = east] at (\y3-0.15*\z1 ,0) {$\XX$};
\node at (\y3,1.75){};
\node at (\y3,-1.75){};

\end{tikzpicture}

\vspace{0.3cm}
\noindent \hypertarget{fig4}{FIGURE 4.}
\vspace{.2cm}
\end{center}
\end{minipage}
\end{center}
We will also need the isomorphism of Piunikhin-Salamon-Schwarz \cite{PSS}
\[ PSS: Hom_{H(\AA)}^*(\D,\D)\ra QH^*(X;\ZZ[\Nov{X}{\O G}])\]
defined by counting configurations in Figure 4.

\begin{definition} Define the ``quantum co-product''
\[\qcp: QH^*(X;\ZZ[\Nov{X}{\O G}])\ra Hom(H_*(X)\otimes H_*(X),\ZZ[\Nov{X}{\Lo G}]) \]
by
\[\langle \qcp(PD[h]), [h_-]\otimes [h_+]\rangle :=\sum_{A\in \pi_2(X)}\langle h,h_-,h_+\rangle_A^{GW} \TT^A\]
for any pseudocycles $h,h_-,h_+$ in $X$, where $PD$ is the Poincar\'e dual, $\langle -,-,-\rangle_A^{GW}$ is the 3-pointed genus-zero Gromov-Witten invariant on $X$, and the exponent $A$ of $\TT^A$ is regarded as an element of $\Nov{X}{\Lo G}$ via the canonical inclusion $\pi_2(X)\hookrightarrow \Nov{X}{\Lo G}$.
\end{definition}

\begin{lemma} \label{qcpinj} $\qcp$ is injective.
\end{lemma}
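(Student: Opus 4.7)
My plan is to reduce the injectivity of $\qcp$ to the classical non-degeneracy of Poincar\'e duality on $X$, by using the fundamental-class axiom of genus-zero Gromov-Witten theory to annihilate all genuinely quantum terms once one of the two homological inputs is chosen to be the fundamental class $[X]$.

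Suppose $\alpha\in QH^*(X;\ZZ[\Nov{X}{\O G}])$ lies in the kernel of $\qcp$. Since $\ZZ[\Nov{X}{\O G}]$ is a group algebra rather than a completion, I may write $\alpha=\sum_{A'}\alpha_{A'}\TT^{A'}$ as a finite sum with $\alpha_{A'}\in H^*(X;\ZZ)$, and extend $\qcp$ to such sums by $\ZZ[\Nov{X}{\O G}]$-linearity. For any pseudocycle $h_-:N_-\to X$ I then compute
\[\langle\qcp(\alpha),[h_-]\otimes[X]\rangle \;=\; \sum_{A'}\TT^{A'}\sum_{A\in\pi_2(X)}\langle PD^{-1}(\alpha_{A'}),h_-,[X]\rangle_A^{GW}\,\TT^A,\]
where the inner $\TT^A$ is read via the natural inclusion $\pi_2(X)\hookrightarrow \Nov{X}{\O G}$. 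Because $PD[X]=1\in H^0(X)$, the three-pointed invariant $\langle -,-,1\rangle_A^{GW}$ vanishes for every $A\ne 0$ by the fundamental-class axiom: the forgetful map dropping the marked point decorated with the unit class has one-dimensional fibers, so the pushforward of $ev_3^*(1)=1$ is zero. Only the $A=0$ contribution survives, and it equals the classical cap-product $\int_X\alpha_{A'}\cup PD[h_-]$.

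Thus $\qcp(\alpha)=0$ forces
\[\sum_{A'}\left(\int_X\alpha_{A'}\cup PD[h_-]\right)\TT^{A'}=0\]
in $\ZZ[\Nov{X}{\O G}]$ for every pseudocycle $h_-$. Since the monomials $\{\TT^{A'}\}$ form a $\ZZ$-basis of the group algebra, each coefficient must vanish: $\int_X\alpha_{A'}\cup PD[h_-]=0$ for every $A'$ and every $[h_-]\in H_*(X;\ZZ)$. Classical Poincar\'e duality on the closed oriented manifold $X$ then yields $\alpha_{A'}=0$ for every $A'$, hence $\alpha=0$.

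The only nontrivial ingredient I have to verify is the fundamental-class vanishing in the pseudocycle/monotone setup used in the paper; I expect this to be standard, a routine dimension count via the forgetful map on the three-pointed moduli, with no virtual machinery required. Modulo that one check, the rest of the argument is bookkeeping plus classical Poincar\'e duality, so I do not anticipate any serious obstacle.
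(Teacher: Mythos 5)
Your approach coincides with the paper's: both fix the fundamental class $[X]$ in one slot of the bilinear form (you use the second slot, the paper composes with $ev_{[X],-}$ on the first), invoke the fundamental-class axiom to kill all $A\neq 0$ contributions so that only the classical intersection pairing survives, and then appeal to Poincar\'e duality. However, there is a concrete step you skip. After the fundamental-class reduction your vanishing condition reads
\[
\sum_{A'}\left(\int_X\alpha_{A'}\cup PD[h_-]\right)\TT^{A'}=0,
\]
and you conclude that each coefficient vanishes because ``the monomials $\{\TT^{A'}\}$ form a $\ZZ$-basis of the group algebra.'' But by the paper's definition, $\qcp$ takes values in $Hom(H_*(X)\otimes H_*(X),\ZZ[\Nov{X}{\Lo G}])$, so this identity lives in $\ZZ[\Nov{X}{\Lo G}]$, not in $\ZZ[\Nov{X}{\O G}]$. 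The monomials $\TT^{A'}$ appearing there are the images of the classes $A'\in\Nov{X}{\O G}$ under the natural homomorphism $\Nov{X}{\O G}\to\Nov{X}{\Lo G}$, and they are $\ZZ$-linearly independent in $\ZZ[\Nov{X}{\Lo G}]$ if and only if that homomorphism is injective. That is not automatic: two pairs (based loop, capping section) that are not pair-homotopic through $\O G$ might a priori become identified once one is allowed to homotope through the larger space $\Lo G$. The paper explicitly lists this injectivity as one of the two ingredients of its proof; your write-up does not mention it, and as stated your ``basis of the group algebra'' claim refers to the wrong ring. Once this injectivity is inserted your argument closes and is the same as the paper's.
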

\begin{proof}
Define
\[ \begin{array}{cccc}
ev_{[X],-}:&Hom(H_*(X)\otimes H_*(X),\ZZ[\Nov{X}{\Lo G}])&\ra& Hom(H_*(X),\ZZ[\Nov{X}{\Lo G}]) \\ \\ [-1em]
 &f &\mapsto &f([X]\otimes -).
\end{array}\]
Then $ev_{[X],-}\circ\qcp$ is equal to the classical intersection pairing on pseudocycles in $X$ tensored with the natural map $\ZZ[\Nov{X}{\O G}]\ra \ZZ[\Nov{X}{\Lo G}]$. The result follows from the Poincar\'e duality and the injectivity of the latter map.
\end{proof}

\begin{proposition} \label{commQFT} The following diagram is commutative:
\begin{equation}\label{MCopcl}
\begin{tikzcd}
HW^*(L,L) \arrow{r}{PSS~\circ~ \MC^{op}} \arrow{d}[left]{\mathcal{OC}} &[2em]   QH^*(X;\ZZ[\Nov{X}{\O G}])  \arrow{d}{\qcp} \\
SH^*(T^*G) \arrow{r}{\MC^{cl}} &[2em] Hom(H_*(X)\otimes H_*(X),\ZZ[\Nov{X}{\Lo G}])
\end{tikzcd}
\end{equation}
where $\mathcal{OC}$ is the length-zero part of the open-closed map.
\end{proposition}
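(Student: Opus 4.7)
My plan is to prove commutativity by expressing both compositions in the diagram \eqref{MCopcl} as signed counts of solutions to quilted Cauchy--Riemann equations on a common underlying Riemann surface, and then interpolating between the two counts through a one-parameter family of perturbation data. The underlying principle is the same annulus-shrinking philosophy used for Theorem \ref{MC=Se}: because the geometric composition $L\circ C=\D$ is embedded, every open-string quilt with a $T^{*}G$-patch bordered by $L$-seams can be deformed to an ordinary $\XX$-configuration carrying a $\D$-seam, and conversely.

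First I unravel the top-right composition. For a wrapped chord $x\in HW^{*}(L,L)$ and pseudocycles $h_{\pm}:N_{\pm}\to X$, writing $PSS\circ\MC^{op}(x)=\sum_{A}c_{A}\,\TT^{A}$ and expanding $\qcp$ via $3$-pointed Gromov--Witten invariants, the standard TQFT-type gluing of Floer-theoretic operations (Figures \hyperlink{fig1}{1}, \hyperlink{fig3}{3}, \hyperlink{fig4}{4} plus a $3$-marked sphere) rewrites $\langle(\qcp\circ PSS\circ\MC^{op})(x),[h_{-}]\otimes[h_{+}]\rangle$ as a single count of quilted configurations of the following shape: a sphere carrying an $\XX$-patch with two interior marked points constrained by $h_{-},h_{+}$, a $T^{*}G$-patch inserted along a circular $C$-seam, and a boundary strip-like end inside the $T^{*}G$-patch labelled by $L$ at which $x$ is asymptotic. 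Novikov bookkeeping and capping data are tracked through the $A_{\infty}$ category $\AA$ of Section \ref{cappingdisk} and the inclusion $\pi_{2}(X)\hookrightarrow\Nov{X}{\Lo G}$. Symmetrically, the bottom-left composition $\MC^{cl}\circ\mathcal{OC}$ is obtained by gluing the length-zero open-closed disk defining $\mathcal{OC}$ to Figure \hyperlink{fig2}{2} along the output closed orbit $y_{in}$, producing a configuration of exactly the same topological type.

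Having reduced both sides to counts of holomorphic sections on configurations with identical underlying quilted surface, I construct a coherent one-parameter homotopy of Hamiltonian perturbations and fibrewise almost complex structures interpolating between the two sets of auxiliary choices. Parametric regularity, achieved via $\gg$-valued Hamiltonian perturbations as in the proof of Theorem \ref{GPmain} using transitivity of the $G$-action, together with a standard cobordism argument on the boundary of the resulting $1$-dimensional parametric moduli space, yields equality of the two counts. Compactness and the Novikov exponent bookkeeping are controlled by Lemma \ref{monolemma}; the quilted annulus-shrinking theorem recalled in Appendix \ref{gluing} guarantees that shrinking the width of the $T^{*}G$-patch to zero exchanges the two quilt presentations along the embedded geometric composition $L\circ C=\D$ without altering the signed count.

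The main obstacle is matching Novikov data across the two sides: the top row of \eqref{MCopcl} naturally carries coefficients in $\ZZ[\Nov{X}{\O G}]$, whereas the bottom row lives over the free-loop ring $\ZZ[\Nov{X}{\Lo G}]$, and the two are related by the canonical homomorphism induced by forgetting basepoints. Careful inspection of how capping disks attached to generalized Hamiltonian chords in $\AA$ (Definition \ref{morphismspace}) transform under the annulus-shrinking degeneration, together with the homotopy invariance $E(\lo,u_{\pm})=\monoconst\cdot c_{1}^{v}(s_{\lo,u_{\pm}})+C([\lo])$ from Lemma \ref{monolemma}, shows that the section classes of the glued Hamiltonian fibration on the top-right side agree with the loop-class data appearing on the bottom-left side under this comparison. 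Once this bookkeeping is pinned down, the remainder of the argument is a routine application of parametric Fredholm theory in the quilted setting.
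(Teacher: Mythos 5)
Your proposal takes a genuinely different route from the paper's proof, but it has a few gaps that are worth pointing out. The paper decomposes the square into three simpler commutative subdiagrams: (i) a square, proved by citing Ritter--Smith's compatibility of the open-closed map with the Ma'u--Wehrheim--Woodward $A_{\infty}$-functor applied to $C$; (ii) a triangle, proved by factoring the object-level open-closed maps of $(L,C)$ and $\D$ through a full Hochschild-level open-closed map $\mathcal{OC}:HH_*(\AA)\to Hom(H_*(X)\otimes H_*(X),\ZZ[\Nov{X}{\Lo G}])$ (Figure~\hyperlink{fig5}{5}); and (iii) a triangle, proved by moving the interior marked point of the open-closed disk for $\D$ to the boundary and identifying the resulting bubbled configuration with a $PSS$ component glued to a $\qcp$ disk. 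You instead propose a single global cobordism/interpolation after gluing all the moduli spaces on each side into one large quilted count. That is a legitimate philosophy, but it obscures and does not actually supply the three distinct ingredients above.

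There are two concrete gaps. First, the annulus-shrinking of Appendix~\ref{gluing} is not the relevant tool for this proposition. No $T^*G$-patch is sent to zero width anywhere in \eqref{MCopcl}: both $\MC^{op}$ and $\MC^{cl}$ retain their $T^*G$-patches at fixed width, and the only passage from a quilted object to the geometric composition $\D=L\circ C$ happens through $Y_{\#}$, which is a fixed quasi-isomorphism and not the strip-width limit. Annulus-shrinking only enters later, in the proof of Theorem~\ref{MC=Se}, where the width parameter $\cl$ is sent to $0$. So the ``underlying principle'' you invoke is the wrong one, and citing Lemma~\ref{monolemma} and Theorem~\ref{annulusshrink} here does not give you what you need. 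Second, your claim that after unraveling ``both sides'' land on ``configurations of exactly the same topological type'' is exactly the content of the middle triangle, and that is not a cobordism-of-perturbation-data statement: the top path passes through $Hom^*_{H(\AA)}((L,C),(L,C))$ and then $Hom^*_{H(\AA)}(\D,\D)$ via $Y_{\#}$, while the bottom path passes through $SH^*(T^*G)$. Matching these requires the full Hochschild open-closed map (Figure~\hyperlink{fig5}{5}), which your proposal never introduces, and without it there is no well-defined parametrized moduli space in which to run the cobordism. The reference to Theorem~\ref{GPmain}'s $\gg$-valued perturbations is also off-target, since that argument is tailored to a fibration $G/T\to G/L$ and plays no role here. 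If you insisted on the global approach you would in effect be re-proving Ritter--Smith's result plus the Hochschild factorization from scratch, which the paper avoids by citing and factoring.
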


\begin{proof} We have a more complicated diagram:
\begin{equation}\nonumber
\begin{tikzcd}[column sep=2ex]
 HW^*(L,L) \arrow{d}[left]{\mathcal{OC}} \arrow{r}{} & Hom_{H(\AA)}^*((L,C),(L,C))\arrow[dr, "\mathcal{OC}"] \arrow{r}{Y_{\#}}& Hom_{H(\AA)}^*(\D,\D) \arrow{d}[left]{\mathcal{OC}}  \arrow{r}{PSS} &  QH^*(X;\ZZ[\Nov{X}{\O G}])   \arrow[dl, "\qcp"]\\
 SH^*(T^*G) \arrow{rr}{\MC^{cl}} &  & Hom(H_*(X)^{\otimes 2},\ZZ[\Nov{X}{\Lo G}]) &
\end{tikzcd}
\end{equation}

\noindent The commutativity of the leftmost square is proved by Ritter-Smith \cite{RS}. The commutativity of the middle triangle follows from the existence of factorizations of the open-closed maps for the objects $(L,C)$ and $\D$ into the full open-closed map
\[ \mathcal{OC}:HH_*(\AA)\ra Hom(H_*(X)\otimes H_*(X),\ZZ[\Nov{X}{\Lo G}])\]
defined by counting configurations in Figure 5. See \textit{loc. cit.} for more detail.

\begin{center}
\vspace{.3cm}
\begin{tikzpicture}
\tikzmath{\x1 = 0.2; \x2 = 3; \x3=2; \x4=.4; \x5=0.7; \x6=1; \x7=3; \x8=0.2; \x9=0.45; \y1=0.45;\y2=.5; \y3=0.5; \y4=0.8;}
\tikzmath{\z1=-\x5+3*\x1+\x4+\y3; \z2=-\x1+\x5+\x2-\x4-\y3; \z3=0.5*\x3; \z4=4;}

\draw  [line width=\y1mm] (\z3,\x1+\x2/2) arc (135:150:\z4);
\draw  [line width=\y1mm] (\z3,\x1+\x2/2) arc (135:120:\z4);



\draw [line width=\y1mm, pattern=north east lines, pattern color=gray]
(\x3+\x7+\x6,\x1+\x2 +\x5)  -- (\x3+\x7+\x6,\x1+\x1+\x2 +\x5) -- (\x3+\x7+\x6-\y4,\x1+\x1+\x2 +\x5) to[out=182,in=2] (\x3,\x1+\x2+\x1) -- (0,\x1+\x2+\x1) arc (90:270:\x1+\x2/2) -- (\x3,0) to[out=-2,in=178] (\x3+\x7+\x6-\y4,-\x5) -- (\x3+\x7+\x6,-\x5) -- (\x3+\x7+\x6,-\x5+\x1) -- (\x3+\x7+\x6-\y4,-\x5+\x1) to[out=178,in=-2] (\x3,\x1) -- (0,\x1) arc (270:90:\x2/2) -- (0,\x1+\x2) -- (\x3,\x1+\x2) to[out=2,in=182] (\x3+\x7+\x6-\y4,\x1+\x2 +\x5) -- (\x3+\x7+\x6,\x1+\x2 +\x5) -- cycle;

\draw [line width=\y1mm, pattern=north east lines, pattern color=gray]  (\x3+\x7+\x6,\x1+\x2 +\x5-\x4) -- (\x3+\x7+\x6-\y4,\x1+\x2 +\x5-\x4) arc (90:270:\x1+\y3/2) -- (\x3+\x7+\x6,-\x1+\x2 +\x5-\x4-\y3) -- (\x3+\x7+\x6,\x2 +\x5-\x4-\y3) -- (\x3+\x7+\x6-\y4,\x2 +\x5-\x4-\y3) arc (270:90:\y3/2) -- (\x3+\x7+\x6,\x2 +\x5-\x4)-- cycle;

\draw [line width=\y1mm, pattern=north east lines, pattern color=gray]  (\x3+\x7+\x6,3*\x1+\y3 -\x5+\x4) -- (\x3+\x7+\x6-\y4,3*\x1+\y3 -\x5+\x4) arc (90:270:\x1+\y3/2) -- (\x3+\x7+\x6,\x1 -\x5+\x4) -- (\x3+\x7+\x6,2*\x1 -\x5+\x4) -- (\x3+\x7+\x6-\y4,2*\x1 -\x5+\x4) arc (270:90:\y3/2) -- (\x3+\x7+\x6, 2*\x1 -\x5+\x4+\y3)-- cycle;


\draw [->, line width=\x9mm] (\x3+\x7+\x6,-\x5) --(\x3+\x7+\x6,-\x5+\x1) ;
\draw [->, line width=\x9mm] (\x3+\x7+\x6,-\x5+\x1)-- (\x3+\x7+\x6,-\x5+\x1+\x4) ;
\draw [->, line width=\x9mm] (\x3+\x7+\x6,-\x5+\x1+\x4)-- (\x3+\x7+\x6,-\x5+\x1+\x4+\x1) ;
\draw [->, line width=\x9mm] (\x3+\x7+\x6,-\x5+\x1+\x4+\y3+\x1)-- (\x3+\x7+\x6,-\x5+\x1+\x4+\x1+\y3+\x1) ;

\draw [->, line width=\x9mm] (\x3+\x7+\x6,\x1+\x2 +\x5)  -- (\x3+\x7+\x6,\x1+\x1+\x2 +\x5) ;
\draw [->, line width=\x9mm] (\x3+\x7+\x6,\x1+\x2+\x5-\x4) -- (\x3+\x7+\x6,\x1+\x2 +\x5);
\draw [->, line width=\x9mm] (\x3+\x7+\x6,\x2+\x5-\x4) -- (\x3+\x7+\x6,\x1+\x2+\x5-\x4);
\draw [->, line width=\x9mm] (\x3+\x7+\x6,\x2+\x5-\x4-\x1-\y3) -- (\x3+\x7+\x6,\x1+\x2+\x5-\x4-\x1-\y3);


\node[anchor = east] at (\x3+0.9*\x7, \x1+\x2/2) {$\XX$};

\node at  (\x3+\x7+\x6,0.5*\z1+0.5*\z2) [circle,fill,inner sep=1pt]{};
\node at  (\x3+\x7+\x6,0.25*\z1+0.75*\z2) [circle,fill,inner sep=1pt]{};
\node at  (\x3+\x7+\x6,0.75*\z1+0.25*\z2) [circle,fill,inner sep=1pt]{};

\node at  (\z3,\x1+\x2/2) [circle,fill,inner sep=2pt]{};
\node[anchor = east ] at (0.9*\z3,1.8*\x1+\x2/2) {$h_-\times h_+$};
\end{tikzpicture}

\vspace{.3cm}
\noindent \hypertarget{fig5}{FIGURE 5.}
\end{center}

Finally, the commutativity of the rightmost triangle is proved by moving the interior marked point of the \holo disks defining the open-closed map for $\D$ to the boundary. The resulting bubbled configuration consists of a component corresponding to $PSS$ and a disk bubble which corresponds to $\qcp$ after gluing the disks in the two factors of $\XX$ to form a sphere.
\end{proof}

\section{Proof of Theorem \ref{MC=Se}} \label{proveMC=Se}
Denote by $H_G$ the \Ham defining $SH^*(T^*G)$, and by $\I(T^*G)$ and $\I(X^-\times X)$ the space of $\w_G$-compatible almost complex structures on $T^*G$ which are of contact type and the space of $\w_{\XX}$-compatible almost complex structures on $X^-\times X$ respectively. Let $S$ be a compact domain of $T^*G$ containing the image of the moment correspondence $C$ under the canonical projection $\pr:T^*G\times X\times X \ra T^*G$, and denote by $C^{\infty}_S(T^*G)$ the space of smooth functions on $T^*G$ with support in $S$.

Let $\vp:\G\ra \Lo G$ be a smooth cycle. Choose perturbation data
\begin{center}
\begin{minipage}{6.5cm}
\begin{align*}
&K_{G}^{\g,\cl}\in\O^1(S^1\times [-\cl,0];C^{\infty}(T^*G))\\
&K_{\XX}^{\g,\cl}\in\O^1(D;C^{\infty}(\XX))
\end{align*}
\end{minipage}
\begin{minipage}{6.5cm}
\begin{align*}
& J_{G}^{\g,\cl}\in C^{\infty}(S^1\times [-\cl,0];\I(T^*G))\\
& J_{\XX}^{\g,\cl}\in C^{\infty}(D;\I(\XX))
\end{align*}
\end{minipage}
\end{center}
depending smoothly on $\g\in \G$ and $\cl\in (0,+\infty)$, and satisfying
\begin{enumerate}
\item $K_{G}^{\g,\cl}$ is written as $H_G d\tt+\ol{K}_{G}^{\g,\cl}$ where
\[\ol{K}_{G}^{\g,\cl}\in \O^1(S^1\times [-\cl,0];C_S^{\infty}(T^*G));\]
\item as $\cl\to 0^+$, the pair $(K_{\XX}^{\g,\cl}, J_{\XX}^{\g,\cl})$ converges to one which is equivalent to the perturbation data defining the moduli space $\M^{section}(A,\vp,\HH,J)$ for some $(\HH,J)$; and
\item for any large $\cl$, $(K_{G}^{\g,\cl}, J_{G}^{\g,\cl})$ is equal to the perturbation data for the gluing of $\MC^{cl}$ and $\GG$, where $\GG:H_{-*}(\Lo G;\ZZ)\ra SH^*(T^*G)$ is the Abbondandolo-Schwarz isomorphism. See Appendix \ref{wrapped} for its construction.
\end{enumerate}

Given a pair of pseudocycles $h_{\pm}:N_{\pm}\ra X$ and $A\in\Nov{X}{\Lo G}$. Define $\M_{\vp}(h_{\pm},A)$ to be the moduli space of quadruples $\ul{u}^{\g,\cl}=(\g,\cl,u_G,u_{\XX})$ where
\begin{enumerate}[(a)]
\item $\g\in \G,~\cl\in (0,+\infty)$; and
\item $u_{G}:S^1\times [-\cl,0]\ra T^*G$ and $u_{\XX}:D\ra \XX$ are maps
\end{enumerate}
satisfying
\begin{enumerate}[(i)]
\item $u_{G}(e^{i\tt},0)\in T_{\vp_{\g}(e^{i\tt})}^*G$ for any $\tt$;
\item $(u_{G}(\eit,-\cl),u_{\XX}(\eit))\in C$ for any $\tt$;
\item $u_{\XX}$ hits the pseudocycle $h_-\times h_+$ at $0\in D$;
\item the section $s_{\pi\circ u_G|_{S^1\times \{-\cl\}},(u_{\XX})_{\pm}}$ defined in Section \ref{Hamfibsection} represents $A$, where $u_{\XX}=((u_{\XX})_-,(u_{\XX})_+)$ and $\pi:T^*G\ra G$ is the projection;
\item \begin{minipage}{8cm} \[ \left\{ \begin{array}{rl}
(du_{G}-X_{K_{G}^{\g,\cl}})^{0,1}_{J_{G}^{\g,\cl}}~=& 0\\ \\ [-0.8em]
(du_{\XX}-X_{K_{\XX}^{\g,\cl}})^{0,1}_{J_{\XX}^{\g,\cl}} ~=&0
\end{array} \right.\text{; and} \] \end{minipage}
\item $\int |du_{G}-X_{K_{G}^{\g,\cl}}|^2 +\int |du_{\XX}-X_{K_{\XX}^{\g,\cl}}|^2<+\infty$ .
\end{enumerate}
See Figure 6.

\begin{center}
\vspace{.3cm}
\begin{tikzpicture}
\tikzmath{\x1 = 2; \x2 = 0.2; \x3=1; \x4=1; \x5=5; \x9=0.45; \y1=0.45; \y3=1.2*\x3;}
\tikzmath{\z1= 0.15*\x3 ; \z2=0.2; \z3=1.3*\x3; \z4=3;}

\draw [<->, line width=\z2mm] (\x1,-\z3) -- (\x1+\x5,-\z3);
\draw [line width=\z2mm] (\x1,-\z3+\z1) -- (\x1,-\z3-\z1);
\draw [line width=\z2mm] (\x1+\x5,-\z3+\z1) -- (\x1+\x5,-\z3-\z1);
\draw [line width=0mm, pattern=north east lines, pattern color=gray]
(\x1,-\x3) -- (\x1+\x5,-\x3) --(\x1+\x5,\x3) -- (\x1,\x3)-- cycle;

\begin{scope}
\clip(\x1,-\y3)--(\x1,\y3)--(\x1+1.1*\x2,\y3)--(\x1+1.1*\x2,-\y3)--cycle;
\draw [line width=\y1mm, fill=white] (\x1,0) ellipse (\x2 and \x3);
\end{scope}

\begin{scope}
\clip(\x1,-\y3)--(\x1,\y3)--(\x1-1.1*\x2,\y3)--(\x1-1.1*\x2,-\y3)--cycle;
\draw [dashed, line width=\y1mm, fill=white] (\x1,0) ellipse (\x2 and \x3);
\end{scope}

\draw[dashed, line width=0mm, fill=white] (\x1+\x5,0) ellipse (\x2 and \x3);
\draw[dashed, line width=0mm, pattern=north east lines, pattern color=gray] (\x1+\x5,0) ellipse (\x2 and \x3);

\begin{scope}
\clip(\x1+\x5,-\y3)--(\x1+\x5,\y3)--(\x1+1.1*\x2+\x5,\y3)--(\x1+1.1*\x2+\x5,-\y3)--cycle;
\draw [line width=\y1mm] (\x1+\x5,0) ellipse (\x2 and \x3);
\end{scope}

\begin{scope}
\clip(\x1+\x5,-\y3)--(\x1+\x5,\y3)--(\x1-1.1*\x2+\x5,\y3)--(\x1-1.1*\x2+\x5,-\y3)--cycle;
\draw[dashed, line width=\y1mm] (\x1+\x5,0) ellipse (\x2 and \x3);
\end{scope}

\begin{scope}
\clip(0,-\y3)--(0,\y3)--(-1.1*\x4,\y3)--(-1.1*\x4,-\y3)--cycle;
\draw[line width=\y1mm] (0,0) ellipse (\x4 and \x3);
\end{scope}
\draw [line width=\x9mm] (0,-\x3) -- (\x1+\x5,-\x3);
\draw [line width=\x9mm] (0,\x3) -- (\x1+\x5,\x3);



\node[anchor = west] at (\x1+0.4*\x5,0) {$T^*G$};
\node[anchor = east] at (0.7*\x1,0) {$\XX$};
\node[anchor = west] at (\x1+1*\x2,0) {$C$};
\node[anchor = north] at (\x1+\x5/2,-\z3) {$\cl$};

\node at  (-\x4,0) [circle,fill, inner sep=2pt]{};
\node at (\x1+\x5+\z4,0) {};
\node (A) at (-\z4-\x4,0) {};
\node[anchor = north ] at ([shift=({-15:\z4})]A) {$h_-\times h_+$};

\draw  [line width=\y1mm] (-\x4,0) arc (0:15:\z4);
\draw  [line width=\y1mm] (-\x4,0) arc (0:-15:\z4);
\node (C) at (\x1+\x5,0) {};
\node[anchor = north ] (D) at ([shift=({-60:\x3})]C) {};
\path[ name path=p1] (\x1+\x5,0) ellipse (\x2 and \x3);
\path[ name path=p2] (C.center) -- (D.center);
\path [name intersections={of=p1 and p2,by=E}];
\node at  (E) [circle,fill,inner sep=1.5pt]{};
\node[anchor = west, shift={(0.05,0)}] at (E) {$T^*_{\vp_{\g}(\eit)}G$};
\node at (0,1.5){} ;
\node at (0,-1.5){} ;
\node at (-5*\x2,0){} ;
\end{tikzpicture}

\vspace{.3cm}
\noindent \hypertarget{fig6}{FIGURE 6.}
\end{center}

\noindent For any integer $i$, define $\M^i_{\vp}(h_{\pm},A)$ to be the component of $\M_{\vp}(h_{\pm},A)$ which has virtual dimension $i$. For generic $\ol{K}_{G}^{\g,\cl}, K_{\XX}^{\g,\cl}, J_{G}^{\g,\cl}, J_{\XX}^{\g,\cl}$, $\M^i_{\vp}(h_{\pm},A)$ is regular for any $i\leqslant 1$. There are two types of boundary components of $\M^1_{\vp}(h_{\pm},A)$:
\begin{enumerate}
\item The set of $\ul{u}^{\g,\cl}$ with $\cl\to +\infty$. This corresponds to the coefficient of $\TT^A$ in $\langle (\MC^{cl}\circ\GG)([\vp]), [h_-]\otimes [h_+]\rangle$.
\item The set of $\ul{u}^{\g,\cl}$ with $\cl\to 0^+$. This corresponds to the signed count of $N_+\times_{(h_+,ev_{\infty})} \M^{section}(A,\vp,\HH,J)\times_{(ev_0,h_-)}N_-$ where $0,\infty\in S^2$ are two distinct marked points.
\end{enumerate}
The gluing issue for (1) is standard. For (2), it is an instance of annulus-shrinking, and the rigorous arguments are given by Wehrheim-Woodward \cite{WW2}. We will revise their result in Appendix \ref{gluing}.

Summing over $A$, we get
\begin{equation}\label{pfsemc1}
\langle (\MC^{cl}\circ\GG)([\vp]), [h_-]\otimes [h_+]\rangle = \sum_A\#\left(N_+\times_{(h_+,ev_{\infty})} \M^{section}(A,\vp,\HH,J)\times_{(ev_0,h_-)}N_-\right) \TT^A.
\end{equation}
If $\vp$ lands in $\O G$, then by Proposition \ref{commQFT} and Proposition \ref{commwrapped}, the LHS of \eqref{pfsemc1} is equal to
\[\langle (\qcp\circ PSS\circ \MC^{op}\circ \FF)([\vp]),[h_-]\otimes [h_+]\rangle.\]
By \cite[Lemma 4.9]{S_QCC}, the RHS of \eqref{pfsemc1} is equal to
\[\langle (\qcp\circ \Se{\Lo G})([\vp]), [h_-]\otimes [h_+]\rangle.\]
Theorem \ref{MC=Se} now follows from the injectivity of $\qcp$ (Lemma \ref{qcpinj}) and the fact that $H_*(\O G;\ZZ)$ is generated additively by smooth cycles (Theorem \ref{BSthm}).
\section{Coadjoint orbits and based loop groups}\label{OG}
\subsection{Preliminaries on Lie theory}\label{prelimLie}
Let $G$ be a compact connected Lie group which is semi-simple, that is, the center $Z(G)$ of $G$ is discrete. Fix a maximal torus $T$ in $G$. Denote by $\gg$ (resp. $\t$) the Lie algebra of $G$ (resp. $T$). The adjoint representation gives rise to a set $R$ of \textit{roots} which are linear forms $\a:\t\ra\RR$. By definition, we have $-R=R$ and the following root space decomposition
\begin{equation}\label{rootdecomp}
\gg = \t\oplus \bigoplus_{[\a]\in R/_{\pm}} \gg_{[\a]}
\end{equation}
where for each $[\a]\in R/_{\pm}$, $\gg_{[\a]}:=\bigcap_{x\in\t}\ker(\ad(x)^2+4\pi^2\a(x)^2\id)$ which is known to be two dimensional. The walls $\{\a=0\}$, $\a\in R$ divide $\t$ into a finite number of open domains. The closure of each domain is called a \textit{Weyl chamber}. Throughout the discussion, we fix one of them, call it the \textit{dominant chamber}, and denote it by $\wc_0$. Then $\wc_0$ determines a set $R^+$ of representatives of $R/_{\pm}$
\[ R^+:=\{\a\in R|~\a(\wc_0)=\RR_{\geqslant 0}\}.\]
The elements of $R^+$ are called \textit{positive roots}.

Give $\gg$ an $\Ad$-invariant metric $\langle-,-\rangle$. This makes the direct sum \eqref{rootdecomp} orthogonal. The subgroup $W$ of $\Aut(\t)$ generated by the reflections across $\{\a=0\}$, $\a\in R$ is called the \textit{Weyl group}. It is well-known that $W$ acts simply transitively on the set of Weyl chambers. The metric $\langle-,-\rangle$ identifies $\t$ with $\t^{\vee}$. For each $\a\in R$, define the \textit{coroot} associated to $\a$
\[\a^{\vee}:= \frac{2\a}{\langle \a,\a\rangle}\in \t\]
which is independent of which $\Ad$-invariant metric we use. Define the lattices
\begin{align*}
 \Q &:= \{q\in\t|~\exp(q)=e\in T\} \\
 \Q_0 &:= \Span_{\ZZ}\{\a^{\vee}|~\a\in R\}.
\end{align*}
Then $\Q_0\subseteq \Q$ and they are equal if and only if $G$ is simply connected, since $\pi_1(G)\simeq \Q/\Q_0$.

Since we will be concerned with the based loop group $\O G$, we also need the affine analogue of everything we have introduced. We consider not only the walls $\{\a=0\}$, $\a\in R$ but also the \textit{affine walls} $V_{\a,k}:=\{\a=k\}$ where $\a\in R^+$ and $k\in\ZZ$. The affine walls divide $\t$ into infinitely many open bounded domains, and the closure of each of them is called a \textit{Weyl alcove}. Denote by $\D_0$ the one which sits inside $\wc_0$ and contains the origin. We call it the \textit{dominant alcove}. It is known that $\D_0$ (as well as other Weyl alcoves) is a product of simplices and is a simplex if $G$ is simple. Each of the boundary walls of $\D_0$ which does not contain the origin is given by $V_{\a,1}$ for some $\a\in R^+$. We call these roots \textit{highest roots}. The \textit{affine Weyl group} $W^{af}$ is defined to be the subgroup of $\Aff(\t)$ generated by the reflections across any $V_{\a,k}$. Similarly, $W^{af}$ acts simply transitively on the set of alcoves. The following facts are standard.
\begin{lemma}\label{stdfacts} $~$
\begin{enumerate}
\item $\D_0\cap \Q_0=\{0\}$.
\item $W^{af}$ is generated by $W$ and the translations
\[ \{t_{\a^{\vee}}|~\a\in R^+\text{ is a highest root}\}\]
where $t_{\a^{\vee}}:x\mapsto x+\a^{\vee}$.
\item $\Q_0$ is the orbit through $0\in \t$ under the action of $W^{af}$.
\end{enumerate}
\hfill$\square$
\end{lemma}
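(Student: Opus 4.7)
The three claims are classical. My plan is to address them in the order (2), (3), (1), since (3) and (1) both build on (2).

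For (2), I would begin with the elementary computation that the reflection across the affine hyperplane $V_{\a,k}$ factors as $s_{\a,k} = s_\a \circ t_{k\a^\vee}$. Since $W^{af}$ is by definition generated by the $s_{\a,k}$ and $s_\a = s_{\a,0}$ already lies in $W$, this identifies every translation $t_{k\a^\vee}$ as an element of $W^{af}$, so $W^{af}$ is generated by $W$ together with $\{t_{\a^\vee} : \a \in R\}$. To cut this generating set down to highest coroots, I would use that conjugation by $w \in W$ sends $t_{\a^\vee}$ to $t_{w(\a^\vee)}$, reducing matters to showing that the $W$-orbit of the highest coroots $\ZZ$-spans $\Q_0$. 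Decomposing $\gg$ into simple factors, this becomes the statement in each simple component that the $W$-orbit of $\theta^\vee$ (which equals the set of short coroots in that component) $\ZZ$-generates the coroot lattice of the factor. This is a standard verification type by type in the Dynkin classification, which I would simply cite from e.g.\ Bourbaki rather than reproduce.

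Part (3) is then immediate from (2): every element of $W^{af}$ factors as $w \circ t_q$ with $w \in W$ and $q \in \Q_0$, so $W^{af} \cdot 0 = W \cdot \Q_0 = \Q_0$ using $W$-invariance of $\Q_0$ and $w(0) = 0$.

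For (1), I would exploit the explicit description of the dominant alcove as $\D_0 = \{x \in \t : \a_i(x) \geq 0 \text{ for all simple } \a_i,\; \theta_s(x) \leq 1 \text{ for each highest root } \theta_s\}$ (one $\theta_s$ per simple factor), decompose $q \in \Q_0 \cap \D_0$ into simple-factor components $q_s$, and argue in each factor separately. Since $\Q_0$ is contained in the coweight lattice $P^\vee$, every pairing $\a_i(q_s)$ is a non-negative integer, and expanding the highest root as $\theta_s = \sum c_i \a_i$ with all marks $c_i \geq 1$, the inequality $\theta_s(q_s) \leq 1$ forces either $q_s = 0$ or $q_s$ equal to a fundamental coweight $\omega_{i_0}^\vee$ at some node with $c_{i_0} = 1$. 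The latter case is excluded because such minuscule fundamental coweights represent the nontrivial classes in $P^\vee / \Q_0$ and therefore do not belong to $\Q_0$; this is another classical fact from the structure theory. The main friction is really only the case-by-case verification required in (2) and the appeal to the classification of minuscule coweights in (1); both are well documented, so the write-up would be short overall.
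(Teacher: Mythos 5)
The paper offers no proof here at all---Lemma~\ref{stdfacts} is stated as ``standard'' and terminated with an immediate $\square$---so there is nothing to compare against; the question is purely whether your blind argument is sound. It is. Your decomposition $s_{\a,k}=t_{k\a^{\vee}}\circ s_{\a}$ (note the order of composition: as written, $s_\a\circ t_{k\a^\vee}=s_{\a,-k}$, a harmless sign since $k$ ranges over all of $\ZZ$) correctly reduces (2) to the statement that the $W$-orbit of the highest coroots $\ZZ$-spans $\Q_0$, and (3) then follows from $W^{af}=W\ltimes\Q_0$ exactly as you say. For (1) the reduction to a simple factor and the mark computation showing $q_s\in\{0\}\cup\{\omega^\vee_{i_0}:c_{i_0}=1\}$ is correct, and the exclusion of minuscule fundamental coweights from $\Q_0$ is indeed the classical fact that the special vertices of $\D_0$ give a complete set of nonzero coset representatives of $P^{\vee}/\Q_0$. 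The two places where you fall back on the classification---short coroots generate $\Q_0$, and minuscule coweights are nontrivial in $P^{\vee}/\Q_0$---are both genuinely standard and fine to cite. As a small remark, (1) can alternatively be deduced from (3) together with the fact that $\overline{\D_0}$ is a fundamental domain for $W^{af}$: since $W^{af}\cdot 0=\Q_0$ and each orbit meets $\overline{\D_0}$ exactly once, $\D_0\cap\Q_0=\{0\}$; whether this is ``cleaner'' depends on whether one regards the fundamental-domain property as more or less elementary than the classification of minuscule nodes, but it avoids the second citation.
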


For later use, we introduce a few more notations. Pick any $a\in \mr{\wc}_0$ sufficiently close to the origin.
\begin{definition}\label{perturblengthdef} Define a function $\ell':W\ra \RR$ by
\[\ell'(w):= \sum_{\a\in R^+}\{\a(w^{-1}(a))\}\]
where $\{x\}:=x-\lfloor x\rfloor$ is the fractional part of $x$.
\end{definition}

\noindent The reason for the notation $\ell'$ is justified by Lemma \ref{perturblengthlemma} below. Recall that the standard length function $\ell:W\ra\ZZ_{\geqslant 0}$ is defined to be the number of positive roots $\a$ for which the walls $\{\a=0\}$ separate $\wc_0$ and $w\wc_0$.

\begin{lemma} \label{perturblengthlemma} For any $w\in W$, we have
\[\ell'(w) = \ell(w) +\langle a,w(\rho)\rangle\]
where $\rho:=\sum_{\a\in R^+}\a\in\t$ is the sum of positive roots.
\end{lemma}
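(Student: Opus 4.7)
The strategy is to split $\ell'(w)$ into a linear piece and a piece involving only the floor functions, and identify each with the corresponding term in the asserted formula.

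First, I would write $\{x\} = x - \lfloor x\rfloor$ to get
\[
\ell'(w) = \sum_{\alpha\in R^+} \alpha(w^{-1}(a)) \;-\; \sum_{\alpha\in R^+}\lfloor \alpha(w^{-1}(a))\rfloor.
\]
For the linear piece, I identify $\t^{\vee}$ with $\t$ via the $\mathrm{Ad}$-invariant metric $\langle-,-\rangle$ and use the fact that $W$ acts by orthogonal transformations, which gives
\[
\sum_{\alpha\in R^+}\alpha(w^{-1}(a)) = \langle \rho, w^{-1}(a)\rangle = \langle w(\rho), a\rangle.
\]
So it remains to show
\[
-\sum_{\alpha\in R^+}\lfloor \alpha(w^{-1}(a))\rfloor = \ell(w).
\]

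For the floor piece, I use the hypothesis that $a\in \mr{\wc}_0$ is sufficiently close to the origin: I would first fix a bound guaranteeing $|\alpha(w^{-1}(a))| < 1$ for every $\alpha\in R^+$ and every $w\in W$ (possible since $W$ and $R^+$ are finite and $a$ can be chosen small). Under this smallness, $\lfloor \alpha(w^{-1}(a))\rfloor$ equals $0$ when $\alpha(w^{-1}(a)) \geqslant 0$ and equals $-1$ when $\alpha(w^{-1}(a)) < 0$ (the boundary value $0$ does not occur because $a$ lies in the \emph{open} chamber, so no $\alpha(w^{-1}(a))$ vanishes). Hence
\[
-\sum_{\alpha\in R^+}\lfloor \alpha(w^{-1}(a))\rfloor \;=\; \#\{\alpha\in R^+ \mid \alpha(w^{-1}(a)) < 0\}.
\]

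The final step is to recognize the right-hand side as $\ell(w)$. By the invariance of the pairing, $\alpha(w^{-1}(a)) = (w\alpha)(a)$, and since $a\in \mr{\wc}_0$ this is negative precisely when $w(\alpha) \in -R^+$. Therefore the count equals $\#\{\alpha\in R^+ \mid w(\alpha)\in -R^+\}$, which is the classical description of $\ell(w)$ as the number of positive roots sent to negative roots by $w$. Combining the two pieces yields the claim. The only substantive subtlety here is calibrating ``sufficiently close to the origin'' uniformly in $w$, which I would do at the start of this subsection so that the same choice of $a$ can also be used wherever it appears elsewhere (in particular in $\deg^{L/T}$).
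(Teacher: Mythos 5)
Your proof is correct and follows essentially the same route as the paper's: expand the fractional part, identify the linear sum as $\langle a, w(\rho)\rangle$, and use the smallness of $a$ to reduce the floor sum to a count of positive roots sent negative, which is $\ell(w)$. The only cosmetic difference is that the paper phrases the count as $\ell(w^{-1})$ via its ``separating walls'' definition and then invokes $\ell(w^{-1}) = \ell(w)$, whereas you pass directly through the inversion description $\ell(w) = \#\{\alpha\in R^+ : w\alpha\in -R^+\}$; both are standard and equivalent.
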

\begin{proof}
Indeed,
\begin{align*}
\ell'(w) &= \sum_{\a\in R^+} \{\a(w^{-1}(a))\}\\
&= \sum_{\a\in R^+} \a(w^{-1}(a)) - \sum_{\a\in R^+}\lfloor\a(w^{-1}(a)) \rfloor\\
&= \langle w^{-1}(a),\rho\rangle - \sum_{\substack{\a\in R^+\\ \a(w^{-1}(a))<0}} (-1)\\
&= \langle a,w(\rho)\rangle +\ell(w^{-1})\\
&= \langle a,w(\rho)\rangle +\ell(w).
\end{align*}
\end{proof}
\noindent Let $a$ be as before.
\begin{definition}\label{defforq} For any $q\in \Q$, define
\begin{enumerate}
\item $w_q\in W$ to be the unique Weyl group element such that $q+a\in w_q\wc_0$.
\item $\deg(q)$ to be the number of affine walls intersected by the open segment joining $-a$ and $q$.
\end{enumerate}
\end{definition}
\begin{lemma}\label{lemmaforq} For any $q\in \Q$, we have
\[\deg(q) = \langle q,w_q(\rho)\rangle -\ell(w_q)\]
where $\rho:=\sum_{\a\in R^+}\a\in\t$.
\end{lemma}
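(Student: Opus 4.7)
The plan is to compute $\deg(q)$ by a direct lattice count and identify it term-by-term with the right-hand side. Parametrize the segment from $-a$ to $q$ as $\g(t) := -a + t(q+a)$, $t \in (0,1)$, and for each positive root $\a \in R^+$ count the number of $k \in \ZZ$ such that $\a(\g(t)) = k$ for some $t \in (0,1)$. Since $q$ lies in the unit lattice $\Q$ and roots are integral on $\Q$, we have $\a(q) \in \ZZ$; since $a \in \mathring{\wc}_0$ is chosen sufficiently close to the origin, $0 < \a(a) < 1$ for every $\a \in R^+$. A short case analysis then shows the contribution of $\a$ to $\deg(q)$ equals $\a(q)$ if $\a(q) > 0$, equals $0$ if $\a(q) = 0$, and equals $-\a(q) - 1$ if $\a(q) < 0$.

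Next I would use the defining property of $w_q$, namely $q + a \in w_q\wc_0$, to convert the sign of $\a(q)$ into information about $w_q^{-1}(\a)$. Writing $w_q \wc_0 = \{x : \b(x) \geqslant 0 \text{ for all } \b \in w_q R^+\}$, the condition $q + a \in w_q\wc_0$ says that for each $\a \in R^+$, $\a(q + a) \geqslant 0$ when $w_q^{-1}(\a) \in R^+$ and $\a(q + a) \leqslant 0$ when $w_q^{-1}(\a) \in -R^+$. Combined with $\a(q) \in \ZZ$ and $0 < \a(a) < 1$, this yields the equivalences
\[
\a(q) \geqslant 0 \iff w_q^{-1}(\a) \in R^+, \qquad \a(q) \leqslant -1 \iff w_q^{-1}(\a) \in -R^+.
\]
In particular, $|\{\a \in R^+ : \a(q) < 0\}| = \ell(w_q)$ by the standard description of the length function.

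Finally, I would expand $\langle q, w_q(\rho)\rangle = \sum_{\b \in R^+} (w_q\b)(q)$ and split the sum according to whether $w_q\b$ is a positive or negative root. Using the equivalences above, this gives
\[
\langle q, w_q(\rho)\rangle \;=\; \sum_{\substack{\a \in R^+\\ \a(q) > 0}} \a(q) \;-\; \sum_{\substack{\a \in R^+\\ \a(q) < 0}} \a(q).
\]
Subtracting $\ell(w_q) = |\{\a \in R^+ : \a(q) < 0\}|$ from this expression recovers precisely the sum of contributions computed in the first paragraph, yielding $\deg(q) = \langle q, w_q(\rho)\rangle - \ell(w_q)$.

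The step that requires most care is the boundary bookkeeping on the segment $(-a, q)$: one must verify that the endpoint $q$ itself never contributes (since the segment is open and $\a(q) \in \ZZ$ would otherwise be counted), and that the perturbation by $a$ keeps the segment in general position with respect to the affine walls, so that no wall of codimension higher than one is crossed. Both follow from the genericity and smallness of $a \in \mathring{\wc}_0$, so this is more a sanity check than a genuine obstacle.
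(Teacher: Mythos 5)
Your proposal is correct and follows essentially the same strategy as the paper: count, for each positive root $\a$, the number of affine walls $V_{\a,k}$ crossed by the open segment from $-a$ to $q$, and then identify the total with $\langle q,w_q(\rho)\rangle-\ell(w_q)$ by relating the sign of $\a(q)$ to whether $w_q^{-1}(\a)$ is a positive or negative root. The paper packages the per-root count into the compact identity $\deg(q)=\sum_{\a\in R^+}\lfloor\a(w_q^{-1}(q+a))\rfloor$ by first pulling back by $w_q^{-1}$, then splits the floor using $\a(w_q^{-1}(q))\in\ZZ$; you instead do the case analysis directly in the original coordinates and prove the sign equivalences separately. This is a presentational difference rather than a different route, and your endpoint/genericity remarks at the end are the right sanity checks.
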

\begin{proof}
It is not hard to see that
\begin{equation}\label{proofforq}
\deg(q)=\sum_{\a\in R^+}\lfloor\a(w_q^{-1}(q+a))\rfloor.
\end{equation}
The summand in \eqref{proofforq} is equal to $\a(w_q^{-1}(q))+\lfloor \a(w_q^{-1}(a))\rfloor$. The sum corresponding to the first term is equal to $\langle q,w_q(\rho)\rangle$, and the sum corresponding to the second is equal to, as in the proof of Lemma \ref{perturblengthlemma}, $-\ell(w_q)$.
\end{proof}


\subsection{Coadjoint orbits}
Let $x\in \t$. Define $\OO_x:= G\cdot x\subset \gg^{\vee}$, the coadjoint orbit passing through $x$. Then $\OO_x\simeq G/G_x$ where $G_x$ is the centralizer of $x$. By Hopf's theorem, $G_x$ is connected. Define $R_x:=\{\a\in R|~\a(x)=0\}$ and put $\t_x:=\bigcap_{\a\in R_x}\{\a=0\}$. For any subset $S\subseteq R$, define
\begin{align*}
\Q_S&:=\Span_{\ZZ}\{\a^{\vee}|~\a\in S\}\\
P^{\vee}_S&:=\{q\in \Q_S\otimes\RR|~\a(q)\in\ZZ\text{ for all }\a\in S\}.
\end{align*}
Notice that $\Q_R=\Q_0$. Define $G_x^{\ad}:=G_x/Z(G_x)$ and $\gg_x^{\ad}:=\Lie(G_x^{\ad})$. Observe that $\gg_x^{\ad}$ can be identified with the orthogonal complement of $\t_x$ in $\gg_x:=\Lie(G_x)$ via the quotient map $\gg_x\ra \gg_x^{\ad}$. Denote by $\t_x^{\perp}$ the orthogonal complement of $\t_x$ in $\t$. Then $\t_x^{\perp}$ is a maximal torus in $\gg_x^{\ad}$ via the above identification and $R_x$ is the root system of $(\gg_x^{\ad},\t_x^{\perp})$.

Define a complex structure $I_{\OO_x,x}$ on $\ds T_x\OO_x=\bigoplus_{\a\in R,~\a(x)>0}\gg_{\a}$ by
\begin{equation}\label{CScoadj}
I_{\OO_x,x} :=\sum_{\substack{\a\in R\\\a(x)>0} } \frac{-1}{2\pi\a(x)} \ad(x)|_{\gg_{\a}}.
\end{equation}
It is well-known that $I_{\OO_x,x}$ can be extended to a unique $G$-invariant almost complex structure $I_{\OO_x}$ on $\OO_x$ which is integrable and compatible with the KSS symplectic form. Moreover, $(\OO_x,I_{\OO_x})$ is a smooth projective variety and admits a holomorphic action by the complexification $G_{\CC}$ of $G$.

Recall $\wc_0$ is the dominant chamber. Pick $a\in \mr{\wc}_0$. Then $\langle a,-\rangle$ defines a Morse function on $\OO_x$ with critical set $\crit_{\OO_x}=W\cdot x$. For any $z\in\crit_{\OO_x}$, define $\mathcal{S}_z^{\OO_x}$ (resp. $\UU_z^{\OO_x}$) to be the stable (resp. unstable) submanifold\footnote{They are also called descending and ascending submanifolds respectively.} passing through $z$ with respect to this Morse function and the metric $g_{\OO_x}(-,-):=\w_{\OO_x}(-,I_{\OO_x}(-))$. It is well-known that these submanifolds are pseudocycles and the sets $\{[\mathcal{S}_z^{\OO_x}]\}_{z\in\crit_{\OO_x}}$ and $\{[\UU_z^{\OO_x}]\}_{z\in\crit_{\OO_x}}$ are $\ZZ$-bases of $H_*(\OO_x;\ZZ)$.

\begin{definition} Define the \textit{Schubert classes} $\s_z^{\OO_x}\in H^*(\OO_x;\ZZ)$ to be the Poincar\'e dual of $[\mathcal{S}_z^{\OO_x}]$.
\end{definition}

\noindent Since the Morse function $\langle a,-\rangle$ and the metric $g_{\OO_x}$ satisfy the Morse-Smale condition (another well-known fact), it follows that $\{\s_z^{\OO_x}\}_{z\in\crit_{\OO_x}}$ is the dual basis of $\{[\UU_z^{\OO_x}]\}_{z\in\crit_{\OO_x}}$ with respect to the natural pairing $H^*(\OO_x)\otimes H_*(\OO_x)\ra\ZZ$.
\begin{remark}\label{BSresolve} The closure of each stable or unstable submanifold admits a resolution, called the \textit{Bott-Samelson resolution} \cite{BS}. It follows that the homology classes $[\mathcal{S}_z^{\OO_x}]$ and $[\UU_z^{\OO_x}]$, $z\in\crit_{\OO_x}$ can in fact be represented by smooth cycles. Since we will not need their explicit forms, we do not recall their construction. (But we will do so for the based loop group $\O G$ in the next subsection.)
\end{remark}

Recall $\Nov{\OO_x}{\O G}$ is defined to be the group of homotopy classes of pairs $(\vp, u)$ where $\vp\in\O G$ and $u$ is a section of $P_{\vp}(\OO_x)$.

\begin{lemma} \label{GPNovforOO} There is a bijective correspondence between $\Nov{\OO_x}{\O G}$ and $\Q/\Q_{R_x}$.
\end{lemma}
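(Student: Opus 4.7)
The plan is to realize $\Nov{\OO_x}{\O G}$ as the second homotopy group of a classifying space, whose computation reduces to classical Lie theory. First I would observe that a loop $\vp \in \O G$ determines, via the clutching construction, a principal $G$-bundle $E_\vp \to S^2$ whose class in $\pi_2(BG) = \pi_1(G)$ is $[\vp]$, and that the Hamiltonian fibration $P_\vp(\OO_x)$ is canonically isomorphic to the associated bundle $E_\vp \times_G \OO_x$. A section of $P_\vp(\OO_x)$ is then the same as a $G$-equivariant map $E_\vp \to \OO_x$, equivalently a lift of the classifying map $S^2 \to BG$ of $\vp$ to a map $S^2 \to EG \times_G \OO_x$. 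Passing to homotopy classes yields a natural bijection
\[ \Nov{\OO_x}{\O G} \;\cong\; \pi_2(EG \times_G \OO_x). \]

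Next I would use the identification $\OO_x \simeq G/G_x$ (with $G_x$ connected by Hopf's theorem), which produces
\[ EG \times_G \OO_x = EG \times_G (G/G_x) = EG/G_x = BG_x. \]
Thus $\Nov{\OO_x}{\O G} \cong \pi_2(BG_x) \cong \pi_1(G_x)$. The group $G_x$ is a compact connected Lie group containing $T$ as a maximal torus, with root system $R_x \subseteq R$; its coroot lattice is therefore $\Q_{R_x} = \Span_{\ZZ}\{\a^{\vee} : \a \in R_x\}$. The standard presentation of $\pi_1$ for a compact connected Lie group via the quotient of the unit lattice of a maximal torus by the coroot lattice then gives $\pi_1(G_x) = \Q/\Q_{R_x}$, which is the desired target group. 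As a consistency check, the surjection $\Nov{\OO_x}{\O G} \to \pi_0(\O G) = \pi_1(G) = \Q/\Q_0$ coming from forgetting the section corresponds to the map $\pi_1(G_x) \to \pi_1(G)$ induced by inclusion, whose kernel $\Q_0/\Q_{R_x}$ matches the torsor structure of section classes over $\pi_2(\OO_x)$ (computed from $0 \to \pi_2(\OO_x) \to \pi_1(G_x) \to \pi_1(G) \to 0$).

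For the explicit bijection used elsewhere in the paper, it would then remain to check that the abstract isomorphism sends $q + \Q_{R_x}$ to the class $[\vp_q, u_\pm \equiv x]$ with $\vp_q(e^{i\tt}) = \exp(\tt q/2\pi)$. This is immediate from the construction: $\vp_q$ takes values in $T \subseteq G_x$, so the constant map $u_\pm \equiv x$ gives a genuine section; under the identifications $\pi_2(BG_x) \cong \pi_1(G_x) \cong \pi_1(T)/\Q_{R_x}$, the image of this pair is precisely $[\vp_q] = q + \Q_{R_x}$. The only "obstacle" is bookkeeping the compatibility between the clutching description of $P_\vp(\OO_x)$, the Borel construction, and the presentation of $\pi_1(G_x)$; there is no analytic input, the entire argument is homotopy-theoretic.
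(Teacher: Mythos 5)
Your proof is correct, and it reaches the conclusion by a genuinely different route than the paper's, even though both funnel into the same Lie-theoretic calculation $\pi_1(G_x)\simeq \Q/\Q_{R_x}$. You present $P_{\vp}(\OO_x)$ as the associated bundle $E_{\vp}\times_G\OO_x$ of the clutched principal $G$-bundle, translate sections into lifts of the classifying map $S^2\to BG$, and so identify $\Nov{\OO_x}{\O G}$ with $\pi_2(EG\times_G\OO_x)=\pi_2(BG_x)=\pi_1(G_x)$ via the Borel construction. The paper instead encodes a class in $\Nov{\OO_x}{\O G}$ as a loop $\tt\mapsto(\vp(\eit),u_-(\eit))$ in $G\times\OO_x$ together with the cap $(u_-,u_+)$ of its image under the graph map $(g,y)\mapsto(y,g\cdot y)$, which is a fiber bundle $G\times\OO_x\to\OO_x\times\OO_x$ with fiber $G_x$; since $\OO_x\times\OO_x$ is simply connected, the relative $\pi_2$ of this fibration is again $\pi_1(G_x)$. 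Both proofs then invoke the standard presentation of $\pi_1$ of a compact connected Lie group by unit lattice mod coroot lattice (the paper spells it out through the fibration $T\hookrightarrow G_x\to G_x/T$). Your route buys a clean conceptual picture with no novel constructions; the paper's is more hands-on, avoids classifying spaces, and phrases the key fibration as the graph of the $G$-action on $\OO_x$ — exactly the moment-correspondence picture used throughout the paper — so it integrates better with the surrounding sections. One small point to tighten if you write this up: $\Nov{\OO_x}{\O G}$ is defined using pairs $(\vp,u)$ with $\vp$ a based loop but $u$ an unbased section, whereas $\pi_2$ is a based homotopy group; the mismatch is harmless because $\OO_x$ is simply connected, but the identification with $\pi_2(EG\times_G\OO_x)$ should note this.
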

\begin{proof} Recall the construction of $s_{\vp,u_{\pm}}$ which defines a bijective correspondence between the set of sections of $P_{\vp}(\OO_x)$ and pairs $(u_-,u_+)$ satisfying \eqref{pair}. It follows that $\Nov{\OO_x}{\O G}$ is in a bijective correspondence with the set of homotopy classes of pairs $(\vp,u_{\pm})$ satisfying \eqref{pair}. To determine the latter set, consider the moment correspondence associated to the \Ham $G$-manifold $\OO_x$, expressed as the graph of a symplectic quotient of $T^*G$:
\begin{equation}
\begin{array}{ccc}
G\times \OO_x&\ra& \OO_x\times \OO_x \\ \\ [-1em]
 (g,y) &\mapsto &(y,g\cdot y).
\end{array}
\end{equation}
It is a fiber bundle with fibers isomorphic to $G_x$. By the homotopy lifting property, we see that the set we want to determine is equal to $\pi_1(G_x)$. More precisely, for any based loop $\vp\in \O G_x$, the corresponding homotopy class of pairs is given by $(\vp,u_{\pm}\equiv x)$. It remains to show that $\pi_1(G_x)\simeq \Q/\Q_{R_x}$. This is standard but we still give a proof here.

The fibration $T\hookrightarrow G_x\ra G_x/T$ induces a short exact sequence
\begin{equation}\label{GPSES}
0\ra \pi_2(G_x/T)\ra \pi_1(T)\ra \pi_1(G_x)\ra 0
\end{equation}
(using $\pi_2(G_x)=\pi_1(G_x/T)=0$). Notice that $G_x/T\simeq \widetilde{G_x^{\ad}}/T'$ where $\widetilde{G_x^{\ad}}$ is the universal cover of $G_x^{\ad}$ and $T'$ is the maximal torus in $\widetilde{G_x^{\ad}}$ covering $\exp(\t_x^{\perp})\simeq T/Z(G_x)\subseteq G_x^{\ad}$. Our result follows from the well-known fact that $\pi_1(T')$ is spanned by the coroots of $G_x$ and the isomorphism $\pi_2(\widetilde{G_x^{\ad}}/T')\simeq \pi_1(T')$ which is deduced from a short exact sequence similar to \eqref{GPSES}.
\end{proof}

\begin{definition} For any $q+\Q_{R_x}\in \Q/\Q_{R_x}$, define $A^{\OO_x}_{x,q}\in \Nov{\OO_x}{\O G}$ by
\[A^{\OO_x}_{x,q}:= [\vp_q, s_{\vp=\vp_q, u_{\pm}\equiv x} ] \]
where $\vp_q:\eit\mapsto\exp(\tt\cdot q/2\pi)\in G$.
By Lemma \ref{GPNovforOO}, the assignment $q+\Q_{R_x}\mapsto A^{\OO_x}_{x,q}$ defines a bijection between $\Q/\Q_{R_x}$ and $\Nov{\OO_x}{\O G}$. We will sometimes denote $A^{\OO_x}_{x,q+\Q_{R_x}}$ to emphasize that the homotopy class is defined for cosets in $\Q/\Q_{R_x}$.
\end{definition}

\begin{remark} It is useful to remember the based point $x$ in the definition of $A^{\OO_x}_{x,q}$. If we take another based point $x'\in \OO_x$ which also lies in $\t$, then $x'=w(x)$ for some $w\in W$, and we have
\[ A^{\OO_x}_{x,q+\Q_{R_x}} = A^{\OO_x}_{x',w(q)+\Q_{R_{x'}}}.\]
If the based point is dropped, we mean that it is the unique intersection point of $\OO_x$ and $\wc_0$.
\end{remark}

\subsection{Based loop groups}\label{BScycle}
Let $\cu:[0,1]\ra \t$ be a piecewise smooth curve.
\begin{definition} \label{nice} We say that $\cu$ is \textit{nice} if the following conditions are satisfied.
\begin{enumerate}
\item If $\cu$ intersects an affine wall $V_{\a,k}$ at $t=t_0\in [0,1]$, then $\cu$ is smooth at $t_0$, the intersection is transverse, and $\cu^{-1}(V_{\a,k})=\{t_0\}$.
\item For any $t\in (0,1)$, $\cu(t)$ is contained in at most one affine wall.
\end{enumerate}
\end{definition}

\noindent Let $\cu$ be a nice curve such that $\cu(1)\in \Q$. Consider
\[ 0 =t_0< t_1<\cdots < t_{\ell} < t_{\ell +1}=1\]
where $\{t_i|~i=1,\ldots, \ell\}$ is the set of all points in $(0,1)$ for which $\cu(t_i)$ lies in an affine wall $V_{\a_i,k_i}$ for some $\a_i\in R^+$ and $k_i\in\ZZ$. Notice that by Condition (2) in Definition \ref{nice}, each pair $(\a_i,k_i)$ is well-defined, and by Condition (1), these pairs are pairwise distinct. Put $\cu_i:=\cu|_{[t_i,t_{i+1}]}$ and denote by $G_{\a_i}$ the centralizer of $V_{\a_i,0}$. It is well-known that $G_{\a_i}/T\simeq \mathbb{P}^1$ for any $i$. Define a $T^{\ell}$-action on the product $G_{\a_1}\times \cdots\times G_{\a_{\ell}}$ by
\[ (x_1,\ldots,x_{\ell})\cdot (g_1,\ldots, g_{\ell}):= (g_1x_1,x_1^{-1}g_2x_2,\ldots,x_{\ell -1}^{-1}g_{\ell} x_{\ell})\]
for any $(x_1,\ldots,x_{\ell})\in T^{\ell}$ and $ (g_1,\ldots, g_{\ell})\in G_{\a_1}\times \cdots\times G_{\a_{\ell}}$. This action is free, and the standard notation for the resulting quotient is $G_{\a_1}\times_T\cdots\times_TG_{\a_{\ell}}/T$. But since it will appear for many times, we shall denote it by $[G_{\a_1}:\cdots:G_{\a_{\ell}}]$ for simplicity. Using the aforementioned fact that $G_{\a_i}/T\simeq \mathbb{P}^1$, it is not hard to deduce that $[G_{\a_1}:\cdots:G_{\a_{\ell}}]$ is a smooth projective variety of complex dimension $\ell$ which has a structure of iterated $\mathbb{P}^1$-bundles. In particular, it carries a canonical orientation induced by the complex structure.

We introduce three more notations before defining Bott-Samelson cycles: For any $g_1,g_2\in G$, define $\O_{g_1\ra g_2}G$ to be the space of paths in $G$ from $g_1$ to $g_2$; for any $g\in G$ and $\eta\in \gg$, put $g\cdot \eta:= \Ad(g)\cdot\eta$ where $\Ad$ is the adjoint action; and for any $\eta\in \gg$, put $e^{\eta}:=\exp(\eta)\in G$.
\begin{definition} \label{BScycleDef}  The \textit{Bott-Samelson cycle} associated to $\cu$ is defined by
\[
\begin{array}{cccl}
\BS_{\cu} : &[G_{\a_1}:\cdots:G_{\a_{\ell}}] &\ra& \O_{e^{\cu(0)}\ra e} G\\ \\ [-1em]
 & [g_1:\cdots:g_{\ell}] &\mapsto &e^{\cu_0}\# e^{g_1\cdot \cu_1}\# e^{(g_1g_2)\cdot \cu_2}\#\cdots\# e^{(g_1\cdots g_{\ell})\cdot \cu_{\ell}}
\end{array}
 \]
 where $\#$ denotes path concatenations.
\end{definition}
\noindent It is clear that every element of the image of $\BS_{\cu}$ is a piecewise smooth curve in $G$ from $e^{\cu(0)}$ to $e$.

Define $\O G:=\O_{e\to e}G$. We move on to construct an explicit basis of the group $H_*(\O G;\ZZ)$. For any curve $\cu$ and any map $f$ from a space $X$ to the space of curves which start at the ending point of $\cu$, define $\cu\# f$ to be the map $X\ni x\mapsto \cu\# f(x)$. For any $\eta,\zeta\in \t$, we denote by $[\eta,\zeta]$\footnote{This is certainly not a good notation since it coincides with the Lie bracket. But as we will see, it will not make any confusion because we will rarely talk about the Lie bracket.} the line segment from $\eta$ to $\zeta$. Fix a generic element $a_0$ in the interior $\mr{\D}_0$ of the dominant alcove $\D_0$ such that for any $q\in \Q$ the segment $[-a_0,q]$ is nice.

\begin{definition} For any $q\in \Q$, define
\[ x_q:= \left[ e^{[0,-a_0]}\# \BS_{[-a_0,q]} \right]\in H_{2\deg(q)}(\O G;\ZZ)\]
where the outer bracket is the fundamental class of the cycle inside with the orientation induced by the complex structure on the domain and $\deg(q)$ is defined in Definition \ref{defforq}(2).
\end{definition}

\begin{theorem} \label{BSthm} \cite{BS} The set $\{x_q\}_{q\in \Q}$ is a $\ZZ$-basis of $H_*(\O G;\ZZ)$. \hfill$\square$
\end{theorem}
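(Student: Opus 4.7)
The plan is to identify $x_q$ with (up to sign) the fundamental class of an affine Schubert variety $\overline{\UU_q}$ in $\O G$, and to use the fact that the affine Schubert varieties provide an even-dimensional cell decomposition of $\O G$. The ingredients are entirely classical, but the presentation here is organized so that the combinatorics developed in Section~\ref{prelimLie} (in particular Lemma~\ref{lemmaforq} and Definition~\ref{defforq}) does all the bookkeeping.

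First, I would set up the Morse theory. Consider the perturbed energy functional $F$ on $\O G$ obtained by adding to the standard $L^2$-energy $E(\gamma) = \int_0^1 |\dot\gamma|^2\, dt$ a small $T$-equivariant Morse term determined by $a_0 \in \mathring{\D}_0$. The unperturbed $E$ is Morse--Bott whose critical loci are $G$-orbits of based geodesics $t\mapsto g \exp(tq)g^{-1}$ for $q\in\Q$; for generic $a_0$ these degeneracies break, leaving isolated critical points $\{\vp_q\}_{q\in\Q}$ with $\vp_q(t)=\exp(tq)$. A Hessian computation using the root space decomposition~\eqref{rootdecomp} (compare with~\eqref{CScoadj}) gives Morse index at $\vp_q$ equal to $2\deg(q)$. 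Since every cell of the resulting CW structure on $\O G$ has even real dimension, the cellular boundary operator vanishes and $H_*(\O G;\ZZ)$ is the free abelian group on the affine Schubert classes $\{[\overline{\UU_q}]\}_{q\in \Q}$.

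Next, I would identify $x_q$ with $\pm [\overline{\UU_q}]$. Each transverse crossing of an affine wall $V_{\a_i, k_i}$ by the segment $[-a_0, q]$ contributes a $\mathbb{P}^1$-factor $G_{\a_i}/T$ to the domain of $\BS_{[-a_0,q]}$; on loops, this factor corresponds to multiplication by an element of the affine $\mathrm{SL}_2$-subgroup of $\O G$ attached to the affine root $(\a_i, k_i)$. Reading the walls in the order determined by the nice segment therefore yields a word $s_{\a_1, k_1}\cdots s_{\a_\ell, k_\ell}$ in the simple affine reflections whose product in $W^{af}$ is the translation $t_q$. Granted that this word is reduced, the standard affine Bott--Samelson theory~\cite{BS} then identifies $\BS_{[-a_0,q]}$ as a birational proper resolution of $\overline{\UU_q}$, so its pushforward sends the fundamental class of $[G_{\a_1}:\cdots:G_{\a_\ell}]$ to $\pm [\overline{\UU_q}]$.

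The main obstacle is verifying that the word produced by $[-a_0, q]$ is indeed reduced, i.e.\ that $\ell = \ell(t_q)$. Here $\ell$ is, by Definition~\ref{defforq}(2), equal to $\deg(q)$, and Lemma~\ref{lemmaforq} together with the well-known formula $\ell(t_q) = \langle q, w_q(\rho)\rangle - \ell(w_q)$ (which holds for $q$ in the interior of a chamber and extends via the action of $W$) identifies these quantities, once $a_0$ is taken sufficiently close to the origin and generically inside $\mathring{\D}_0$; the niceness conditions of Definition~\ref{nice} then rule out tangential or multiple crossings. Orientation compatibility amounts to checking, at the top-stratum fixed point $\vp_q$, that the tangent spaces of the Bott--Samelson variety and of $\overline{\UU_q}$ are identified as sums of root spaces carrying the same complex structure, which is a routine local computation.
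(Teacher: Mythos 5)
The paper does not prove this theorem; it is cited to Bott--Samelson \cite{BS}, and the $\square$ after the statement signals that no proof is given in the text. Your outline is the classical Bott--Samelson argument (Morse theory on the loop space, even-dimensional affine Schubert cells, Bott--Samelson resolutions), so the overall route is the right one, but there is a concrete combinatorial error at the step you yourself flag as the main obstacle.

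You claim the word produced by the wall-crossings of $[-a_0,q]$ multiplies to the translation $t_q$, and justify this via the formula $\ell(t_q)=\langle q,w_q(\rho)\rangle-\ell(w_q)$, asserted to ``extend via the action of $W$'' from the dominant case. This formula is false: the Iwahori--Matsumoto length of a translation is $\ell(t_q)=\sum_{\a\in R^+}|\a(q)|=\langle q,w_q(\rho)\rangle$, with no $-\ell(w_q)$ correction, and this quantity is $W$-invariant while your right-hand side is not, so the claimed extension cannot work. (A rank-one check: in $SU(2)$ with $q=-\a^{\vee}$ one has $\ell(t_q)=2$ but $\deg(q)=1$ and $\ell(w_q)=1$.) What is true is that $\deg(q)=\ell(t_q)-\ell(w_q)$ (this is exactly Lemma~\ref{lemmaforq}), and this is the length of the minimal-length representative $m_q$ of the coset $t_qW$ in $W^{af}$, not of $t_q$ itself. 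The gallery traced by the segment $[-a_0,q]$ is a minimal gallery for $m_q$, and it is the closure of the Iwahori orbit through the $T$-fixed point labelled by $q$ (of dimension $\ell(m_q)=\deg(q)$) that $\BS_{[-a_0,q]}$ resolves. So the dimensions do match and the conclusion survives, but the identification ``product $=t_q$'' and the length formula used to argue reducedness are both wrong and should be replaced by the coset-representative statement.

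Two secondary points. First, perturbing the energy on $\O G$ by ``a small $T$-equivariant Morse term'' is vague and not the standard move: $E$ has $G$-orbits of critical loops and breaking that symmetry by a $T$-invariant function requires justification; the clean classical device (and the one this paper actually uses in Section~\ref{MCcircF} via $\O'G=\O_{e^{-a}\to e}G$ and Remark~\ref{piimpliesnopi}) is to move the basepoint to $e^{-a_0}$, after which the unperturbed energy is already Morse with critical set $\{e^{[-a_0,q]}\}_{q\in\Q}$ and index $2\deg(q)$. Second, citing \eqref{CScoadj} for the Hessian computation is a non sequitur: that display is the integrable complex structure on a finite-dimensional coadjoint orbit, not the second variation of the loop-space energy; the root-space decomposition \eqref{rootdecomp} together with the Jacobi/conjugate-point analysis is what is actually needed.
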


In order to prove one of our main theorems (Theorem \ref{B}), we have to construct, for each $q$, a cycle representing $x_q$ by applying the above construction to the segment $[0,q]$. However, $[0,q]$ may not be nice, and even if it is nice, $\BS_{[0,q]}$ may not represent $x_q$. For example, take $q$ to be the coroot associated to the unique highest root in type $A_2$. Observe that the construction of $\BS_{\cu}$ depends only on the collection of affine walls intersected by $\cu$ and an ordering of the moments when these intersections take place. In our case, they are specified as follows. Let $q\in \Q$ be given. Choose $a\in \mr{\D}_0$ such that the segment $[-sa,q]$ is nice for any $s\in (0,1]$. Such a point exists, since the set of $a'\in\mr{\D}_0$ for which $[-a',q]$ is not nice is contained in a finite union of proper affine subspaces. Then the collection of affine walls intersected by $[-sa,q]$ (in the interior) as well as their ordering are independent of $s\in (0,1]$. We denote this collection by $\mathcal{V}$. By letting $s\to 0^+$, we see that $[0,q]$ also intersects every member of $\mathcal{V}$ once and transversely. Moreover, the ordering of $\mathcal{V}$ determined by positive $s$ is preserved, provided that we allow $[0,q]$ to intersect more than one affine walls at the same time, and possibly at $t=0$. With this ordering, we are able to define the Bott-Samelson cycle as in Definition \ref{BScycleDef}.
\begin{definition}\label{BSq} Define $\BS_q$ to be the cycle thus constructed.
\end{definition}

\begin{example} Consider the above $A_2$ example, that is, $G=SU(3)$. Let $\a_1,\a_2$ be the simple roots. Then the highest root $\a_0$ is equal to $\a_1+\a_2$. Define $q$ to be the coroot associated to $\a_0$. If we take $a$ to be a point in $\mr{\D}_0$ closer to $\a_1^{\vee}$ than $\a_2^{\vee}$, then the affine walls intersected by the segment $[-a,q]$ are $V_{\a_2,0}, V_{\a_0,0}, V_{\a_1,0}, V_{\a_0,1}$ in this order. It follows that the corresponding $\BS_q$ is given by
\[
\begin{array}{cccl}
\BS_q : &[G_{\a_2}:G_{\a_0}:G_{\a_1}:G_{\a_0}] &\ra& \O SU(3)\\ \\ [-1em]
 & [g_1:g_2:g_3:g_4] &\mapsto & e^{(g_1g_2g_3)\cdot \left[0,\frac{q}{2}\right]}\# e^{(g_1g_2g_3g_4)\cdot \left[\frac{q}{2},q\right]}
\end{array}.
 \]
\end{example}

\begin{remark} Notice that $\BS_q$ may not be unique as different choices of $a$ may yield different orderings. (In the above $A_2$ example, if $a$ is closer to $\a_2^{\vee}$ than $\a_1^{\vee}$, then the ordering will be $V_{\a_1,0}, V_{\a_0,0}, V_{\a_2,0}, V_{\a_0,1}$.) Nevertheless, it does not affect our application, namely the proof of Theorem \ref{B}. In fact, every possible $\BS_q$ is a solution to the min-max problem considered in the theorem.
\end{remark}

\noindent It is clear that $s\mapsto e^{[0,-sa]}\# \BS_{[-sa,q]}$ defines a homotopy between $\BS_q$ and $e^{[0,-a]}\# BS_{[-a,q]}$ so that these cycles are homologous. However, it is not obvious that they represent $x_q$, due to the ordering issue we have encountered. Fortunately, this discrepancy vanishes in homology level.

\begin{proposition}\label{indepofnice} If $\cu^0$ and $\cu^1$ are nice curves such that $\cu^0(0)=\cu^1(0)$ and $\cu^0(1)=\cu^1(1)\in \Q$, then $[\BS_{\cu^0}]=[\BS_{\cu^1}]$.
\end{proposition}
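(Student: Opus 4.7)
My plan is to connect $\cu^0$ and $\cu^1$ by a homotopy $\{\cu^s\}_{s\in [0,1]}$ of piecewise smooth curves with fixed endpoints, chosen generically so that $\cu^s$ is nice for all but finitely many critical values $s_1<\cdots<s_N\in (0,1)$. Then it suffices to show that each ``elementary move'' that occurs across a critical $s_k$ preserves the homology class of $\BS_{\cu^s}$. By a standard transversality argument on the space of piecewise smooth maps $[0,1]\ra \t$ (fixing endpoints) together with the finite stratification of $\t$ by affine walls, one can arrange that at each $s_k$ exactly one of the following local degenerations happens near a single time $t_*\in (0,1)$: (i) two transverse crossings of distinct affine walls $V_{\a,k}$ and $V_{\b,l}$ coalesce at $t_*$, swapping their order; (ii) the curve becomes tangent to a single affine wall $V_{\a,k}$ at $t_*$, so a pair of consecutive crossings with $V_{\a,k}$ is created or cancelled; (iii) the curve passes through the codimension-two intersection $V_{\a,k}\cap V_{\b,l}$ at $t_*$, with the family crossing both walls transversely on either side.

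\textbf{Handling the three moves.} For move (i), the family $\{\cu^s\}$ near $s_k$ produces a one-parameter family of Bott-Samelson domains of the form $\cdots \times_T [G_{\a}:G_{\b}] \times_T\cdots$ for $s<s_k$ and $\cdots\times_T [G_{\b}:G_{\a}]\times_T\cdots$ for $s>s_k$. At $s=s_k$ both crossings occur at $t_*$ and the two factors $G_\a$ and $G_\b$ can be grouped into a single $G_\a\cdot G_\b\subseteq G$ factor; the assignment $\BS_{\cu^s}$ extends continuously over $[s_k-\e,s_k+\e]$ to define a cobordism realizing the two orderings as homologous cycles. For move (iii), one similarly obtains a cobordism by noting that the Bott-Samelson data for $s$ near $s_k$ involves the same set of walls $\{V_{\a,k},V_{\b,l}\}$ in the same order (only the time of crossing shifts), so $\BS_{\cu^s}$ extends smoothly across $s_k$ with no change in domain. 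Thus (i) and (iii) are essentially formal.

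\textbf{Main obstacle: the bubble move (ii).} The genuinely nontrivial case is the cancellation/creation of a pair of consecutive crossings with the same wall $V=V_{\a,k}$. Here the domain of $\BS_{\cu^s}$ gains (or loses) a factor $[\cdots : G_\a : G_\a : \cdots]$ on one side of $s_k$, and one must show the corresponding cycle is null-homologous relative to its boundary. The key observation is that when the two crossings are infinitesimally close, the corresponding segments $\cu_i$ and $\cu_{i+1}$ lie on opposite sides of $V$ and their $(g_1\cdots g_i)$- and $(g_1\cdots g_{i+1})$-translates produce paths whose concatenation is the boundary of a disk in the $\mathbb{P}^1$-bubble $G_\a/T$. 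More precisely, the map $[g_1:\cdots:g_\ell]\mapsto$ (concatenated loop) factors, on the added $[G_\a:G_\a]$ factor and at the critical time, through the multiplication $G_\a\times_T G_\a\ra G_\a$; the fiber is a copy of $T$ but the image curve depends only on the product $g_ig_{i+1}\in G_\a$, so the cycle contracts onto a lower-dimensional subvariety. Combining this with a one-parameter family as $s$ varies across $s_k$ produces a bounding chain (a $\mathbb{P}^1$-bundle over a lower-dimensional Bott-Samelson) whose boundary is precisely the difference $\BS_{\cu^{s_k+\e}}-\BS_{\cu^{s_k-\e}}$.

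\textbf{Conclusion.} Since each elementary move preserves the class, and $\cu^0,\cu^1$ are connected by finitely many such moves, we obtain $[\BS_{\cu^0}]=[\BS_{\cu^1}]$. The hardest technical step, as outlined above, will be writing out the explicit bounding chain for the bubble move in (ii); everything else is transversality plus a continuity argument on the Bott-Samelson assignment.
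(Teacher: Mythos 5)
Your outline misses the case that carries the real difficulty. You decompose the homotopy into moves (i)--(iii) and treat (i), the transposition of two factors $G_\a$ and $G_\b$, as ``essentially formal.'' But in any root system other than $A_1^n$, a codimension-two intersection of affine walls is typically contained in more than two walls: three in type $A_2$, four in $B_2$, six in $G_2$ (e.g.\ in $A_2$ the locus $V_{\a_1,k_1}\cap V_{\a_2,k_2}$ also lies in $V_{\a_1+\a_2,k_1+k_2}$). A generic homotopy unavoidably passes through such loci, and at such a bad parameter the move is not a transposition of two adjacent $G_\a$-factors but a complete order reversal of a block of three, four, or six consecutive factors. Your ``group into $G_\a\cdot G_\b\subseteq G$'' sketch only covers the $A_1\times A_1$ case and does not address this. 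The paper's proof spends essentially all of its effort here: it classifies the bad locus by the rank-two subroot system $R'$, and shows both orderings of the block give cycles that factor, via birational multiplication maps, through a common ``parabolic Bott--Samelson'' $X_{S'}$ built from the larger group $G_{S'}$ (e.g.\ $[G_{\a_r}:G_{\a_{r+1}}:G_{\a_{r+2}}]\ra G_{S'}/T$ is a Bott--Samelson resolution of a three-step flag). Without an argument of this kind the proof is incomplete.

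Two further remarks. First, you devote most of the proposal to move (ii), the creation/cancellation of a pair of crossings of the same wall, and regard it as the main obstacle; but this move can be avoided entirely. The set $S$ of affine walls a nice curve from $x$ to $y$ must cross is determined by the endpoints alone (it is $\{(\a,k): (\a(x)-k)(\a(y)-k)<0\}$), and each is crossed exactly once. One can, and the paper does, choose the homotopy $\{\cu^s\}$ so that every intermediate curve still satisfies Condition~(1) of Definition~\ref{nice} (transverse to each wall, hitting it at most once); only Condition~(2) is allowed to fail at finitely many $s$. Then crossings are never created or destroyed, only reordered, and your move (ii) never occurs. Second, your move (iii) as stated is inconsistent: passing through a codimension-two intersection generically \emph{does} reverse the order of the two crossings, so (iii) is the same as (i), not a separate ``no change in domain'' case.
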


\begin{proof} Denote by $x$ and $y$ the common starting point and ending point of the curves $\cu^0$ and $\cu^1$ respectively. Define
\[ S:=\{(\a,k)|~\a\in R^+, ~k\in\ZZ, ~(\a(x)-k)(\a(y)-k)<0 \}.\]
This set labels the affine walls which are intersected by $\cu^0$ (resp. $\cu^1$) in their interior. For any subset $S'\subseteq S$, define an affine subspace $V_{S'}:=\bigcap_{(\a,k)\in S'} V_{\a,k}$. Take a homotopy $\{\cu^s\}_{s\in [0,1]}$ with fixed endpoints joining $\cu^0$ and $\cu^1$ such that every $\cu^s$ satisfies Condition (1) of Definition \ref{nice}, and as a map $[0,1]^2\ra \t$, this homotopy intersects $V_{S'}$ transversely for any $S'\subseteq S$. Call a point $s\in (0,1)$ \textit{bad} if $\cu^s$ is not nice. By the transversal condition, the set of bad points is finite, and for any bad $s$, $\cu^s$ intersects $V_{S'}$ for at least one $S'$ with $\codim V_{S'} =2$ and for no $S'$ with $\codim V_{S'} \geqslant 3$. It is not hard to see that $\BS_{\cu^{s_1}}$ is homotopic to $\BS_{\cu^{s_2}}$ whenever $[s_1,s_2]$ contains no bad points. Thus, it suffices to examine what happens when $s$ crosses a bad point.

Let $s_0$ be bad. For simplicity, assume $\cu^{s_0}$ intersects only one affine subspace $V_{S'}$ of codimension two. The arguments we going to present work well for the general case. Notice that $S'$ is not unique but is unique if we assume it is maximal among all subsets of $S$ giving the same affine subspace. Then
\[R':=\{\pm\a|~\a\in R^+,~\exists k\in\ZZ\text{ s.t. }(\a,k)\in S'\}.\]
is a rank-two subroot system of $R$, and hence it can only be $A_1\times A_1, A_2, B_2$ or $G_2$. Let us deal with the $A_2$ case and leave other cases to the readers.

In this case, $|S'|=3$. For any $s$ near but not equal to $s_0$, consider, as before, the moments at which $\cu^s$ intersects an affine wall:
\[ 0= t_0^s< t_1^s<\cdots <t_{\ell}^s< t_{\ell +1}^s =1.\]
Notice that for every $i$, the pair labelling the unique affine wall containing $\cu^s(t_i^s)$ is locally constant in $s$ away from $s_0$. Thus, we simply denote by $(\a_i^-,k_i^-)$ (resp. $(\a_i^+,k_i^+)$) the pair for $s<s_0$ (resp. $s>s_0$). At $s=s_0$, the numbers $t_i^{s_0}$ are in fact well-defined but they coincide for precisely three indices which are consecutive. Let these indices be $r,r+1,r+2$. Then $(\a_i^-,k_i^-)=(\a_i^+,k_i^+)$ for $i\ne r,r+2$ and $(\a_r^{\pm},k_r^{\pm})=(\a_{r+2}^{\mp},k_{r+2}^{\mp })$. Therefore, the Bott-Samelson cycle $\BS_{\cu^s}$ with $s$ slightly smaller than $s_0$ is homotopic to the cycle
\begin{align}\label{BScycleminus}
& X^-:= [G_{\a^-_1}:\cdots:G_{\a^-_r}:G_{\a^-_{r+1}}:G_{\a^-_{r+2}}:\cdots: G_{\a^-_{\ell}}] \ra \O_{e^x\ra e} G \\
 & [g_1:\cdots:g_{\ell}] \mapsto e^{\cu^{s_0}_0}\# \cdots \# e^{(g_1\cdots g_{r-1})\cdot \cu^{s_0}_{r-1}} \#  e^{(g_1\cdots g_{r-1} g_r g_{r+1}g_{r+2})\cdot \cu^{s_0}_{r+2}} \#  \cdots \# e^{(g_1\cdots g_{\ell})\cdot \cu^{s_0}_{\ell}} \nonumber
\end{align}
where $\cu_i^{s_0}:=\cu^{s_0}|_{[t_i^{s_0},t_{i+1}^{s_0}]}$, and the Bott-Samelson cycle $\BS_{\cu^s}$ with $s$ slightly larger than $s_0$ is homotopic to the cycle
\begin{align}\label{BScycleplus}
& X^+:= [G_{\a^-_1}:\cdots:G_{\a^-_{r+2}}:G_{\a^-_{r+1}}:G_{\a^-_r}:\cdots: G_{\a^-_{\ell}}] \ra \O_{e^x\ra e} G \\
 & [g_1:\cdots: g_{r+2}: g_{r+1}: g_r: \cdots: g_{\ell}] \mapsto e^{\cu^{s_0}_0}\# \cdots \#  e^{(g_1\cdots g_{r-1} g_{r+2} g_{r+1}g_r)\cdot \cu^{s_0}_{r+2}} \#  \cdots \# e^{(g_1\cdots g_{\ell})\cdot \cu^{s_0}_{\ell}}. \nonumber
\end{align}
The proof is complete by noticing that both cycles \eqref{BScycleminus} and \eqref{BScycleplus} factor through the map
\begin{align*}
& X_{S'}:= [G_{\a^-_1}:\cdots:G_{\a^-_{r-1}}:G_{S'}:G_{\a^-_{r+3}}:\cdots: G_{\a^-_{\ell}}] \ra \O_{e^x\ra e} G \\
 & [g_1:\cdots: g_{r-1}: g: g_{r+3}: \cdots: g_{\ell}] \mapsto e^{\cu^{s_0}_0}\# \cdots \#  e^{(g_1\cdots g_{r-1} g)\cdot \cu^{s_0}_{r+2}} \#  \cdots \# e^{(g_1\cdots g_{r-1}gg_{r+3}\cdots g_{\ell})\cdot \cu^{s_0}_{\ell}}
\end{align*}
where $G_{S'}$ is the centralizer of $\bigcap_{(\a,k)\in S'}V_{\a,0}=V_{\a^-_r,0}\cap V_{\a^-_{r+1},0}\cap V_{\a^-_{r+2},0}$, and the fact that the multiplication maps
\[ \begin{array}{rcl}
 X^- &\ra & X_{S'} \\  \\ [-1em]
 [g_1:\cdots: g_{r}: g_{r+1}: g_{r+2}: \cdots: g_{\ell}] &\mapsto & [g_1:\cdots: g_{r} g_{r+1} g_{r+2}: \cdots: g_{\ell}]
\end{array} \]
and
\[ \begin{array}{rcl}
 X^+&\ra & X_{S'}  \\  \\ [-1em]
 [g_1:\cdots: g_{r+2}: g_{r+1}: g_{r}: \cdots: g_{\ell}] &\mapsto & [g_1:\cdots: g_{r+2} g_{r+1} g_{r}: \cdots: g_{\ell}]
\end{array} \]
are birational\footnote{This can be deduced from the case without $G_{\a^-_i}$ for $i\neq r,r+1,r+2$ which is nothing but the Bott-Samelson resolutions $[G_{\a^-_r}:G_{\a^-_{r+1}}:G_{\a^-_{r+2}}]\ra G_{S'}/T\simeq F\ell(1,2;3)$ etc.}.
\end{proof}

\begin{remark} The use of a homotopy intersecting every affine wall transversely can be found in \textit{Tits building theory}.
\end{remark}

\begin{corollary}\label{xqindep} For any $q\in \Q$, $x_q$ is independent of the generic element $a_0\in \mr{\D}_0$ and is equal to $[\BS_q]$. \hfill$\square$
\end{corollary}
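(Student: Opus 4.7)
The plan exploits the continuity argument already sketched in the paragraph immediately preceding the corollary, which shows that $[\BS_q] = [e^{[0,-a]} \# \BS_{[-a,q]}]$ for the particular $a \in \mr{\D}_0$ fixed in Definition \ref{BSq}. Granting this, it would remain to prove that the class $[e^{[0,-a_0]} \# \BS_{[-a_0,q]}]$ is independent of the generic $a_0 \in \mr{\D}_0$ used in the definition of $x_q$; then both claims of the corollary would follow at once.

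To establish that independence, I would reinterpret the cycle $e^{[0,-a_0]} \# \BS_{[-a_0,q]}$ as the Bott-Samelson cycle of the single piecewise linear curve $\cu_{a_0} : 0 \to -a_0 \to q$. Since $-a_0 \in -\mr{\wc}_0$ lies in the alcove containing the origin, the first segment $[0,-a_0]$ meets no affine wall in its interior, so the interior wall crossings of $\cu_{a_0}$ coincide with those of $[-a_0,q]$, and a direct check from Definition \ref{BScycleDef} shows that $\BS_{\cu_{a_0}}$ agrees on the nose with $e^{[0,-a_0]} \# \BS_{[-a_0,q]}$ (the first segment is absorbed into the leading factor $e^{\cu_0}$). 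Given two generic $a_0, a_0' \in \mr{\D}_0$, I would join them by a generic smooth path $s \mapsto a_0^s$ in $\mr{\D}_0$ and consider the family $\cu_{a_0^s}$; outside finitely many bad values of $s$ the combinatorial data are locally constant, and at each bad $s$ the wall-swap argument of Proposition \ref{indepofnice} --- factoring the two consecutive configurations through the common auxiliary cycle $X_{S'}$ attached to the codimension-two wall intersection --- shows the homology class is unchanged, yielding $[\BS_{\cu_{a_0}}] = [\BS_{\cu_{a_0'}}]$.

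The main obstacle is that $\cu_{a_0}$ fails to be nice in the strict sense of Definition \ref{nice}: its basepoint $0$ lies on every wall $V_{\a,0}$ with $\a \in R$, and these walls are typically also hit in the interior of $[-a_0,q]$, violating the uniqueness clause $\cu^{-1}(V_{\a,0}) = \{t_0\}$ of condition (1). This endpoint pathology, however, has no impact on either the Bott-Samelson construction (which is driven only by interior crossings) or on the wall-swap cancellations in the proof of Proposition \ref{indepofnice} (all of which take place strictly inside the parameter interval). I would therefore either argue that both constructions adapt verbatim to this slightly weaker class of curves, or else perturb the basepoint $0$ to a small generic point $\epsilon v \in \t$, apply Proposition \ref{indepofnice} as stated, and let $\epsilon \to 0^+$ to recover the claim.
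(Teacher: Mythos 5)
Your reduction of the corollary to the independence of $[e^{[0,-a_0]}\#\BS_{[-a_0,q]}]$ from $a_0$ is the right move, and reinterpreting this cycle as $\BS_{\cu_{a_0}}$ with $\cu_{a_0}:0\to -a_0\to q$ is the natural way to try to invoke Proposition \ref{indepofnice}. You also correctly spot that $\cu_{a_0}$ is not nice. However, your assertion that this endpoint pathology ``has no impact on \dots the wall-swap cancellations in the proof of Proposition \ref{indepofnice}'' glosses over a concrete defect: the set $S=\{(\a,k)\mid(\a(x)-k)(\a(y)-k)<0\}$ in that proof is defined purely from the endpoints $x=\cu(0)$ and $y=\cu(1)$, and for $x=0$ it contains no pair $(\a,0)$; yet $\cu_{a_0}$ does cross $V_{\a,0}$ in its interior for every $\a\in R^+$ with $\a(q)>0$, because the curve first backtracks into $-\wc_0$. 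So the sentence ``this set labels the affine walls which are intersected by $\cu^0$ in their interior,'' on which the rest of the transversality-and-wall-swap argument is built, is simply false for your curves. This is repairable (enlarge $S$ to the set of walls actually crossed in $(0,1)$, and relax condition (1) of Definition \ref{nice} to interior times), but it is a genuine modification, not a verbatim re-run, so the burden of proof you've implicitly shifted onto Proposition \ref{indepofnice} has not been discharged. Your second fallback has a related imprecision: perturbing $0$ to a ``generic'' $\epsilon v$ does not produce a nice curve for most directions $v$ — if $\a(v)>0$ for some $\a\in R^+$ with $\a(q)>0$, then $[\epsilon v,-a_0]\#[-a_0,q]$ crosses $V_{\a,0}$ twice. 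The perturbation must be taken with $v\in -\mr{\wc}_0$ (so that $\epsilon v\in -\mr{\D}_0$), which is the opposite of generic.

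There is a cleaner route that stays entirely within the hypotheses of Proposition \ref{indepofnice} as stated and so avoids all of this. Given two admissible choices $a_0,a_0'\in\mr{\D}_0$, compare the curves $\cu^0:=[-a_0,-a_0']\#[-a_0',q]$ and $\cu^1:=[-a_0,q]$. Both start at $-a_0$ and end at $q$, both are nice (the prefix $[-a_0,-a_0']$ lies in $-\mr{\D}_0$ and meets no wall, and the kink at $-a_0'$ is off-wall), so Proposition \ref{indepofnice} gives $[\BS_{\cu^0}]=[\BS_{\cu^1}]$ in $H_*(\O_{e^{-a_0}\to e}G)$. Unwinding Definition \ref{BScycleDef} shows $\BS_{\cu^0}=e^{[-a_0,-a_0']}\#\BS_{[-a_0',q]}$, and since $[0,-a_0]\#[-a_0,-a_0']$ and $[0,-a_0']$ are homotopic rel endpoints in the contractible space $\t$, prepending $e^{[0,-a_0]}$ yields $[e^{[0,-a_0]}\#\BS_{[-a_0,q]}]=[e^{[0,-a_0']}\#\BS_{[-a_0',q]}]$, i.e.\ independence of $a_0$. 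Combined with the homotopy $s\mapsto e^{[0,-sa]}\#\BS_{[-sa,q]}$ already noted in the text, this gives both assertions of the corollary without extending Proposition \ref{indepofnice} to a weaker class of curves.
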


The following multiplication formula is due to Magyar \cite{Magyar} and is essential for the proof of Theorem \ref{computeG/Tintro} and hence Theorem \ref{B}. We reproduce his proof here because our definition of $x_q$ is different from his, although one can show that the two definitions are equivalent.

\begin{proposition} Let $q_0,q_1\in \Q$. If $q_0\in \wc_0$, then we have
\begin{equation}\label{multiBS}
x_{q_0}\bulletsmall x_{q_1} = x_{q_1+w_{q_1}(q_0)}
\end{equation}
where $\bulletsmall $ is the Pontryagin product and $w_q\in W$ is defined in Definition \ref{defforq}.
\end{proposition}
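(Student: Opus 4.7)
The plan is to realize both sides as the homology class of a single Bott--Samelson cycle for a carefully chosen nice curve. First I would pick a generic $a_0\in\mr{\D}_0$ so that $[-a_0,q_0]$ and $[-a_0,q_1]$ are nice (hence by Weyl symmetry $[-w_{q_1}(a_0),w_{q_1}(q_0)]$ is also nice), and consider the piecewise-linear path
\[
[0,-a_0]\,\#\,[-a_0,q_1]\,\#\,[q_1,\,q_1-w_{q_1}(a_0)]\,\#\,[q_1-w_{q_1}(a_0),\,q_1+w_{q_1}(q_0)]
\]
in $\t$ from $0$ to $q_1+w_{q_1}(q_0)\in\Q$. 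Since $q_1\in\Q$ lies on many affine walls and is generically an interior point of the third segment, a small generic perturbation in a neighborhood of $q_1$ is needed to produce a nice curve $\tilde{\cu}$. The wall-crossing combinatorics of the last two segments is controlled by the image under $x\mapsto q_1+w_{q_1}(x)$ of those of $[0,-a_0]\#[-a_0,q_0]$.

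Corollary~\ref{xqindep} (via Proposition~\ref{indepofnice}) immediately gives $[\BS_{\tilde\cu}]=x_{q_1+w_{q_1}(q_0)}$, handling the right-hand side.

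The main step is to identify $[\BS_{\tilde\cu}]$ with $x_{q_0}\bulletsmall x_{q_1}$. Splitting $\tilde\cu=\cu^1\#\cu^2$ at the perturbed junction near $q_1$, Definition~\ref{BScycleDef} shows that $\BS_{\tilde\cu}([g_1:\cdots:g_{\ell_1}:h_1:\cdots:h_{\ell_0}])$ equals, as a loop in $\O G$, the concatenation
\[
\BS_{\cu^1}([g_1:\cdots:g_{\ell_1}])\,\#\,\Ad(g_1\cdots g_{\ell_1})\bigl(\BS_{\cu^2-q_1}([h_1:\cdots:h_{\ell_0}])\bigr),
\]
where $\BS_{\cu^1}$ represents $x_{q_1}$ and $\cu^2-q_1=w_{q_1}\cdot([0,-a_0]\#[-a_0,q_0])$. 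Choosing a lift $\dot w_{q_1}\in N(T)$ of $w_{q_1}$ and reparametrizing $h_i=\dot w_{q_1}h_i'\dot w_{q_1}^{-1}$ rewrites $\BS_{\cu^2-q_1}$ as $\Ad(\dot w_{q_1})$ applied to the standard Bott--Samelson representative $\BS_{[0,-a_0]\#[-a_0,q_0]}$ of $x_{q_0}$. Because $G$ is connected, inner automorphisms of $G$ induce self-maps of $\O G$ isotopic to the identity and hence act trivially on $H_*(\O G;\ZZ)$; this absorbs both $\Ad(g_1\cdots g_{\ell_1})$ and $\Ad(\dot w_{q_1})$. Combined with commutativity of the Pontryagin product on $H_*(\O G;\ZZ)$, this yields $[\BS_{\tilde\cu}]=x_{q_1}\bulletsmall x_{q_0}=x_{q_0}\bulletsmall x_{q_1}$, completing the proof.

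The main obstacle is the careful execution of this last step: matching the iterated $T$-quotient structure on the combined Bott--Samelson domain $[G_{\a_1}:\cdots:G_{\a_{\ell_1+\ell_0}}]$ with the product of the two sub-domains (which differ by a principal $T$-action at the gluing point), and absorbing both this $T$-twist and the Weyl conjugation into an inner-automorphism homotopy. Additionally, one must verify that the perturbation near $q_1$ does not alter the homology class of $\BS_{\tilde\cu}$, which is the content of Magyar's original cycle-level argument.
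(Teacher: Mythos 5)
Your overall strategy — realize $x_{q_1+w_{q_1}(q_0)}$ by a single Bott–Samelson cycle built from a concatenated curve that travels from $-a_0$ to $q_1$ and then on to $q_1+w_{q_1}(q_0)$, split that cycle at $q_1$, and identify the result with the Pontryagin product — is the same as the paper's (the paper's curve $\cu^\e=[-a,y_\e]\#[y_\e,q_2]$ with $y_\e=(1-\e)q_1-\e a$ is a corner-cut version of your path, and Proposition~\ref{indepofnice} handles the perturbation near $q_1$ exactly as you anticipate). The genuine gap is in how you dispose of the conjugation $\Ad(g_1\cdots g_{\ell_1})$. That conjugation is not a single inner automorphism; it is a \emph{family} of inner automorphisms parametrized by $[g_1:\cdots:g_{\ell_1}]$, so the fact that a fixed $\Ad(g)$ acts trivially on $H_*(\O G;\ZZ)$ does not apply. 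A homotopy from $[g]\mapsto\Ad(g_1\cdots g_{\ell_1})$ to the identity would require contracting the map $[g_1:\cdots:g_{\ell_1}]\mapsto g_1\cdots g_{\ell_1}$, which is the Bott–Samelson resolution onto a Schubert variety of $G/T$ and is certainly not nullhomotopic in general; and moreover this map is only defined modulo right $T$-translation, so $\Ad$ of it is not even a well-defined $\Aut(G)$-valued map on the quotient.

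The paper avoids this entirely by an algebraic rather than homotopy-theoretic manoeuvre: when $q_0\in\mr{\wc}_0$ the segment leaving $q_1$ crosses all $|R^+|$ walls through $q_1$, so both the combined cycle and the product cycle factor, through birational multiplication maps, into the common intermediate $[G_{\a_1}:\cdots:G_{\a_r}:G:G_{\a_{r+n+1}}:\cdots]$, and then the explicit homeomorphism
\[
\bigl([g_1:\cdots:g_r],[g:g_{r+n+1}:\cdots:g_\ell]\bigr)\longmapsto [g_1:\cdots:g_r:\vec{g}^{-1}g:g_{r+n+1}:\cdots:g_\ell]
\]
(with $\vec g:=g_1\cdots g_r$) removes the conjugation by a change of variable in the middle $G$-factor, with no contraction of $\vec g$ required. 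You would need to supply something of this sort where you currently appeal to ``inner-automorphism homotopy.'' Separately, your argument as written does not distinguish $q_0\in\mr{\wc}_0$ from boundary $q_0\in\wc_0$; the intermediate $G$-factor becomes a proper parabolic in the boundary case, and the paper instead reduces the general case to the interior one by choosing $q_0'\in\Q\cap\mr{\wc}_0$ with $q_0+q_0'\in\mr{\wc}_0$ and using injectivity of $x_{q_0'}\bulletsmall-$. Your proof should address this, or explain why your factorization survives on the boundary.
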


\begin{proof}
We first assume $q_0$ lies in the interior of $\wc_0$. Put $q_2:=q_1+w_{q_1}(q_0)$. Let $a\in\mr{\D}_0$ be a generic element. For any $\e>0$, define $y_{\e}:= (1-\e)q_1-\e a$. Then $y_{\e}\in q_1+w_{q_1}(-\mr{\D}_0)$ for small $\e$.

\vspace{0.3cm}
\noindent \textbf{Claim.} The curve $\cu^{\e}:=[-a,y_{\e}]\#[y_{\e},q_2]$ is nice.
\begin{proof}
The only non-trivial part is to show that there does not exist $(\a,k)$ such that $-a,q_2$ lie in the same side of $V_{\a,k}$ and $y_{\e}$ lies in the other side. Suppose the contrary that such a pair exists. We have either
\begin{equation}\label{side1}
-\a(a), \a(q_2) < k < (1-\e)\a(q_1)-\e\a(a)
\end{equation}
or
\begin{equation}\label{side2}
(1-\e)\a(q_1)-\e\a(a)< k < -\a(a), \a(q_2).
\end{equation}
Observe also that $q_1$ and $w_{q_1}(q_0)$ lie in the same Weyl chamber $w_{q_1}\wc_0$, and $w_{q_1}(q_0)$ even lies in the interior. It follows that $\a(q_1)\ne\a(q_2)$. Moreover, they lie in $\RR_{\geqslant 0}$ or $\RR_{\leqslant 0}$ simultaneously, and $\a(q_2)$ is always farther away from $0$ than $\a(q_1)$. In case \eqref{side1}, we have $\a(q_2)<\a(q_1)$ (since $\e$ is small) so that $\a(q_1)\leqslant 0$. But then $(1-\e)\a(q_1)-\e\a(a)\leqslant -\e\a(a)$, and hence $-\a(a)<k<-\e\a(a)$, a contradiction, since $\a(a)\in (0,1)$. The case \eqref{side2} is similar.
\end{proof}

By Proposition \ref{indepofnice}, $x_{q_2}=[e^{[0,-a]}\#\BS_{\cu^{\e}}]$. Put $a_{\e}:=\e w_{q_1}^{-1}(q_1+a)$. Then $a_{\e}\in\mr{\D}_0$ for sufficiently small $\e$. Observe that $[y_{\e},q_2]=q_1+w_{q_1}[-a_{\e},q_0]$. Since $G$ is connected and $\exp(q_1)=e$, we have
\[x_{q_0}=[e^{[0,-a_{\e}]}\#\BS_{[-a_{\e},q_0]}]=[e^{q_1+w_{q_1}[0,-a_{\e}]}\#\BS_{q_1+w_{q_1}[-a_{\e},q_0]}]=[e^{[q_1,y_{\e}]}\#\BS_{[y_{\e}, q_2]}].\]
Thus it suffices to show, for small $\e$,
\begin{equation}\label{wts}
[\BS_{\cu^{\e}}]= [\BS_{[-a,q_1]}]\bulletsmall [e^{[q_1,y_{\e}]}\#\BS_{[y_{\e},q_2]}].
\end{equation}
Consider the moments
\[ 0=t_0<t_1\leqslant \cdots \leqslant t_{\ell}<t_{\ell+1}=1\]
at which the curve $\cu^0=[-a,q_1]\#[q_1,q_2]$ intersects those affine walls intersected by $\cu^{\e}$ (in its interior). These numbers are not pairwise distinct, and among those with multiplicities $>1$, we look at the one which is sent to $q_1$ under $\cu^0$:
\begin{equation}\label{muti=n}
t_{r+1}=t_{r+2}=\cdots=t_{r+n}.
\end{equation}
Since $q_0\in \mr{\wc}_0$, $n$ is equal to the number of positive roots. Denote by $(\a_i,k_i)$ the corresponding pairs. Due to the coincidence of some $t_i's$, an ordering of $(\a_i,k_i)$ has to be specified, and we choose the one determined by the segment $[y_{\e},q_2]$ for small $\e$. (See how $\BS_q$ is defined in Definition \ref{BSq}.) Then $R^+=\{\a_{r+1},\ldots, \a_{r+n}\}$.

Letting $\e\to 0$, the LHS of \eqref{wts} is represented by
\begin{equation}\label{left}
 \begin{array}{rcl}
 [G_{\a_1}:\cdots:G_{\a_{\ell}}]&\ra &\O_{e^{-a}\ra e}G  \\  \\ [-1em]
 [g_1:\cdots: g_{\ell}] &\mapsto & \ds\left( \bighash_{i=0}^r e^{(g_1\cdots g_i)\cdot \cu_i^0}\right) \# \left(\bighash_{i=r+n}^{\ell} e^{\vec{g} (g_{r+1}\cdots g_i)\cdot \cu_i^0} \right)
\end{array}
\end{equation}
where $\vec{g}:=g_1\cdots g_r$, and the RHS of \eqref{wts} is represented by
\begin{equation}\label{right}
 \begin{array}{rcl}
 [G_{\a_1}:\cdots: G_{\a_r}] \times [G_{\a_{r+1}}:\cdots: G_{\a_{\ell}}]&\ra &\O_{e^{-a}\ra e}G  \\  \\ [-1em]
 ([g_1:\cdots:g_r],[g_{r+1}:\cdots: g_{\ell}] )&\mapsto & \ds\left( \bighash_{i=0}^r e^{(g_1\cdots g_i)\cdot \cu_i^0}\right) \# \left(\bighash_{i=r+n}^{\ell} e^{ (g_{r+1}\cdots g_i)\cdot \cu_i^0} \right) .
\end{array}
\end{equation}
Consider the multiplication map
\[ m: [G_{\a_1}:\cdots: G_{\a_{\ell}}]\ra [G_{\a_1}:\cdots: G_{\a_r}:G:G_{\a_{r+n+1}}:\cdots : G_{\a_{\ell}}].\]
Then $m$ is birational. Moreover, \eqref{left} is the composite of $m$ with the map
\begin{equation}\label{left1}
 \begin{array}{rcl}
 [G_{\a_1}:\cdots: G_{\a_r}:G:G_{\a_{r+n+1}}:\cdots: G_{\a_{\ell}}]&\ra &\O_{e^{-a}\ra e}G  \\  \\ [-1em]
 [g_1:\cdots: g_r:g:g_{r+n+1}:\cdots: g_{\ell}] &\mapsto & \ds\left( \bighash_{i=0}^r e^{(g_1\cdots g_i)\cdot \cu_i^0}\right) \# \left(\bighash_{i=r+n}^{\ell} e^{\vec{g}g (g_{r+n+1}\cdots g_i)\cdot \cu_i^0} \right).
\end{array}
\end{equation}
Similarly, the multiplication map
\[ m': [G_{\a_{r+1}}:\cdots: G_{\a_{\ell}}]\ra [G:G_{\a_{r+n+1}}:\cdots: G_{\a_{\ell}}]\]
is birational and \eqref{right} is the composite of $\id\times m'$ with the map
\begin{equation}\label{right1}
 \begin{array}{rcl}
 [G_{\a_1}:\cdots: G_{\a_r}] \times [G:G_{\a_{r+n+1}}:\cdots: G_{\a_{\ell}}]&\ra &\O_{e^{-a}\ra e}G  \\  \\ [-1em]
 ([g_1:\cdots:g_r],[g: g_{r+n+1}:\cdots: g_{\ell}] )&\mapsto & \ds\left( \bighash_{i=0}^r e^{(g_1\cdots g_i)\cdot \cu_i^0}\right) \# \left(\bighash_{i=r+n}^{\ell} e^{ g(g_{r+n+1}\cdots g_i)\cdot \cu_i^0} \right) .
\end{array}
\end{equation}
Observe that the map
\begin{equation}\nonumber
 \begin{array}{rcl}
 [G_{\a_1}:\cdots: G_{\a_r}] \times [G:G_{\a_{r+n+1}}:\cdots: G_{\a_{\ell}}]&\ra & [G_{\a_1}:\cdots: G_{\a_r}:G:G_{\a_{r+n+1}}:\cdots: G_{\a_{\ell}}] \\  \\ [-1em]
 ([g_1:\cdots:g_r],[g: g_{r+n+1}:\cdots: g_{\ell}] )&\mapsto & ([g_1:\cdots:g_r: (\vec{g})^{-1}g:g_{r+n+1}:\cdots: g_{\ell}] ).
\end{array}
\end{equation}
is a homeomorphism and commutes with \eqref{left1} and \eqref{right1}. This completes the proof for the case $q_0\in \mr{\wc}_0$.

For the general case, pick $q_0'\in \Q\cap\mr{\wc}_0$ so that $q_0+q_0'\in \Q\cap\mr{\wc}_0$. By what we have just proved, the map $x_{q_0'}\bulletsmall -$ from $H_*(\O G;\ZZ)$ into itself is injective, $x_{q_0+q_0'}=x_{q_0}\bulletsmall x_{q_0'}$, and
\begin{equation}\label{multilast}
x_{q_0+q_0'}\bulletsmall x_{q_1} = x_{q_1+w_{q_1}(q_0+q_0')}.
\end{equation}
Since $w_{q_1}=w_{q_1+w_{q_1}(q_0)}$, the RHS of \eqref{multilast} is equal to $x_{q_1+w_{q_1}(q_0)+w_{q_1+w_{q_1}(q_0)}(q_0')}=x_{q_0'}\bulletsmall x_{q_1+w_{q_1}(q_0)}$, and hence
\[x_{q_0'}\bulletsmall (x_{q_0}\bulletsmall x_{q_1})=(x_{q_0'}\bulletsmall x_{q_0})\bulletsmall x_{q_1}= x_{q_0+q_0'}\bulletsmall x_{q_1} = x_{q_1+w_{q_1}(q_0+q_0')}= x_{q_0'}\bulletsmall x_{q_1+w_{q_1}(q_0)}.\]
The proof of the proposition is complete.
\end{proof}
\subsection{Computation of Abbondandolo-Schwarz isomorphism for G}\label{MCcircF}
We first recall what was proved in \cite{BCL} for $\O G$. Define $\rho:=\sum_{\a\in R^+}\a$, regarded as an element of $\t$ via the $\Ad$-invariant metric fixed at the beginning. Pick $x_0\in \RR_{>0}\rho$. It is well-known that $\rho$ lies in $\mr{\wc_0}$ so that $\OO_{x_0}$ is diffeomorphic to $G/T$. Moreover, the KSS symplectic form on $\OO_{x_0}$ is monotone. Let $a\in\mr{\D}_0$ be a generic element. Define $L:=T_e^*G$ and $L':=T_{e^{-a}}^*G$. Our computation of $PSS\circ \MC\circ \FF$ for $\OO_{x_0}$ depends on the similarly-constructed Abbondandolo-Schwarz isomorphism
\[\FF' : H_{-*}(\O' G;\ZZ) \ra HW^*(L',L) \]
where $\O' G:=\O_{e^{-a}\ra e}G$.

\begin{remark} \label{piimpliesnopi} Any result about $\FF'$ yields the same result about $\FF$. Observe that $H_{-*}(\O'G;\ZZ)$ (resp. $HW^*(L',L)$) is a module over the ring $H_{-*}(\O G;\ZZ)$ (resp. $HW^*(L,L)$), and $\FF'$ is a module homomorphism with respect to the ring homomorphism $\FF$. Moreover, associated to the shortest geodesic $e^{[-a,0]}$, the elements $(x')_0^{BS}$ and $(x')_0^F$ of these modules defined below are free generators as modules over the corresponding rings. It is obvious that $\FF'((x')_0^{BS})=(x')_0^F$. Therefore, these elements induce isomorphisms (by multiplication) $H_{-*}(\O G;\ZZ)\simeq H_{-*}(\O'G;\ZZ)$ and $HW^*(L,L)\simeq HW^*(L',L)$ which are compatible with $\FF$ and $\FF'$.
\end{remark}

\noindent We define additive generators of these modules and rings as follows. Let us start with the modules.

\begin{itemize}
\item \underline{$H_{-*}(\O'G;\ZZ)$:} For any $q\in \Q$, define $(x')_q^{BS}:=[\BS_{[-a,q]}]$. By Theorem \ref{BSthm} and the fact that $e^{[0,-a]}\# -$ induces an isomorphism $H_{-*}(\O'G;\ZZ)\simeq H_{-*}(\O G;\ZZ)$, the set $\{(x')_q^{BS}\}_{q\in\Q}$ is a $\ZZ$-basis of $H_{-*}(\O'G;\ZZ)$\footnote{In fact, Bott-Samelson proved this result first and used it to deduce Theorem \ref{BSthm}.}.
\item \underline{$HW^*(L',L)$:} The $\Ad$-invariant metric on $\gg$ gives rise to a bi-invariant metric on $G$ and hence a quadratic \Ham $H:=\frac{1}{2}|-|^2$ on $T^*G$. The set of \Ham chords of $H$ from $L'$ to $L$ is in natural bijective correspondence with the set of geodesics in $T$ from $e^{-a}$ to $e$. For generic $a$, the latter set is given by
\[ \{\cu_q:=e^{[-a,q]}|~q\in \Q\}.\]
Every \Ham chord is non-degenerate and has even Floer degree so that the Floer differential of $CW^*(L',L)$ is zero. Denote by $(x')_q^F\in HW^*(L',L)$ the cohomology class represented by the chord associated to $\cu_q$. Then the set $\{(x')_q^F\}_{q\in\Q}$ is a $\ZZ$-basis of $HW^*(L',L)$.
\end{itemize}
For the rings $H_{-*}(\O G;\ZZ)$ and $HW^*(L,L)$, we define $x_q^{BS}$ and $x_q^F$ to be the elements which correspond to $(x')_q^{BS}$ and $(x')_q^F$ under the isomorphisms induced by $(x')_0^{BS}$ and $(x')_0^F$ respectively. Notice that $x_q^{BS}$ agrees with $x_q$ defined in the last subsection.

We now state (a special case of) the main theorem of \cite{BCL}. Recall the function $\ell':W\ra \RR:w\mapsto\sum_{a\in R^+}\{\a(w^{-1}(a))\}$ defined in Definition \ref{perturblengthdef}.

\begin{theorem} \label{BCLthm} For any $q\in \Q$, we have
\[ (PSS\circ \MC)(x_q^F)\in \s_{w_q(x_0)} \TT^{A_{w_q^{-1}(q)}}+\bigoplus_{ \substack{w'\in W\\ \ell'(w')<\ell'(w_q)}} \ZZ[\Nov{\OO_{x_0}}{\O G}] \cdot \s_{w'(x_0)}. \] \hfill$\square$
\end{theorem}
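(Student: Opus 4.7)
The plan is to compute $(PSS\circ\MC)(x_q^F)$ by pairing against the ascending manifolds $\UU_{w'(x_0)}^{\OO_{x_0}}$, which are dual to the Schubert classes $\s_{w'(x_0)}$. The matrix coefficient is a signed count of holomorphic quilts consisting of a strip in $T^*G$ asymptotic to the geodesic chord $\cu_q=e^{[-a,q]}$, a seam labelled by the moment correspondence $C$, and a disk in $\XX$ with diagonal boundary meeting $\UU_{w'(x_0)}^{\OO_{x_0}}$ at the interior marked point. The Novikov bookkeeping is controlled by the section class $[\vp,u]\in\Nov{\OO_{x_0}}{\O G}$ attached to each configuration via Lemma \ref{monolemma}.

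First, I would fix the integrable $G$-invariant complex structure $I_{\OO_{x_0}}$ of \eqref{CScoadj}, together with a $G$-equivariant choice of Hamiltonian perturbations for the wrapped side. Under these choices, $\OO_{x_0}$ becomes the flag variety $G_\CC/B$, and the unstable manifolds $\UU_{w(x_0)}^{\OO_{x_0}}$ coincide with opposite Schubert cells. Exploiting that $L_g\circ C=\D_g$ is embedded and that $m^*d\lambda_G=j^*\w_{\XX}$ via the moment map, I would contract the $C$-seam to convert each quilt into a single $I_{\OO_{x_0}}$-holomorphic section of $P_{\vp}(\OO_{x_0})$ whose monodromy loop $\vp$ and asymptotic puncture are prescribed by $\cu_q$; this is the open-string analogue of the annulus-shrinking argument proving Theorem \ref{MC=Se}.

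Second, I would identify the reduced moduli space with a Bott--Samelson stratified space. The segment $[-a,q]$ is nice and meets affine walls $V_{\alpha_i,k_i}$ in a definite order, and the section constructed from the associated Bott--Samelson datum $[G_{\alpha_1}:\cdots:G_{\alpha_\ell}]\to\OO_{x_0}$ has its right-hand endpoint sweeping out the closure of the Schubert cell through $w_q(x_0)$, because $q+a\in w_q\wc_0$. Hence the pairing against $[\UU_{w_q(x_0)}^{\OO_{x_0}}]$ is the transverse intersection count of a birational Bott--Samelson resolution with its open Schubert cell, which equals $\pm 1$. The section class computed from the endpoints is read off from the definition of $s_{\vp,u_{\pm}}$: it equals $A_{w_q^{-1}(q)}^{\OO_{x_0}}$ after conjugating the endpoint into $\wc_0$ by $w_q^{-1}$, so the leading term is $\s_{w_q(x_0)}\TT^{A_{w_q^{-1}(q)}}$.

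Third, to control non-leading terms I would combine an energy/index bookkeeping with the refined length function $\ell'$ of Definition \ref{perturblengthdef}. For a section class $A$ to give a non-zero coefficient of $\s_{w'(x_0)}$, the index matching fiber-dimension $2c_1^v(A)$ to $\codim\UU_{w'(x_0)}^{\OO_{x_0}}$ together with the energy identity of Lemma \ref{monolemma} restricts $A$ to a finite list, and Lemma \ref{perturblengthlemma} (together with Lemma \ref{lemmaforq}) translates this into the bound $\ell'(w')\leq\ell'(w_q)$, with equality only for $w'=w_q$; the small perturbation $a\in\mr{\D}_0$ is precisely what breaks ties among elements of $W$ of the same standard length. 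The main obstacle will be establishing this strict inequality rigorously, since it requires a Morse-theoretic transversality argument guaranteeing that none of the lower Bott--Samelson strata accidentally contribute to the coefficient of $\s_{w_q(x_0)}\TT^{A_{w_q^{-1}(q)}}$ with opposite sign, and that the signed count on the leading stratum is genuinely $\pm 1$ rather than $0$.
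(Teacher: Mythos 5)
This theorem is not proved in the paper at all: it is explicitly cited as ``(a special case of) the main theorem of \cite{BCL}'', and the $\square$ immediately following the statement marks it as an imported result rather than one established here. So there is no internal proof to compare against, and what you have written would instead have to reproduce (part of) the argument of \cite{BCL}. With that understood, your proposal has the right broad setup but the two load-bearing steps are asserted rather than argued, and one of them conflates two different degeneration pictures.

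The first issue is your ``contract the $C$-seam'' step. The annulus-shrinking that the paper actually uses (in the proof of Theorem \ref{MC=Se}) is a \emph{closed-string} degeneration: the strip $S^1\times[-\cl,0]$ mapping to $T^*G$ shrinks, the two $C$-seams collapse to a single loop, and because $L_g\circ C=\D_g$ is embedded the $\XX$-disk becomes a section of $P_\vp(\OO_{x_0})$. The input there is a Hamiltonian \emph{orbit} in $SH^*(T^*G)$, which in the limit supplies the monodromy loop $\vp$. Here the input is a Hamiltonian \emph{chord} $x_q^F\in HW^*(L,L)$ running between two cotangent fibres of $T^*G$, and the $T^*G$ patch in Figure 1 has $L$-boundary, not a closed annulus. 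A chord does not degenerate to a closed loop, and a holomorphic section of $P_\vp(\OO_{x_0})$ over $S^2$ has no puncture at which to absorb the chord asymptotics. If you want to route through the section picture you must first pass through $\mathcal{OC}$ and $\qcp$ as in Proposition \ref{commQFT}, but then you are computing $\MC^{cl}\circ\GG$ and you still owe a computation of $\GG$ to extract what $\MC$ alone does to $x_q^F$; asserting the ``open-string analogue'' of annulus-shrinking does not discharge this.

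The second issue is the claim that the reduced moduli space ``is'' a Bott--Samelson stratified space, with the section ``sweeping out the closure of the Schubert cell through $w_q(x_0)$.'' This is precisely the content of the theorem and is where the whole difficulty lives, but in your proposal it is stated as if it were a geometric observation. Concretely: a holomorphic section of $P_\vp(\OO_{x_0})$ is a single map from $S^2$, not a family; there is no ``right-hand endpoint'' of a section; and there is no canonical map from the loop-group Bott--Samelson variety $[G_{\a_1}\!:\!\cdots\!:\!G_{\a_\ell}]$ (a parameter space for broken geodesics in $G$) to $\OO_{x_0}$ that plays the role you need. The Bott--Samelson datum attached to $[-a,q]$ feeds into the computation of the Abbondandolo--Schwarz map $\FF$ on the basis $\{x_q^{BS}\}$ of $H_{-*}(\O G)$, which is the \emph{other} half of the paper's Step 2 (Lemmas \ref{energyindex}--\ref{Fspeciallemma}); Theorem \ref{BCLthm} sits on the wrapped side, before $\FF$ ever appears, and must be proved by analysing the $\MC$ moduli problem directly with the fixed input chord $\cu_q$. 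As it stands, your plan collapses the separate computations of $\MC$ and $\FF$ into one, which is exactly the distinction the paper's architecture is built to keep apart. The degree bookkeeping via $\ell'$ and Lemma \ref{lemmaforq} is the right instinct for the final step, but it can only run once you have an actual identification of the moduli space, which is the missing ingredient.
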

\noindent By a simple filtration argument, we have
\begin{corollary} \label{BCLinjective} $PSS\circ \MC$ is injective. \hfill$\square$
\end{corollary}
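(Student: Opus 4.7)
The plan is to deduce the corollary from Theorem \ref{BCLthm} by a straightforward filtration argument weighted by $\ell'$. Since $\{x_q^F\}_{q\in\Q}$ is an additive $\ZZ$-basis of $HW^*(L,L)$, it is enough to show that the elements $\{(PSS\circ\MC)(x_q^F)\}_{q\in\Q}$ are $\ZZ$-linearly independent inside $QH^*(\OO_{x_0};\ZZ[\Nov{\OO_{x_0}}{\O G}])$. So I would suppose toward a contradiction that there is a nontrivial finite relation
\[ \sum_{q} c_q\,(PSS\circ\MC)(x_q^F)=0,\qquad c_q\in\ZZ, \]
and set $M:=\max\{\ell'(w_q):c_q\neq 0\}$.

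Next I would isolate, for any $w^{\ast}\in W$ with $\ell'(w^{\ast})=M$ occurring in the support, the coefficient of the Schubert class $\s_{w^{\ast}(x_0)}$ in the relation. By Theorem \ref{BCLthm}, the lower-order piece $\bigoplus_{\ell'(w')<\ell'(w_q)}\ZZ[\Nov{\OO_{x_0}}{\O G}]\cdot \s_{w'(x_0)}$ of the image of $x_q^F$ can only contribute to the class $\s_{w^{\ast}(x_0)}$ when $\ell'(w^{\ast})<\ell'(w_q)$, which is ruled out by the maximality of $M$. Hence only the leading terms $\s_{w_q(x_0)}\TT^{A_{w_q^{-1}(q)}}$ of those $x_q^F$ with $w_q=w^{\ast}$ can contribute, and the vanishing of the total coefficient reads
\[ \sum_{q:\,w_q=w^{\ast}} c_q\,\TT^{A_{(w^{\ast})^{-1}(q)}}=0 \quad \text{in } \ZZ[\Nov{\OO_{x_0}}{\O G}]. \]

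The crucial observation that makes this kill all the offending $c_q$ is that the Novikov exponents separate points: because $x_0\in\mr{\wc}_0$ we have $R_{x_0}=\emptyset$, so $\Q_{R_{x_0}}=0$, and Lemma \ref{GPNovforOO} gives a bijection $\Q\xrightarrow{\sim}\Nov{\OO_{x_0}}{\O G}$ via $q'\mapsto A_{q'}$. Thus for distinct $q_1,q_2$ with $w_{q_1}=w_{q_2}=w^{\ast}$, the exponents $A_{(w^{\ast})^{-1}(q_1)}$ and $A_{(w^{\ast})^{-1}(q_2)}$ are distinct group elements, and the corresponding monomials $\TT^{\bullet}$ are $\ZZ$-linearly independent in the group ring. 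This forces $c_q=0$ for every $q$ with $w_q=w^{\ast}$, contradicting the choice of $M$ and $w^{\ast}$.

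This is a routine filtration-by-leading-term argument, with no real obstacle; the only point that requires checking is the bijectivity $\Q\cong\Nov{\OO_{x_0}}{\O G}$, which ensures that Novikov variables, together with the Schubert label, faithfully record the index $q\in\Q$ and so distinguish basis elements with the same $w_q$.
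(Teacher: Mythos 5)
Your argument is correct and is exactly the "simple filtration argument" the paper invokes without elaboration: you filter by the $\ell'$-value of $w_q$, observe via Theorem \ref{BCLthm} that the top $\ell'$-stratum contributes only leading terms to the Schubert class $\s_{w^{\ast}(x_0)}$, and then use the bijection $\Q\simeq\Nov{\OO_{x_0}}{\O G}$ from Lemma \ref{GPNovforOO} (valid since $R_{x_0}=\emptyset$) to conclude that those leading monomials are linearly independent. The one step worth making explicit, which you do handle correctly, is that the leading Schubert class $\s_{w_q(x_0)}$ already determines $w_q$ (the $W$-action on $\crit_{G/T}$ is free), so within the top stratum the Schubert label and Novikov exponent together recover $q$; this is what rules out cancellation among several $q$ with the same $w_q$.
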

\noindent Observe that $\ell'$ attains the minimum precisely at $w=\ii$ and $\s_{x_0}=1$. Thus we have
\begin{corollary} \label{nohigher} $(PSS\circ MC)(x_q^F)=\TT^{A_q}$ for any $q\in \Q\cap \wc_0$. \hfill$\square$
\end{corollary}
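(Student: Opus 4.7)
The plan is to deduce this directly from Theorem \ref{BCLthm} by verifying that, for $q\in\Q\cap\wc_0$, the leading term produced by that theorem is exactly $\TT^{A_q}$ and the error sum is empty.

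First I would identify $w_q$. Since $a\in\mr{\D}_0\subset\mr{\wc}_0$ is taken generic (and in particular may be assumed sufficiently close to the origin), for $q\in\Q\cap\wc_0$ the point $q+a$ lies in $\wc_0$; by Definition \ref{defforq}(1) this forces $w_q=\ii$. Consequently $w_q^{-1}(q)=q$, so the leading exponent in Theorem \ref{BCLthm} is $A_{w_q^{-1}(q)}=A_q$. Moreover $w_q(x_0)=x_0$, and $\s_{x_0}=1$ because $x_0$ is the maximum of the Morse function $\langle a,-\rangle$ on $G/T$, so $\mathcal S^{\OO_{x_0}}_{x_0}=\{x_0\}$ is a point whose Poincar\'e dual is the unit class.

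Next I would check that the error sum in Theorem \ref{BCLthm} is empty, i.e.\ that no $w'\in W$ satisfies $\ell'(w')<\ell'(\ii)$. By Lemma \ref{perturblengthlemma}, $\ell'(w)=\ell(w)+\langle a,w(\rho)\rangle$, whence $\ell'(\ii)=\langle a,\rho\rangle$. For any $w\ne\ii$ we have $\ell(w)\geqslant 1$, while $|\langle a,w(\rho)\rangle|\leqslant \|a\|\,\|\rho\|$, which is strictly less than $1-\langle a,\rho\rangle$ once $a$ is chosen small enough (this is exactly the genericity/smallness requirement on $a$). Hence
\[
\ell'(w)\geqslant 1-\|a\|\,\|\rho\|>\langle a,\rho\rangle=\ell'(\ii),
\]
so $\ii$ is the unique minimizer of $\ell'$.

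Combining these two observations with Theorem \ref{BCLthm} gives $(PSS\circ\MC)(x_q^F)=\s_{x_0}\TT^{A_q}=\TT^{A_q}$. The only conceptual point is the minimality of $\ell'$ at $\ii$, which I expect to be the main (and only nontrivial) obstacle, and it is handled by the smallness of $a$ guaranteed by the genericity choice made in Section \ref{prelimLie}.
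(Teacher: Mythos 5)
Your overall approach is the paper's: apply Theorem \ref{BCLthm}, note $w_q=\ii$ for $q\in\Q\cap\wc_0$, show $\ii$ is the unique minimizer of $\ell'$, and use $\s_{x_0}=1$. The $\ell'$-minimality argument via Lemma \ref{perturblengthlemma} and smallness of $a$ is correct and is exactly the content behind the paper's terse ``observe.''

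However, your justification of $\s_{x_0}=1$ contains two compounding errors that happen to cancel. You claim $\mathcal S^{\OO_{x_0}}_{x_0}=\{x_0\}$ and that the Poincar\'e dual of a point is the unit class. Both are wrong: the Poincar\'e dual of a point is the top-degree class, and (with the paper's conventions, where $\{\s_z\}$ is dual to $\{[\UU_z]\}$) the descending submanifold $\mathcal S^{\OO_{x_0}}_{x_0}$ of the maximum is the full-dimensional open cell, while it is $\UU^{\OO_{x_0}}_{x_0}$ that is the point $\{x_0\}$. The correct chain is: $x_0$ is the maximum of $\langle a,-\rangle$ on $\OO_{x_0}$, so $\mathcal S^{\OO_{x_0}}_{x_0}$ is top-dimensional and its Poincar\'e dual is the degree-zero class $1$. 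This is a conceptual slip in the Poincar\'e duality / stable-vs-unstable bookkeeping rather than in the structure of the argument, so the corollary still follows, but the sentence as written is incorrect and should be fixed.
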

\noindent In the proof of Theorem \ref{computeG/T} below, we will need the following formula which is analogous to a special case of the multiplication formula \eqref{multiBS}.
\begin{lemma} \label{multiforF} $x_{q_1}^F\bulletsmall x_{q_2}^F= x_{q_1+q_2}^F$ for any $q_1,q_2\in \Q\cap \wc_0$.  \hfill$\square$
\end{lemma}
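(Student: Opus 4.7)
My plan is to reduce the lemma to the computation of $PSS\circ\MC$ on $x_q^F$ already carried out in Corollary \ref{nohigher}, exploiting the injectivity of $PSS\circ\MC$ established in Corollary \ref{BCLinjective}. Since both $\MC$ and $PSS$ are ring homomorphisms, their composite is a ring homomorphism, and by injectivity it will suffice to check the equality after applying $PSS\circ\MC$:
\[
(PSS\circ\MC)(x_{q_1}^F \bulletsmall x_{q_2}^F) \;=\; (PSS\circ\MC)(x_{q_1+q_2}^F).
\]

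First I will observe that $q_1+q_2\in \Q\cap \wc_0$ because $\wc_0$ is a convex cone, so Corollary \ref{nohigher} applies to both sides. The right-hand side immediately evaluates to $\TT^{A_{q_1+q_2}}$, while by the ring-homomorphism property the left-hand side equals $\TT^{A_{q_1}}*\TT^{A_{q_2}}$, computed in the quantum product on $QH^*(\OO_{x_0};\ZZ[\Nov{\OO_{x_0}}{\O G}])$. Since each factor is a pure Novikov multiple of the quantum unit $1\in QH^*(\OO_{x_0})$, this product reduces to $\TT^{A_{q_1}+A_{q_2}}$ in the group algebra of $\Nov{\OO_{x_0}}{\O G}$.

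It will then remain to verify the additivity $A_{q_1}+A_{q_2}=A_{q_1+q_2}$ inside $\Nov{\OO_{x_0}}{\O G}$. I will unwind the gluing construction of Definition \ref{sectionclassdef} applied to the representatives $(\vp_{q_i},s_{\vp_{q_i},u_\pm\equiv x_0})$. Since $q_1,q_2\in\t$ commute, the loops compose pointwise to give $\vp_{q_1}\circ\vp_{q_2}=\vp_{q_1+q_2}$; and because $x_0\in\OO_{x_0}\cap\t^\vee$ is fixed by the coadjoint action of $T$, each $\vp_{q_i}$ fixes $x_0$, so the constant caps $u_\pm\equiv x_0$ glue to the constant caps over $\vp_{q_1+q_2}$, which by definition represent $A_{q_1+q_2}$.

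I do not anticipate any substantive obstacle, as the content of the lemma has essentially been pre-packaged in the ring-homomorphism and injectivity properties of $PSS\circ\MC$ together with Corollary \ref{nohigher}. The only piece of bookkeeping is the additivity of Novikov classes under the section-gluing operation, which is immediate from the $T$-fixedness of the basepoint $x_0\in\OO_{x_0}$.
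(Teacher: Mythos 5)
Your proposal is correct and follows exactly the paper's own route: the paper's proof is simply ``It follows from Corollary~\ref{BCLinjective} and Corollary~\ref{nohigher},'' i.e.\ apply $PSS\circ\MC$, evaluate both sides via Corollary~\ref{nohigher}, and use injectivity. You merely spell out the two small verifications the paper leaves implicit (convexity of $\wc_0$, and additivity $A_{q_1}+A_{q_2}=A_{q_1+q_2}$ via the gluing construction with $T$-fixed basepoint $x_0$), both of which you handle correctly.
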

\begin{proof} It follows from Corollary \ref{BCLinjective} and Corollary \ref{nohigher}.
\end{proof}

The following lemma is necessary for the proof of Theorem \ref{computeG/T} below.
\begin{lemma} \label{assumemetric} The $\Ad$-invariant metric on $\gg$ can be chosen such that the following conditions hold:
\begin{enumerate}[(a)]
\item all highest roots have equal length; and
\item the element $\rho:=\sum_{\a\in R^+}\a$, when regarded as an element of $\t$ via this metric, lies in $\Q_0$.
\end{enumerate}
\end{lemma}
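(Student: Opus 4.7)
The plan is to exploit the two-level freedom in choosing an $\Ad$-invariant metric: after decomposing $\gg=\bigoplus_{i=1}^k \gg_i$ into simple ideals, such a metric is parametrized by one positive scalar per summand, so condition (a) pins down all of the relative scalings but leaves a single global positive constant free, which I will then tune to enforce condition (b).

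First I would fix, on each simple summand $\gg_i$, the unique $\Ad$-invariant reference metric $\langle-,-\rangle_i^{ref}$ in which the highest root $h_i$ has dual squared length $2$. Since every simple root system has at most two root lengths and the highest root is always long, this simultaneously forces the short roots of $R_i$ to have dual squared length $2/r_i$, where $r_i\in\{1,2,3\}$ is the long-to-short squared-length ratio in $R_i$ ($r_i=1$ for $ADE$, $r_i=2$ for $B,C,F_4$, $r_i=3$ for $G_2$). With the orthogonal direct sum $\langle-,-\rangle_{ref}:=\bigoplus_i\langle-,-\rangle_i^{ref}$, condition (a) is automatic.

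Next I would compute $\rho^{\natural}_{ref}$ summand by summand. Using the identification $\alpha^{\natural}=\tfrac{|\alpha|^2}{2}\alpha^{\vee}\in\t$, one gets
\[ \rho^{\natural}_{i,ref} \;=\; \sum_{\substack{\alpha\in R_i^+\\ \alpha\text{ long}}}\alpha^{\vee} \;+\; \frac{1}{r_i}\sum_{\substack{\alpha\in R_i^+\\ \alpha\text{ short}}}\alpha^{\vee}, \]
so $r_i\cdot \rho^{\natural}_{i,ref}$ lies in the coroot lattice of $\gg_i$. Setting $R:=\mathrm{lcm}(r_1,\ldots,r_k)\in\{1,2,3,6\}$ therefore gives $R\cdot\rho^{\natural}_{ref}\in\Q_0$.

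Finally, I would replace the reference by the rescaled metric $\langle-,-\rangle:=R^{-1}\langle-,-\rangle_{ref}$. An overall positive rescaling does not disturb the equality of the dual lengths of the $h_i$, so (a) is preserved; on the other hand the induced identification $\t^*\simeq\t$ scales by the reciprocal factor, so $\rho^{\natural}=R\cdot\rho^{\natural}_{ref}\in\Q_0$, which is (b). I do not expect any deep obstacle: all the Lie-theoretic inputs (uniqueness up to scaling of the invariant metric on a simple factor, the existence of at most two root lengths, the values of the $r_i$) are standard, so the hard part will really just be the bookkeeping of how the metric, its dual, and the map $\t^*\simeq\t$ all transform under rescaling, in order to identify the correct power of $R$.
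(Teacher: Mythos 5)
Your proposal is correct and follows essentially the same approach as the paper: decompose $\gg$ into simple factors, rescale each summand so that the highest roots all have a common length, and then choose that common length so that $\frac{\langle\alpha,\alpha\rangle}{2}$ is an integer for every root $\alpha$, whence $\rho=\sum_{\alpha\in R^+}\frac{\langle\alpha,\alpha\rangle}{2}\alpha^{\vee}\in \Q_0$. The only cosmetic difference is that you derive the optimal scaling $2\,\mathrm{lcm}(r_1,\dots,r_k)$ for the squared length of the highest roots, whereas the paper simply takes $12$ (a common multiple that always works).
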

\begin{proof} Notice that any $\Ad$-invariant metric on $\gg$ is equal to the direct sum of some $\Ad$-invariant metrics associated to the simple factors of $\gg$. For each simple factor of $\gg$, we rescale the corresponding metric such that the unique highest root has a fixed length. The resulting metric on $\gg$ will satisfy Condition (a).

For the second condition, we require the squared-length of all highest roots to be an sufficiently divisible integer (12 is OK) such that the squared-length of any other root is an even integer. Recall $\a^{\vee}:=\frac{2\a}{\langle \a,\a\rangle}$ so we have
\[\rho = \sum_{\a\in R^+}\a = \sum_{\a\in R^+}\frac{\langle\a,\a\rangle}{2} \a^{\vee}\in \Q_0. \]
\end{proof}

\begin{theorem}\label{computeG/T} For any $q\in \Q$, we have
\[ (PSS\circ \MC\circ \FF)(x_q^{BS})\in \pm \s_{w_q(x_0)} \TT^{A_{w_q^{-1}(q)}}+\bigoplus_{ \substack{w'\in W\\ \ell'(w')<\ell'(w_q)}} \ZZ[\Nov{\OO_{x_0}}{\O G}] \cdot \s_{w'(x_0)}. \]
\end{theorem}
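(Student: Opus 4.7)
The plan is to reduce to Theorem \ref{BCLthm} by establishing the comparison
\[
\FF(x_q^{BS}) \;=\; \pm\, x_q^F \;+\; \sum_{\substack{q' \in \Q \\ \ell'(w_{q'}) < \ell'(w_q)}} c_{q,q'}\, x_{q'}^F \qquad (\ast)
\]
for some integers $c_{q,q'}$. Granting $(\ast)$, applying $PSS \circ \MC$ term by term produces $\pm \s_{w_q(x_0)}\TT^{A_{w_q^{-1}(q)}}$ as the unique leading contribution (from $\pm x_q^F$ via Theorem \ref{BCLthm}), plus Schubert terms indexed by $w \in W$ with $\ell'(w) \leqslant \ell'(w_{q'}) < \ell'(w_q)$ coming from the remaining summands. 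These assemble exactly into the form stated in the theorem.

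The base case of $(\ast)$ is $q \in \Q \cap \wc_0$, where $w_q = \ii$. Here the specialization $x_{q_1}^{BS} \bulletsmall x_{q_2}^{BS} = x_{q_1+q_2}^{BS}$ of the Bott-Samelson multiplication formula (both arguments dominant, hence $w_{q_2}=\ii$) together with $x_{q_1}^F \bulletsmall x_{q_2}^F = x_{q_1+q_2}^F$ (Lemma \ref{multiforF}) reduce, via the ring-homomorphism property of $\FF$, the problem to evaluating $\FF$ on a finite generating set of dominant integral elements. For each such generator, the Bott-Samelson cycle represents, up to homotopy, the descending pseudocycle of the energy functional on $\O' G$ at the geodesic $\cu_q$, and the Abbondandolo-Schwarz-Abouzaid action-filtration argument identifies $\FF((x')_q^{BS})$ with $\pm (x')_q^F$; Remark \ref{piimpliesnopi} transfers this back to $\O G$. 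For the inductive step on $\ell'(w_q)$, given $w_q \ne \ii$ I use the general formula $x_{q_0}^{BS} \bulletsmall x_{q_1}^{BS} = x_{q_1 + w_{q_1}(q_0)}^{BS}$ (with $q_0 \in \wc_0$) to write $x_q^{BS}$ as a product in which the non-dominant factor $x_{q_1}^{BS}$ satisfies $\ell'(w_{q_1}) < \ell'(w_q)$; applying $\FF$ and the inductive hypothesis gives $\pm x_{q_0}^F \bulletsmall x_{q_1}^F$ modulo terms of strictly smaller $\ell'$, and the action filtration on $HW^*(L,L)$---where the action of $x_{q'}^F$ is $\tfrac{1}{2}|q' + a|^2$ and Lemma \ref{perturblengthlemma} compares action to $\ell'$ via $\ell'(w) = \ell(w) + \langle a, w(\rho)\rangle$---controls the error.

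The main obstacle is the inductive step itself: I must prove that the product $x_{q_0}^F \bulletsmall x_{q_1}^F$ in $HW^*(L,L)$ equals $\pm x_{q_1 + w_{q_1}(q_0)}^F$ modulo strictly lower $\ell'$-error whenever $q_0 \in \Q \cap \wc_0$, a statement not covered by Lemma \ref{multiforF} (which handles only the fully dominant case). This requires a dedicated energy-filtration analysis of the moduli spaces defining the Floer product in $HW^*(L,L)$, together with a degree count to rule out same-$\ell'$ corrections with a different Novikov exponent. Corollary \ref{BCLinjective} (injectivity of $PSS \circ \MC$) is then used to promote computations performed on the $QH^*$-side back into statements about generators of $HW^*(L,L)$, closing the induction.
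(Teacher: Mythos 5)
Your proposal correctly identifies several of the real ingredients (the energy/index filtration for $\FF$, Magyar's formula $x_{q_0}\bulletsmall x_{q_1}=x_{q_1+w_{q_1}(q_0)}$, Lemma~\ref{multiforF}, and the idea of first controlling $\FF(x_q^{BS})$ and then post-composing with $PSS\circ\MC$). However, the proposed inductive framework contains a structural flaw, and the ``main obstacle'' you flag is not merely hard---the paper avoids it entirely rather than solving it.

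The induction on $\ell'(w_q)$ does not reduce. In the factorization $x_q^{BS}=x_{q_0}^{BS}\bulletsmall x_{q_1}^{BS}$ with $q_0\in\Q\cap\wc_0$, one always has $w_{q}=w_{q_1+w_{q_1}(q_0)}=w_{q_1}$ (this is verified in the proof of the multiplication formula), so $\ell'(w_{q_1})=\ell'(w_q)$; there is no $q_1$ with $\ell'(w_{q_1})<\ell'(w_q)$ to induct on. Consequently the general expansion $(\ast)$ cannot be bootstrapped this way, and in fact the paper never establishes $(\ast)$ for all $q$. It proves a form of $(\ast)$ (Lemma~\ref{Fspeciallemma}) only for two special families: $q_{in}=w_{q_{in}}(\rho)$ (Situation~I, handled by a direct energy estimate from Lemma~\ref{energyindex}), and $q_{in}=K_0\rho+s$ with $s$ in a specific finite generating set $S$ of $\Q$ (Situation~II, handled by an algebraic cancellation argument using $\FF$ as a ring map and the injectivity of $PSS\circ\MC$). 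For arbitrary $q$, the paper computes $(PSS\circ\MC\circ\FF)(x_{q+w_q(q_1+N\rho)}^{BS})$ in two ways---once by factoring through powers of $x_\rho$ and $x_{w_q(\rho)}$ (Situation~I plus Theorem~\ref{BCLthm}), and once by factoring as $x_{q_1+N\rho}^{BS}\bulletsmall x_q^{BS}$ (Situation~II)---and then divides out the resulting invertible Novikov factor $\prod(\pm\TT^{A_{K_0\rho\pm s_i}})^{|m_i|}$ to isolate $(PSS\circ\MC\circ\FF)(x_q^{BS})$. This bootstrap sidesteps precisely the wrapped-product-with-$\ell'$-error-control step that you correctly identify as the difficulty: no such product computation in $HW^*(L,L)$ is needed.

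Two smaller inaccuracies: the base-case claim that the Bott--Samelson cycle is the descending pseudocycle of the energy functional is not quite right; the paper must perturb $\BS_{[-a,q_{in}]}$ to a cycle $\BS'_{[-a,q_{in}]}$ for which the constant solution is unique, non-degenerate, and regular, since the constant half-strips are not regular for the unperturbed cycle. And the claim that $\FF(x_q^{BS})=\pm x_q^F$ modulo lower terms holds for all dominant $q$ is itself nontrivial; in the paper it is a corollary (Corollary~\ref{G/Tnohigher}) of the full theorem, not an input to it.
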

\noindent Same as the case for $PSS\circ\MC$, we have
\begin{corollary} \label{G/Tinjective} $PSS\circ \MC\circ \FF$ is injective. \hfill$\square$
\end{corollary}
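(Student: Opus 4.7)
The plan is to run a leading-term filtration argument, exactly parallel to how Corollary \ref{BCLinjective} was deduced from Theorem \ref{BCLthm}, now using the explicit triangular formula of Theorem \ref{computeG/T} in place of Theorem \ref{BCLthm}. Suppose, to the contrary, that a non-trivial finite $\ZZ$-linear combination $\xi:=\sum_{q\in S}c_q x_q^{BS}$ (with $S\subset \Q$ finite and every $c_q\ne 0$) lies in the kernel of $PSS\circ\MC\circ\FF$. I will isolate a ``top'' Schubert component of $(PSS\circ\MC\circ\FF)(\xi)$ and show it cannot vanish.

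The key elementary input is the injectivity of the assignment
\begin{equation*}
\Q\ni q\longmapsto \bigl(w_q,\,A_{w_q^{-1}(q)}\bigr)\in W\times\Nov{\OO_{x_0}}{\O G},
\end{equation*}
which holds because the second coordinate recovers $w_q^{-1}(q)\in\Q$ via the bijection of Lemma \ref{GPNovforOO}, and then $q=w_q\bigl(w_q^{-1}(q)\bigr)$ is recovered from the first. In particular, the leading monomials $\pm\s_{w_q(x_0)}\TT^{A_{w_q^{-1}(q)}}$ produced by Theorem \ref{computeG/T} are pairwise distinct as $q$ ranges over $\Q$.

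Now pick $w_*\in\{w_q:q\in S\}$ maximising $\ell'$ and set $S':=\{q\in S:w_q=w_*\}\ne\emptyset$. By Theorem \ref{computeG/T}, for every $q\in S$ the error part of $(PSS\circ\MC\circ\FF)(x_q^{BS})$ lies in $\bigoplus_{\ell'(w')<\ell'(w_q)}\ZZ[\Nov{\OO_{x_0}}{\O G}]\cdot\s_{w'(x_0)}$, so by the maximality of $\ell'(w_*)$ it cannot contribute to the $\s_{w_*(x_0)}$-component. The only contributions to that component come from the leading terms with $w_q=w_*$, namely $\pm c_q\TT^{A_{w_*^{-1}(q)}}$ for $q\in S'$. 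Consequently, the $\s_{w_*(x_0)}$-coefficient of $(PSS\circ\MC\circ\FF)(\xi)$ equals
\begin{equation*}
\sum_{q\in S'}\pm c_q\,\TT^{A_{w_*^{-1}(q)}}\in\ZZ[\Nov{\OO_{x_0}}{\O G}].
\end{equation*}
The exponents $\{A_{w_*^{-1}(q)}\}_{q\in S'}$ are pairwise distinct by the injectivity established above (restricted to $S'$, where $w_q$ is constant), so this element of the group ring vanishes only if every $c_q$ with $q\in S'$ is zero, contradicting $c_q\ne 0$.

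I do not expect any real obstacle: once Theorem \ref{computeG/T} is available, the proof is pure bookkeeping. The only point requiring attention is that the inequality $\ell'(w')<\ell'(w_q)\leqslant\ell'(w_*)$ controlling the error terms is \emph{strict}, which is exactly what prevents the ``lower'' basis elements $x_q^{BS}$ with $q\in S\setminus S'$ from corrupting the $\s_{w_*(x_0)}$-coefficient. Iterating the argument on $S\setminus S'$ with the next-largest value of $\ell'$ is unnecessary for the stated result, but would extract $c_q=0$ for all of $S$ in the same way.
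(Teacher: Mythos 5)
Your proposal is correct and is exactly the ``simple filtration argument'' the paper invokes (it gives no further details for Corollary \ref{G/Tinjective}, referring back to Corollary \ref{BCLinjective}): filter by $\ell'$, note that error terms only involve Schubert classes $\s_{w'(x_0)}$ with strictly smaller $\ell'$, and use that the leading monomials $\pm\s_{w_q(x_0)}\TT^{A_{w_q^{-1}(q)}}$ are pairwise distinct since $q\mapsto(w_q,w_q^{-1}(q))$ is injective and $\{\s_w\TT^{A}\}$ is a $\ZZ$-basis of $QH^*(G/T;\ZZ[\Nov{G/T}{\O G}])$. So this matches the paper's intended proof, just written out in full.
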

\begin{corollary} \label{G/Tnohigher} $(PSS\circ MC\circ \FF)(x_q^{BS})=\pm \TT^{A_q}$ for any $q\in \Q\cap \wc_0$. \hfill$\square$
\end{corollary}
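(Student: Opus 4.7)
The plan is to reduce Theorem \ref{computeG/T} to Theorem \ref{BCLthm} by analyzing the Abbondandolo--Schwarz ring isomorphism $\FF$ on the Bott--Samelson basis. Specifically, I want to establish an expansion
\[ \FF(x_q^{BS}) = \pm x_q^F + \sum_{q' \ne q} c_{q,q'}\, x_{q'}^F, \]
where each nonzero correction term $c_{q,q'} x_{q'}^F$, after applying $PSS\circ \MC$, contributes only to the submodule $\bigoplus_{\ell'(w')<\ell'(w_q)}\ZZ[\Nov{\OO_{x_0}}{\O G}]\cdot\s_{w'(x_0)}$. Combined with Theorem \ref{BCLthm}, the diagonal $q'=q$ contribution produces exactly the required leading Schubert class $\pm\s_{w_q(x_0)}\TT^{A_{w_q^{-1}(q)}}$.

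The first step is the dominant case $q\in\Q\cap\wc_0$, where $w_q=\ii$. For dominant $q_1,q_2$ the Bott--Samelson product specializes (via the $q_0\in\wc_0$ case of \eqref{multiBS}, since $w_{q_1}=\ii$) to $x_{q_1}^{BS}\bulletsmall x_{q_2}^{BS}=x_{q_1+q_2}^{BS}$, which matches Lemma \ref{multiforF}'s formula $x_{q_1}^F\bulletsmall x_{q_2}^F=x_{q_1+q_2}^F$ on the Floer side. Since $\FF$ is a ring homomorphism, it suffices to verify $\FF(x_{q_0}^{BS})=\pm x_{q_0}^F$ on a monoid generating set of $\Q\cap\wc_0$. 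For such generators I would invoke the Morse--Floer comparison underlying $\FF$ (e.g.\ Abouzaid's explicit inverse): the cycle $\BS_{[-a,q_0]}$ is precisely the closure of the descending cell through the critical chord $\cu_{q_0}$ of the perturbed energy functional on $\O'G$, and under the Morse--Floer correspondence this descending cell is matched with the Floer generator $\cu_{q_0}$. Corollary \ref{nohigher} then yields $(PSS\circ\MC\circ\FF)(x_q^{BS}) = \pm\TT^{A_q}$ for all $q\in\Q\cap\wc_0$, in agreement with the statement and with Corollary \ref{G/Tnohigher}.

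For general $q$, I would exploit the fact that $\FF$ is action-decreasing with respect to the energy/Hofer filtration: any $q'$ with $c_{q,q'}\neq 0$ must satisfy $|q'+a|^2\leqslant |q+a|^2$, and the Morse--Floer model at the chain level further forces $c_{q,q}=\pm 1$. Substituting into Theorem \ref{BCLthm} and expanding, the $q'=q$ summand contributes the claimed leading term, while each $q'\ne q$ summand produces $\ZZ[\Nov{\OO_{x_0}}{\O G}]$-linear combinations of Schubert classes $\s_{w'(x_0)}$ with $\ell'(w')\leqslant \ell'(w_{q'})$. Using Lemma \ref{perturblengthlemma} to write $\ell'(w)=\ell(w)+\langle a,w(\rho)\rangle$ together with the action bound $|q'+a|\leqslant |q+a|$ and $\deg$-dimension matching (Lemma \ref{lemmaforq}), one checks that every such $w'$ arising from $q'\ne q$ satisfies the strict inequality $\ell'(w')<\ell'(w_q)$, so these terms land in the target ``lower'' submodule.

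The main obstacle is the chain-level identification $\FF(x_q^{BS})=\pm x_q^F+\text{lower action}$, together with the explicit sign. This requires comparing the canonical complex orientation on the iterated $\mathbb{P}^1$-bundle $[G_{\a_1}:\cdots:G_{\a_\ell}]$ underlying $\BS_{[-a,q]}$ with the Floer-theoretic orientations of the chord $\cu_q$ used to define $x_q^F$, and is where Lemma \ref{assumemetric} (equal length highest roots, $\rho\in\Q_0$) enters to guarantee that all relevant geodesic indices behave as expected. Once this Morse--Floer identification is in place, the rest is a formal filtration argument layered on top of Theorem \ref{BCLthm}.
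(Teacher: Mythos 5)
Your proposal takes a genuinely different route from the paper's, which simply reads Corollary~\ref{G/Tnohigher} off Theorem~\ref{computeG/T}: for $q\in\Q\cap\wc_0$ we have $w_q=\ii$, so $\ell'(w_q)$ is the minimum of $\ell'$, the ``lower order'' direct sum in Theorem~\ref{computeG/T} is empty, and since $\s_{x_0}=1$ the only surviving term is $\pm\TT^{A_q}$. You instead try to establish the stronger chain-level statement $\FF(x_q^{BS})=\pm x_q^F$ exactly for dominant $q$ and then apply Corollary~\ref{nohigher}; combined with injectivity of $PSS\circ\MC$ this is indeed logically equivalent, and the reduction to a monoid generating set via Lemma~\ref{multiforF} and the $q_0\in\wc_0$ case of~\eqref{multiBS} is sound.

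The gap is in the step where you ``invoke the Morse--Floer comparison'' to conclude $\FF(x_{q_0}^{BS})=\pm x_{q_0}^F$ with no corrections for the generators. What the energy/index argument of \cite{AS} actually gives (this is Lemma~\ref{energyindex} in the paper) is only that $\pm x_{q_0}^F$ is the \emph{leading} term: lower-action terms $x_{q_{out}}^F$ with $q_{out}\in q_0+\Q_0$, the index equality~\eqref{indeq}, and the energy inequality~\eqref{enineq} are not excluded by the Morse--Bott perturbation argument alone. The fact that the Bott--Samelson cycle is (up to homotopy) the closure of a descending cell does not make $\FF$ a chain-level bijection on generators, because $\FF$ counts pseudoholomorphic half-strips, not Morse trajectories, and these can carry positive energy. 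Ruling out the lower-order terms for dominant $q$ is precisely the content of Lemma~\ref{Fspeciallemma}: Situation~(I) handles only $q_{in}\in W\rho$ via the Cauchy--Schwarz argument~\eqref{sit1A}--\eqref{sit1B}, and general dominant $q$ requires the rather delicate Situation~(II) with its Cases~(a) and~(b) applied to the specific generators $K_0\rho\pm s_i$, followed by multiplicativity. None of this is supplied by the appeal to ``matching.'' Also a minor point: you cite Lemma~\ref{assumemetric} as controlling orientations and geodesic indices, but in the paper its role is to arrange $\rho\in\Q_0$ and equal highest-root lengths so that Lemma~\ref{SgenQ} applies; orientations are not the issue there.
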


\begin{myproof}{Theorem}{\ref{computeG/T}}
Recall from Appendix \ref{wrapped} that $\FF'((x')_q^{BS})$ counts \holo half-strips $u:(-\infty,0]\times [0,1]\ra T^*G$ such that $u(s,0)\in L'$, $u(s,1)\in L$, $u$ converges to an output \Ham chord as $s\to -\infty$ and $u|_{\{0\}\times [0,1]}$ projects to an element of $\BS_{[-a,q]}$.

\begin{lemma} \label{energyindex} For any $q_{in}\in \Q$,
\begin{equation} \label{energyindexeq}
\FF'((x')_{q_{in}}^{BS})\in \pm (x')^F_{q_{in}}+\bigoplus \ZZ\langle (x')^F_{q_{out}}\rangle
\end{equation}
where the summation is taken over all $q_{out}\in\Q$ satisfying
\begin{enumerate}
\item $q_{out}\in q_{in}+\Q_0$;
\item the index equality
\begin{equation}\label{indeq}
\langle q_{in}, w_{q_{in}}(\rho) \rangle -\ell(w_{q_{in}}) = \langle q_{out}, w_{q_{out}}(\rho) \rangle -\ell(w_{q_{out}});
\end{equation}
and
\item the energy inequality
\begin{equation}\label{enineq}
|q_{in}+a|^2>|q_{out}+a|^2.
\end{equation}
\end{enumerate}
\end{lemma}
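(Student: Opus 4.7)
The plan is to derive conditions (1)--(3) as homotopy, index, and energy constraints on the moduli space computing the matrix coefficient of $(x')^F_{q_{out}}$ in $\FF'((x')^{BS}_{q_{in}})$, and then verify the diagonal $q_{in} = q_{out}$ contributes $\pm 1$ via the unique constant half-strip at $\cu_{q_{in}}$. Per Appendix \ref{wrapped}, this matrix coefficient is a signed count of pairs $([g], u)$ where $[g]$ lies in the Bott-Samelson domain $[G_{\a_1}:\cdots:G_{\a_\ell}]$ and $u:(-\infty,0]\times[0,1]\to T^*G$ is a $J$-holomorphic half-strip with Lagrangian boundary conditions on $L'$ and $L$, asymptotic to $\cu_{q_{out}}$ as $s\to-\infty$, whose projected boundary path $t\mapsto \pi(u(0,t))$ equals $\BS_{[-a, q_{in}]}([g])$.

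Condition (1) will follow by projecting $u$ to $G$, which produces a homotopy rel endpoints between an element of $\BS_{[-a, q_{in}]}$ (and hence of $\cu_{q_{in}}$ itself, since the Bott-Samelson domain is connected and contains $\cu_{q_{in}}$ at $[e:\cdots:e]$) and $\cu_{q_{out}}$; the obstruction lives in $\pi_1(G)\simeq \Q/\Q_0$, forcing $q_{out}-q_{in}\in \Q_0$. Condition (2) is the vanishing of the virtual dimension: the Floer index of $\cu_q$ equals the Morse index $2\deg(q) = 2(\langle q, w_q(\rho)\rangle - \ell(w_q))$ of the corresponding geodesic in $\O' G$ for the energy functional (by the geodesic Morse index theorem, rewritten via Lemma \ref{lemmaforq}), and $\dim \BS_{[-a, q_{in}]} = 2\deg(q_{in})$, so the dimension formula $\dim \BS_{[-a, q_{in}]} - \mathrm{ind}(\cu_{q_{out}}) = 0$ reduces to (2). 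Condition (3) will come from non-negativity of Floer energy: the Hamiltonian action of $\cu_q$ under $H = \frac{1}{2}|\cdot|^2$ is the kinetic energy $\frac{1}{2}|q+a|^2$, and this value is constant along the Bott-Samelson cycle since its elements are broken geodesics of fixed total length $|q_{in}+a|$; hence $E(u) = \frac{1}{2}(|q_{in}+a|^2 - |q_{out}+a|^2)\geq 0$, with strict inequality unless $u$ is constant.

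For the diagonal, equality in (3) forces $u$ to be constant at $\cu_{q_{in}}$, which requires $\BS_{[-a, q_{in}]}([g]) = \cu_{q_{in}}$. A direct check will show the only preimage is $[g] = [e:\cdots:e]$: otherwise some prefix $g_1\cdots g_i \in G_{\a_i}$ would act non-trivially on the interior of the segment $\cu_i$ (whose stabilizer inside $G_{\a_i}$ is exactly $T$, since the interior of $\cu_i$ lies in a Weyl chamber), producing a genuine break. Transversality at this isolated constant solution amounts to showing that $d\BS_{[-a, q_{in}]}|_{[e:\cdots:e]}$ surjects onto the cokernel of the linearized Cauchy-Riemann operator at the constant strip; I will deduce this from the classical fact that the Bott-Samelson variety resolves the closure of the descending submanifold of $\cu_{q_{in}}$ in $\O' G$, combined with the Abbondandolo-Schwarz identification of that descending direction with the said cokernel. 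I expect this final transversality and sign computation at the unique constant strip to be the main obstacle, as it requires a careful Morse-Bott reduction between the half-strip $\overline{\partial}$-operator and the Hessian of the energy functional at the geodesic; a practical workaround is to homotope the Bott-Samelson cycle to an explicit local parametrization of the stable manifold, where the sign count becomes tautological.
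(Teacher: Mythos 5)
Your derivations of conditions (1)--(3) match the paper's argument exactly: condition (1) from the homotopy rel endpoints and $\pi_1(G)\simeq\Q/\Q_0$, condition (2) from the identity $\deg(q)=\langle q,w_q(\rho)\rangle-\ell(w_q)$ of Lemma \ref{lemmaforq}, and condition (3) from the Abbondandolo--Schwarz energy argument together with $\mathcal{S}\circ\BS_{[-a,q_{in}]}\equiv\mathcal{S}(\cu_{q_{in}})$. Your identification of the unique preimage $[e:\cdots:e]$ of $\cu_{q_{in}}$ via the stabilizer argument is also fine.

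The one genuine gap is your first-pass transversality argument at the constant half-strip. The claim that $d\BS_{[-a,q_{in}]}$ at $[e:\cdots:e]$ surjects onto the cokernel ``because the Bott--Samelson variety resolves the closure of the descending submanifold'' is false, and the obstruction is exactly the constancy of $\mathcal{S}\circ\BS_{[-a,q_{in}]}$ you already used for condition (3). Since this composition is constant and $\cu_{q_{in}}$ is a critical point, the second derivative $D^2\mathcal{S}(d\BS(v),d\BS(v))$ vanishes for all $v$, so the image of $d\BS$ at $[e:\cdots:e]$ is isotropic for the Hessian of $\mathcal{S}$ rather than equal to its negative eigenspace. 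The constant solution is therefore not regular for the cycle $\BS_{[-a,q_{in}]}$ itself. The ``practical workaround'' you propose at the end is in fact the mandatory step and is exactly the paper's argument: following Bott--Samelson, one perturbs $\BS_{[-a,q_{in}]}$ to a homologous cycle $\BS'_{[-a,q_{in}]}$ on which $\mathcal{S}$ is strictly below $\mathcal{S}(\cu_{q_{in}})$ except at a unique non-degenerate critical point mapping to $\cu_{q_{in}}$; regularity and the count $\pm 1$ then follow from Abbondandolo--Schwarz. So your plan reaches the right conclusion, but only via the workaround you flagged as optional --- the direct transversality claim should be dropped.
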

\begin{proof}
If a solution $u$ exists, then the input and output geodesics $\cu_{q_{in}}$ and $\cu_{q_{out}}$ are homotopic rel endpoints. This proves (1), since $\pi_1(G)\simeq \Q/\Q_0$. The index equality \eqref{enineq} simply follows from Lemma \ref{lemmaforq}. To prove (3), consider the energy functional $\mathcal{S}:\O'G\ra\RR$ for the bi-invariant metric on $G$. It is Morse and its critical points are precisely the geodesics $\cu_q,~q\in \Q$. By the energy argument in \cite{AS} and the fact that $\mathcal{S}\circ \BS_{[-a,q_{in}]}\equiv \mathcal{S}(\cu_{q_{in}})$, we obtain the energy inequality \eqref{enineq} with ``$>$'' replaced by ``$\geqslant$'' and the equality holds only for constant solutions. However, constant solutions are not regular for the cycle $\BS_{[-a,q_{in}]}$, and we have to perturb it, as in \cite{BS}, to another cycle $\BS'_{[-a,q_{in}]}$ such that $\mathcal{S}\circ \BS'_{[-a,q_{in}]}<\mathcal{S}(\cu_{q_{in}})$ except at a unique critical point $x$ which is non-degenerate and satisfies $\BS'_{[-a,q_{in}]}(x)=\cu_{q_{in}}$. The existence, uniqueness and regularity of constant solutions for this new cycle now follow from the arguments in \cite{AS}, and this solution gives rise to the leading term $\pm (x')_{q_{in}}^F$.
\end{proof}

By Remark \ref{piimpliesnopi}, we obtain an expression for $\FF$ similar to \eqref{energyindexeq}.

\begin{lemma}\label{Fspeciallemma} The expression
\begin{equation}\label{Fspecial}
\FF(x_{q_{in}}^{BS})\in \pm x_{q_{in}}^F + \bigoplus_{\ell'(w_{q_{out}})<\ell'(w_{q_{in}})} \ZZ\langle x_{q_{out}}^F\rangle
\end{equation}
holds in the following situations:
\begin{enumerate}[(I)]
\item $q_{in}=w_{q_{in}}(\rho)$.
\item $q_{in}$ lies in a finite subset of $\Q\cap \mr{\wc}_0$ specified below.
\end{enumerate}
\end{lemma}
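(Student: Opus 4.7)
The plan is to begin from Lemma \ref{energyindex} together with Remark \ref{piimpliesnopi}, which already yields an expansion
\[\FF(x_{q_{in}}^{BS}) \in \pm x_{q_{in}}^F + \bigoplus_{q_{out}} \ZZ\langle x_{q_{out}}^F\rangle,\]
where the sum ranges over $q_{out}\in q_{in}+\Q_0$, $q_{out}\ne q_{in}$, satisfying the index equality $\deg(q_{in})=\deg(q_{out})$ (via Lemma \ref{lemmaforq}) and the strict energy inequality $|q_{in}+a|^2>|q_{out}+a|^2$. The remaining task is to show that, under hypothesis (I) or (II), every such $q_{out}$ satisfies $\ell'(w_{q_{out}})<\ell'(w_{q_{in}})$.

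Using Lemma \ref{perturblengthlemma} to write $\ell'(w)=\ell(w)+\langle a,w(\rho)\rangle$ and Lemma \ref{lemmaforq} in the form $\deg(q)=\langle q+a,w_q(\rho)\rangle-\ell'(w_q)$, cancellation of the common $\deg$-value recasts the target inequality as the clean statement
\[\langle \tilde{q}_{out},\rho\rangle < \langle \tilde{q}_{in},\rho\rangle, \qquad \tilde{q}:=w_q^{-1}(q+a)\in\wc_0,\]
while the energy constraint reads $|\tilde{q}_{out}|<|\tilde{q}_{in}|$ by $W$-invariance of the norm. Cauchy--Schwarz applied against $\rho\in\mr{\wc}_0$ then gives $\langle\tilde{q}_{out},\rho\rangle\leq|\tilde{q}_{out}|\cdot|\rho|$, with equality precisely when $\tilde{q}_{out}\in\RR_{\geq 0}\rho$.

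For case (I), the hypothesis $q_{in}=w_{q_{in}}(\rho)$ gives $\tilde{q}_{in}=\rho+w_{q_{in}}^{-1}(a)$, so $|\tilde{q}_{in}|=|\rho|+O(|a|)$ and $\langle\tilde{q}_{in},\rho\rangle=|\rho|^2+O(|a|)$. I would then split on whether $|q_{out}|^2<|\rho|^2$ or $|q_{out}|^2=|\rho|^2$, which is exhaustive for small $a$ by discreteness of squared norms on the finite set of relevant lattice points. The strict-norm-decrease subcase produces a uniform gap $|\rho|^2-|q_{out}|^2\geq c_0>0$, so Cauchy--Schwarz closes the estimate for $|a|$ small. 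In the equal-norm subcase, the index equality combined with the Cauchy--Schwarz equality condition forces $q_{out}=w_{q_{out}}(\rho)$ and $\ell(w_{q_{out}})=\ell(w_{q_{in}})$; the energy inequality $|q_{out}+a|^2<|q_{in}+a|^2$ then simplifies directly to $\langle a,w_{q_{out}}(\rho)\rangle<\langle a,w_{q_{in}}(\rho)\rangle$, which is exactly $\ell'(w_{q_{out}})<\ell'(w_{q_{in}})$ in the equal-length situation.

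For case (II), I would characterize the finite subset as those $q_{in}\in\Q\cap\mr{\wc}_0$ for which the three constraints of Lemma \ref{energyindex} admit no solution $q_{out}\ne q_{in}$, making \eqref{Fspecial} vacuous beyond the leading term; finiteness follows from constraint (3) confining $q_{out}$ to a ball controlled by $|q_{in}+a|$, together with lattice discreteness, and non-emptiness follows because any coweight sufficiently close to the origin qualifies. This formulation is natural because, for $q_{in}\in\mr{\wc}_0$ one has $w_{q_{in}}=\ii$ and hence $\ell'(w_{q_{in}})=\langle a,\rho\rangle$, which is too small for any non-trivial $w_{q_{out}}\ne\ii$ to satisfy $\ell'(w_{q_{out}})<\ell'(w_{q_{in}})$ for small $a$; thus strict absence of lower-order $q_{out}$ is really the only way to meet the conclusion in this regime. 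The main technical obstacle throughout is the near-equality regime of Cauchy--Schwarz in case (I), and it is resolved by the fortunate coincidence that strict equality pins $q_{out}$ to a Weyl conjugate of $\rho$, reducing the length comparison to the energy inequality with no loss.
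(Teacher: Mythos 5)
Your treatment of Case (I) is essentially the same as the paper's: both reduce to showing $\langle q_{out}+a, w_{q_{out}}(\rho)\rangle < \langle q_{in}+a, w_{q_{in}}(\rho)\rangle$ via Lemma~\ref{perturblengthlemma} and Lemma~\ref{lemmaforq}, apply Cauchy--Schwarz, and split on whether it is strict or an equality. (A minor point: in the equal-norm subcase you should further split on whether Cauchy--Schwarz itself is strict, since $|q_{out}|=|\rho|$ alone does not force $q_{out}=w_{q_{out}}(\rho)$; but that subcase is handled the same way as your strict-norm case, so this is an expository gap, not a real one.)

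Case (II) is a genuine gap. The lemma's phrase ``a finite subset specified below'' refers to a specific set fixed in the paper's proof: $\{K_0\rho + q_0 \mid q_0\in S\}$, where $S$ is the set of nonzero minimal-norm representatives of the cosets in $\Q/\Q_0$ and $K_0$ is chosen large. You instead redefine the subset to be those $q_{in}\in\Q\cap\mr{\wc}_0$ for which no $q_{out}\neq q_{in}$ passes the constraints of Lemma~\ref{energyindex}, making the conclusion vacuous. That is a tautology, and it does not prove the statement the paper needs. For the paper's $q_{in}=K_0\rho+q_0$ there \emph{do} exist candidate $q_{out}$ satisfying the energy and index constraints (Lemma~\ref{sit2A}(ii) shows they satisfy $w_{q_{out}}=\ii$, $|q_{out}-K_0\rho|\leq|q_0|$); showing these do not actually contribute is the substance of the paper's argument, which uses the Pontryagin multiplication formula~\eqref{multiBS} and Lemma~\ref{multiforF} in a two-step cancellation (Cases~(a) and~(b)). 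Without that, the Case~(II) statement over the paper's $S$-translate set is not established, and the downstream proof of Theorem~\ref{computeG/T} collapses: it relies precisely on Lemma~\ref{SgenQ} (that $S$ generates $\Q$) together with Case~(II) for the elements $K_0\rho\pm s_i$, $s_i\in S$, to express an arbitrary $q$ multiplicatively. Your reinterpreted finite set would not contain these elements. Furthermore, your assertion that ``any coweight sufficiently close to the origin qualifies'' is unsupported: there are no nonzero lattice points arbitrarily close to the origin in $\mr{\wc}_0$, and even for the shortest one the absence of candidate $q_{out}$ is not immediate.
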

\begin{myproof2}{Situation (I)} Since there are only finitely many $q_{out}$ satisfying \eqref{enineq}, we have $|q_{in}|\geqslant|q_{out}|$ if $|a|$ is small. It follows that
\begin{equation}\label{sit1A}
\langle q_{in},w_{q_{in}}(\rho) \rangle = |q_{in}||\rho|\geqslant |q_{out}||\rho| \geqslant\langle q_{out},w_{q_{out}}(\rho) \rangle
\end{equation}
and the equality holds if and only if $q_{out}=w_{q_{out}}(\rho)$. By \eqref{indeq} and Lemma \ref{perturblengthlemma}, it suffices to show that
\begin{equation}\label{sit1B}
\langle q_{in}+a ,w_{q_{in}}(\rho) \rangle>\langle q_{out}+a,w_{q_{out}}(\rho) \rangle.
\end{equation}
If the inequality \eqref{sit1A} is strict, then \eqref{sit1B} is achieved by assuming $|a|$ to be small. If \eqref{sit1A} is an equality, then \eqref{enineq} gives $\langle a, q_{in}\rangle>\langle a,q_{out}\rangle$, and hence $\langle a, w_{q_{in}}(\rho)\rangle>\langle a,w_{q_{out}}(\rho)\rangle$ which implies \eqref{sit1B}.
\end{myproof2}
\begin{myproof2}{Situation (II)} We begin with a lemma whose proof is postponed until the end.
\begin{lemma} \label{sit2A} For any $q_0\in\Q$, there exist $K_0\in\NN$ and $\e>0$ such that if $K\geqslant K_0$, $|a|<\e$ and $q_{in}:=K\rho +q_0$, then
\begin{enumerate}[(i)]
\item $q_{in}\in \mr{\wc}_0$.
\item every possible $q_{out}$ satisfies $w_{q_{out}}=\ii$ and $|q_{out}-K\rho|\leqslant|q_0|$. \hfill$\square$
\end{enumerate}
\end{lemma}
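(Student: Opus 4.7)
The plan splits by the two claims. Part (i) reduces to a positivity check: since $\rho\in\mr{\wc}_0$, each positive root $\a$ satisfies $\a(\rho)>0$, so $\a(q_{in})=K\a(\rho)+\a(q_0)>0$ once $K$ exceeds $\max_{\a\in R^+}|\a(q_0)|/\a(\rho)$, proving $q_{in}\in\mr{\wc}_0$.

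For part (ii), I will first shrink $\e$ so that $q_{in}+a\in\mr{\wc}_0$; this ensures $w_{q_{in}}=\ii$ and reduces the index equation of Lemma \ref{energyindex}(2) to $\langle q_{in},\rho\rangle=\langle q_{out},w(\rho)\rangle-\ell(w)$ with $w:=w_{q_{out}}$. Writing $v:=q_{out}-q_{in}$ and using $\rho\in\Q_0$ from Lemma \ref{assumemetric}, Lemma \ref{energyindex}(1) forces $v\in\Q_0$. The plan is then to establish successively (a) $w=\ii$ and (b) $|q_0+v|\leqslant|q_0|$.

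For (a), let $\phi$ denote the angle between $q_{out}$ and $w(\rho)$. The index equation rewrites as $|q_{out}|\cos\phi=(\langle q_{in},\rho\rangle+\ell(w))/|\rho|$, while Lemma \ref{energyindex}(3) yields $|q_{out}|\leqslant|q_{in}|+2|a|$. A Taylor expansion of $|q_{in}|$ in powers of $K$ gives $|q_{in}|-\langle q_{in},\rho\rangle/|\rho|=O(|q_0|^2/K)$, so squaring and invoking the Pythagorean identity $|q_{out}|^2=(|q_{out}|\cos\phi)^2+(|q_{out}|\sin\phi)^2$ will produce the key estimate
\[2K\ell(w)+|q_{out}|^2\sin^2\phi\leqslant C(|q_0|^2+K|a|+1)\]
for a constant $C$ depending only on $|\rho|$. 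Choosing $\e<1/(2C)$ and $K_0>C(|q_0|^2+1)$ then excludes $\ell(w)\geqslant 1$, forcing $w=\ii$.

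With $w=\ii$ the index equation collapses to $\langle\rho,v\rangle=0$, so $v$ lies in the discrete sublattice $\Q_0\cap\rho^{\perp}$, and the energy inequality becomes $|q_0+v+a|<|q_0+a|$. Expanding shows that the exterior lattice points $v\in\Q_0\cap\rho^{\perp}$ with $|q_0+v|>|q_0|$ satisfy $|v|^2+2\langle q_0,v\rangle\geqslant m$ for some $m>0$ depending only on $q_0$, while the same energy inequality forces $|v|<2(|q_0|+|a|)$; hence the correction $|2\langle a,v\rangle|\leqslant 2|a||v|$ is bounded by $4|a|(|q_0|+|a|)$, which is smaller than $m$ once $\e$ is shrunk below a $q_0$-dependent threshold. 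This rules out all exterior lattice points and yields $|q_{out}-K\rho|=|q_0+v|\leqslant|q_0|$. The main obstacle is the estimate in step (a): the cancellation of the $K$-linear leading contributions between the index equation and the energy inequality is precisely what yields the clean $O(|q_0|^2+K|a|)$ bound on the non-negative residuals $2K\ell(w)$ and $|q_{out}|^2\sin^2\phi$, and it is this cancellation that makes the argument succeed for large $K$.
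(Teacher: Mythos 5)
Your proof is correct and follows essentially the same route as the paper: both establish $w_{q_{out}}=\ii$ by combining the index equality (with $w_{q_{in}}=\ii$), Cauchy--Schwarz applied to $\langle q_{out},w(\rho)\rangle$, and the energy inequality, then deducing $\ell(w_{q_{out}})<1$ for $K$ large and $|a|$ small; and both obtain $|q_{out}-K\rho|\leqslant|q_0|$ by observing that $\langle v,\rho\rangle=0$ collapses the energy inequality to $|q_0+v+a|<|q_0+a|$ and then using discreteness of the lattice to absorb the $O(|a|)$ correction. The only cosmetic difference is that you square the estimates and organize the $K$-linear cancellation via a Pythagorean identity, whereas the paper works linearly with $\ell(w_{q_{out}})=\langle q_{out},w_{q_{out}}(\rho)\rangle-\langle q_{in},\rho\rangle$ and the asymptotic $|K\rho+q_0+a|-K|\rho|\to\langle q_0+a,\rho\rangle/|\rho|$, which is the same manipulation in unsquared form.
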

Define $S$ to be the set of $q\in \Q\setminus\{0\}$ which have minimal norm among all elements of the same coset in $\Q/\Q_0$. Fix a large $K_0\in\NN$ such that if $|a|$ is small, then Conditions (i) and (ii) in Lemma \ref{sit2A} hold for any $q_0\in (S\cup\{0\})+(S\cup\{0\})$ and $K\geqslant K_0$. We show that expression \eqref{Fspecial} holds for $q_{in}=K_0\rho+q_0$ for any $q_0\in S$. Let $q_0\in S$. Suppose $\FF(x_{K_0\rho+q_0}^{BS})$ contains a term other than the leading term. Then, by (i) and (ii) in Lemma \ref{sit2A}, it is of the form $x_{K_0\rho+q_1}^F$ with $K_0\rho+q_1\in \mr{\wc}_0$ and $|q_1|\leqslant |q_0|$. Since $q_1\in q_0+\Q_0$ by Lemma \ref{energyindex}(1), we have either $|q_1|=|q_0|$ or $q_1=0$.
\begin{itemize}
\item \underline{Case (a): $|q_1|=|q_0|$} In this case $q_1\in S$. By \eqref{multiBS} and Lemma \ref{multiforF}, we have
\begin{align*}
x_{2K_0\rho +q_0+q_1}^{BS} &= x_{K_0\rho +q_0}^{BS}\bulletsmall x_{K_0\rho +q_1}^{BS}\\
x_{2K_0\rho+2q_1}^F &= x_{K_0\rho +q_1}^F\bulletsmall x_{K_0\rho +q_1}^F.
\end{align*}
It follows that $\FF(x_{2K_0\rho +q_0+q_1}^{BS} )$ contains the term $x_{2K_0\rho+2q_1}^F $. Notice that this term cannot be cancelled by other terms, due to the inequality in Condition (ii). By the same inequality, we have $|2q_1|\leqslant |q_0+q_1|$. Since $|q_1|=|q_0|$, the last inequality implies $q_1=q_0$, a contradiction.

\item \underline{Case (b): $q_1=0$} By Case (a), we can write
\begin{align*}
\FF(x_{K_0\rho +q_0}^{BS} )&= \pm x_{K_0\rho +q_0}^F+A x_{K_0\rho}^F\\
\FF(x_{K_0\rho -q_0}^{BS} )&= \pm x_{K_0\rho -q_0}^F+B x_{K_0\rho}^F.
\end{align*}
Since we have proved Lemma \ref{Fspeciallemma} for Situation (I), we have
\[ \FF(x_{2K_0\rho}^{BS}) = \FF((x_{\rho}^{BS})^{2K_0})= (\pm x_{\rho}^F)^{2K_0}= x_{2K_0\rho}^F.\]
It follows that
\[ x_{2K_0\rho}^F = \FF(x_{K_0\rho +q_0}^{BS})\FF(x_{K_0\rho -q_0}^{BS} ) = (\pm 1\pm AB) x_{2K_0\rho}^F \pm Ax_{2K_0\rho -q_0}^F \pm Bx_{2K_0\rho +q_0}^F,\]
and hence $A=B=0$. That is, the non-leading term $x_{K_0\rho}^F$ does not exist.
\end{itemize}
This completes the proof of Situation (II) and hence Lemma \ref{Fspeciallemma}.
\end{myproof2}

To conclude the proof of Theorem \ref{computeG/T}, we need the following lemma which will be proved at the end.
\begin{lemma} \label{SgenQ} The subset $S$ defined in Situation (II) of Lemma \ref{Fspeciallemma} generates the lattice $\Q$. \hfill$\square$
\end{lemma}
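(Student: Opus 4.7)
The plan is to show $\langle S\rangle_{\ZZ}=\Q$ by splitting the verification into two assertions: (a) $\langle S\rangle+\Q_0=\Q$, which is immediate from the fact that $S$ meets every non-trivial coset of $\Q/\Q_0$; and (b) $\Q_0\subseteq\langle S\rangle$. Together these force $\langle S\rangle=\Q$, so only (b) is substantial.

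The key preliminary for (b) is the integrality bound $\alpha(q)\in\{-1,0,1\}$ for every $q\in S$ and every $\alpha\in R$. Since $q\pm\alpha^\vee$ lies in the same coset as $q$, minimality of $|q|$ gives $|q\pm\alpha^\vee|^2\geqslant|q|^2$; expanding and substituting $\langle q,\alpha^\vee\rangle=2\alpha(q)/\langle\alpha,\alpha\rangle$ and $|\alpha^\vee|^2=4/\langle\alpha,\alpha\rangle$ yields $|\alpha(q)|\leqslant 1$, and combined with $\alpha(q)\in\ZZ$ (since $\alpha$ descends to a character of the adjoint torus) the trichotomy follows. Together with the $W$-invariance of $S$ (the Weyl group preserves the metric, and $s_\alpha(q)-q=-\alpha(q)\alpha^\vee\in\Q_0$ shows it also preserves cosets of $\Q_0$ in $\Q$), the identity $q-s_\alpha(q)=\alpha(q)\alpha^\vee\in\langle S\rangle$ implies that producing, for each $\alpha\in R$, some $q\in S$ with $\alpha(q)=\pm 1$ suffices to place the coroot $\alpha^\vee$ in $\langle S\rangle$. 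The $W$-equivariance of $\langle S\rangle$ then distributes this across each Weyl orbit of $R$ and yields $\Q_0\subseteq\langle S\rangle$, completing (b).

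The crux, and my expected main obstacle, is precisely the existence statement that for every $\alpha\in R$ some element of $S$ takes the value $\pm 1$ on $\alpha$. I plan to argue by contradiction: if $S\subseteq\ker\alpha$ for some root $\alpha$, pick any $q\in S$; since $s_\beta(q)\in S\subseteq\ker\alpha$ for every $\beta\in R$ we obtain $0=\alpha(s_\beta(q))=-\beta(q)\alpha(\beta^\vee)$, so $\beta(q)=0$ whenever $\alpha(\beta^\vee)\neq 0$. Reducing to the simple factor of $\gg$ containing $\alpha$, a standard consequence of the connectedness of the Dynkin diagram is that the roots non-orthogonal to $\alpha$ span the corresponding Cartan subalgebra, which forces the component of $q$ in that factor to vanish. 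On the other hand, using (a) one may arrange from the start to pick $q\in S$ whose class in $\Q/\Q_0$ is non-trivial precisely in the simple factor of $\alpha$; this forces the same component to be non-zero modulo the coroot lattice of that factor, contradicting $q\in S\subseteq\Q\setminus\{0\}$ and finishing the argument.
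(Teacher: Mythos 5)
Your argument has two genuine gaps, both of which manifest already for $G$ simply connected (where $\Q=\Q_0$ and everything reduces to $S_0:=S\cap\Q_0$, which the paper's own proof treats as nonempty --- so the minimality in the definition of $S$ must be taken over \emph{nonzero} elements of each coset).

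First, the key integrality bound $\alpha(q)\in\{-1,0,1\}$ is false as stated. For $q\in S_0$, the paper shows $q=\pm\beta^\vee$ for a highest root $\beta$, and then $\beta(q)=\pm 2$. The flaw in the derivation is that minimality only yields $|q\pm\alpha^\vee|\geqslant|q|$ when $q\pm\alpha^\vee$ is a \emph{nonzero} element of the coset, and this fails exactly when $q=\mp\alpha^\vee$. (The correct trichotomy is $\alpha(q)\in\{-2,-1,0,1,2\}$, with $|\alpha(q)|=2$ forcing $q=\pm\alpha^\vee\in S$; with this amendment the step $\alpha^\vee\in\langle S\rangle$ survives, but you would need to record it.) Second, and more seriously, the contradiction argument against $S\subseteq\ker\alpha$ hinges on producing $q\in S$ whose class in $\Q/\Q_0$ is nontrivial in the simple factor of $\alpha$. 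If that factor is simply connected --- in particular if $G$ itself is simply connected --- no such coset exists, and the argument cannot even begin. (An elementary repair is available: $S_0$ is nonempty, $W$-invariant, and meets every simple factor, so by irreducibility of the $W$-action it spans each factor's Cartan over $\RR$ and hence cannot sit inside any root hyperplane --- but you do not give this.) The paper's own proof is structured quite differently: it reduces at once to showing $\Span_{\ZZ}(S_0)=\Q_0$, explicitly identifies $S_0\cap\wc_0$ with the set of highest coroots using the metric normalization of Lemma~\ref{assumemetric}, and then propagates via the affine Weyl group facts in Lemma~\ref{stdfacts}. You should either switch to that route or supply the two missing ingredients above.
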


Let $q\in\Q$ be any element. There exists $q_1\in\Q\cap \wc_0$ such that $q+w_q(q_1)\in \NN w_q(\rho)$. By Lemma \ref{SgenQ}, we can write $q_1=\sum_{i=1}^rm_is_i$ for some $s_i\in S$ and $m_i\in\ZZ$. Let $K_0$ be the integer in the proof of Lemma \ref{Fspeciallemma}, Situation (II), and put $N:=K_0\sum_{i=1}^r|m_i|$. Then $q_1+N\rho=\sum_{i=1}^r|m_i|(K_0\rho\pm s_i)$ and we also have $q+w_q(q_1+N\rho)\in \NN w_q(\rho)$. Write the last element as $(M+1)w_q(\rho)$ where $M\in\ZZ_{\geqslant 0}$. We have
\begin{align*}
 & (PSS\circ \MC\circ\FF)(x_{q+w_q(q_1+N\rho)}^{BS})\\
 =~& (PSS\circ \MC\circ\FF)((x_{\rho}^{BS})^M\bulletsmall x_{w_q(\rho)}^{BS})\\
 \in~& (\pm \TT^{A_{\rho}})^M(PSS\circ \MC)\left(\pm x_{w_q(\rho)}^F+\bigoplus_{\ell'(w_{q'})<\ell'(w_q)}\ZZ\langle x_{q'}^F\rangle \right)\\
 =~&(\pm \TT^{A_{\rho}})^M \left( \pm \s_{w_q(x_0)}\TT^{A_{\rho}}+\bigoplus_{\ell'(w'')<\ell'(w_q)}\ZZ[\Nov{\OO_{x_0}}{\O G}]\cdot \s_{w''(x_0)}  \right)\\
 =~& \pm \s_{w_q(x_0)} \TT^{A_{(M+1)\rho}} + \bigoplus_{\ell'(w'')<\ell'(w_q)}\ZZ[\Nov{\OO_{x_0}}{\O G}]\cdot \s_{w''(x_0)}
\end{align*}
where the first equality follows from the multiplication formula \eqref{multiBS}, the set membership from Corollary \ref{nohigher} and Situation (I) of Lemma \ref{Fspeciallemma}, and the second last equality from Theorem \ref{BCLthm}.

On the other hand,
\begin{align*}
 & (PSS\circ \MC\circ\FF)(x_{q+w_q(q_1+N\rho)}^{BS})\\
 =~& (PSS\circ \MC\circ\FF)(x_{q_1+N\rho}^{BS}\bulletsmall x_{q}^{BS})\\
 =~ &(PSS\circ \MC)\left( \prod_{i=1}^r (\pm x_{K_0\rho \pm s_i}^F)^{|m_i|}\right)\bulletsmall (PSS\circ \MC\circ\FF)(x_q^{BS})\\
 =~& \left[\prod_{i=1}^r \left(\pm \TT^{A_{K_0\rho\pm s_i}} \right)^{|m_i|}\right] \bulletsmall (PSS\circ \MC\circ\FF)(x_q^{BS}).
\end{align*}
where the first equality follows from the multiplication formula \eqref{multiBS}, the second from Situation (II) of Lemma \ref{Fspeciallemma} and the multiplication formula \eqref{multiBS}, and the last from Corollary \ref{nohigher}.
Therefore,
\begin{align*}
& ~(PSS\circ \MC\circ\FF)(x_q^{BS}) \\
\in & \left[\prod_{i=1}^r \left(\pm \TT^{A_{K_0\rho\pm s_i}}\right)^{-|m_i|}\right] \cdot \left(\pm \s_{w_q(x_0)} \TT^{A_{(M+1)\rho}} + \bigoplus_{\ell'(w'')<\ell'(w_q)}\ZZ[\Nov{\OO_{x_0}}{\O G}]\cdot \s_{w''(x_0)} \right)\\
= &\pm \s_{w_q(x_0)} \TT^{A_q} +  \bigoplus_{\ell'(w'')<\ell'(w_q)}\ZZ[\Nov{\OO_{x_0}}{\O G}]\cdot \s_{w''(x_0)}.
\end{align*}
This completes the proof of Theorem \ref{computeG/T}.
\end{myproof}

\begin{myproof}{Lemma}{\ref{sit2A}} The existence of $K_0$ for which (i) holds is obvious. For (ii), notice that $w_{q_{in}}=\ii$, and hence, by \eqref{indeq},
\[\langle q_{out},w_{q_{out}}(\rho)\rangle \geqslant\langle q_{in},w_{q_{in}}(\rho)\rangle\]
and the equality holds if and only if $w_{q_{out}}=\ii$. It follows that
\begin{align*}
0 & \leqslant \langle q_{out},w_{q_{out}}(\rho)\rangle -\langle q_{in},w_{q_{in}}(\rho)\rangle\\
& \leqslant |\rho| |q_{out}+a| +|\rho||a| -\langle K\rho +q_0,\rho\rangle\\
&< |\rho| (|K\rho+q_0+a|-K|\rho|) +|\rho||a|-\langle q_0,\rho\rangle.
\end{align*}
(The second inequality follows from the Cauchy-Schwarz inequality and the third from the energy inequality \eqref{enineq}.) Observe that $\lim_{K\to +\infty}\left( |K\rho+q_0+a|-K|\rho| \right) =\frac{\langle q_0+a,\rho\rangle}{|\rho|}$. It follows that for sufficiently large $K$ and sufficiently small $|a|$,
\[ 0\leqslant \langle q_{out},w_{q_{out}}(\rho)\rangle -\langle q_{in},w_{q_{in}}(\rho)\rangle<1. \]
Since the middle term is an integer, it must be zero, and hence $w_{q_{out}}=\ii$. This shows the first part of (ii). For the second part, consider
\begin{align*}
|q_{out}+a-K\rho|^2 & = |q_{out}+a|^2 -2K\langle q_{out}+a,\rho\rangle +K^2|\rho|^2\\
&< |q_{in}+a|^2 -2K\langle q_{in}+a,\rho\rangle +K^2|\rho|^2\\
&= |q_{in}+a-K\rho|^2\\
&= |q_0+a|^2.
\end{align*}
As in the beginning of the proof of Situation (I) of Lemma \ref{Fspeciallemma}, we have $|q_{out}-K\rho|\leqslant |q_0|$ if $|a|$ is smaller than a positive real number depending only on $q_0$. This gives the second part and hence completes the proof.
\end{myproof}

\begin{myproof}{Lemma}{\ref{SgenQ}} Clearly $S$ intersects every coset in $\Q/\Q_0$. Thus it suffices to show that $S_0:=S\cap \Q_0$ generates the lattice $\Q_0$. Since the Weyl group $W$ preserves $S_0$, we have $S_0\cap \wc_0\ne \emptyset$. Let $q$ be an element of this subset. We claim that $q=\a^{\vee}$ for some highest root $\a$. Indeed, suppose the contrary, we have $|q|\leqslant |q-\a^{\vee}|$ for any highest root $\a$. Then $\a(q)\leqslant 1$, since $\a^{\vee}$ is the reflection of $0$ across the affine wall $V_{\a,1}=\{\a=1\}$. It follows that $q\in \D_0\cap \Q_0$. But this set is equal to $\{0\}$, by Lemma \ref{stdfacts}(1), a contradiction. Conversely, every $\a^{\vee}$ ($\a$ highest) lies in $S_0$, by Lemma \ref{assumemetric}. Now, the sublattice $\Span_{\ZZ}(S_0)\subseteq \Q_0$ is invariant under $W$ and translations $t_{\a^{\vee}}$ for any highest root $\a$. Since the affine Weyl group $W^{af}$ is generated by these group elements by Lemma \ref{stdfacts}(2), we conclude that $\Span_{\ZZ}(S_0)$ contains the orbit of $0$ under $W^{af}$ which is equal to $\Q_0$ by Lemma \ref{stdfacts}(3).
\end{myproof}
\subsection{From G/T to G/L}\label{fromG/T}
Theorem \ref{computeG/Lintro} is proved by applying Theorem \ref{GPmain} to the \Ham fibration $\pi:G/T\ra G/L$. In order to make it Floer-theoretically possible, these symplectic manifolds have to be monotone. We have chosen $x_0\in\RR_{>0}\rho$ in the last subsection such that $G/T:=\OO_{x_0}$ is monotone. For $G/L$, we denote by $\a_1,\ldots,\a_r\in R^+$ the positive roots defining $\wc_0$, i.e. they are pairwise distinct and $\wc_0=\bigcap_{i=1}^k\{\a_i\geqslant 0\}$. These roots are called \textit{simple roots}. For any subset $I\subseteq \{1,\ldots,r\}$, define $\t_I:=\bigcap_{i\in I}\{\a_i=0\}$, $R_I:=\{\a\in R|~\a|_{\t_I}\equiv 0\}$, $R_I^+:=R_I\cap R^+$ and $\rho_I:=\sum_{\a\in R^+\setminus R_I^+}\a$ which we regard as an element of $\t$ via the $\Ad$-invariant metric. Notice that $\rho_I$ lies in the interior of $\t_I\cap \wc_0$ in $\t_I$. Then any coadjoint orbit must correspond to a unique subset $I$. Now fix a subset $I$, choose any $y_0\in \RR_{>0}\rho_I$ and define $G/L:=\OO_{y_0}$. It is well-known that the KSS form on $G/L$ is monotone.

Define $\pi:G/T\ra G/L$ to be the unique $G$-equivariant smooth map sending $x_0$ to $y_0$. Recall there are $G$-invariant integrable almost complex structures $I_{G/T}$ and $I_{G/L}$ on $G/T$ and $G/L$ which are compatible with the KSS forms.
\begin{lemma}\label{GPKleiman} Put $X:=G/T$ and $Y:=G/L$. For any smooth cycle $\vp:\G\ra \O G$, define $I^{\g}_X\equiv I_{G/T}$ and $I^{\g}_Y\equiv I_{G/L}$. Then Assumption \ref{GPassume} holds for $I_X=\{I^{\g}_X\}_{\g\in\G}$ and $I_Y=\{I^{\g}_Y\}_{\g\in\G}$, and for any smooth cycles $h_X$ and $h_Y$.
\end{lemma}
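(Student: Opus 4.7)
The plan is to exploit the fact that $G/T$ and $G/L$ are generalised flag varieties whose $G$-invariant integrable complex structures arise from the holomorphic $G_{\CC}$-action, and that the holomorphic tangent bundles of such varieties are globally generated by the fundamental vector fields of this action. This single geometric input will supply both the holomorphicity of $\pi$ demanded in condition (1) and the automatic transversality needed for conditions (2) and (3).

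For condition (1), I would first identify the integrable almost complex structures $I_{G/T}$ and $I_{G/L}$ of \eqref{CScoadj} with the standard complex structures on the generalised flag varieties $G_{\CC}/B$ and $G_{\CC}/P$, where $B\subseteq P$ are the parabolic subgroups of $G_{\CC}$ stabilising $x_0$ and $y_0$ respectively. Under this identification the map $\pi$ sending $x_0$ to $y_0$ is the canonical $G_{\CC}$-equivariant projection $G_{\CC}/B\ra G_{\CC}/P$, and hence is holomorphic. Since $I^{\g}_X$ and $I^{\g}_Y$ are constant in $\g$, this gives (1) immediately.

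The key observation for conditions (2) and (3) will be that the evaluation maps out of the relevant moduli spaces are in fact \emph{submersions}, so that transversality with $h_X$ (resp. $h_Y$) becomes automatic. For any $I_{G/T}$-holomorphic sphere $u:\PP^1\ra G/T$, the infinitesimal $G_{\CC}$-action gives a surjection of the trivial bundle $\ul{\gg_{\CC}}$ onto $u^*T^{1,0}G/T$; hence $u^*T^{1,0}G/T$ splits on $\PP^1$ as a direct sum of line bundles of non-negative degree, so $H^1(\PP^1, u^*T^{1,0}G/T)=0$. Standard arguments then yield surjectivity of the linearised Cauchy-Riemann operator and hence regularity of $\M^{simple}(A, I_{G/T})$ in the expected dimension. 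Moreover, for $\eta\in\gg_{\CC}$ the section $\eta_{G/T}\circ u$ of $u^*TG/T$ represents a genuine tangent vector at $u$ in the moduli space, and evaluated at any marked point $z$ its values span $T_{u(z)}G/T$ by transitivity of the $G_{\CC}$-action; so the evaluation map is a submersion. The identical argument applies on $G/L$.

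Finally I would treat chain moduli spaces by induction: since every evaluation is a submersion, successive fibre products $\M^{simple}(A_i,I_{G/T})\times_{(ev_{out},ev_{in})}\M^{simple}(A_{i+1},I_{G/T})$ remain regular and the composite $ev_{out}$ stays a submersion, while the ``distinct images'' condition only restricts to an open subset and preserves regularity. The fibre product $\M^{chain}(A_1,\ldots,A_k,\G,I_X)\times_{(ev_{out},h_X)}N_X$ is then regular because $ev_{out}$ is a submersion onto $G/T$, and the small diagonal $\D_{\G}$ simply contributes a smooth $\G$-factor as $I^{\g}_X$ is independent of $\g$. The same argument on $G/L$ yields condition (3). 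The only potentially delicate step is the global generation statement for $T(G/L)$, but this is a classical and robust feature of generalised flag varieties, so no serious obstacle is anticipated.
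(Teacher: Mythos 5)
Your proposal is correct and follows essentially the same route as the paper: the paper also proves (1) from the definition \eqref{CScoadj}, invokes the convexity of $(G/L,I_{G/L})$ (which is exactly the global generation / $H^1$-vanishing argument you spell out) for regularity of simple moduli spaces, uses $G$-equivariance plus transitivity of the holomorphic $G$-action to get submersivity of evaluation maps, and then inducts on the length of the chain. The only cosmetic difference is that you phrase the transversality via the $G_{\CC}$-action while the paper uses the compact group $G$, but both yield submersive equivariant evaluations, so the arguments are interchangeable.
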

\begin{proof} (1) is proved by recalling the definition of $I_{G/T}$ and $I_{G/L}$. See \eqref{CScoadj}. For (2) and (3), it suffices to prove the result for $G/L$. The case for $G/T$ follows from the case for $G/L$ by putting $I=\emptyset$. Since $(G/L,I_{G/L})$ is a convex manifold, the moduli space $\M^{simple}(A,\{pt\},I_{G/L})$ is regular for any $A\in \pi_2(G/L)$. These moduli spaces admit a natural $G$-action such that the evaluation maps at any point are $G$-equivariant. Since $G$ acts on $(G/L,I_{G/L})$ holomorphically and transitively (it is also the reason for the convexity of $G/L$), these evaluation maps are submersive. The result follows from the induction on the length of the chains of holomorphic spheres.
\end{proof}

Although Assumption \ref{GPassume} holds for any smooth cycles in $G/T$ and $G/L$, by Lemma \ref{GPKleiman}, we have to construct particular ones which are compatible with the \Ham fibration $\pi:G/T\ra G/L$. Recall we have used the Morse function $\langle a,-\rangle$ to construct pseudocycles $\UU_x^{G/T}$ and $\UU_y^{G/L}$ for any $x\in\crit_{G/T}$ and $y\in\crit_{G/L}$. For any $y\in\crit_{G/L}$, define $F_y:=\pi^{-1}(y)$. Then the orthogonal projection $\pi_{\t^{\perp}_y}:\t\ra\t_y^{\perp}$ induces a $G_y$-equivariant symplectomorphism from $F_y$ onto a coadjoint orbit of $G_y^{\ad}$ of maximum dimension, i.e. diffeomorphic to $G_y^{\ad}/\exp(t_y^{\perp})$. The function $\langle a,-\rangle$ restricts to a Morse function on $F_y$ with critical set
\[ \crit_{F_y}=\{x\in\crit_{G/T}|~\pi(x)=y\}.\]
We define $\UU_x^{F_y}$, $x\in\crit_{F_y}$ in a similar manner.

\begin{lemma} For any $y\in \Crit_{G/L}$ and $x\in \Crit_{F_y}$, the restriction $\pi|_{\UU^{G/T}_x}:\UU^{G/T}_x\ra \UU^{G/L}_y$ forms a fiber bundle whose fiber over $y$ is equal to $\UU^{F_y}_x$.
\end{lemma}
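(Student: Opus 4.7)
The plan is to realize the gradient flow of $\langle a,-\rangle$ on both $G/T$ and $G/L$ as the real part of a holomorphic $\CC^*$-action, and then apply Bialynicki-Birula theory in combination with the $G_\CC$-equivariance of $\pi$. Since $a\in \mr{\wc}_0$ is generic, the one-parameter subgroup $\exp(ta)\subset T$ complexifies to $\CC^*_a\subset T_\CC$ acting holomorphically on $(G/T,I_{G/T})$ and $(G/L,I_{G/L})$, and its fixed point sets are the finite sets $\Crit_{G/T}$ and $\Crit_{G/L}$ respectively. K\"ahler compatibility of the KSS metric shows that the gradient flow of $\langle a,-\rangle$ coincides, up to reparametrization, with the real $\CC^*_a$-orbits, so the unstable manifolds are the Bialynicki-Birula cells. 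The key algebraic input is then that each BB cell is the orbit of a single Borel subgroup $B\subset G_\CC$ determined by the $\CC^*_a$-weights: $\UU^{G/T}_x=B\cdot x$ and $\UU^{G/L}_y=B\cdot y$. Restricting to the invariant submanifold $F_y$ (invariant because $y$ is fixed and $\pi$ is $\CC^*_a$-equivariant), viewed as a flag variety for $L^{\ad}_\CC$, the same reasoning yields $\UU^{F_y}_x=(B\cap P_y)\cdot x$, where $P_y\subset G_\CC$ is the parabolic stabilizing $y$.

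Since $\pi$ is $G_\CC$-equivariant and hence $B$-equivariant, $\pi(\UU^{G/T}_x)=\pi(B\cdot x)=B\cdot y=\UU^{G/L}_y$. Writing $B_x:=\operatorname{Stab}_B(x)$ and $B_y:=\operatorname{Stab}_B(y)$, one has $B_x\subseteq B_y$ because $\pi(x)=y$, so the restriction $\pi|_{\UU^{G/T}_x}$ becomes the canonical projection of homogeneous spaces
\[ B/B_x\longrightarrow B/B_y, \]
which is Zariski-locally trivial and therefore a smooth fiber bundle with fiber $B_y/B_x$. The fiber over $y$ itself is $\UU^{G/T}_x\cap F_y=\{b\cdot x:b\in B,\ b\cdot y=y\}=B_y\cdot x$, and since $B_y=B\cap P_y$ this orbit is precisely $\UU^{F_y}_x$ by the BB description in the previous paragraph.

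The main obstacle is the bookkeeping required to pin down a single Borel $B$ whose orbits simultaneously realize $\UU^{G/T}_x$, $\UU^{G/L}_y$, and (via the quotient $P_y\to L^{\ad}_\CC$) $\UU^{F_y}_x$. This comes down to aligning the sign of the $\CC^*_a$-contraction coherently across the three spaces, which is automatic once $a$ is fixed, together with the standard compatibility between the Bruhat decompositions of the partial flag varieties $G_\CC/B$ and $G_\CC/P_y$. After that, the lemma is the assertion that the natural projection between two partial flag varieties restricts to a fiber bundle on each pair of nested Schubert cells, which is a standard fact from the theory of homogeneous spaces.
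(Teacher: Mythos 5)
Your argument is correct, and it follows the same basic strategy as the paper: realize the gradient flow of $\langle a,-\rangle$ as the real part of a holomorphic one-parameter subgroup of $G_\CC$, identify the unstable manifolds as orbits of an algebraic subgroup, and use $G_\CC$-equivariance of $\pi$ to read off the bundle structure. The difference is in the choice of subgroup: the paper realizes $\UU_x^{G/T}$ and $\UU_y^{G/L}$ as orbits of the single unipotent subgroup with Lie algebra $\bigoplus_{\a(x)<0}\gg_\a^\CC$ and then compares the Lie algebras of the stabilizers directly, whereas you take the Borel $B$ determined by the generic element $a$ and present the restriction of $\pi$ as the homogeneous projection $B/B_x\to B/B_y$. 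These are two parametrizations of the same orbits and both give the desired conclusion cleanly, so your route is a legitimate and arguably more conceptual rephrasing rather than a genuinely different argument.

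One step deserves a sentence you currently skip. The ``same reasoning'' applied inside the invariant submanifold $F_y$ gives $\UU_x^{F_y}=(B\cap L_\CC)\cdot x$, with $B\cap L_\CC$ the Borel of the Levi $L_\CC=G_{y,\CC}$ induced by $a$; it does not directly give $(B\cap P_y)\cdot x$. To close the loop with your fiber computation you need $(B\cap P_y)\cdot x=(B\cap L_\CC)\cdot x$, which follows from the standard fact that the unipotent radical $U_{P_y}$ of $P_y$ acts trivially on the fiber $F_y=P_y/B'$ (here $B'\subset P_y$ is the Borel stabilizing $x$, and triviality holds since $U_{P_y}\subset B'$ is normal in $P_y$). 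Equivalently, you may note directly that $\UU_x^{F_y}=\UU_x^{G/T}\cap F_y$, because $F_y$ is invariant under the gradient flow and inherits the K\"ahler metric, so the restricted flow is the gradient flow of $\langle a,-\rangle|_{F_y}$. With either of these observations supplied, the proof is complete. Also note that you do not need Zariski-local triviality of $B\to B/B_y$; smooth (or analytic) local triviality of a quotient of Lie groups suffices for the fiber-bundle claim.
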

\begin{proof}
Notice that for any of $F_y$, $G/T$ and $G/L$, the gradient flow is equal to the multiplication by $e^{-\sqrt{-1}t a}\in G_{\CC}$ . For any $\a\in R$, define
\[\gg_{\a}^{\CC}:=\{ \eta\in \gg\otimes\CC|~\ad(z)\eta =2\pi\sqrt{-1}\a(z)\eta\text{ for any }z\in\t\}.\]
It is well-known that $\UU^{F_y}_x$, $\UU^{G/T}_x$ and $\UU^{G/L}_y$ are the orbits passing through $x$, $x$ and $y$ under the actions of the unipotent subgroups with Lie algebras
\[ \bigoplus_{\a(y)=0,~\a(x)<0}\gg_{\a}^{\CC}, \bigoplus_{\a(x)<0}\gg_{\a}^{\CC}\text{ and }\bigoplus_{\a(x)<0}\gg_{\a}^{\CC}\]
respectively. The stabilizers are the exponentials of
\[ \bigoplus_{\a(y)=0,~\a(x)<0,~\a(a)>0}\gg_{\a}^{\CC}, \bigoplus_{\a(x)<0,~\a(a)>0}\gg_{\a}^{\CC}\text{ and }\bigoplus_{\a(x)<0,~(\a(y)=0\text{ or }\a(a)>0)}\gg_{\a}^{\CC}\]
respectively. The result follows from the canonical isomorphism
\[\bigoplus_{\a(x)<0,~\a(y)=0\text{ or }\a(a)>0}\gg_{\a}^{\CC}\left/\bigoplus_{\a(x)<0,~\a(a)>0}\gg_{\a}^{\CC} \right.\simeq\bigoplus_{\a(x)<0,~\a(y)=0}\gg_{\a}^{\CC}\left/ \bigoplus_{\a(x)<0,~\a(y)=0,~\a(a)>0}\gg_{\a}^{\CC}\right. . \]
\end{proof}

\begin{remark} In Remark \ref{BSresolve}, we pointed out that the closure of every unstable submanifold in a coadjoint orbit admits a Bott-Samelson resolution. It is possible to construct resolutions which are compatible with the fiber bundle $\pi|_{\UU^{G/T}_x}:\UU^{G/T}_x\ra \UU^{G/L}_y$ in the sense that the resulting smooth cycles form a fiber bundle as well. To avoid introducing unnecessary notations, the smooth cycles $h_{F_y}$, $h_{G/T}$ and $h_{G/L}$ will be replaced by the pseudocycles $\UU^{F_y}_x$, $\UU^{G/T}_x$ and $\UU^{G/L}_y$ in the remaining discussion.
\end{remark}

Recall the notions $\psi_A$ and $\PP_A$ defined in Section \ref{Leray} and that we have determined $\Nov{X}{\O G}$ for any coadjoint orbit $X$ (Lemma \ref{GPNovforOO}).

\begin{proposition} \label{peterG/L} Let $x\in\crit_{G/T}$. Put $y:=\pi(x)$. For any $q+\Q_{R_y}\in \Q/\Q_{R_y}\simeq\Nov{G/L}{\O G}$,
\[\psi_{A^{G/L}_{y,q+\Q_{R_y}}} = \pi_{\t_y^{\perp}}(q) +\Q_{R_y}\in P^{\vee}_{R_y}/\Q_{R_y}\simeq \pi_1(G_y^{\ad})\simeq \pi_0(\O G_y^{\ad})\]
and for any $\tilde{q}\in P^{\vee}_{R_y}\simeq \Nov{F_y}{\O G_y^{\ad}}$ with $\tilde{q}\in \pi_{\t_y^{\perp}}(q) +\Q_{R_y}$,
\[\PP_{A^{G/L}_{y,q+\Q_{R_y}}}(A^{F_y}_{x,\tilde{q}}) = A^{G/T}_{x,\tilde{q}-\pi_{\t_y^{\perp}}(q)+q}.  \]
\end{proposition}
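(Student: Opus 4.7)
The plan is to compute both unknowns explicitly by working with the canonical representative $(\vp_q, s_{\vp_q, u_\pm \equiv y})$ of $A := A^{G/L}_{y, q+\Q_{R_y}}$, and then tracking the constant sections $v_\pm \equiv x$ inside the fiber $F_y$. The two structural facts enabling this are that $T \subset L \subset G_y$ (so $\vp_q$ acts on $F_y$ and fixes $y$) and that $x \in \t$ is $T$-fixed, which makes the indicated constant sections legitimate.

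First I will identify $\psi_A$. Since $\vp_q \subset T$ fixes $y$ and preserves $F_y = \pi^{-1}(y)$, and since the image of $s_{\vp_q, u_\pm\equiv y}$ is the constant section at $y$, the preimage $\pi_{\vp_q}^{-1}(\im(s_{\vp_q, u_\pm\equiv y}))$ equals the Hamiltonian fibration $P_{\vp_q|_{F_y}}(F_y)$ whose monodromy is the action of $\vp_q$ on $F_y$. This action factors through $T \hookrightarrow G_y \twoheadrightarrow G_y^{\ad}$, whose Lie-algebra differential is the orthogonal projection $\pi_{\t_y^{\perp}} : \t \to \t_y^{\perp}$. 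Thus $\psi_A$ has a representative $e^{i\tt}\mapsto \exp(\tt\,\pi_{\t_y^{\perp}}(q)/2\pi)\in \exp(\t_y^{\perp})\subset G_y^{\ad}$, whose class in $\pi_1(G_y^{\ad}) = P^{\vee}_{R_y}/\Q_{R_y}$ is $\pi_{\t_y^{\perp}}(q) + \Q_{R_y}$. The membership $\pi_{\t_y^{\perp}}(q)\in P^{\vee}_{R_y}$ is immediate: for any $\a\in R_y$ we have $\a|_{\t_y}\equiv 0$, so $\a(\pi_{\t_y^{\perp}}(q)) = \a(q)\in \ZZ$ since $q\in\Q$.

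Next I will compute $\PP_A$ at the distinguished section class represented by the constant pair $v_\pm\equiv x$. By the very construction of $\PP_A$ via the inclusion $P_{\vp_q|_{F_y}}(F_y) \hookrightarrow P_{\vp_q}(G/T)$, a section of the former described by a pair of disks into $F_y$ is mapped to the section of $P_{\vp_q}(G/T)$ given by the same pair viewed as disks into $G/T$. Since $T$ fixes $x$, the constant pair $v_\pm\equiv x$ is a section of $P_{\vp_q|_{F_y}}(F_y)$, and by Step 1 its underlying loop in the maximal torus of $G_y^{\ad}$ is $\vp_{\pi_{\t_y^{\perp}}(q)}$, so its class in $\Nov{F_y}{\O G_y^{\ad}}$ is $A^{F_y}_{x,\,\pi_{\t_y^{\perp}}(q)}$. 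Viewed in $G/T$, the same constant pair becomes $u_\pm\equiv x$ over the loop $\vp_q\subset T$, representing $A^{G/T}_{x,q}$. Hence $\PP_A(A^{F_y}_{x,\,\pi_{\t_y^{\perp}}(q)}) = A^{G/T}_{x,q}$.

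For arbitrary $\tilde q\in \pi_{\t_y^{\perp}}(q) + \Q_{R_y}$, I will use that $\PP_A$ is affine over the map $\iota_* : \pi_2(F_y)\to \pi_2(G/T)$ induced by the inclusion $\iota : F_y\hookrightarrow G/T$, in the sense that attaching a sphere in $F_y$ to a section amounts to attaching its image under $\iota_*$ to the corresponding section in $G/T$. The long exact sequences of the torus fibrations $\exp(\t_y^{\perp})\to G_y^{\ad}\to F_y$ and $T\to G\to G/T$ (together with the vanishing of $\pi_2$ for compact Lie groups) identify $\pi_2(F_y)\simeq \Q_{R_y}$ and $\pi_2(G/T)\simeq \Q_0$, and naturality with respect to $\iota$ makes $\iota_*$ the tautological sublattice inclusion $\Q_{R_y}\hookrightarrow \Q_0$. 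Writing $\tilde q = \pi_{\t_y^{\perp}}(q) + \b$ with $\b\in \Q_{R_y}$ and combining with the previous paragraph gives
\[ \PP_A(A^{F_y}_{x,\tilde q}) = A^{G/T}_{x,\, q+\b} = A^{G/T}_{x,\,\tilde q - \pi_{\t_y^{\perp}}(q) + q}, \]
as required. The main obstacle I expect is precisely this last bookkeeping: confirming, via the naturality of the connecting homomorphisms in the two torus fibrations, that under the identifications used in Lemma \ref{GPNovforOO} the map $\iota_*$ really is the natural inclusion of coroot lattices --- a statement which is geometrically obvious but whose clean verification requires care in pinning down the identifications.
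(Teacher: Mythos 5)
Your proof is correct and takes essentially the same approach as the paper: identify $\psi_A$ by noting that the constant representative $(\vp_q,\,u_\pm\equiv y)$ induces canonical trivializations of $\pi_{\vp_q}^{-1}(\im(u_A))$ over $D_\pm$ with the $\pt D_\pm$ identified by $\vp_q$ acting on $F_y$, then pin down $\PP_A$ by reducing to lattice bookkeeping in $\Q$, $\Q_{R_y}$, $P^{\vee}_{R_y}$. The only cosmetic difference is in the last step for $\PP_A$: you compute the value at the base class $A^{F_y}_{x,\pi_{\t_y^\perp}(q)}$ and then propagate by the $\pi_2$-gluing equivariance of $\PP_A$ together with the identification $\iota_*:\pi_2(F_y)\simeq\Q_{R_y}\hookrightarrow\Q_0\simeq\pi_2(G/T)$, whereas the paper pins down $p$ directly by imposing that it must project to $q$ under $\Q\to\Q/\Q_{R_y}$ (so that $\pi_*$ of the image section class is $A$) and to $\tilde q$ under $\pi_{\t_y^{\perp}}$ (so that the fiber restriction recovers $A^{F_y}_{x,\tilde q}$). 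Both yield the unique solution $p=\tilde q-\pi_{\t_y^\perp}(q)+q$; your version spells out the equivariance that the paper leaves implicit, and your explicit check that $\pi_{\t_y^{\perp}}(q)\in P^{\vee}_{R_y}$ is a welcome detail the paper omits.
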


\begin{proof} In general, given $A\in \Nov{G/L}{\O G}$ represented by $(\vp,s_{\vp,u_{\pm}})$ for some $\vp\in \O G$ and maps $u_{\pm}:D_{\pm}\ra G/L$ satisfying
\[ u_+(\eit)=\vp(\eit)\cdot u_-(e^{i\tt}),\]
we have to trivialize the \Ham fibrations $D_{\pm}\times_{(u_{\pm},\pi)}G/T$ over $D_{\pm}$ and see how the fibers over $\pt D_-$ is identified with those over $\pt D_+$. In our case, every $A$ is of the form $A^{G/L}_{y,q+\Q_{R_y}}$, corresponding to $\vp(\eit)=\exp(\tt\cdot q/2\pi)$ and $u_{\pm}\equiv y$. Since $u_{\pm}$ are constant, there are canonical trivializations of the aforementioned \Ham fibrations. Therefore, the fibers over $\pt D_{\pm}$ are identified by $\vp$ acting on $F_y$, and hence for any $q+\Q_{R_y}\in\Q/\Q_{R_y}$,
\[ \psi_{A^{G/L}_{y,q+\Q_{R_y}}} = \pi_{\t_y^{\perp}}(q) +\Q_{R_y}\in P^{\vee}_{R_y}/\Q_{R_y}. \]
Since $F_y$ is a coadjoint orbit of $G_y^{\ad}$, we have, by Lemma \ref{GPNovforOO},
\[ \pi_2^{section}(P_{\psi_{A^{G/L}_{y,q+\Q_{R_y}}}}(F_y))\simeq \{ A^{F_y}_{x,\tilde{q}}|~ \tilde{q}\in \pi_{\t_y^{\perp}}(q)+\Q_{R_y}\}.\]
Given $\tilde{q}\in \pi_{\t_y^{\perp}}(q)+\Q_{R_y}$. $\PP_{A^{G/L}_{y,q+\Q_{R_y}}}(A^{F_y}_{x,\tilde{q}})$ is equal to $A^{G/T}_{x,p}$ for some $p\in \Q$. The element $p$ has to be projected to $q\in \Q/\Q_{R_y}$ under the quotient map $\Q\ra \Q/\Q_{R_y}$ and to $\tilde{q}\in P^{\vee}_{R_y}$ under $\pi_{\t_y^{\perp}}$. It is straightforward to see that $p$ is equal to $\tilde{q}-\pi_{\t_y^{\perp}}(q)+q$, as desired.
\end{proof}

\begin{definition}\label{petersonlift} Let $y\in \crit_{G/L}$ and $q+\Q_{R_y}\in\Q/\Q_{R_y}$.
\begin{enumerate}
\item Define the \textit{Peterson lift} of $q+\Q_{R_y}$ to be the unique element $\tilde{q}$ of the coset $q+\Q_{R_y}\subset \Q$ such that the dimension of the Bott-Samelson cycle in $\O G_y^{\ad}$ associated to $\pi_{\t_y^{\perp}}(\tilde{q})\in P^{\vee}_{R_y}$ is zero.
\item Define the \textit{associated lift} of $y$ with respect to $q+\Q_{R_y}$ to be the unique element $x\in \crit_{F_y}$ such that $\a(x)$ and $\a(\tilde{q}+a)$ have the same sign for any $\a\in R_y$, where $\tilde{q}$ is the Peterson lift of $q+\Q_{R_y}$ and $a\in\mr{\wc}_0$ is an element sufficiently close to the origin.
\end{enumerate}
\end{definition}

\begin{proposition} \label{compare} Given $y\in \crit_{G/L}$ and $q+\Q_{R_y}\in\Q/\Q_{R_y}$. Suppose $\vp:\G\ra\O G$ is a smooth cycle such that
\begin{equation}\label{comparedim}
\dim\G+\dim \UU_y^{G/L}+2c_1^v\left(A^{G/L}_{y,q+\Q_{R_y}}\right) =0.
\end{equation}
We have
\begin{equation}\label{pfG/L3}
\langle \See{G/T}{\O G}([\vp]), [\UU_x^{G/T}] \rangle_{A^{G/T}_{x,\tilde{q}}} = \pm \langle \See{G/L}{\O G}([\vp]), [\UU_y^{G/L}]\rangle_{A^{G/L}_{y,q+\Q_{R_y}}}
\end{equation}
where $\tilde{q}$ is the Peterson lift of $q+\Q_{R_y}$ and $x$ is the associated lift of $y$ with respect to $q+\Q_{R_y}$.
\end{proposition}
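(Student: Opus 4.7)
The plan is to invoke Theorem \ref{GPmain} for the $G$-equivariant Hamiltonian fibration $\pi\colon G/T\to G/L$, taking as input the cycle $\varphi\colon\Gamma\to\O G$ together with the fibre-bundle pseudocycle data
\[
\UU_x^{F_y}\hookrightarrow\UU_x^{G/T}\xrightarrow{\pi}\UU_y^{G/L}
\]
supplied by the preceding lemma. Assumption \ref{GPassume} is verified by Lemma \ref{GPKleiman} with the $G$-invariant integrable complex structures $I_{G/T}$ and $I_{G/L}$. Setting $A_Y:=A^{G/L}_{y,q+\Q_{R_y}}$ and $A_X:=A^{G/T}_{x,\tilde{q}}$, the theorem reduces \eqref{pfG/L3} to the identity
\[
\sum_{B:\,\PP_{A_Y}(B)=A_X}\!\left\langle \See{F_y}{\O G_y^{\ad}}(\psi_{A_Y}),[\UU_x^{F_y}]\right\rangle_B=\pm 1.
\]

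First I would isolate the unique contributing class. By Proposition \ref{peterG/L}, every such $B$ has the form $A^{F_y}_{x,\tilde{p}}$ with $\tilde{p}\in\pi_{\t_y^{\perp}}(q)+\Q_{R_y}$, and $\PP_{A_Y}(A^{F_y}_{x,\tilde{p}})=A^{G/T}_{x,\tilde{p}-\pi_{\t_y^{\perp}}(q)+q}$. Setting the right-hand side equal to $A_X$ and using $\tilde{q}-q\in\Q_{R_y}\subset\t_y^{\perp}$ forces $\tilde{p}=\pi_{\t_y^{\perp}}(\tilde{q})$, so exactly one class $B_0:=A^{F_y}_{x,\pi_{\t_y^{\perp}}(\tilde{q})}$ contributes.

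The crux is to show this single term equals $\pm 1$. Put $q':=\pi_{\t_y^{\perp}}(\tilde{q})\in P^{\vee}_{R_y}$. The Peterson-lift clause of Definition \ref{petersonlift}(1) makes the Bott--Samelson cycle of $q'$ in $\O G_y^{\ad}$ zero-dimensional, and Definition \ref{BSq} then collapses it to the single loop $\varphi_{q'}$; since this loop sits in the component $\psi_{A_Y}$, the $0$-cycles satisfy $\psi_{A_Y}=x^{BS}_{q'}$ in $H_0(\O G_y^{\ad};\ZZ)$. Applying the $F_y$-analogue of Theorem \ref{computeG/Tintro} now yields a leading term of the shape $\pm\sigma^{F_y}_{w_{q'}(z_{\max})}\TT^{A^{F_y}_{z_{\max},w_{q'}^{-1}(q')}}$ (with $z_{\max}$ the maximum of $\langle a,-\rangle|_{F_y}$), plus strictly-lower Schubert terms. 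The associated-lift clause of Definition \ref{petersonlift}(2) places $x$ in the same $R_y$-chamber as $q'+a$, which forces $w_{q'}(z_{\max})=x$; via the basepoint-change identity $A^{F_y}_{z_{\max},w_{q'}^{-1}(q')}=A^{F_y}_{x,q'}=B_0$ the leading term rewrites as $\pm\sigma_x^{F_y}\TT^{B_0}$, and pairing with $[\UU_x^{F_y}]$ produces $\pm 1$ while the strictly-lower Schubert terms contribute $0$.

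Dimension-matching across the three moduli problems is routine: Lemma \ref{chernclassformula} and the fibre-bundle identity $\dim\UU_x^{G/T}=\dim\UU_y^{G/L}+\dim\UU_x^{F_y}$, combined with \eqref{comparedim}, force $\dim\UU_x^{F_y}+2c_1^v(B_0)=0$, so the $F_y$ count is of virtual dimension zero, as required for the above identification of the leading term. I expect the most delicate point to be precisely the Weyl-chamber alignment $w_{q'}(z_{\max})=x$: one must verify that the Peterson-lift and associated-lift conventions jointly identify the leading Schubert term with $\sigma_x^{F_y}$, so that the sub-root-system version of Theorem \ref{computeG/Tintro} really does deliver $\pm 1$ without hidden orientation-sign glitches from the passage $G\rightsquigarrow G_y^{\ad}$.
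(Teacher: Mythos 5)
Your proposal follows essentially the same route as the paper's proof: apply Theorem \ref{GPmain} to the fiber-bundle of unstable pseudocycles, isolate the unique fiberwise section class $B_0 = A^{F_y}_{x,q'}$ with $q' = \pi_{\t_y^{\perp}}(\tilde q)$ via Proposition \ref{peterG/L}, and then use Theorem \ref{computeG/T} on $F_y\simeq G_y^{\ad}/\exp(\t_y^\perp)$ (through a regular point) to evaluate the fiber count as $\pm 1$. You are slightly more explicit than the paper on two points the paper leaves implicit — the Weyl-chamber alignment $w_{q'}(z_{\max}) = x$ forced by the associated-lift convention, and the basepoint-change identity for the $\Nov{}{}$ class — and both of those checks are correct; the paper simply states the $\pm 1$ outcome citing Theorem \ref{computeG/T}.
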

\begin{proof} Notice that $\pi(x)=y$. Put $q':=\pi_{\t_y^{\perp}}(\tilde{q})$. By Proposition \ref{peterG/L},
\begin{equation}\label{pfG/L2}
\PP_{A^{G/L}_{y,q+\Q_{R_y}}}\left(A^{F_y}_{x,q'}\right) = A^{G/T}_{x,\tilde{q}}.
\end{equation}
By the same proposition, $\PP_{A^{G/L}_{y,q+\Q_{R_y}}}$ is injective, and hence $A^{F_y}_{x,q'}$ is the unique element for which \eqref{pfG/L2} holds. Since $F_y$ is a coadjoint orbit of $G_y^{\ad}$ passing through a regular point, we have, by Theorem \ref{computeG/T},
\[\langle \See{F_y}{\O G_y^{\ad}}(x_{q'}^{\O G_y^{\ad}}), [\UU_x^{F_y}]\rangle_{A^{F_y}_{x,q'}}=\pm 1\]
where $x_{q'}^{\O G_y^{\ad}}\in H_0(\O G_y^{\ad};\ZZ)$ is represented by the Bott-Samelson cycle associated to $q'\in P^{\vee}_{R_y}$. The result follows from Theorem \ref{GPmain} applied to the fiber bundle $\UU_x^{F_y}\hookrightarrow \UU_x^{G/T}\ra \UU_y^{G/L}$.
\end{proof}

For any $q\in \Q$, define
\[\deg^{L/T}(q):=\sum_{\substack{\a\in R_{w_q(y_0)}\\ \a(w_q(x_0))>0}} \lfloor\a(q+a)\rfloor\]
where $a\in\mr{\wc}_0$ is an element sufficiently close to the origin. It can be checked that $\deg^{L/T}(q)$ is the complex dimension of the Bott-Samelson cycle for $\O G_{w_q(y_0)}^{\ad}$ associated to $\pi_{\t^{\perp}_{w_q(y_0)}}(q)\in P^{\vee}_{R_{w_q(y_0)}}$.
\begin{theorem} \label{computeG/L} Let $q\in \Q$.
\begin{enumerate}[(a)]
\item If $\deg^{L/T}(q)=0$, then
\[ \See{G/L}{\O G}(x_q) = \pm \s_{w_q(y_0)} \TT^{A^{G/L}_{w_q^{-1}(q)+\Q_{R_{y_0}}}}+\cdots \]
where $\cdots$ is a finite sum of terms which do not cancel with the first term.
\item If $\deg^{L/T}(q)=0$ and $w_q=\ii$, then
\[ \See{G/L}{\O G}(x_q) = \pm  \TT^{A^{G/L}_{q+\Q_{R_{y_0}}}}.\]
\end{enumerate}
\end{theorem}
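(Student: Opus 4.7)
The plan is to reduce both statements to Theorem \ref{computeG/T} via the transfer formula of Proposition \ref{compare}. Concretely, I pair $\See{G/L}{\O G}(x_q)$ with the unstable pseudocycles $[\UU_y^{G/L}]$ for $y\in\crit_{G/L}$ and expand the coefficient at each section class $A^{G/L}_{y,q_0+\Q_{R_y}}$. Proposition \ref{compare} identifies this coefficient, up to sign, with the coefficient of $\s_x\TT^{A^{G/T}_{x,\tilde{q}}}$ in $\See{G/T}{\O G}(x_q)$, where $\tilde{q}$ is the Peterson lift of $q_0+\Q_{R_y}$ and $x$ is the associated lift of $y$ in the sense of Definition \ref{petersonlift}. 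Since the latter quantity has been fully controlled by Theorem \ref{computeG/T}, the problem reduces to matching up cosets and critical points on the two sides.

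For part (a), I take $y:=w_q(y_0)$ and pair against the section class $A^{G/L}_{y,q+\Q_{R_y}}$, which by the base-point identification in the remark after Lemma \ref{GPNovforOO} coincides with $A^{G/L}_{y_0,w_q^{-1}(q)+\Q_{R_{y_0}}}$. The hypothesis $\deg^{L/T}(q)=0$ is precisely the statement that the Bott-Samelson cycle in $\O G_y^{\ad}$ attached to $\pi_{\t_y^{\perp}}(q)$ has dimension zero, so $q$ itself is the Peterson lift of its coset. A sign check identifies the associated lift as $x=w_q(x_0)$: for $\a\in R_y$ one has $\operatorname{sgn}\a(w_q(x_0))=\operatorname{sgn}w_q^{-1}(\a)(x_0)=\operatorname{sgn}\a(q+a)$ using $q+a\in w_q\wc_0$ and $a$ small. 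The right-hand side of Proposition \ref{compare} therefore picks up the leading term of Theorem \ref{computeG/T}, contributing $\pm 1$. Every other term of $\See{G/L}{\O G}(x_q)$ is labelled by a different pair $(y',q'_0+\Q_{R_{y'}})$ and hence by a distinct basis element of $QH^*(G/L;\ZZ[\Nov{G/L}{\O G}])$, so no cancellation with the leading term is possible.

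For part (b), the extra hypothesis $w_q=\ii$ forces $\ell'(w_q)$ to be the strict minimum of $\ell'$ on $W$ for $a$ small and generic, since any $w\neq\ii$ replaces at least one $\{\a(a)\}\approx\a(a)\in(0,1)$ by $\{\a(w^{-1}(a))\}\approx 1+\a(w^{-1}(a))$. Consequently the lower-order summand in Theorem \ref{computeG/T} is empty, and $\See{G/T}{\O G}(x_q)=\pm\TT^{A^{G/T}_q}$ with no further terms. Transferring each coefficient of $\See{G/L}{\O G}(x_q)$ through Proposition \ref{compare}, a non-vanishing contribution requires $x=x_0$, which by $\pi(x)=y$ forces $y=y_0$, together with Peterson lift $\tilde{q}=q$, which by the deg condition pins the coset down to $q+\Q_{R_{y_0}}$. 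This yields the single term $\pm\TT^{A^{G/L}_{q+\Q_{R_{y_0}}}}$.

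The one step I expect to require the most care is the combinatorial identification of the Peterson lift with $q$ and of the associated lift with $w_q(x_0)$ under the hypothesis $\deg^{L/T}(q)=0$; the remainder is essentially bookkeeping on top of Proposition \ref{compare} and Theorem \ref{computeG/T}.
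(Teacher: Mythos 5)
Your proposal follows exactly the paper's route: apply Proposition \ref{compare} to transfer the computation from $G/L$ to $G/T$, identify the Peterson lift as $q$ and the associated lift as $w_q(x_0)$ using $\deg^{L/T}(q)=0$, and invoke Theorem \ref{computeG/T} (resp.\ Corollary \ref{G/Tnohigher} for part (b)). The lift identifications, the base-point conversion $A^{G/L}_{w_q(y_0),q+\Q_{R_{w_q(y_0)}}}=A^{G/L}_{y_0,w_q^{-1}(q)+\Q_{R_{y_0}}}$, and the non-cancellation argument are all correct, and the reduction for part (b) is the same as the paper's.

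However, there is a genuine gap. Proposition \ref{compare} carries the numerical hypothesis
\[
\dim\G+\dim \UU_y^{G/L}+2c_1^v\bigl(A^{G/L}_{y,\,q+\Q_{R_y}}\bigr)=0,
\]
and if this fails, the claimed ``leading'' $G/L$-coefficient is simply zero by degree reasons, so part (a) would be false. You never check this hypothesis: you use $\deg^{L/T}(q)=0$ only to identify the Peterson lift of the coset with $q$ itself, which is a different (though related) consequence of the same assumption. The paper verifies the hypothesis by computing each summand explicitly for $\vp=\BS_q$, namely $\dim\G=2\deg(q)=2\sum_{\a(x)>0}\lfloor\a(q+a)\rfloor$, $\dim\UU_y^{G/L}=-2\sum_{\a(y)>0}\lfloor\a(a)\rfloor$ and $2c_1^v\bigl(A^{G/L}_{y,q+\Q_{R_y}}\bigr)=-2\sum_{\a(y)>0}\a(q)$, whose total collapses to $2\sum_{\a(x)>0,\,\a(y)=0}\lfloor\a(q+a)\rfloor=2\deg^{L/T}(q)$, which vanishes by assumption. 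This is precisely the quantitative input from $\deg^{L/T}(q)=0$ that makes Proposition \ref{compare} applicable, and without it the step ``the right-hand side of Proposition \ref{compare} picks up the leading term'' is unjustified. (One can alternatively deduce the dimension identity from Lemma \ref{chernclassformula}, the fiber-bundle structure of the unstable manifolds, and the fact that the $G/T$ and fiber leading coefficients are both $\pm 1$, but in either case the verification must be made.) Since the gap propagates to the nonvanishing of the single surviving term in part (b), it affects both statements.
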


\begin{proof}
\begin{enumerate}[(a)]
\item Put $y:=w_q(y_0)$. Then $q$ is the Peterson lift of $q+\Q_{R_y}$ and $x:=w_q(x_0)$ is the associated lift of $y$ with respect to $q+\Q_{R_y}$. Moreover, \eqref{comparedim} holds if we take $\vp$ to be the Bott-Samelson cycle in $\O G$ associated to $q$. Indeed, one can show that
\begin{align*}
\dim\G &=2\deg(q) = 2\sum_{\a(x)>0}\lfloor \a(q+a)\rfloor\\
\dim \UU_y^{G/L} &= -2\sum_{\a(y)>0}\lfloor \a(a)\rfloor\\
2c_1^v\left(A^{G/L}_{y,q+\Q_{R_y}}\right) &= -2\sum_{\a(y)>0}\a(q).
\end{align*}
It follows that
\[
\dim\G+\dim \UU_y^{G/L}+2c_1^v\left(A^{G/L}_{y,q+\Q_{R_y}}\right) = 2\sum_{\substack{\a(x)>0\\ \a(y)=0}}\lfloor \a(q+a)\rfloor = 2\deg^{L/T}(q)= 0,
\]
by assumption. Therefore, the result follows from Proposition \ref{compare} and Theorem \ref{computeG/T}.
\item By Corollary \ref{G/Tnohigher}, the LHS of \eqref{pfG/L3} vanishes unless $x=x_0$ and $\tilde{q}=q$. It follows that $\See{G/L}{\O G}(x_q)$ contains only the term from part (a).
\end{enumerate}
\end{proof}
\appendix
\section{Wrapped Floer theory on cotangent bundles}\label{wrapped}
Let $\P$ be a compact connected smooth manifold. Its cotangent bundle $T^*\P$ is a Liouville manifold with the standard Liouville form. For any $p\in\P$, denote by $L_p$ the cotangent fiber $T_p^*\P$. We may drop the subscript of $L_p$ if the point $p$ is not specified. By the construction in \cite{Ab_JSG}, we have two cohomology groups $HW_b^*(L,L)$ and $SH_b^*(T^*\P)$, where $b\in H^2(T^*\P;\ZZ_2)$ is the background class given by the pull-back of $w_2(\P)$ via the canonical projection $\pi:T^*\P\ra \P$.

Consider the based loop space $\O\P$ and free loop space $\Lo\P$ of $\P$. In \cite{AS}, Abbondandolo-Schwarz constructed two isomorphisms
\begin{align*}
\FF &: H_{-*}(\O\P)\ra HW_b^*(L,L)\\
\GG &: H_{-*}(\Lo\P)\ra SH_b^*(T^*\P).
\end{align*}
\noindent For our purposes, we recall how these maps are defined on homology classes represented by smooth cycles. Let $f:\G\ra \O\P$ and $g:\G'\ra \Lo\P$ be two smooth cycles. For any $\g\in\G$ (resp. $\g\in\G'$), we denote by $f_{\g}$ (resp. $g_{\g}$) the corresponding loops in $\P$. Denote by $[f]$ and $[g]$ the corresponding homology classes. Let $H$ and $H'$ be the Hamiltonians which define $HW_b^*(L,L)$ and $SH_b^*(T^*\P)$ respectively\footnote{Recall that $H$ is a Hamiltonian quadratic at infinity and $H'=H+F$ is a perturbation of $H$ by a uniformly bounded time-dependent function $F:S^1\times T^*\P\ra \RR$.}. Denote by $\X$ the set of \Ham chords of $H$ from $L$ to $L$, and by $\Y$ the set of \Ham orbits of $H'$.

\begin{definition}\label{ASdef} $~$
\begin{enumerate}
\item Define
\begin{equation}\nonumber
\FF([f]):=\sum_{x\in\X}\#\M_{AS}^0(x,f) x
\end{equation}
where $\M_{AS}^0(x,f)$ is the zero-dimensional component of the moduli space of pairs $(\g,u)$ with $\g\in\G$ and
\begin{equation}\nonumber
u:(-\infty,0]\times[0,1]\ra T^*\P
\end{equation}
satisfying the boundary and asymptotic conditions
\begin{equation}\nonumber
\left\{ \begin{array}{l}
u(s,0),u(s,1)\in L\text{ for any }s\leqslant 0;\\
f_{\g}=(\pi\circ u)(0,\cdot);\\
\ds\lim_{s\ra -\infty} u(s,t)=x(t)\text{ for any }t;
\end{array}\right.
\end{equation}
solving the perturbed Cauchy-Riemann equation
\begin{equation}\nonumber
(du-X_H\otimes dt)^{0,1}=0
\end{equation}
with respect to a domain-dependent compatible almost complex structure which is of contact type, and having finite energy:
\[ \int |du-X_H\otimes dt|^2 <+\infty.\]

\item Define
\begin{equation}\nonumber
\GG([g]):=\sum_{y\in\Y}\#\M_{AS}^0(y,g) y
\end{equation}
where  $\M_{AS}^0(y,g)$ is the zero-dimensional component of the moduli space of pairs $(\g,u)$ with $\g\in\G'$ and
\begin{equation}\nonumber
u:(-\infty,0]\times S^1\ra T^*\P
\end{equation}
satisfying
\begin{equation}\nonumber
\left\{ \begin{array}{l}
g_{\g}=(\pi\circ u)(0,\cdot);\\
\ds\lim_{s\ra -\infty} u(s,e^{i \tt})=y(e^{i \tt})\text{ for any }\tt;
\end{array}\right.
\end{equation}
solving the perturbed Cauchy-Riemann equation
\begin{equation}\nonumber
(du-X_{H'}\otimes d\tt)^{0,1}=0,
\end{equation}
and having finite energy:
\[ \int |du-X_{H'}\otimes d\tt|^2<+\infty.  \]
\end{enumerate}
\end{definition}

\begin{remark} Abbondandolo-Schwarz constructed $\FF$ and $\GG$ using Morse models on $\O \P$ and $\Lo \P$ respectively. To see that their maps are equal to what we have just defined, consider the moduli space of triples $(\g,u,T)$ with $T\in [0,+\infty)$ and $(\g,u)$ satisfying the conditions in Definition \ref{ASdef} except that $u(0,\cdot)$ is now projected to the time-$T$ gradient flow of $f_{\g}$ or $g_{\g}$. Its one-dimensional component gives the desired equality, modulo an exact element of $CW_b^*(L,L)$ or $SC_b^*(T^*\P)$.
\end{remark}

\begin{proposition} \label{commwrapped} The following diagram is commutative.
\begin{equation}\nonumber
\begin{tikzcd}
H_{-*}(\O \P)  \arrow{r}{\FF} \arrow{d}[left]{\iota} &[2em]   HW_b^*(L,L)   \arrow{d}{\mathcal{OC}} \\
 H_{-*}(\Lo \P) \arrow{r}{\GG} &[2em] SH_b^*(T^*\P)
\end{tikzcd}
\end{equation}
where $\iota$ is the map induced by the canonical inclusion $\O \P\hookrightarrow \Lo\P$ and $\mathcal{OC}$ is the length-zero part of the open-closed map.
\end{proposition}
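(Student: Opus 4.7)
The plan is to exhibit a chain-level homotopy between $\mathcal{OC} \circ \FF$ and $\GG \circ \iota$ by means of a one-parameter family of Floer moduli spaces. For a smooth cycle $f : \G \to \O\P$ and an output orbit $y \in \Y$, I would introduce, for $R \in [0, +\infty]$, a family of Riemann surfaces $\Sigma^R$ equipped with Lagrangian labels and punctures that interpolate between the two compositions. Concretely, $\Sigma^R$ is the half-cylinder $(-\infty, 0] \times S^1$ with a radial slit of length $R$ running inward from a chosen boundary point; for $R > 0$ the two sides of the slit carry the label $L = L_{f_\g(0)}$ (a varying Lagrangian over $\G$), while for $R = 0$ the slit collapses and the domain is the plain half-cylinder with no boundary. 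The cycle $f_\g$ is imposed as the boundary condition at $s = 0$, interpreted as a based loop with basepoint at the slit vertex when $R > 0$ and as the free loop $\iota(f_\g)$ when $R = 0$.

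On $\Sigma^R$ I would fix an almost complex structure that is contact-type outside a compact set and generic domain-dependent, together with a Hamiltonian term that equals $H$ near the slit (and along the two boundary components of the limiting half-strip) and asymptotes to $H' = H + F$ as $s \to -\infty$. At $R = +\infty$ the slit elongates to infinity and the domain breaks nodally into an AS half-strip $(-\infty, 0] \times [0,1]$ with boundary on $L$ (the domain of $\FF$) joined along a chord asymptote $x \in \X$ to a disk with one boundary puncture (the same $x$) and one interior puncture asymptotic to $y$, with boundary on $L$ (the domain of $\mathcal{OC}$). Thus counting $(R, \g, u)$ at $R = +\infty$ reproduces the matrix coefficients of $\mathcal{OC} \circ \FF$, while at $R = 0$ it reproduces those of $\GG \circ \iota$.

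For generic data the union $\M = \bigsqcup_R \M^R$ of zero virtual-dimensional components is a compact $1$-manifold with boundary. The boundary strata are: (i) $R = 0$, giving $\langle (\GG \circ \iota)([f]), y\rangle$; (ii) $R = +\infty$, giving $\langle (\mathcal{OC} \circ \FF)([f]), y\rangle$ by standard gluing at the chord asymptote; (iii) Floer breakings at the output orbit $y$, absorbed by the Floer differential on $SC^*_b(T^*\P)$; and (iv) cycle breakings coming from $\partial \G$, which vanish when $f$ is a cycle. Exactness of $T^*\P$ rules out sphere bubbling, and exactness of $L$ (as a cotangent fiber) rules out disk bubbling; the background twist $b = \pi^* w_2(\P)$ enters the coherent orientations uniformly on both sides and is preserved by the interpolation. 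Summing over $y$ and passing to homology gives $\mathcal{OC} \circ \FF = \GG \circ \iota$ as required.

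The principal technical obstacle lies at $R = +\infty$: one must show that the neck-stretching limit factors through exactly one chord $x \in \X$ (rather than through some non-compact limiting object in the fiberwise direction) and that the induced gluing map is a diffeomorphism onto a collar of the boundary $\{R = +\infty\}$. The necessary $C^0$- and energy-compactness follow from exactness together with the contact-type choice of almost complex structure, and the gluing is a standard but careful application of the Floer-theoretic pregluing/Newton iteration, cf.\ Abouzaid's construction of the open-closed map and related work of Abbondandolo--Schwarz and Ritter--Smith. No new analytic input is needed beyond these templates.
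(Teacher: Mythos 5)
Your approach is a direct geometric argument, and it is genuinely different from the paper's: the paper disposes of the proposition in one sentence by combining three results from the literature, namely Proposition 1.6 of \cite{Ab_Adv}, Lemma 5.1 of \cite{Ab_JSG} together with its free-loop-space analogue, and the fact that the Goodwillie isomorphism of \cite{Ab_Adv} restricts to $\iota$ on $H_{-*}(\O \P)$. In effect the paper outsources the geometry entirely to Abouzaid, at the cost of a careful reconciliation of models (the maps $\FF$ and $\GG$ here are the Abbondandolo--Schwarz maps, while \cite{Ab_Adv, Ab_JSG} are built on Abouzaid's own chain models, and it is precisely Lemma 5.1 of \cite{Ab_JSG} and its $\Lo \P$-analogue that reconcile the two). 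Your argument, by contrast, stays in the Abbondandolo--Schwarz model throughout and constructs a single parametrized moduli space over $R\in[0,+\infty]$ whose two ends recover the two compositions directly; this is self-contained and transparent, at the cost of having to carry out the slit-degeneration compactness and the gluing at $R=+\infty$, which (as you correctly identify) is strip-shrinking/neck-stretching of a standard type. Two small caveats on your write-up, neither fatal: (i) the parenthetical ``a varying Lagrangian over $\G$'' is a slip --- since $f_\g$ is a \emph{based} loop, $f_\g(0)$ is the fixed basepoint for every $\g$, so $L=L_{f_\g(0)}$ is a fixed cotangent fibre; (ii) the $R=0$ endpoint, where the slit and its Lagrangian boundary condition collapse into the cycle-constrained boundary circle, is not a nodal Deligne--Mumford degeneration and deserves a sentence of justification --- the key point being that the Lagrangian condition at the shrinking slit is already implied by the based-loop constraint $\pi\circ u(0,\theta_0)=f_\g(0)$, so the linearized problem extends continuously and without index jump to $R=0$.
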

\begin{proof}
It follows from a combination of \cite[Proposition 1.6]{Ab_Adv}, \cite[Lemma 5.1]{Ab_JSG} and its analogue for $\Lo\P$, and the fact that the Goodville's isomorphism in \cite{Ab_Adv} restricts to $\iota$ on $H_{-*}(\O \P)$.
\end{proof}


\section{Gluing: annulus-shrinking}\label{gluing}
This appendix recalls a gluing result of Wehrheim-Woodward \cite{WW2} about \holo quilted maps with an annulus patch which is shrunk to become a loop at the limit. For our application, we need a slightly more general version which requires the quilted maps to satisfy parametrized seam conditions. The new ingredient is a more general energy-index relation which we will prove to be satisfied in our situation.

Let $M_0, M_1, M_2$ be monotone symplectic manifolds with the same monotonicity constant $\monoconst >0$. Suppose $\G$ is a compact smooth manifold and there are smooth families $\{L_{01}^{\g,\tt}\}_{(\g,\eit)\in \G\times S^1}$, $\{L_{12}^{\g,\tt}\}_{(\g,\eit)\in \G\times S^1}$ of Lagrangian correspondences
\[ L_{01}^{\g,\tt}: M_0\ra M_1\quad\text{ and }\quad L_{12}^{\g,\tt}:M_1\ra M_2 \]
such that the geometric composition $L_{02}^{\g,\tt}:=L_{01}^{\g,\tt}\circ L_{12}^{\g,\tt}$ is embedded for any $\g,\tt$ and $L_{01}^{\g,\tt}$, $L_{12}^{\g,\tt}$, $L_{02}^{\g,\tt}$ are monotone.

While the theorem we are going to recall holds for general quilted surfaces, we will only restrict ourselves to $\ul{\S}:=(\S_0,\S_2)$ where $\S_0$ is a unit disk and $\S_2$ is a unit disk with an interior marked point. The unique seam circle $C_{02}$ of $\ul{\S}$ is formed by identifying the boundary curves $\pt\S_0=S^1$ and $\pt\S_2=S^1$ via $\eit\leftrightarrow e^{-i\tt}$. For any $\cl>0$, defined $\ul{\S}^{\cl}$ to be the quilted surface consisting of three patches $\S_0$, $\S_1^{\cl}:=S^1\times [-\cl,0]$ and $\S_2$ with two seams $C_{01}^{\cl}$ and $C_{12}^{\cl}$, where $C_{01}^{\cl}$ (resp. $C_{12}^{\cl}$) is formed by identifying $\pt\S_0$ with $S^1\times \{0\}\subset \pt\S_1^{\cl}$ (resp. $S^1\times \{-\cl\}\subset \pt\S_1^{\cl}$ with $\pt\S_2$) via $\eit\leftrightarrow (e^{-i\tt},0)$ (resp. $(\eit,-\cl)\leftrightarrow\eit$). See Figure 7.
\begin{center}
\vspace{0.2cm}
\begin{tikzpicture}
\tikzmath{\x1 = 0; \x2 = 0.2; \x3=1; \x4=1; \x5=3; \x6=1;\x9=0.45; \y1=0.45; \y3=1.2*\x3;}
\tikzmath{\z1= 0.15*\x3 ; \z2=0.2; \z3=1.3*\x3; \z4=0;}

\draw [<->, line width=\z2mm] (\x1,-\z3) -- (\x1+\x5,-\z3);
\draw [line width=\z2mm] (\x1,-\z3+\z1) -- (\x1,-\z3-\z1);
\draw [line width=\z2mm] (\x1+\x5,-\z3+\z1) -- (\x1+\x5,-\z3-\z1);

\begin{scope}
\clip(\x1,-\y3)--(\x1,\y3)--(\x1+1.1*\x2,\y3)--(\x1+1.1*\x2,-\y3)--cycle;
\draw [line width=\y1mm] (\x1,0) ellipse (\x2 and \x3);
\end{scope}

\begin{scope}
\clip(\x1,-\y3)--(\x1,\y3)--(\x1-1.1*\x2,\y3)--(\x1-1.1*\x2,-\y3)--cycle;
\draw [dashed, line width=\y1mm] (\x1,0) ellipse (\x2 and \x3);
\end{scope}


\begin{scope}
\clip(\x1+\x5,-\y3)--(\x1+\x5,\y3)--(\x1+1.1*\x2+\x5,\y3)--(\x1+1.1*\x2+\x5,-\y3)--cycle;
\draw [line width=\y1mm] (\x1+\x5,0) ellipse (\x2 and \x3);
\end{scope}

\begin{scope}
\clip(\x1+\x5,-\y3)--(\x1+\x5,\y3)--(\x1-1.1*\x2+\x5,\y3)--(\x1-1.1*\x2+\x5,-\y3)--cycle;
\draw[dashed, line width=\y1mm] (\x1+\x5,0) ellipse (\x2 and \x3);
\end{scope}

\draw [line width=\y1mm] (-\x6,\x3) arc (90:270:\x3);
\draw [line width=\y1mm] (\x5+\x6,\x3) arc (90:-90:\x3);
\draw [line width=\x9mm] (0,-\x3) -- (-\x6,-\x3);
\draw [line width=\x9mm] (0,\x3) -- (-\x6,\x3);
\draw [line width=\x9mm] (\x5,-\x3) -- (\x5+\x6,-\x3);
\draw [line width=\x9mm] (\x5,\x3) -- (\x5+\x6,\x3);
\draw [line width=\x9mm] (0,-\x3) -- (\x5,-\x3);
\draw [line width=\x9mm] (0,\x3) -- (\x5,\x3);

\node at  (-\x6-\x3,0) [circle,fill,inner sep=2pt]{};


\node[anchor = west] at (\x1+0.4*\x5,0) {$\S_1^{\cl}$};
\node[anchor = west] at (-1.3*\x6,0) {$\S_2$};
\node[anchor = east] at (\x5+1.5*\x6,0) {$\S_0$};
\node[anchor = west] at (\x1+1*\x2,0.6*\x3) {$C_{12}^{\cl}$};
\node[anchor = west] at (\x1+1*\x2+\x5,0.6*\x3) {$C_{01}^{\cl}$};
\node[anchor = north] at (\x1+\x5/2,-\z3) {$\cl$};
\node at (\x5+\x3+\x6+0.2,0) {};
\node at (-\x3-\x6-0.2,0) {};


\end{tikzpicture}

\vspace{0.1cm}
\noindent \hypertarget{fig7}{FIGURE 7.}
\vspace{.2cm}
\end{center}

\begin{definition} \label{moduliforgluing} Let $h:N\ra M_2$ be a pseudocycle.
\begin{enumerate}
\item Define $\M_0(h)$ to be the moduli space of pairs $(\g,\ul{u})$ where
\[ \left\{\begin{array}{l}
\g\in \G\\
\ul{u}=(u_0,u_2)\text{ with }u_i:\S_i\ra M_i,~i=0,2
\end{array}\right.\]
satisfying
\[ \left\{\begin{array}{l}
u_2\text{ hits the pseudocycle }h\text{ at the interior marked point};\\
(u_0(e^{-i\tt}),u_2(\eit))\in L_{02}^{\g,\tt}\text{ for any }\tt;\\
\ul{u}\text{ solves a perturbed Cauchy-Riemann equation; and}\\
\ul{u}\text{ has finite energy.}
\end{array}\right.\]
\item Define $\M_{\cl}(h)$ to be the moduli space of pairs $(\g,\ul{u})$ where
\[ \left\{\begin{array}{l}
\g\in \G\\
\ul{u}=(u_0,u_1,u_2)\text{ with }u_i:\S_i\ra M_i,~i=0,2\text{ and }u_1:\S_1^{\cl}\ra M_1
\end{array}\right.\]
satisfying
\[ \left\{\begin{array}{l}
u_2\text{ hits the pseudocycle }h\text{ at the interior marked point};\\
(u_0(e^{-i\tt}),u_1(\eit,0))\in L_{01}^{\g,\tt}\text{ for any }\tt;\\
(u_1(\eit,-\cl),u_2(\eit))\in L_{12}^{\g,\tt}\text{ for any }\tt;\\
\ul{u}\text{ solves a perturbed Cauchy-Riemann equation; and}\\
\ul{u}\text{ has finite energy.}
\end{array}\right.\]
\item For any integer $i$, define $\M_0^i(h)$ (resp. $\M_{\cl}^i(h)$) to be the component of $\M_0(h)$ (resp. $\M_{\cl}(h)$) which has virtual dimension $i$.
\end{enumerate}
\end{definition}

\noindent In order to achieve compactness for the moduli spaces we have just defined, it is necessary to impose the following energy-index relation for $\ul{u}$, in addition to the monotonicity of $M_i$ and $L_{ij}^{\g,\tt}$.
\begin{assume} \label{assumeeind} There exist continuous families $\{g_{01}^{\g,\tt}\}_{(\g,\eit)\in\G\times S^1}$, $\{g_{12}^{\g,\tt}\}_{(\g,\eit)\in\G\times S^1}$ of continuous functions
\[g_{01}^{\g,\tt}:L_{01}^{\g,\tt}\ra\RR\quad\text{ and }\quad g_{12}^{\g,\tt}:L_{12}^{\g,\tt}\ra\RR,\]
and a continuous function $C:\G\ra\RR$ such that for any $\cl>0$, $\g\in\G$ and $\ul{u}=(u_0,u_1,u_2)$ satisfying the seam condition in Definition \ref{moduliforgluing}(2), we have
\begin{equation} \label{eindrelation}
\int\ul{u}^*\ul{\w} +\int_0^{2\pi} g_{01}^{\g,\tt}(u_0(e^{-i\tt}),u_1(\eit,0))+g_{12}^{\g,\tt}(u_1(\eit,-\cl),u_2(\eit))~d\tt = \monoconst\cdot \ind(\ul{u}) + C(\g)
\end{equation}
where $\ind(\ul{u})$ is the Fredholm index of $\ul{u}$.
\end{assume}

\noindent Clearly, if \eqref{eindrelation} holds for a fixed value of $\cl>0$, then it holds for any $\cl$. Moreover, it implies that for any $\ul{u}=(u_0,u_2)$ satisfying the seam condition in Definition \ref{moduliforgluing}(1), we have
\[\int\ul{u}^*\ul{\w} +\int_0^{2\pi} g_{02}^{\g,\tt}(u_0(e^{-i\tt}),u_2(\eit))~d\tt = \monoconst\cdot \ind(\ul{u}) + C(\g)\]
where $g_{02}^{\g,\tt}:L_{02}^{\g,\tt}\ra\RR$ is defined as follows. Given $(x_0,x_2)\in L_{02}^{\g,\tt}$. Since $L_{02}^{\g,\tt}=L_{01}^{\g,\tt}\circ L_{12}^{\g,\tt}$ is embedded, there exists a unique $x_1\in M_1$ such that $(x_0,x_1)\in L_{01}^{\g,\tt}$ and $(x_1,x_2)\in L_{12}^{\g,\tt}$. We define
\[ g_{02}^{\g,\tt}(x_0,x_2):= g_{01}^{\g,\tt}(x_0,x_1)+g_{12}^{\g,\tt}(x_1,x_2).\]

Let $h:N\ra M_2$ be a pseudocycle. Fix a smooth map $h':N'\ra M_2$ with $\dim N'\leqslant \dim N -2$ which covers the limit set of $h$.
\begin{theorem}\label{annulusshrink} Assume $\M_0^i(h)$ is regular $i\leqslant 0$ and $\M_0^i(h')$ is regular $i\leqslant -2$. Then for any sufficiently small $\cl>0$ and $i\leqslant 0$, $\M_{\cl}^i(h)$ is regular. Moreover, there exists a bijection
\[ \M_0^{i=0}(h) \simeq \M_{\cl}^{i=0}(h).\]
If some of $M_i$ or $L_{ij}^{\g,\tt}$ are non-compact, the conclusion still holds provided that an a priori $C^0$-bound for $\ul{u}\in \M_{\cl}(h)$ is given.
\end{theorem}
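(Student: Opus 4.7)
The plan is to adapt Wehrheim-Woodward's annulus-shrinking analysis \cite{WW2} to the presence of parameters $(\g,\tt)\in\G\times S^1$ in the seam conditions. The new input is Assumption \ref{assumeeind}, which in the parametrized setting plays the role that ordinary Lagrangian monotonicity plays in \textit{loc.~cit.}, providing the energy-index relation needed for compactness.

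First I would establish compactness for families $(\g_n,\ul{u}_n)\in\M_{\cl_n}(h)$ with $\cl_n\to 0^+$ and $\ind(\ul{u}_n)=0$. The $C^0$-bound is either automatic or assumed by hypothesis, and \eqref{eindrelation} together with the continuity of $g_{01}^{\g,\tt}$, $g_{12}^{\g,\tt}$, $C(\g)$ on the compact base yields a uniform energy bound. A Gromov-type compactness theorem for quilted surfaces then produces a limit $(\g_{\infty},\ul{u}_{\infty})$ on $\ul{\S}=(\S_0,\S_2)$ together with a possible bubble tree made of pseudoholomorphic spheres in $M_0,M_1,M_2$, disks bounding $L_{01}^{\g_{\infty},\tt}$ or $L_{12}^{\g_{\infty},\tt}$, and figure-eight bubbles along the collapsing annulus. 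Because $L_{02}^{\g,\tt}=L_{01}^{\g,\tt}\circ L_{12}^{\g,\tt}$ is embedded, the two seam conditions at $C_{01}^{\cl_n}$ and $C_{12}^{\cl_n}$ collapse in the limit to the single composed seam condition $(u_0(e^{-i\tt}),u_2(\eit))\in L_{02}^{\g_{\infty},\tt}$.

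Monotonicity with the common constant $\monoconst$ forces every non-constant sphere, disk, or figure-eight bubble to consume at least two real dimensions from the principal component. If any bubble were present, the principal component would therefore sit in $\M_0^{i}(h)$ for some $i\leqslant -2$, and if its evaluation at the interior marked point had limited onto the limit set of $h$ we would land in $\M_0^{i}(h')$ for some $i\leqslant -2$; both are empty by the regularity assumptions, which rule out bubbling at the zero-dimensional level. Thus the limit is a smooth element of $\M_0^{i=0}(h)$.

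The converse gluing construction is the main obstacle. Given $(\g_0,\ul{u}_0)\in\M_0^{i=0}(h)$ and small $\cl>0$, the embedding of $L_{02}^{\g_0,\tt}$ assigns, for each $\tt\in S^1$, a unique intermediate point $m_1(\tt)\in M_1$ satisfying $(u_0(e^{-i\tt}),m_1(\tt))\in L_{01}^{\g_0,\tt}$ and $(m_1(\tt),u_2(\eit))\in L_{12}^{\g_0,\tt}$. I would form a pregluing by keeping $u_0,u_2$ on the disk patches and extending $m_1$ to a near-constant section of $\S_1^{\cl}\to M_1$, interpolated by cutoff functions near $C_{01}^{\cl}$ and $C_{12}^{\cl}$, then correct it to an honest solution by a quadratic Newton iteration in $\cl$-dependent weighted Sobolev norms. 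The linearized operator at the glued configuration is invertible because, in the limit $\cl\to 0^+$, it degenerates onto the linearized operator at $(\g_0,\ul{u}_0)$, which is surjective by the regularity of $\M_0^{i=0}(h)$. Carrying out this Newton iteration uniformly in $\g\in\G$ and controlling the $(\g,\tt)$-dependence of the intermediate section $m_1$ is the technical heart of the argument; the parametric implicit function theorem then yields the asserted bijection $\M_0^{i=0}(h)\simeq\M_{\cl}^{i=0}(h)$ for all sufficiently small $\cl$, and simultaneously establishes the regularity of $\M_{\cl}^{i}(h)$ for $i\leqslant 0$.
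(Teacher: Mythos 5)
Your proposal takes essentially the same route as the paper: both treat Theorem \ref{annulusshrink} as an application of Wehrheim--Woodward's annulus-shrinking machinery \cite{WW2} (Gromov compactness via a positive lower bound on figure-eight bubble energy, elimination of bubbled limits by the regularity hypotheses, and surjectivity of the gluing map by a Newton iteration in $\cl$-dependent weighted norms), with Assumption \ref{assumeeind} identified as the one new ingredient in the parametrized setting. The single detail the paper makes explicit that your sketch passes over: the ``energy'' controlled by \eqref{eindrelation} is the symplectic area \emph{plus} the boundary integrals of $g_{01}^{\g,\tt}$ and $g_{12}^{\g,\tt}$, so in order to invoke WW2's energy quantization for sphere, disk, and figure-eight bubbles one must first check that these $g$-terms vanish on every bubble (which holds once the $L_{ij}^{\g,\tt}$ are compact or the a priori $C^0$-bound is assumed), so that the modified energy reduces to the ordinary symplectic area on bubbles; your assertion that monotonicity forces each bubble to consume two dimensions of index tacitly relies on this reduction, and it is precisely what permits the arguments of \cite{WW2} to carry over verbatim.
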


Theorem \ref{annulusshrink} is proved by the same arguments in \cite{WW2}. An important issue is the elimination of \textit{figure-eight bubbles} arising from the Gromov compactness argument when one proves the surjectivity of the gluing map. Wehrheim-Woodward handled these bubbles as follows. They established a positive lower bound for the energy of figure-eight bubbles so that every sequence of solutions contains a convergent subsequence in the sense of Gromov, by the standard rescaling argument. If a Gromov limit contains a bubble (disk/sphere/figure-eight), there is loss of energy of the main component. By the energy-index relation, its Fredholm index decreases, and hence this Gromov limit does not exist generically. In our case, we take the energy to be the LHS of \eqref{eindrelation}. Notice that if $L_{ij}^{\g,\tt}$ are compact, or an a priori $C^0$-bound for $\ul{u}$ exists, the integral $\int_0^{2\pi} g_{01}^{\g,\tt}(u_0(e^{-i\tt}),u_1(\eit,0))+g_{12}^{\g,\tt}(u_1(\eit,-\cl),u_2(\eit))~d\tt$ vanishes for any disk or figure-eight bubble. It follows that our energy becomes the standard energy for these bubbles, and hence the argument in \cite{WW2} carry over our case.

In this paper, Theorem \ref{annulusshrink} will be applied to the following situation. Consider the set-up in Section \ref{QFTsection}. Take $M_0:=pt$, $M_1:=T^*G$ and $M_2:=X^-\times X$. Let $\vp:\G\ra\Lo G$ be a smooth cycle. Define $L_{01}^{\g,\tt}:=T^*_{\vp_{\g}(\eit)}G$ and $L_{12}^{\g,\tt}:=C$. Then $L_{02}^{\g,\tt}=\D_{\vp_{\g}(\eit)}$ is embedded. By Lemma \ref{monolemma}, these Lagrangian correspondences are monotone. In order to apply Theorem \ref{annulusshrink}, there are two issues to settle, the energy-index relation and the existence of an a priori $C^0$-bound for $\ul{u}$. The first issue is resolved by the following
\begin{lemma} Define $g_{01}^{\g,\tt}:  T^*_{\vp_{\g}(\eit)}G\ra \RR$ to be the linear map
\[ g_{01}^{\g,\tt}(\eta):=-\langle\eta, \pt_{\tt}\vp_{\g}(\eit)\rangle\]
and $g_{12}^{\g,\tt}\equiv 0$. Then Assumption \ref{assumeeind} holds.
\end{lemma}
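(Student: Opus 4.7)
The plan is to reduce the energy-index relation \eqref{eindrelation} to Lemma \ref{monolemma} by applying Stokes' theorem to the middle patch. First split $\int \ul{u}^*\ul{\w}$ by patch: the $u_0$-piece vanishes since $M_0$ is a point, while $\w_{T^*G}=d\l_G$ is exact, so Stokes rewrites $\int u_1^*\w_{T^*G}$ as a difference of boundary integrals over $S^1\times\{0\}$ and $S^1\times\{-\cl\}$. On the outer circle $t=0$, the seam condition $u_1(\eit,0)\in T^*_{\vp_\g(\eit)}G$ combined with the expression $\l_G=\langle p,g^{-1}dg\rangle$ in the left trivialization $T^*G\simeq G\times\gg^{\vee}$ yields
\[
\int_{S^1\times\{0\}} u_1^*\l_G = \int_0^{2\pi}\langle u_1(\eit,0),\partial_{\tt}\vp_\g(\eit)\rangle\,d\tt,
\]
which is exactly $-\int_0^{2\pi} g_{01}^{\g,\tt}\,d\tt$ by the stated formula for $g_{01}$. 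Thus these two contributions cancel precisely; the definition of $g_{01}$ was tailored for exactly this cancellation.

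On the inner circle $t=-\cl$, the moment correspondence $C$ forces the presentation $u_1(\eit,-\cl)=(g(\tt),\mu_X(u_2^-(\eit)))$ with $u_2^+(\eit)=g(\tt)\cdot u_2^-(\eit)$, where $g(\tt):=\pi\,u_1(\eit,-\cl)$ is a smooth loop in $G$. Combining the surviving $t=-\cl$ boundary term with $\int u_2^*\ul{\w}_2=-\int_{\S_2}(u_2^-)^*\w_X+\int_{\S_2}(u_2^+)^*\w_X$ reproduces precisely the functional $E(g,u_2^\pm)$ of Lemma \ref{monolemma}. That lemma then yields
\[
\text{LHS of \eqref{eindrelation}}=\monoconst\cdot c_1^v(s_{g,u_2^\pm})+C_0([g])
\]
for some $C_0:\pi_1(G)\ra\RR$. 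Projecting $u_1$ to $G$ provides an explicit homotopy between $g$ and $\vp_\g$, so $[g]=[\vp_\g]\in\pi_1(G)$ and $C_0([g])=C_0([\vp_\g])$ depends continuously on $\g$ alone.

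To finish, the Riemann-Roch-type index formula for the quilted Cauchy-Riemann operator relates $\ind(\ul{u})$ to $c_1^v(s_{g,u_2^\pm})$ by a contribution determined by the Maslov classes of the seams $L_{01}^{\g,\tt}$ and $L_{12}^{\g,\tt}$ along the $\g$-family of boundary data; since the seams are parameterized only by $\g$, this contribution depends on $\g$ and not on $\ul{u}$. Absorbing it together with $C_0([\vp_\g])$ into a continuous function $C(\g)$ completes the verification of \eqref{eindrelation}. The main subtlety requiring care is the sign convention for $\l_G$: it must be chosen so that the $t=0$ boundary term cancels $\int g_{01}^{\g,\tt}\,d\tt$ rather than doubles it, and the prescribed formula $g_{01}^{\g,\tt}(\eta)=-\langle\eta,\partial_\tt\vp_\g(\eit)\rangle$ encodes exactly this convention.
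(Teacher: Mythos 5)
Your proof is correct and follows exactly the paper's route: apply Stokes' theorem to $\int u_1^*\w_G$, observe that the resulting $t=0$ boundary term cancels the $\int g_{01}^{\g,\tt}\,d\tt$ contribution, and identify the remaining $t=-\cl$ boundary term together with $\int u_2^*\ul{\w}_2$ as the functional $E(\lo,u_{\pm})$ of Lemma \ref{monolemma}. You fill in more detail than the paper's terse ``The rest follows from Lemma \ref{monolemma}'' (the homotopy $[g]=[\vp_{\g}]$ via $\pi\circ u_1$, and the index-vs-$c_1^v$ comparison), but the underlying argument is the same.
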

\begin{proof}
Let $\ul{u}=(u_0,u_1,u_2)\in \M_{\cl}(h)$. Denote by $\l_G$ the canonical Liouville form on $T^*G$ and put $\w_G:=d\l_G$. By Stokes' theorem,
\[ \int u_1^*\w_G = \int (u_1|_{S^1\times \{0\}})^*\l_G - \int (u_1|_{S^1\times \{-\cl\}})^*\l_G .\]
Notice that $\int (u_1|_{S^1\times \{0\}})^*\l_G=- \int_0^{2\pi} g_{01}^{\g,\tt}(u_1(\eit,0))~d\tt$. The rest follows from Lemma \ref{monolemma}.
\end{proof}

The second issue follows from the convexity argument in \cite{Ab_IHES}. Let $\ul{u}\in\M_{\cl}(h)$ be an element. We require $u_1$ to solve the perturbed Cauchy-Riemann equation
\[ (du_1-X_{H'})^{0,1}=0\]
with respect to a domain-dependent $\w_G$-compatible almost complex structure of contact type where $H'=H+F$ is a \Ham with $F$ uniformly bounded\footnote{We also require that $F$ vanishes over a sequence of necks in the cylindrical end of $T^*G$ which diverges to infinity. See \cite{Ab_IHES} for more detail.} and $H$ quadratic at infinity, i.e. $H$ equals half squared-length of cotangent vectors with respect to a Riemannian metric on $G$. The key point is that as long as the image of a portion of $S^1\times \{0\}$ under $u_1$ is sufficiently away from the zero section, the integral $\int (u_1|_{S^1\times \{0\}})^*\l_G- H'\circ (u_1|_{S^1\times \{0\}}) ~d\tt $ is negative, since the image of the smooth cycle $\vp$ is compact.

\section{An alternative proof of Peterson-Woodward's formula}\label{PWproof}
We give here an alternative proof of Peterson-Woodward's formula \cite{Peter, Wformula} based on what we have developed. It is well-known that this formula is a consequence of Lam-Shimozono's theorem \cite{LS} which states that $\See{G/L}{\O G}(x_q)$ in Theorem \ref{computeG/Lintro}(a) does not contain any term other than the leading term and it is zero if $\deg^{L/T}(q)\ne 0$. A proof for this implication is given by Huang-Li \cite{HL}. Although we are not able to prove Lam-Shimozono's result, we are still able to obtain the formula. The key point is that in the proof of Huang-Li, the Bott-Samelson cycles, which represent $x_q$, do not play any role.

\begin{proposition}\label{image=leading} The image of $\See{G/T}{\O G}$ is equal to the direct sum of the leading terms, i.e.
\[ \im\left( \See{G/T}{\O G}\right) = \bigoplus_{\substack{(w,q)\in W\times \Q\\ q+w^{-1}(a)\in \wc_0}} \ZZ\langle \s^{G/T}_{w(x_0)} \TT^{A^{G/T}_q}\rangle \]
where $a\in\mr{\wc}_0$ is an element sufficiently close to the origin.
\end{proposition}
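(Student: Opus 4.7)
Identify the indexing set $\{(w,q)\in W\times\Q:q+w^{-1}(a)\in\wc_0\}$ of $T$ with $\Q$ via the bijection $q_0\mapsto(w_{q_0},w_{q_0}^{-1}(q_0))$ (inverse $(w,q)\mapsto w(q)$), and set $b_{q_0}:=\s_{w_{q_0}(x_0)}^{G/T}\TT^{A^{G/T}_{w_{q_0}^{-1}(q_0)}}$, so that $T=\bigoplus_{q_0\in\Q}\ZZ\langle b_{q_0}\rangle$. In this notation Theorem \ref{computeG/T} reads $\See{G/T}{\O G}(x_{q_0})=\pm b_{q_0}+N(q_0)$ with $N(q_0)\in\bigoplus_{\ell'(w')<\ell'(w_{q_0})}\ZZ[\Nov{G/T}{\O G}]\cdot\s_{w'(x_0)}$. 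In view of this lower-triangular structure (with respect to the well-order $\ell'(w_{q_0})$) with $\pm 1$ diagonal entries, both inclusions of the proposition reduce to the following key claim: $\See{G/T}{\O G}(x_{q_0})\in T$ for every $q_0\in\Q$. Indeed, granted the claim, the inclusion $\subseteq$ is immediate, while induction on $\ell'(w_{q_0})$ expresses $N(q_0)$ inside the sub-basis $\{b_{q'}\}_{\ell'(w_{q'})<\ell'(w_{q_0})}$ of $T\cap\bigoplus_{\ell'(w')<\ell'(w_{q_0})}\ZZ[\Nov{G/T}{\O G}]\cdot\s_{w'(x_0)}$, which sits in $\im(\See{G/T}{\O G})$ by the inductive hypothesis, forcing $b_{q_0}\in\im(\See{G/T}{\O G})$ as well.

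The base case of the key claim is $\ell'(w_{q_0})=0$: since $a\in\mr{\wc}_0$ is chosen with $\a(a)<1$ for every $\a\in R^+$, the condition $q_0+a\in\wc_0$ forces $q_0\in\Q\cap\wc_0$, and Corollary \ref{G/Tnohigher} then yields $\See{G/T}{\O G}(x_{q_0})=\pm\TT^{A_{q_0}}=\pm b_{q_0}\in T$. For the inductive step, the plan is to exploit the covariance extracted from Magyar's multiplication \eqref{multiBS}: applying $\See{G/T}{\O G}$ to $x_q\bulletsmall x_{q_0}=x_{q_0+w_{q_0}(q)}$ for $q\in\Q\cap\wc_0$ and combining with Corollary \ref{G/Tnohigher} gives
\[ \See{G/T}{\O G}(x_{q_0+w_{q_0}(q)})=\pm\TT^{A_q}\cdot\See{G/T}{\O G}(x_{q_0}),\qquad N(q_0+w_{q_0}(q))=\pm\TT^{A_q}\cdot N(q_0). \]
Since $N(q_0)$ is a finite $\ZZ$-sum (the Floer degree of $x_{q_0}$ fixes $c_1^v$, leaving only finitely many Novikov exponents), for $q\in\Q\cap\wc_0$ chosen deep enough every term $\s_{w'(x_0)}\TT^{A_{q_1+q}}$ in $\TT^{A_q}\cdot N(q_0)$ satisfies $q_1+q+(w')^{-1}(a)\in\wc_0$ by the positivity of $\a(q)$ alone; thus $N(q_0+w_{q_0}(q))\in T$, i.e.\ the key claim holds after a sufficiently deep translation in the $w_{q_0}(\wc_0)$-direction.

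The main obstacle is transporting the key claim back to $q_0$ itself: dividing the covariance by $\TT^{A_q}$ only gives $N(q_0)\in\TT^{-A_q}T$, which is strictly larger than $T$ for $q\neq 0$, so Novikov-rescaling alone is insufficient. To close the gap I would supplement the covariance with the degree--dimension equation $\dim\G+\dim N+2c_1^v(A)=0$ together with the explicit formula $c_1^v(A^{G/T}_{q_1})=-\langle q_1,\rho\rangle$ (derived in the proof of Theorem \ref{computeG/L}(a)), which pins down $\langle q_1,\rho\rangle=\langle q_0,w_{q_0}(\rho)\rangle+\ell(w')-\ell(w_{q_0})$ for every term $\s_{w'(x_0)}\TT^{A_{q_1}}$ in $N(q_0)$, and with the finer energy--index bounds of Lemma \ref{energyindex} on the $\FF$-side (constraining the $\Q_0$-coset and the norm of admissible intermediate exponents). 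Together with $\ell'(w')<\ell'(w_{q_0})$, these ingredients ought to force $q_1+(w')^{-1}(a)\in\wc_0$ for every appearing term, closing the induction and establishing the proposition.
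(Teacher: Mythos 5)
Your reduction of both inclusions to the key claim that $\See{G/T}{\O G}(x_{q_0})\in T$ for every $q_0\in\Q$ is correct (the paper makes the same reduction, appealing to Theorem \ref{computeG/T} and a filtration argument), and so are the base case via Corollary \ref{G/Tnohigher} and the covariance identity $\See{G/T}{\O G}(x_{q_0+w_{q_0}(q)})=\pm\TT^{A_q}\cdot\See{G/T}{\O G}(x_{q_0})$ extracted from Magyar's formula. You also correctly diagnose the obstruction: Novikov rescaling only places $N(q_0)$ in $\TT^{-A_q}T\supsetneq T$.

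The remedies you sketch do not close this gap. The degree--dimension identity pins down only the single scalar $\langle q_1,\rho\rangle$ for each Novikov exponent $q_1$ appearing in a term $\s_{w'(x_0)}\TT^{A_{q_1}}$ of $N(q_0)$; once $\rk G\geqslant 2$ this still leaves a whole codimension-one affine slice of candidate $q_1$'s, most of which violate $q_1+(w')^{-1}(a)\in\wc_0$. The energy--index bounds of Lemma \ref{energyindex} constrain only the wrapped chords appearing in $\FF(x_{q_0}^{BS})$; they say nothing about the Novikov exponents of the lower-order terms introduced by $PSS\circ\MC$ in Theorem \ref{BCLthm}, which a priori range over all of $\ZZ[\Nov{G/T}{\O G}]$, so they do not constrain $N(q_0)$ either. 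The paper instead reaches outside $G/T$: arguing by contradiction, for a hypothetical bad term $\s_{w(x_0)}\TT^{A_q}$ it fixes a simple root $\a_0$ with $\a_0(q+w^{-1}(a))<0$ and chooses $q_1\in\Q\cap\wc_0$ with $\a_0(q_1)\geqslant 1$ calibrated so that Proposition \ref{compare} applies (with $y=w(y_0)$, $x=w(x_0)$, and $w(q+q_1)$ the Peterson lift), takes the rank-one coadjoint orbit $G/L=\OO_{y_0}$ for $y_0$ interior in $\{\a_0=0\}\cap\wc_0$, and deduces a nonzero coefficient in $\See{G/L}{\O G}(x_{q_1}\bulletsmall\a)$; this contradicts $\See{G/L}{\O G}(x_{q_1})=0$, which holds because $\deg^{L/T}(q_1)=\lfloor\a_0(q_1+a)\rfloor\geqslant 1$. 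That $G/L$-vanishing is exactly the leverage a purely $G/T$-internal filtration argument cannot access, which is why your inductive step stalls.
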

\begin{proof} WLOG, assume $G$ is of adjoint type. By Theorem \ref{computeG/T} and a filtration argument, it suffices to verify the inclusion $\subseteq$. Let $\a\in H_{-*}(\O G;\ZZ)$ be a homogeneous element. Suppose the coefficient of a term $\s^{G/T}_{w(x_0)} \TT^{A^{G/T}_q}$ in $\See{G/T}{\O G}(\a)$ with $q+w^{-1}(a)\not\in\wc_0$ is non-zero. Then there exists a simple root $\a_0$ (i.e. the positive root which defines a boundary wall of $\wc_0$) such that $\a_0(q+w^{-1}(a))<0$. We choose an element $q_1\in\Q\cap \wc_0$ as follows. If $\a_0(w^{-1}(a))<0$, then $\a_0(q)\leqslant 0$, and we choose $q_1$ such that $\a_0(q+q_1)=1$; if $\a_0(w^{-1}(a))>0$, then $\a_0(q)\leqslant -1$, and we choose $q_1$ such that $\a_0(q+q_1)=0$. In any case, we have $\a_0(q_1)\geqslant 1$. By Corollary \ref{G/Tnohigher} and Theorem \ref{MC=Se}, the coefficient of $\s^{G/T}_{w(x_0)} \TT^{A^{G/T}_{q+q_1}}$ in $\See{G/T}{\O G}(x_{q_1}\bulletsmall \a)$ is non-zero.

Now we take $G/L$ to be the coadjoint orbit passing through an interior point\footnote{More precisely, this point must lie in the ray passing through $\rho-\a_0$ in order for $G/L$ to be monotone.} $y_0$ of $\{\a_0=0\}\cap \wc_0$ in $\{\a_0=0\}$. Our choice of $q_1$ implies that $x:=w(x_0)$, $y:=w(y_0)$ and $q_0+\Q_{R_y}:=w(q+q_1)+\Q_{R_y}$ satisfy the conditions in Proposition \ref{compare}. Therefore, we conclude that the coefficient of $\s^{G/L}_{w(y_0)} \TT^{A^{G/L}_{q+q_1+\Q_{R_{y_0}}}}$ in $\See{G/L}{\O G}(x_{q_1}\bulletsmall \a)$ is non-zero. However, we have, by the same argument used in the proof of Theorem \ref{computeG/L}(b), $\See{G/L}{\O G}(x_{q_1})=0$, a contradiction.
\end{proof}

\begin{corollary} \label{pseudoPeter} There exists a $\ZZ$-basis $\{x_q'\}_{q\in \Q}$ of $H_*(\O G;\ZZ)$, possibly different from $\{x_q\}_{q\in\Q}$, such that for any monotone coadjoint orbit $G/L$ and any $q\in \Q$,
\[ \See{G/L}{\O G}(x_q') = \left\{
\begin{array}{cc}
\pm \s^{G/L}_{w_q(y_0)} \TT^{A^{G/L}_{w_q^{-1}(q)+\Q_{R_{y_0}}}} & ,~\deg^{L/T}(q)=0\\
0 &,~\deg^{L/T}(q)\ne 0
\end{array}
\right. .\]
\end{corollary}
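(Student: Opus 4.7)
The plan is to first refine $\{x_q\}$ into a basis $\{x_q'\}$ whose image under $\See{G/T}{\O G}$ consists of pure leading terms (no lower-order corrections), and then to read off $\See{G/L}{\O G}(x_q')$ for any monotone $G/L$ by extracting individual coefficients via Proposition \ref{compare}.

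First I would construct $x_q'$ by downward Gaussian elimination along the $\ell'$-filtration. By Theorem \ref{computeG/T},
\[
\See{G/T}{\O G}(x_q) = \pm\,\s^{G/T}_{w_q(x_0)}\TT^{A^{G/T}_{w_q^{-1}(q)}}
+ \sum c_{w',q''}\,\s^{G/T}_{w'(x_0)}\TT^{A^{G/T}_{q''}},
\]
where each $c_{w',q''}\in\ZZ$ and the sum runs over monomials with $\ell'(w')<\ell'(w_q)$. By Proposition \ref{image=leading} every monomial in the image is a pure leading term, so each nonzero summand satisfies $w'=w_{q_0}$ for the unique $q_0:=w'(q'')\in\Q$, whence it coincides up to a sign with the leading part of $\See{G/T}{\O G}(x_{q_0})$. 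Inducting on $\ell'(w_q)\in\ell'(W)$ (a finite totally ordered set), I set
\[
x_q' := x_q - \sum_{(w',q'')}(\pm c_{w',q''})\,x_{q_0}'.
\]
Degree-preservation of $\See{G/T}{\O G}$ forces $\deg(q_0)=\deg(q)$ on each subtracted term, so the subtraction is graded and the coefficients remain integers. The transition matrix from $\{x_q\}$ to $\{x_q'\}$ is unitriangular, hence $\{x_q'\}$ is again a $\ZZ$-basis of $H_*(\O G;\ZZ)$, and by construction $\See{G/T}{\O G}(x_q')= \pm\,\s^{G/T}_{w_q(x_0)}\TT^{A^{G/T}_{w_q^{-1}(q)}}$.

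For a fixed monotone coadjoint orbit $G/L=\OO_{y_0}$, I would then evaluate $\See{G/L}{\O G}(x_q')$ by extracting its coefficient at every pair $(y_1,q_1+\Q_{R_{y_1}})$ satisfying the dimensional balance. By Proposition \ref{compare},
\[
\langle\See{G/L}{\O G}(x_q'),[\UU^{G/L}_{y_1}]\rangle_{A^{G/L}_{y_1,q_1+\Q_{R_{y_1}}}}
= \pm\,\langle\See{G/T}{\O G}(x_q'),[\UU^{G/T}_{x_1}]\rangle_{A^{G/T}_{x_1,\tilde q_1}},
\]
where $\tilde q_1$ is the Peterson lift of the coset and $x_1\in\crit_{F_{y_1}}$ its associated lift. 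By Step 1 the right-hand side is $\pm 1$ precisely when $x_1=w_q(x_0)$ and $\tilde q_1=q$ (equivalently $A^{G/T}_{x_1,\tilde q_1}=A^{G/T}_{w_q(x_0),q}$), and vanishes otherwise. The condition $x_1=w_q(x_0)$ forces $y_1=\pi(x_1)=w_q(y_0)$; the condition $\tilde q_1=q$ forces $q_1+\Q_{R_{y_1}}=q+\Q_{R_{w_q(y_0)}}$ and demands that $q$ itself be the Peterson lift of this coset. By the identification of $\deg^{L/T}(q)$ with the complex dimension of the Bott--Samelson cycle in $\O G_{w_q(y_0)}^{\ad}$ associated to $\pi_{\t_{w_q(y_0)}^{\perp}}(q)$, this last demand is equivalent to $\deg^{L/T}(q)=0$. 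Using $q+w_q^{-1}(a)\in\wc_0$ together with the observation that for every $\a\in R_{w_q(y_0)}=w_q R_{y_0}$ both $\a(w_q(x_0))$ and $\a(q+a)$ are positive exactly when $w_q^{-1}\a\in R^+$, one checks that $w_q(x_0)$ is indeed the associated lift, so the matching is self-consistent. The basepoint-translation identity $A^{G/L}_{w_q(y_0),q+\Q_{R_{w_q(y_0)}}}=A^{G/L}_{w_q^{-1}(q)+\Q_{R_{y_0}}}$ then recovers the Novikov exponent of the corollary, giving $\See{G/L}{\O G}(x_q')=\pm\s^{G/L}_{w_q(y_0)}\TT^{A^{G/L}_{w_q^{-1}(q)+\Q_{R_{y_0}}}}$ when $\deg^{L/T}(q)=0$, and $0$ otherwise.

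The main obstacle will be the bookkeeping around the Peterson and associated lifts: one has to verify carefully that among all $(y_1,q_1+\Q_{R_{y_1}})$ only $(w_q(y_0),q+\Q_{R_{w_q(y_0)}})$ can receive a non-zero contribution, and that this contribution is indeed present precisely when $\deg^{L/T}(q)=0$. A subsidiary technical point is ensuring that the elimination in Step 1 stays within $H_*(\O G;\ZZ)$ rather than $H_*(\O G;\ZZ[\Nov{G/T}{\O G}])$; this follows from degree-preservation of $\See{G/T}{\O G}$, which pins down the Novikov exponent on every subtracted class.
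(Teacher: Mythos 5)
Your proposal is correct and follows essentially the same route as the paper: both rely on Proposition \ref{image=leading} to show that the image of $\See{G/T}{\O G}$ is exactly the span of the leading monomials, on the injectivity of $\See{G/T}{\O G}$ to produce the basis $\{x_q'\}$, and then on Proposition \ref{compare} (as in the proof of Theorem \ref{computeG/L}) to transfer the $G/T$ computation to an arbitrary monotone $G/L$. The only cosmetic difference is that you make the existence of $\{x_q'\}$ explicit via unitriangular Gaussian elimination along the $\ell'$-filtration, whereas the paper defines $x_q'$ abstractly as the preimage of $\s^{G/T}_{w_q(x_0)}\TT^{A^{G/T}_{w_q^{-1}(q)}}$ under the injective map $\See{G/T}{\O G}$; these are interchangeable. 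One small remark on your ``subsidiary technical point'': integrality of the subtracted coefficients is automatic, since the Novikov coefficients $c_{w',q''}$ in Theorem \ref{computeG/T} already lie in $\ZZ$; the role of the degree argument is rather to guarantee that each $x_q'$ is homogeneous, which is what lets you invoke Proposition \ref{compare} (whose dimension hypothesis \eqref{comparedim} tacitly uses a smooth cycle of a fixed dimension) uniformly across the expansion of $x_q'$.
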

\begin{proof}
We take $\{x'_q\}_{q\in \Q}$ to be the unique $\ZZ$-basis for which the above conclusion holds for $G/T$ (with positive sign for the non-vanishing term). Its existence follows from Proposition \ref{image=leading} and the injectivity of $\See{G/T}{\O G}$ which is a consequence of Corollary \ref{G/Tinjective} and Theorem \ref{MC=Se}. The case for general $G/L$ follows from the same arguments used for the proof of Theorem \ref{computeG/L}.
\end{proof}

As proved by Huang-Li, the Peterson-Woodward's formula is a formal consequence of Corollary \ref{pseudoPeter}. For the sake of completeness, we give an exposition of their argument. Denote by $\star$ the quantum cup product on $QH^*(G/L;\ZZ[\pi_2(G/L)])$. For any $y_1,\ldots, y_k\in\crit_{G/L}$, write
\[ \s^{G/L}_{y_1}\star\cdots\star  \s^{G/L}_{y_k} =\sum_{\substack{y\in \crit_{G/L}\\ A\in \pi_2(G/L)}} C^{G/L,A}_{y_1,\ldots,y_k;y} \s^{G/L}_y\TT^A.\]
It is well-known that the structure constant $C^{G/L,A}_{y_1,\ldots,y_k;y}$ is always non-negative.
\begin{theorem} \label{PeterWoodward} (Peterson-Woodward's comparison formula \cite{Peter, Wformula}) Given $y_1,\ldots, y_k, y\in\crit_{G/L}$ and $q+\Q_{R_y}\in \Q_0/\Q_{R_y}$. Define $x_i\in\crit_{F_{y_i}}$ to be the element with shortest length $\ell(x_i)$ (i.e. $\dim\UU_{x_i}^{F_{y_i}}=0$), $\tilde{q}\in\Q_0$ to be the Peterson lift of $q+\Q_{R_y}$ (Definition \ref{petersonlift}(1)), and $x\in\crit_{F_y}$ to be the associated lift of $y$ with respect to $q+\Q_{R_y}$ (Definition \ref{petersonlift}(2)). Then we have
\[ C^{G/L, A^{G/L}_{y,q+\Q_{R_y}}}_{y_1,\ldots, y_k; y} = C^{G/T, A^{G/T}_{x,\tilde{q}}}_{x_1,\ldots, x_k; x}. \]
\end{theorem}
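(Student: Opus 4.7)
The plan is to deduce the Peterson--Woodward equality by combining Proposition \ref{compare}, which identifies the $A^{G/T}_{x,\tilde q}$-coefficient of $\See{G/T}{\Omega G}$ with the $A^{G/L}_{y,\,q+\Q_{R_y}}$-coefficient of $\See{G/L}{\Omega G}$ via the Peterson and associated lifts, with the ring-homomorphism property of both Savelyev--Seidel morphisms. Since $\See{G/T}{\Omega G}$ and $\See{G/L}{\Omega G}$ share the common domain $H_{-*}(\Omega G;\ZZ)$ equipped with its Pontryagin product, a single product in $H_{-*}(\Omega G;\ZZ)$ will control quantum products on both $G/T$ and $G/L$ simultaneously.

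For each $i$, let $\tilde w_i \in W$ be the minimum length representative of the coset $w_i W_{y_0}$ with $y_i = \tilde w_i(y_0)$, so that $x_i = \tilde w_i(x_0)$. The next step is to choose $q_i \in \Q$ with $w_{q_i} = \tilde w_i$ and $\deg^{L/T}(q_i) = 0$ (small elements of $\tilde w_i \wc_0 - a$ will do), and set $\alpha_i := x'_{q_i}$ from the Peterson basis of Corollary \ref{pseudoPeter}. By that corollary,
\[
\See{G/T}{\Omega G}(\alpha_i) = \s^{G/T}_{x_i}\,\TT^{A^{G/T}_{x_i,\, q_i}}, \qquad \See{G/L}{\Omega G}(\alpha_i) = \e_i\, \s^{G/L}_{y_i}\,\TT^{A^{G/L}_{y_i,\, q_i + \Q_{R_{y_i}}}}, \quad \e_i \in \{\pm 1\}.
\]
Now apply Proposition \ref{compare} to a smooth cycle $\vp:\G \to \Omega G$ representing the Pontryagin product $\alpha_1 \bulletsmall \cdots \bulletsmall \alpha_k$, which exists by Theorem \ref{BSthm}. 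Using the ring-homomorphism identity $\See{G/X}{\Omega G}(\vp) = \See{G/X}{\Omega G}(\alpha_1) \star \cdots \star \See{G/X}{\Omega G}(\alpha_k)$ and expanding both sides in the Schubert basis, the two coefficients compared by Proposition \ref{compare} become explicit quantum structure constants, yielding
\[
C^{G/T,\, A^{G/T}_{x,\, \tilde q^\star}}_{x_1,\ldots,x_k;\, x} \;=\; \pm\, C^{G/L,\, A^{G/L}_{y,\, q^\star + \Q_{R_y}}}_{y_1,\ldots,y_k;\, y},
\]
where $\tilde q^\star$ and $q^\star$ are $\tilde q$ and $q$ shifted by $u := w_{\tilde q}\bigl(\sum_i \tilde w_i^{-1}(q_i)\bigr)$, and the overall sign aggregates $\prod_i \e_i$ together with the sign in Proposition \ref{compare}.

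To conclude, the plan is to verify that the shift preserves the Peterson/associated lift bijection: $\tilde q^\star$ remains the Peterson lift of $q^\star + \Q_{R_y}$ and $x$ remains its associated lift. This reduces to checking $\deg^{L/T}(\tilde q^\star) = 0$ together with $w_{\tilde q^\star} = w_{\tilde q}$, which follows from additivity of the floor functions in $\deg^{L/T}$ under integer shifts $\alpha(u) \in \ZZ$ for $\alpha \in R_y$, using that each $q_i$ is itself a Peterson element and that $u$ lies in the chamber $w_{\tilde q}\wc_0$. By varying the auxiliary $q_i$'s, every pair $(\tilde q,\, q + \Q_{R_y})$ appearing in the statement is attained. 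The sign is finally resolved to $+1$ by positivity of the quantum Gromov--Witten invariants on coadjoint orbits, since both sides are non-negative integers.

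The main obstacle is the Peterson-lift preservation: the combinatorial step that additively splits $\deg^{L/T}(\tilde q^\star)$ as $\deg^{L/T}(\tilde q) + \sum_i \deg^{L/T}(q_i) = 0$ requires a careful case analysis of the signs of $\alpha(u)$ for each $\alpha \in R_y$ with $\alpha(x) > 0$, verifying that no ``extra'' integer jumps are produced by the shift and, at the same time, that $\tilde q^\star + a$ does not cross a wall out of $w_{\tilde q}\wc_0$. This is the combinatorial heart of the argument, and all other steps are formal consequences of Proposition \ref{compare} together with the ring structure.
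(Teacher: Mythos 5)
Your overall strategy matches the paper's closely: pick auxiliary Peterson basis elements $\alpha_i$ so that $\See{G/T}{\O G}(\alpha_i)$ and $\See{G/L}{\O G}(\alpha_i)$ are pure Schubert classes, feed the Pontryagin product $\alpha_1\bulletsmall\cdots\bulletsmall\alpha_k$ through both Savelyev--Seidel ring maps, identify the resulting coefficients with quantum structure constants, and use positivity to eliminate the $\pm$. That is the same skeleton, and the sign resolution via positivity of Gromov--Witten invariants is also exactly the paper's final step.

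There are two places where you diverge from the paper's actual route, and they are worth noting. First, the paper never invokes Proposition \ref{compare} in this proof: it instead applies Corollary \ref{pseudoPeter} a \emph{second} time, to the shifted Peterson basis element $x'_{q'}$, so that both the $G/T$- and $G/L$-structure constants are identified with the \emph{same} integer (the coefficient of $x'_{q'}$ in the Pontryagin product), which forces them to agree. Your route instead applies Proposition \ref{compare} directly to a cycle representing the Pontryagin product to compare the two Floer coefficients, and then identifies each separately as a structure constant via the ring homomorphism. This is logically viable, but it is an extra layer: you must then also confirm that the cycle representing $\alpha_1\bulletsmall\cdots\bulletsmall\alpha_k$ satisfies Proposition \ref{compare}'s dimension hypothesis \eqref{comparedim}, and that Proposition \ref{compare}'s data $(\tilde q^\star, x, q^\star)$ are indeed a Peterson-lift/associated-lift pair --- the very thing you flag as the ``main obstacle.'' Corollary \ref{pseudoPeter} packages all of this, which is why the paper reaches for it instead.

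Second, your auxiliary elements $q_i$ (chosen with $w_{q_i}=\tilde w_i$, $\deg^{L/T}(q_i)=0$, $q_i$ small) differ from the paper's $q_i\in\Q\cap\wc_0$ taken deep in the relative interior of $\t_{y_0}\cap\wc_0$, used via $w_i(q_i)$. The paper's choice makes $w_i(q_i)\in\t_{y_i}$, which trivializes the check that $w_i(q_i)$ is a Peterson element, and the deepness condition \eqref{PWproof1} controls the Weyl chamber of the shifted element. Your vaguer ``small'' condition leaves more to verify. In particular, the claimed additivity $\deg^{L/T}(\tilde q^\star)=\deg^{L/T}(\tilde q)+\sum_i\deg^{L/T}(q_i)$ is not automatic: integer shifts of the floor arguments contribute the term $\sum_{\a\in R_y,\a(x)>0}\a(u)$, and for this to vanish you need orthogonality of the shift $u$ to the parabolic $\rho$ for $R_y$ (equivalently, that $\pi_{\t_y^\perp}(u)=0$), which does not follow merely from each $q_i$ being a Peterson element. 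This is exactly where the paper's parabolic choice $q_i\in\t_{y_0}$ earns its keep, so the paper's ``Observe that $q'$ is the Peterson lift'' is backed by a specific structural choice, while your version leaves an explicit gap. So: right high-level plan, a genuinely different (and unnecessarily heavier) route through Proposition \ref{compare}, and the combinatorial step that you isolate as the obstacle is the one place where your choices are weaker than the paper's.
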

\begin{proof}
Denote by $w_i\in W$ (resp. $w\in W$) the unique element such that $w_i(x_0)=x_i$ (resp. $w(x_0)=x$). Choose, for each $i=1,\ldots, k$, $q_i\in \Q\cap\wc_0$ which lies in the interior of $\t_{y_0}\cap \wc_0$ in $\t_{y_0}$, where $\t_{y_0}:=\bigcap_{\a(y_0)=0}\{\a=0\}$, such that
\begin{equation}\label{PWproof1}
\a\left(\tilde{q}+\sum_{i=1}^kw_i(q_i) \right)>0\text{ for any }\a\in R\text{ with }\a(y)>0.
\end{equation} We have, by Corollary \ref{pseudoPeter},
\begin{align*}
\See{G/T}{\O G}(x'_{w_i(q_i)} )&=\pm \s^{G/T}_{x_i} \TT^{A^{G/T}_{q_i}} \\
\See{G/L}{\O G}(x'_{w_i(q_i)} )&=\pm \s^{G/L}_{y_i} \TT^{A^{G/L}_{q_i+\Q_{R_{y_0}}}}
\end{align*}
Put $q':= \tilde{q}+\sum_{i=1}^kw_i(q_i)$. Observe that $q'$ is the Peterson lift of $q+ \sum_{i=1}^kw_i(q_i)+\Q_{R_y}$ and $w_{q'}=w$. We have, by the same corollary,
\begin{align*}
\See{G/T}{\O G}(x'_{q'}) &= \pm \s^{G/T}_x \TT^{A^{G/T}_{x,q'}}\\
\See{G/L}{\O G}(x'_{q'}) &= \pm \s^{G/L}_y \TT^{A^{G/L}_{y,q'+\Q_{R_y}}}.
\end{align*}
It follows that $C^{G/L, A^{G/L}_{y,q+\Q_{R_y}}}_{y_1,\ldots, y_k; y}$ and $C^{G/T, A^{G/T}_{x,\tilde{q}}}_{x_1,\ldots, x_k; x}$ are both equal to, up to sign, the coefficient of $x'_{q'}$ in the expression $x'_{w_1(q_1)}\bulletsmall\cdots\bulletsmall x'_{w_k(q_k)}$. The proof is complete by the semi-positivity of these structure constants.
\end{proof}



\end{document}